	\theoremstyle{definition}
	\newtheorem{thm}{Theorem}[section]
	\newtheorem{defn}{Definition}[section]
	\newtheorem{lem}[thm]{Lemma}
	\newtheorem{prop}[thm]{Proposition}
	\newtheorem{cor}[thm]{Corollary}
	\newtheorem{rmk}{Remark}[section]
	\renewcommand{\Re}{\textbf{Re }}  
	\renewcommand{\Im}{\textbf{Im }}  
	\newcommand{\R}{\mathbb{R}}  
	\newcommand{\Z}{\mathbb{Z}}
	\newcommand{\N}{\mathbb{N}}
	\newcommand{\T}{\mathbb{T}}
	\newcommand{\TT}{\mathcal{T}}
	\newcommand{\TP}{\overline{\p}{}}
	\newcommand{\TL}{\overline{\Delta}{}}
	\newcommand{\dive}{\text{div }}
	\newcommand{\ST}{\text{ ST}}
	\newcommand{\RT}{\text{ RT}}
	\newcommand{\VS}{\text{ VS}}
	\newcommand{\sgn}{\text{sgn}}
	\newcommand{\bd}[1]{\mathbf{#1}} 
	\newcommand{\RR}{\mathcal{R}}      
	\newcommand{\Om}{\Omega}
	\newcommand{\q}{\quad}
	\newcommand{\p}{\partial}
	\newcommand{\dd}{\mathfrak{D}}
	\newcommand{\nab}{\nabla}
	\newcommand{\lap}{\Delta}
	\newcommand{\vp}{\varphi}
	\newcommand{\no}{\nonumber}
	\newcommand{\eql}{\stackrel{L}{=}}
	\newcommand{\cnab}{\overline{\nab}}
	\newcommand{\jp}{\lee\TP\ree}
	\newcommand{\dx}{\,\mathrm{d}x}
	\newcommand{\dz}{\,\mathrm{d}z}
	\newcommand{\dt}{\,\mathrm{d}t}
	\newcommand{\dtau}{\,\mathrm{d}\tau}
	\newcommand{\deta}{\,\mathrm{d}\eta}
	\newcommand{\lee}{\langle}
	\newcommand{\ree}{\rangle}
	\newcommand{\kk}{\kappa}
	\newcommand{\ee}{\mathcal{E}}
	\newcommand{\eew}{\widetilde{\mathcal{E}}}
	\newcommand{\rr}{\mathfrak{R}}
	\newcommand{\cc}{\mathfrak{C}}
	\newcommand{\h}{\mathcal{H}}
	\newcommand{\WW}{\mathcal{W}}
	\newcommand{\VV}{\mathbf{V}}
	\newcommand{\BB}{\mathbf{B}}
	\newcommand{\PP}{\mathbf{P}}
	\newcommand{\LP}{\mathcal{P}}
	\newcommand{\EE}{\mathfrak{E}}
	\newcommand{\HE}{\mathcal{E}}
	\newcommand{\EW}{\widetilde{E}}
	\newcommand{\EEW}{\widetilde{\mathfrak{E}}}
	\newcommand{\fs}{\mathfrak{s}}
	\newcommand{\BS}{\mathbf{S}}
	\newcommand{\GG}{\mathbf{G}}
	\newcommand{\FF}{\mathbf{F}}
	\newcommand{\FP}{\mathfrak{F}_\psi}
	\newcommand{\hh}{\mathfrak{H}}
	\newcommand{\io}{\int_{\Omega}}
	\newcommand{\iopm}{\int_{\Omega^\pm}}
	\newcommand{\is}{\int_{\Sigma}}
	\newcommand{\ddt}{\frac{\mathrm{d}}{\mathrm{d}t}}
	\numberwithin{equation}{section}
	\newcommand{\eps}{\varepsilon}
	\providecommand{\jump}[1]{\left\llbracket #1 \right\rrbracket }
	\providecommand{\len}[1]{\lee #1 \ree }
	\providecommand{\ino}[1]{\left\| #1 \right\| }
	\providecommand{\bno}[1]{\left| #1 \right| }
	\newcommand{\lam}{\lambda}
	\newcommand{\Lam}{\Lambda}
	\newcommand{\pk}{\widetilde{\varphi}}
	\newcommand{\NN}{\mathbf{N}}
	\newcommand{\QQ}{\mathbf{Q}}
	\newcommand{\vb}{\bar{v}}
	\newcommand{\wb}{\bar{w}}
	\newcommand{\ff}{\mathcal{F}}
	\newcommand{\ffp}{\mathcal{F}_p}
	\newcommand{\ffpm}{\mathcal{F}_p^\pm}
	\newcommand{\gapm}{{\gamma,\pm}}
	\newcommand{\es}{{\varepsilon, \sigma}}
	\newcommand{\pp}{\p^{\varphi}}
	\newcommand{\nabp}{\nab^{\varphi}}
	\newcommand{\nabper}{\nab^{\varphi,\perp}}
	\newcommand{\dnp}{\mathfrak{N}_\psi^+}
	\newcommand{\dnm}{\mathfrak{N}_\psi^-}
	\newcommand{\dnpm}{\mathfrak{N}_\psi^\pm}
	\newcommand{\dnmp}{\mathfrak{N}_\psi^\mp}
	\newcommand{\dnw}{\widetilde{\mathfrak{N}}}
	\newcommand{\bp}{(b\cdot\nab^{\varphi})}
	\newcommand{\bc}{\bar{b}}
	\newcommand{\fn}{{\mathfrak{n}}}
	\newcommand{\fm}{{\mathfrak{m}}}
	\newcommand{\MM}{{\mathfrak{M}}}
	\newcommand{\fb}{\mathbf{b}}
	\newcommand{\fw}{\mathbf{w}}
	\newcommand{\fu}{\mathbf{u}}
	\newcommand{\fbc}{\bar{\mathbf{b}}}
	\newcommand{\fwc}{\bar{\mathbf{w}}}
	\newcommand{\fuc}{\bar{\mathbf{u}}}
	\newcommand{\bmu}{\bar{\mu}}
	\newcommand{\bpm}{(b^\pm\cdot\nab^{\varphi})}
	\newcommand{\nabpp}{\nab^{\varphi_0}}
	\newcommand{\lapp}{\lap^{\vp}}
	\newcommand{\Dtbpm}{\overline{D_t^\pm}}
	\newcommand{\Dtp}{D_t^{\varphi}}
	\newcommand{\Dtpm}{D_t^{\vp\pm}}
	\newcommand{\dvt}{\,\mathrm{d}\mathcal{V}_t}
\begin{document}
\bibliographystyle{plain}
\title{\textbf{On the Incompressible Limit of Current-Vortex Sheets \\ with or without Surface Tension}}
\author{
{\sc Junyan Zhang}\thanks{School of Mathematical Sciences. University of Science and Technology of China. 96 Jinzhai Road, Baohe District, Hefei, Anhui 230026, China. Email: \texttt{yx3x@ustc.edu.cn}.}
 }
\date{\today}
\maketitle

\setcounter{tocdepth}{1}

\begin{abstract}
	This is the second part of the two-paper sequence, which aims to present a comprehensive study for current-vortex sheets in ideal compressible magnetohydrodynamics (MHD). The local well-posedness of current-vortex sheets with surface tension has been proved in the first part of the paper sequence \cite{Zhang2023CMHDVS1}. In this paper, we prove the incompressible and zero-surface-tension limits under certain stability conditions. The proof of uniform estimates relies on the analysis of the evolution equation of the free interface via a paradifferential approach, the wave equation of the pressure and a weighted anisotropic structure in vorticity analysis.
\end{abstract}

\noindent\textbf{Mathematics Subject Classification (2020): }76N10, 35Q35, 76W05, 35L65.

\noindent \textbf{Keywords}: Current-vortex sheets, Magnetohydrodynamics, Incompressible limit, Kelvin-Helmholtz instability, Paradifferential calculus.

\tableofcontents

\section{Introduction}
Current-vortex sheet, as one of the characteristic discontinuities in ideal compressible MHD, is described by a free-interface problem of two-phase compressible ideal MHD flows where the magnetic fields are tangential to the interface and no mass flow transfers across the interface. This model has been widely used in both solar physics (e.g., the heliopause of the solar system, the night-side magnetopause of the earth) \cite{CVSphy} and controlled nuclear fusion \cite{MHDphy}. In particular, the surface tension effect on the interface cannot be neglected when the plasma is a liquid metal \cite{MHDSTphy2}. 

Let $H>10$ be a given real number, $x=(x_1,\cdots,x_d)$ and $x':=(x_1,\cdots,x_{d-1})$ and the space dimension $d=2,3$. We define the regions $\Om^+(t):=\{x\in\T^{d-1}\times\R:\psi(t,x')<x_d<H\}$, $\Om^-(t):=\{x\in\T^{d-1}\times\R:-H<x_d<\psi(t,x')\}$ and the moving interface $\Sigma(t):=\{x\in\T^{d-1}\times\R:x_d=\psi(t,x')\}$ between $\Om^+(t)$ and $\Om^-(t)$. For $T>0$, we denote $\Om_T^\pm:=\bigcup\limits_{0\leq t\leq T}\{t\}\times\Om^\pm(t)$ and $\Sigma_T:=\bigcup\limits_{0\leq t\leq T}\{t\}\times\Sigma(t)$. The following free-interface problem describes the motion of $\Sigma(t)$ as a \textit{current-vortex sheet} (or an \textit{MHD tangential discontinuity}) 
\begin{equation}\label{CMHDVS}
\begin{cases}
\varrho^\pm (\p_t+u^\pm\cdot\nab) u^\pm-B^\pm\cdot\nabla B^\pm+\nabla Q^\pm=0,~~Q^\pm:=P^\pm+\frac{1}{2}|B^\pm|^2 &~~\text{ in }\Om_T^\pm,\\
(\p_t+u^\pm\cdot\nab)\varrho^\pm+\varrho^\pm\nab\cdot u^\pm=0&~~\text{ in }\Om_T^\pm,\\
(\p_t+u^\pm\cdot\nab) B^\pm=B^\pm\cdot\nabla u^\pm-B^\pm\nab\cdot u^\pm&~~\text{ in }\Om_T^\pm,\\
\nab\cdot B^\pm=0&~~\text{ in }\Om_T^\pm,\\
(\p_t+u^\pm\cdot\nab) \fs^\pm=0&~~\text{ in }\Om_T^\pm,\\
P^\pm=P^\pm(\varrho^\pm,\mathfrak{s}^\pm),~~\frac{\p P^\pm}{\p\varrho^\pm}>0,~~\varrho^\pm\geq\overline{\rho}_0>0&~~\text{ in }\overline{\Om_T^\pm},\\
\jump{Q}=\sigma\h&~~\text{ on }\Sigma_T,\\
B^\pm\cdot N=0&~~\text{ on }\Sigma_T,\\
\p_t\psi=u^\pm\cdot N&~~\text{ on }\Sigma_T,\\
u_d^\pm=B_d^\pm=0&~~\text{ on }[0,T]\times\Sigma^\pm,\\
(u^\pm,B^\pm,\varrho^\pm,\mathfrak{s}^\pm)|_{t=0}=(u_0^\pm,B_0^\pm,\varrho_0^\pm,\mathfrak{s}_0^\pm)~~\text{ in }\Om^\pm(0),\quad\quad \psi|_{t=0}=\psi_0 &~~\text{ on }\Sigma(0).
\end{cases}
\end{equation}
Here $\nabla:=(\p_{x_1},\cdots,\p_{x_d})$ is the standard spatial derivative. $D_t:=\p_t+u\cdot\nabla$ is the material derivative. The fluid velocity, the magnetic field, the fluid density, the fluid pressure and the entropy are denoted by $u=(u_1,\cdots,u_d)$, $B=(B_1,\cdots,B_d)$, $\varrho$, $P$ and $\mathfrak{s}$ respectively. The quantity $Q:=P+\frac12|B|^2$ is the total pressure. Note that the fourth equation in \eqref{CMHDVS} is just an initial constraint instead of an independent equation. The fifth equation of \eqref{CMHDVS} is derived from the equation of total energy and Gibbs relation and we refer to \cite[Ch. 4.3]{MHDphy} for more details. To close system \eqref{CMHDVS}, we need to introduce the equation of state
\begin{equation}\label{eos}
P=P(\varrho,\mathfrak{s})\text{ satisfying }\frac{\p P}{\p\varrho}>0.
\end{equation} The assumption $\varrho\geq\bar{\rho_0}>0$ (for some constant $\bar{\rho_0}>0$) together with $\frac{\p P}{\p\varrho}>0$ guarantees the hyperbolicity of system \eqref{CMHDVS}. 

The seventh to the tenth equations in \eqref{CMHDVS} are the boundary conditions, in which $N:=(-\p_1\psi,\cdots,-\p_{d-1}\psi, 1)^\top$ is the normal vector to $\Sigma(t)$ (pointing towards $\Om^+(t)$), $\sigma\geq 0$ is the constant coefficient of surface tension and the quantity $\mathcal{H}:=\cnab\cdot\left(\frac{\cnab\psi}{\sqrt{1+|\cnab\psi|^2}}\right)$ is twice the mean curvature of $\Sigma(t)$ with $\cnab=(\p_1,\cdots,\p_{d-1})$. The jump of a function $f$ on $\Sigma(t)$ is denoted by $\jump{f}:=f^+|_{\Sigma(t)}-f^-|_{\Sigma(t)}$ with $f^\pm:=f|_{\Om^\pm(t)}$. 
\begin{itemize}
\item The first boundary condition $\jump{Q}=\sigma \h$ shows that the jump of total pressure is balanced by surface tension. 
\item The second boundary condition $B^\pm\cdot N=0$ shows that both plasmas are perfect conductors. 
\item The third boundary condition $\p_t\psi=u^\pm\cdot N$ shows that there is no mass flow across the interface and thus the two plasmas are physically contact and mutually impermeable.  
\item The slip boundary conditions are imposed on the rigid boundaries $\Sigma^\pm:=\T^{d-1}\times\{\pm H\}$. 
\end{itemize}We refer to \cite[Appendix A]{TW2021MHDCDST} for detailed derivation of these boundary conditions from the Rankine-Hugoniot conditions. Also note that the conditions $\nab\cdot B^\pm=0$ in $\Om^\pm(t)$, $B^\pm\cdot N|_{\Sigma(t)}=0$ and $B^\pm_d=0$ on $\Sigma^\pm$ are constraints for initial data so that system \eqref{CMHDVS} is not over-determined. These initial constraints can propagate within the lifespan of solutions. 

We also require the initial data to satisfy certain compatibility conditions. Let $(u_0^\pm,B_0^\pm,\varrho_0^\pm,\mathfrak{s}_0^\pm,\psi_0):=(u^\pm,B^\pm,\varrho^\pm,\mathfrak{s}^\pm,\psi)|_{t=0}$ be the initial data of system \eqref{CMHDVS}. We say the initial data satisfies the compatibility condition up to $m$-th order $(m\in\N)$ if
\begin{equation}\label{CMHDcomp}
\begin{aligned}
(D_t^\pm)^j \jump{Q}|_{t=0}=\sigma(D_t^\pm)^j\mathcal{H}|_{t=0}~~\text{ on }\Sigma(0),~~0\leq j\leq m,\\
(D_t^\pm)^j \p_t\psi|_{t=0}=(D_t^\pm)^j(u^\pm\cdot N)|_{t=0}~~\text{ on }\Sigma(0),~~0\leq j\leq m,\\
\p_t^j u_d^\pm=0~~\text{ on }\Sigma^\pm,~~0\leq j\leq m.
\end{aligned}
\end{equation} In fact, the fulfillment of the first condition implicitly requires the fulfillment of the second one. With these compatibility conditions, one can show that the magnetic fields also satisfy (cf. \cite[Section 4.1]{Trakhinin2008CMHDVS})
\[
(D_t^\pm)^j(B^\pm\cdot N)|_{t=0}=0~~\text{ on }\Sigma(0)\text{ and }\Sigma^\pm,~~0\leq j\leq m.
\] 

System \eqref{CMHDVS} admits a conserved $L^2$ energy
\[
\begin{aligned}
E_0(t):=\sum_{\pm}\frac12\int_{\Om^\pm(t)} \varrho^\pm|u^\pm|^2+|B^\pm|^2+ 2 \mathfrak{P}(\varrho^\pm,\fs^\pm) + \varrho^\pm |\fs^\pm|^2\dx + \sigma \text{ Area}(\Sigma(t))
\end{aligned}
\]where $\mathfrak{P}(\varrho^\pm,\fs^\pm)=\int_{\bar{\rho}_0}^{\varrho^\pm}\frac{P^\pm(z,\fs^\pm)}{z^2}\dz$. See the first part of this paper sequence \cite[Section 3.1]{Zhang2023CMHDVS1} for the proof.

The local well-posedness and the incompressible limit of \eqref{CMHDVS} for each fixed $\sigma>0$ has been established in \cite[Section 3.1]{Zhang2023CMHDVS1}. This paper is a continuation of \cite{Zhang2023CMHDVS1}: we aim to prove the zero-surface-tension limit for the solution to \eqref{CMHDVS} under certain stability conditions and improve the low Mach number limit result in \cite{Zhang2023CMHDVS1} such that the uniform boundedness (with respect to the Mach number) of high-order ($\geq 2$ order) time derivatives can be dropped. The motivation to study these limit processes are stated in Section \ref{sect goals}.

\subsection{Reformulation in flattened domains}
\subsubsection{Flattening the fluid domains}
As in \cite{Zhang2023CMHDVS1}, we convert \eqref{CMHDVS} into a PDE system defined in $\Om^\pm := \T^{d-1}\times\{0<\pm x_d<H\}.$ We consider a family of diffeomorphisms $\Phi(t, \cdot): \Om^\pm\to \Om^\pm(t)$ characterized by the moving interface. Let $\Phi(t,x',x_d) = \left(x', \varphi(t,x_d)\right)$
where
\begin{align}
 \varphi (t,x) = x_d+\chi(x_d)\psi (t,x') \label{change of variable vp}
\end{align}and $\chi\in C_c^\infty ([-H,H])$ is a smooth cut-off function satisfying the following bounds:
\begin{equation}
{\|\chi'\|_{L^\infty(\R)}\leq \frac{1}{\|\psi_0\|_{\infty}+20},\quad \sum_{j=1}^8 \|\chi^{(j)}\|_{L^\infty(\R)}\leq C, }\quad\chi=1\,\,\,\, \text{on}\,\, (-1, 1) \label{chi}
\end{equation}
for some generic constant $C>0$. We assume $|\psi_0|_{L^{\infty}(\T^2)}\leq 1$. One can prove that there exists some $T_0>0$ such that $\sup\limits_{[0,T_0]}|\psi(t,\cdot)|_{L^{\infty}(\T^2)}<10<H$, the free interface is still a graph within the time interval $[0,T_0]$ and
$$\p_d \varphi(t, x', x_d) = 1+\chi'(x_d)\psi(t,x')=1-\frac{1}{20}\times 10 \geq \frac12 ,\quad t\in[0,T_0],
$$
which ensures that $\Phi(t)$ is a diffeomorphism in $[0,T_0]$.  We then introduce the new variables
\begin{align}
v^\pm(t, x) = u^\pm(t, \Phi(t,x)),\quad b^\pm(t,x)=B^\pm(t,\Phi(t,x)),\quad \rho^\pm(t,x) = \varrho^\pm(t, \Phi(t,x)),\notag\\
S^\pm(t, x) = \mathfrak{s}^\pm(t, \Phi(t,x)), \quad q^\pm(t,x) = Q^\pm(t, \Phi(t,x)), \quad p^\pm(t,x)=P(t,\Phi(t,x))
\end{align}
that represent the velocity fields, the magnetic fields, the density functions, the entropy functions, the total pressure functions and the fluid pressure functions defined in the fixed domains $\Omega^\pm$ respectively. Also, we introduce the differential operators
\begin{align}
\nab^{\vp}=(\pp_1,\cdots,\pp_d),~\text{where } &~\pp_a = \p_a -\frac{\p_a \varphi}{\p_d\varphi}\p_d,~~a=t, 1,\cdots,d-1;~~\p_d^{\vp}= \frac{1}{\p_d \varphi} \p_d.\label{nabp 3}
\end{align}
We set the tangential gradient operator and the tangential derivatives as $\cnab := (\p_1, \cdots,\p_{d-1}),~\TP_i:=\p_i,~i=1,\cdots,d-1.$ Invoking \eqref{nabp 3}, we can alternatively write the material derivative $\Dtp$ as
\begin{equation}
\Dtpm = \p_t + \vb^\pm\cdot\cnab+\frac{1}{\p_d \varphi} (v^\pm\cdot \NN-\p_t\varphi)\p_d, \label{Dt alternate}
\end{equation}
where $\vb^\pm:=(v_1^\pm,\cdots,v_{d-1}^\pm)^\top$ consists of the horizontal components of the fluid velocity and 
$\vb^\pm \cdot \cnab := \sum\limits_{j=1}^{d-1}v_j^\pm\p_j$. $\NN:= (-\p_1\varphi, \cdots,-\p_{d-1} \varphi, 1)^\top$ is the extension of the normal vector $N$ into $\Om^\pm$. This formulation will be helpful for us to simplify the analysis.

\subsubsection{Parametrization of the equation of state}\label{sect eos} We assume the fluids in $\Om^+$ and $\Om^-$ satisfy the same equation of state. Specifically, we parametrize the equation of state to be $\rho_\lam(p,S):=\rho(p/\lam^2,S)$ where $\lam>0$ is proportional to the sound speed $c_s:=\sqrt{\p_p\rho}$ and $\rho$ is a $C^8$ function in its arguments satisfying $\frac{\p \rho}{\p p}>0$ as well as the non-degeneracy condition $\rho\geq \bar{\rho_0}>0$ in $\overline{\Om}$ for some constant $\bar{\rho_0}$. By chain rule, it is straightforward to see 
	\begin{align}\label{eos ineq 1}
	0<\frac{\p }{\p p}\rho_\lam(p,S)\leq C\lam^{-2}.
	\end{align}
	and 
	\begin{align}\label{eos ineq 3}
	|(\p_p)^k \rho_\lam(p,S)|\leq C\lam^{-2k},\q\q |(\p_S)^k \rho_\lam(p, S)|\leq C,\q\q 1\leq k\leq 8,
	\end{align} for some $C>0.$  For example, a polytropic gas satisfies the above assumptions whose the equation of state is parametrized  in terms of $\lam>0$: 
	\begin{equation}\label{eos11}
	p_{\lam}(\rho,S)=\lam^2\left(\rho^{\gamma}\exp(S/C_V)-1\right),~~~\gamma> 1,~~C_V>0.
	\end{equation}

\paragraph*{The formulation used in this manuscript.}		
For sake of clean notations,  we would introduce the quantity $\ff^\pm:= \log \rho^\pm$ to replace $\rho$ and introduce the parameter $\eps:=1/\lam$ to replace $\lam$  in the continuity equation, that is, $\ff_\eps(p,S):=\log\rho_{\frac{1}{\eps}}(p, S)$. Since $\frac{\p p^\pm}{\p\rho^\pm}>0$ and $\rho^\pm>0 $ imply $\frac{\p\ff^\pm}{\p p^\pm}=\frac{1}{\rho^\pm}\frac{\p \rho^\pm}{\p p^\pm}>0$,  then the continuity equation is equivalent to 
\begin{equation} \label{continuity eq f}
\frac{\p\ff_\eps^\pm}{\p p^\pm}( p^\pm,S^\pm) \Dtpm p^\pm  +\nabp\cdot v^\pm=0. 
\end{equation}\eqref{eos ineq 1}-\eqref{eos ineq 3} lead to the following inequalities: There exists a constant $A>0$ such that
		\begin{align} \label{pp property}
			0<\frac{\p \ff_\eps}{\p p}( p,S)\leq&~ A\eps^2,\\
			|\p_p^k \ff_\eps(p,S)|\leq A\eps^{2k},~~~|\p_S^k \ff_\eps(p, S)|\leq&~A,\q\q 1\leq k\leq 8.
		\end{align}
In what follows, we slightly abuse the terminology and call $\lam$ the sound speed and call $\eps$ the Mach number. When discussing the incompressible limit ($\lam\gg1$ or equivalently $\eps\ll 1$), we sometimes write $\ffpm:=\frac{\p\ff_\eps^\pm}{\p p} (p^\pm,S^\pm)=\eps^2$ for simplicity. 

System \eqref{CMHDVS} is now converted into
\begin{equation}\label{CMHDVS0}
\begin{cases}
\rho^\pm \Dtpm v^\pm -\bpm b^\pm+\nabp q^\pm=0,~~q^\pm=p^\pm+\frac12|b^\pm|^2&~~\text{ in }[0,T]\times \Omega^\pm,\\
\ff_p^{\pm}\Dtpm p^\pm+\nabp\cdot v^\pm=0 &~~\text{ in }[0,T]\times \Omega^\pm,\\
p^\pm=p^\pm(\rho^\pm,S^\pm),~~\ff^\pm=\log \rho^\pm,~~\ff_p^\pm>0,~\rho^\pm\geq \bar{\rho_0}>0 &~~\text{ in }[0,T]\times \Omega^\pm, \\
\Dtpm b^\pm-\bpm v^\pm+b^\pm\nabp\cdot v^\pm=0&~~\text{ in }[0,T]\times \Omega^\pm,\\
\nabp\cdot b^\pm=0&~~\text{ in }[0,T]\times \Omega^\pm,\\
\Dtpm S^\pm=0&~~\text{ in }[0,T]\times \Omega^\pm,\\
\jump{q}=\sigma\cnab \cdot \left( \frac{\cnab \psi}{\sqrt{1+|\cnab\psi|^2}}\right) &~~\text{ on }[0,T]\times\Sigma, \\
\p_t \psi = v^\pm\cdot N &~~\text{ on }[0,T]\times\Sigma,\\
b^\pm\cdot N=0&~~\text{ on }[0,T]\times\Sigma,\\
v_d^\pm=b_d^\pm=0&~~\text{ on }[0,T]\times\Sigma^\pm,\\
{(v^\pm,b^\pm,\rho^\pm,S^\pm,\psi)|_{t=0}=(v_0^\pm, b_0^\pm, \rho_0^\pm, S_0^\pm,\psi_0)}.
\end{cases}
\end{equation}

Since the material derivatives are tangential to the boundary, that is, $\Dtpm=\Dtbpm:=\p_t+\vb^\pm\cdot\cnab$ on $\Sigma$ and $\Sigma^\pm$, the compatibility conditions \eqref{CMHDcomp} for initial data up to $m$-th order $(m\in\N)$ are now written as:
\begin{equation}\label{comp cond}
\begin{aligned}
\jump{\p_t^j q}|_{t=0}=\sigma\p_t^j\mathcal{H}|_{t=0}~~\text{ on }\Sigma,~~0\leq j\leq m,\\
\p_t^{j+1}\psi|_{t=0}=\p_t^{j}(v^\pm\cdot N)|_{t=0}~~\text{ on }\Sigma,~~0\leq j\leq m,\\
\p_t^j v_d^\pm|_{t=0}=0~~\text{ on }\Sigma^\pm,~~0\leq j\leq m.
\end{aligned}
\end{equation}The above conditions also imply $\p_t^{j}(b^\pm\cdot N)|_{t=0}=0$ on $\Sigma$ and $\Sigma^\pm$ for $0\leq j\leq m$. See \cite[Section 4]{Trakhinin2008CMHDVS} for details.

\subsubsection{Stability conditions for the zero-surface-tension limit}
We need to add some extra stability conditions on the free interface when surface tension is neglected, that is, when $\sigma=0$. We introduce the quantities $$a^\pm:=\sqrt{\rho^\pm\left(1+\left(\frac{c_A^\pm}{c_s^\pm}\right)^2\right)}$$ where $c_A^\pm:=|b^\pm|/\sqrt{\rho^\pm}$ represents the Alfv\'en speed (the speed of magneto-sonic waves), $c_s^\pm:=\sqrt{\p p^\pm/\p\rho^\pm}$ represents the sound speed.  The stability conditions are
\begin{align}
d=3:&~~0<a^\pm\left|\bc^\mp\times\jump{\vb}\right|<|\bc^+\times\bc^-|~~\text{ on }[0,T]\times\Sigma, \label{3D stable}\\
d=2:&~~\left(\frac{|b_1^+|}{a^+}+\frac{|b_1^-|}{a^-}\right)>|\jump{v_1}|>0 ~~\text{ on }[0,T]\times\Sigma, \label{2D stable}
\end{align} where we view the horizontal magnetic field $\bc=(b_1,b_2,0)^\top$ and the horizontal velocity  $\vb=(v_1,v_2,0)^\top$ as vectors lying on $\T^2\times\{x_3=0\}\subset\R^3$ to define the exterior product. The ``$>0$" part in \eqref{3D stable} and \eqref{2D stable} is necessary because vortex sheets naturally require the tangential discontinuity of velocity is nonzero. Thus, the stability conditions require that the strength of the magnetic fields cannot be too weak. Moreover, the condition for 3D case implies that $b^+$ and $b^-$ are not collinear on $\Sigma$ and the condition for 2D case requires certain quantitative relation between the strength of magnetic fields and the jump of tangential velocities. Note that the stability conditions are just initial constraints that can propagate within a short time interval. We will explain in later sections why such stability conditions are needed.

\subsection{An overview of previous results}\label{sect background}
For incompressible Euler equations, vortex sheets without surface tension tend to be violently unstable, which exhibit the so-called Kelvin-Helmholtz instability. We refer to \cite{Ebin1988,Wu2002VS, Wu2004VS} and references therein for the special case of 2D irrotational flows without surface tension and \cite{AM2007VS,CCS2007,SZ3} for the case of nonzero vorticity and nonzero surface tension. For compressible Euler equations, vortex sheets, unlike shock fronts, are characteristic discontinuities and the uniform Kreiss-Lopatinski\u{\i} condition is never satisfied, which leads to a potential loss of normal derivatives. For 3D Euler equations, compressible vortex sheets exhibit an analogue of Kelvin-Helmholtz instability \cite{Miles1, Miles2, SyrovatskiiEuler}; whereas for 2D Euler equations, Coulombel-Secchi \cite{Secchi2004CVS,Secchi2008CVS} proved the existence of ``supersonic" vortex sheets when the Mach number for the rectilinear background solution  exceeds $\sqrt{2}$, which is also a critical Mach number for the neutral stability. For the case of nonzero surface tension, we refer to Stevens \cite{Stevens2016CVS} for the local existence.

The Kelvin-Helmholtz instability can also be suppressed by magnetic fields. Unlike the transversal magnetic fields in MHD contact discontinuities ($B\cdot N|_{\Sigma(t)}\neq 0$) \cite{WangXinMHDCD}, the tangential magnetic fields must satisfy some constraints when the surface tension is neglected. For 3D incompressible ideal MHD, the following Syrovatski\u{\i} condition \cite{SyrovatskiiMHD} is assumed for local well-posedness
\begin{equation}\label{syrov}
\varrho^+|B^+\times\jump{u}|^2+\varrho^-|B^-\times\jump{u}|^2<(\varrho^++\varrho^-)|B^+\times B^-|^2.
\end{equation}Coulombel-Morando-Secchi-Trebeschi \cite{CMST2012MHDVS} proved the a priori estimate for the nonlinear problem under a more restrictive condition
\begin{equation}\label{syrov2}
\max\left\{\left|\frac{B^+}{\sqrt{\varrho^+}}\times\jump{u}\right|,\left|\frac{B^-}{\sqrt{\varrho^-}}\times\jump{u}\right|\right\}<\left|\frac{B^+}{\sqrt{\varrho^+}}\times \frac{B^-}{\sqrt{\varrho^-}}\right|.
\end{equation}Sun-Wang-Zhang \cite{SWZ2015MHDLWP} proved local well-posedness of the nonlinear problem under the original Syrovatski\u{\i} condition \eqref{syrov}. See also Liu-Xin \cite{LiuXin2023MHDVS} and Li-Li \cite{sb} for the study for both $\sigma>0$ and $\sigma=0$ cases.

For compressible current-vortex sheets in ideal MHD, it is unclear whether there is any \textit{necessary and sufficient condition} for the linear stability when the surface tension is neglected. Trakhinin \cite{Trakhinin2005CMHDVS} introduced a sufficient condition for the problem linearized around a background planar current-vortex sheet $(\hat{v}^\pm, \hat{b}^\pm, \hat{\rho}^\pm, \hat{S}^\pm)$ in flattened domains $\R^2\times\R_\pm$, which reads
\begin{equation}\label{3dstable}
|\jump{\hat{v}}|<|\sin(\bm{\alpha}^+-\bm{\alpha}^-)|\min\left\{\frac{\bm{\gamma}^+}{|\sin\bm{\alpha}^-|},\frac{\bm{\gamma}^-}{|\sin\bm{\alpha}^+|}\right\}\quad \text{ on }\{x_3=0\},
\end{equation}where $\bm{\gamma}^\pm:=\frac{c_A^\pm}{\sqrt{1+({c_A^\pm}/{c_s^\pm})^2}}$, $c_A^\pm:=|\hat{b}^\pm|/\sqrt{\hat{\rho}^\pm}$ represents the Alfv\'en speed, $c_s^\pm:=\sqrt{\p \hat{p}^\pm/\p\hat{\rho}^\pm}$ represents the sound speed, and $\bm{\alpha}^\pm$ represents the oriented angle between $\jump{\hat{v}}$ and $\hat{b}^\pm$. Indeed, \eqref{3dstable} is equivalent to \eqref{3D stable} for the constant-coefficient linearized problem. Also note that the formal incompressible limit of this condition as $\hat{\rho}^\pm\to 1$ is exactly \eqref{syrov2}, and it is easy to see \eqref{syrov2} implies \eqref{syrov}. The well-posedness was proven by Chen-Wang \cite{ChenWangCMHDVS} under a more restrictive condition and Trakhinin \cite{Trakhinin2008CMHDVS} under \eqref{3D stable}  with the help of Nash-Moser iteration. See also \cite{MSTY2023CMHDVS} for the 2D problem without surface tension under the stability condition \eqref{2D stable}. 

However, the study of compressible current-vortex sheets with surface tension is unavailable prior to the first part \cite{Zhang2023CMHDVS1} of this two-paper sequence. Also, the local existence results in previous works about free-surface compressible ideal MHD were all established by Nash-Moser iteration. There were quite few results about the energy estimates without loss of regularity: To the author's knowledge, the author's work \cite{Zhang2021CMHD} presents the only available result (except \cite{Zhang2023CMHDVS1}) about the energy estimates without loss of regularity for one-phase MHD without surface tension, but the energy estimates in \cite{Zhang2021CMHD} are never uniform in Mach number. {\bf The study of singular limits (e.g., the incompressible limit) of both inviscid vortex sheets and the free-surface ideal MHD flows remains open}. 

\subsection{Our goals}\label{sect goals}
We aim to present a comprehensive study of current-vortex sheets in ideal MHD and to develop a systematic approach to study the well-posedness and the singular limits for compressible inviscid fluids. The local well-posedness and the incompressible limit of \eqref{CMHDVS} for each fixed $\sigma>0$ has been established in \cite{Zhang2023CMHDVS1}. In this paper, we aim the prove the following results:
\begin{itemize}
\item The incompressible and zero-surface-tension limits under the stability conditions \eqref{3D stable} or \eqref{2D stable} in 3D or 2D respectively. It should be noted that these two limit process are independent of each other, that is, the energy estimates are uniform in both Mach number and $\sigma$ under \eqref{3D stable} or \eqref{2D stable}.
\item The improved incompressible limit such that the uniform boundedness (with respect to Mach number) of high-order ($\geq 2$) time derivatives of velocity fields can be dropped. 
\end{itemize}
To our knowledge, this paper presents \textit{the first} result about the incompressible limit for both inviscid vortex sheets and free-surface MHD flows.  

The study of singular limits of magneto-fluids has been a long-term research topic as stated in Majda's book \cite[Section 2.4]{Majdalimit}. For ideal compressible MHD, the singular-limit theory (e.g., the low Mach number limit, the small Alfv\'en number limit), which has shown great importance in the modelling of nuclear fusion or solar winds, is far less developed even for the fixed-domain problems, especially when the magnetic fields are tangential to the boundary. One of the major difficulty under the low Mach number setting is the \textit{``mismatch" of function spaces} for the local existence: The linearized problem of compressible ideal MHD in a fixed domain with $B\cdot N=0$ on the boundary is ill-posed in standard Sobolev spaces $H^l(l\geq 2)$ (see \cite{OS1998MHDill}), whereas the incompressible problem is well-posed in standard Sobolev spaces (e.g., see \cite{GuWang2016LWP}).  The anisotropic Sobolev spaces defined in Section \ref{sect anisotropic}, introduced by Shu-Xing Chen \cite{ChenSX}, were adopted in previous works about ideal compressible MHD (e.g., \cite{1991MHDfirst,Trakhinin2008CMHDVS,ChenWangCMHDVS,Zhang2021CMHD}), but none of the energy estimates obtained in these works is uniform in Mach number. The above issues are not resolved until the appearance of the author's recent preparatory work \cite{WZ2023CMHDlimit} (joint with J. Wang, for the fixed-domain problem in the reference domain $\T^{d-1}\times(-1,1)$) and this paper sequence.

When the fluid domain is $\R^d,\T^d$ or a domain with a fixed boundary, there have been numerous results concerning the low Mach number limit of compressible inviscid fluids (e.g., the pioneering works \cite{Klainerman1982limit, Ebin1982, Schochet1986limit, Metivier2001limit, Alazard2005limit} about Euler equations), including the case of ``ill-prepared (general) data" ($\dive v_0=O(1),~\p_t v|_{t=0}=O(1/\eps)$). However, for free-boundary problems, the study of singular limits remains largely open: There is even no robust framework to establish the low Mach number limit for inviscid fluids with ``well-prepared data"  ($\dive v_0=O(\eps),~\p_t v|_{t=0}=O(1)$). The results in previous works \cite{LL2018priori, Luo2018CWW, DL2019limit, Zhang2020CRMHD, LuoZhang2022CWWST} and the first part \cite{Zhang2023CMHDVS1} of this paper sequence require the uniform boundedness with respect to Mach number up to \textit{top-order} time derivatives. In the author's previous work \cite{Zhang2021elasto}, the boundedness assumption on high-order $(\geq 3)$ time derivatives\footnote{When $\sigma=0$, the Rayleigh-Taylor sign condition $-\nab_N q|_{\Sigma(t)}>0$ is necessary for the local existence. To propagate the Rayleigh-Taylor sign, one has to require $\p_t q\sim \p_t^2 v$ to be uniformly bounded. Very recently, Gu-Wang \cite{GuWang2023LWP} derived the $L_t^2$-type bound of $\p_t q$ for free-surface Euler equations with heat conduction, but this is completely contributied by the parabolic feature of the heat-conductive part.} was dropped for one-phase elastodynamic flows \textit{without} surface tension (also applicable to Euler equations), in which the framework is not applicable to two-phase fluids or fluids with surface tension. 

The zero-surface-tension limit, despite not a singular limit, rigorously shows that either surface tension or suitable magnetic field brings suppression effect on the analogue of Kelvin-Helmholtz instability for compressible vortex sheets. Also, in Section \ref{stat syrov}, we will see the comparison between the stabilization mechanism brought by surface tension and the one brought by magnetic fields satisfying \eqref{3D stable} or \eqref{2D stable}. 

In this paper, we developed \textit{a different framework (also applicable to Euler equations) that can help us drop the boundedness assumption of high-order time derivatives regardless of surface tension}. This framework is also applicable to compressible vortex-sheet problems if the two-phase fluids (not only for MHD, but also for Euler equations) are isentropic and the density functions are approaching the same constant as the Mach number goes to 0. In Section \ref{stat para} and Section \ref{stat ill}, we will see this method is the combination of a paradifferential approach applied to the evolution equation of the free interface and the analysis of wave equations for the pressure functions in $\eps^2$-weighted Sobolev spaces. 

\subsection{Main results}

\subsubsection{Anisotropic Sobolev spaces}\label{sect anisotropic}

Following the notations in \cite{ChenSX, WZ2023CMHDlimit}, we first define the anisotropic Sobolev space $H_*^m(\Omega^\pm)$ for $m\in\N$ and $\Om^\pm=\T^{d-1}\times\{0<\pm x_d<H\}$. Let $\omega=\omega(x_d)=(H^2-x_d^2)x_d^2$ be a smooth function on $[-H,H]$. The choice of $\omega(x_d)$ is not unique, as we just need $\omega(x_d)$ vanishes on $\Sigma\cup\Sigma^\pm$ and is comparable to the distance function near the interface and the boundaries. Then we define $H_*^m(\Omega^\pm)$ for $m\in\N^*$ as follows
\[
H_*^m(\Omega^\pm):=\left\{f\in L^2(\Omega^\pm)\bigg| (\omega\p_d)^{\alpha_{d+1}}\p_1^{\alpha_1}\cdots\p_d^{\alpha_d} f\in L^2(\Omega^\pm),~~\forall \alpha \text{ with } \sum_{j=1}^{d-1}\alpha_j +2\alpha_d+\alpha_{d+1}\leq m\right\},
\]equipped with the norm
\begin{equation}\label{anisotropic1}
\|f\|_{H_*^m(\Omega^\pm)}^2:=\sum_{\sum\limits_{j=1}^{d-1}\alpha_j +2\alpha_d+\alpha_{d+1}\leq m}\|(\omega\p_d)^{\alpha_{d+1}}\p_1^{\alpha_1}\cdots\p_d^{\alpha_d} f\|_{L^2(\Omega)}^2.
\end{equation} For any multi-index $\alpha:=(\alpha_0,\alpha_1,\cdots,\alpha_{d},\alpha_{d+1})\in\N^{d+2}$, we define
\[
\p_*^\alpha:=\p_t^{\alpha_0}(\omega\p_d)^{\alpha_{d+1}}\p_1^{\alpha_1}\cdots\p_d^{\alpha_d},~~\lee \alpha\ree:=\sum_{j=0}^{d-1}\alpha_j +2\alpha_d+\alpha_{d+1},
\]and define the \textbf{space-time anisotropic Sobolev norm} $\|\cdot\|_{m,*,\pm}$ to be
\begin{equation}\label{anisotropic2}
\|f\|_{m,*,\pm}^2:=\sum_{\lee\alpha\ree\leq m}\|\p_*^\alpha f\|_{L^2(\Omega^\pm)}^2=\sum_{\alpha_0\leq m}\|\p_t^{\alpha_0}f\|_{H_*^{m-\alpha_0}(\Omega^\pm)}^2.
\end{equation}

We also write the interior Sobolev norm to be $\|f\|_{s,\pm}:= \|f(t,\cdot)\|_{H^s(\Omega^\pm)}$ for any function $f(t,x)\text{ on }[0,T]\times\Omega^\pm$ and denote the boundary Sobolev norm to be $|f|_{s}:= |f(t,\cdot)|_{H^s(\Sigma)}$ for any function $f(t,x')\text{ on }[0,T]\times\Sigma$. 

From now on, we assume the dimension $d=3$, that is, $\Om^\pm=\T^2\times\{0<\pm x_3<H\} $, $\Sigma^\pm=\T^2\times\{x_3=\pm H\} $ and $\Sigma=\T^2\times\{x_3=0\}$. We will see the 2D case follows in the same manner as the 3D case up to slight modifications in the vorticity analysis and the analysis of stability condition when $\sigma=0$. 

\subsubsection{The existence theorem for compressible current-vortex sheets with surface tension}
Let us first record a theorem in \cite{Zhang2023CMHDVS1} about well-posedness and uniform-in-$\eps$ estimates of \eqref{CMHDVS0} for a fixed $\sigma>0$.
\begin{thm}\label{thm STLWP}
Fix the constant $\sigma>0$. Let $v_0^\pm, b_0^\pm, \rho_0 ^\pm,S_0^\pm \in H_*^8(\Om^\pm)$ and $\psi_0\in H^{9.5}(\Sigma)$ be the initial data of \eqref{CMHDVS0} satisfying 
\begin{itemize}
\item the compatibility conditions \eqref{comp cond} up to 7-th order;
\item the constraints $\nab^{\vp_0}\cdot b_0^\pm=0$ in $\Om^\pm$, $b^\pm\cdot N|_{\{t=0\}\times(\Sigma\cup\Sigma^\pm)}=0$ ;
\item $|\jump{\vb_0}|>0$ on $\Sigma$, $|\psi_0|_{L^{\infty}(\Sigma)}\leq 1$, and $E(0)\le M$ for some constant $M>0$.
\end{itemize}Then there exists $T_\sigma>0$ depending only on $M$ and $\sigma$, such that \eqref{CMHDVS0} admits a unique solution $(v^\pm(t),b^\pm(t),\rho^\pm(t),S^\pm(t),\psi(t))$ verifies the energy estimate
		    \begin{equation}
		    	\sup_{t\in[0,T]}E(t) \le C(\sigma^{-1})P(E(0))
		    \end{equation}and $\sup\limits_{t\in[0,T_\sigma]}|\psi(t)|<10<H$, where $P(\cdots)$ is a generic polynomial in its arguments. The energy $E(t)$ is defined to be

	        \begin{equation}
	        		\begin{aligned}\label{energy lwp}
	        			E(t):=&~E_4(t) + E_5(t) + E_6(t) + E_7(t) + E_8(t),\\
	        			E_{4+l}(t):=&\sum_\pm\sum_{\lee\alpha\ree=2l}\sum_{k=0}^{4-l}\left\|\left(\eps^{2l}\TT^{\alpha}\p_t^{k}\left(v^\pm, b^\pm,S^\pm,(\ffpm)^{\frac{(k+\alpha_0-l-3)_+}{2}}p^\pm\right)\right)\right\|^2_{4-k-l,\pm}\\
 &+\sum_{k=0}^{4+l}\left|\sqrt{\sigma}\eps^{2l}\p_t^{k}\psi\right|^2_{5+l-k}\quad 0\leq l\leq 4,
	        		\end{aligned}
	        \end{equation}
            where $k_+:=\max\{k,0\}$ for $k\in\R$ and we denote $\TT^{\alpha}:=(\omega(x_3)\p_3)^{\alpha_4}\p_t^{\alpha_0}\p_1^{\alpha_1}\p_2^{\alpha_2}$ to be a high-order tangential derivative for the multi-index $\alpha=(\alpha_0,\alpha_1,\alpha_2,0,\alpha_4)$ with length (for the anisotropic Sobolev spaces) $\lee \alpha\ree=\alpha_0+\alpha_1+\alpha_2+2\times0+\alpha_4$.	The quantity $\eps$ is the parameter defined in Section \ref{sect eos}. Moreover, the $H^{9.5}(\Sigma)$-regularity of $\psi$ can be recovered in the sense that 
\begin{align}
\sum_{l=0}^4\sum_{k=0}^{3+l}\left|\sigma\eps^{2l}\p_t^{k}\psi\right|^2_{5.5+l-k}\leq P(E(t)),\quad \forall t\in[0,T_\sigma].
\end{align}

	\end{thm}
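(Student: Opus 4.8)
Since problem \eqref{CMHDVS0} with a fixed $\sigma>0$ is treated in detail in the first part \cite{Zhang2023CMHDVS1} of this sequence, I only outline the strategy. I would prove the theorem by the energy method in four stages: (i) an a priori estimate for \eqref{CMHDVS0} in the norm $E(t)$ with constants depending on $\sigma^{-1}$ but \emph{not} on $\eps$; (ii) construction of a solution on a time interval $[0,T_\sigma]$, $T_\sigma=T_\sigma(M,\sigma)$, by an iteration scheme that loses no regularity (so that Nash--Moser iteration can be avoided), whose convergence is guaranteed by the uniform bounds of (i); (iii) uniqueness, via the same estimate run on the difference of two solutions one order lower; and (iv) recovery of the $H^{9.5}(\Sigma)$-regularity of $\psi$ from the dynamic boundary condition. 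The bulk of the work --- and the main obstacle --- is stage (i).

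For (i) I would first prove a div--curl decomposition adapted to the anisotropic spaces: a vector field $f$ on $\Om^\pm$ is controlled in $H_*^m$ by its boundary-tangential derivatives $\TT^\alpha\p_t^k f$ in lower interior norms, together with $\nabp\cdot f$, the curl $\nabp\times f$, and a trace on $\Sigma\cup\Sigma^\pm$, the weight $\omega(x_3)$ absorbing the one normal derivative lost near the characteristic boundary. For $v^\pm,b^\pm$ one has $\nabp\cdot b^\pm=0$, $\nabp\cdot v^\pm=-\ffpm\Dtpm p^\pm$ from the continuity equation, transport-type equations for $\nabp\times v^\pm,\nabp\times b^\pm$ from the momentum and induction equations, and $\Dtpm S^\pm=0$ for the entropy; $q^\pm$ is recovered from an elliptic transmission problem with interior equation $\lapp q^\pm=\cdots$ and data supplied by $\jump{q}=\sigma\h$ and the normal momentum balance on $\Sigma$, together with the slip condition on $\Sigma^\pm$. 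Everything thus reduces to tangential estimates: apply $\eps^{2l}\TT^\alpha\p_t^k$ with $\lee\alpha\ree=2l$ to \eqref{CMHDVS0}, pass to Alinhac's good unknowns to absorb the worst commutators coming from the change of variables $\Phi$, and run the $L^2$ energy identity --- which at leading order amounts to testing the momentum equation against $v^\pm$ and the induction equation against $b^\pm$. Two cancellations are decisive: the Lorentz-force/induction cancellation $\bpm b^\pm\cdot v^\pm+\bpm v^\pm\cdot b^\pm=\bpm(b^\pm\cdot v^\pm)$, which integrates to zero by $\nabp\cdot b^\pm=0$ and the boundary conditions, removing the apparent top-order magnetic contribution; and, on $\Sigma$, the cancellation of $\jump{q}$ against $\sigma\h$, which --- after moving a total time derivative to the energy side --- produces the coercive capillary term $\tfrac{\sigma}{2}\is|\cnab\TT^\alpha\p_t^k\psi|^2(1+|\cnab\psi|^2)^{-1/2}\,\mathrm{d}x'$, i.e.\ exactly the $\sqrt\sigma$-weighted control of $\psi$ in $E(t)$. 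Since $\sigma>0$ is fixed, this capillary coercivity is what closes the $\psi$-estimate, and neither a Rayleigh--Taylor sign condition nor a Syrovatski\u{\i}-type stability condition is needed here; the hypothesis $|\jump{\vb_0}|>0$ only keeps $\Sigma(t)$ a graph, and the fact that $\psi$ is then controlled only in a $\sqrt\sigma$-weighted norm is the source of the $\sigma^{-1}$-dependence of the constants.

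The genuinely hard part is the uniformity in $\eps$. The pressure is the fast variable: differentiating the continuity equation shows that each time derivative of $p^\pm$ past a fixed threshold costs a factor $\eps^{-2}=\lam^2$ unless it carries the weight $(\ffpm)^{1/2}\sim\eps$ --- exactly the weight $(\ffpm)^{(k+\alpha_0-l-3)_+/2}$ attached to $p^\pm$ in $E(t)$. To control these weighted high-order time derivatives without losing regularity I would complement the tangential estimate with the acoustic wave equation for $p^\pm$: applying $\Dtpm$ to the continuity equation and substituting the momentum equation gives, up to lower-order terms and a smooth positive coefficient, a wave equation $\ffpm(\Dtpm)^2p^\pm-\lapp p^\pm=\cdots$ of propagation speed $\sim\lam$, whose energy estimates in $\eps^2$-weighted anisotropic Sobolev spaces convert the singular factors into the harmless weights of $E(t)$ and supply the missing normal- and time-derivative control of $p^\pm$, hence of $q^\pm$. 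Interlocking the hierarchy $E_4,\dots,E_8$ so that every integration by parts, commutator and product estimate produces only powers of $\eps$ already present in $E(t)$, and never a negative power, is the technical heart of the argument: this is where the anisotropic spaces, the wave-equation estimates, and the tangential estimates must be matched term by term.

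Given the uniform bound of (i), stage (ii) is a standard linearized iteration without Nash--Moser: one mollifies $\psi$ tangentially at a scale adapted to the iteration, solves the resulting linear MHD system in the smoothed geometry --- which is non-characteristic enough, and enjoys the extra $\psi$-regularity supplied by surface tension, to be well-posed in the anisotropic spaces --- and shows the iteration map is bounded and contracting in a lower norm on $[0,T_\sigma]$; passing to the limit yields a solution with $\sup_{[0,T_\sigma]}E(t)\le C(\sigma^{-1})P(E(0))$ and $\sup_{[0,T_\sigma]}|\psi(t)|<10<H$. Stage (iii) repeats the energy estimate on the difference of two solutions one derivative lower. Finally, for (iv), the dynamic condition reads $\sigma\,\cnab\cdot\!\big(\cnab\psi(1+|\cnab\psi|^2)^{-1/2}\big)=\jump{q}$, an elliptic equation for $\psi$ on $\T^2$ whose principal part is a perturbation of $\sigma\lap$; elliptic regularity gives $\sigma|\psi|_{s+2}\lesssim|\jump{q}|_{s}+(\text{lower-order terms})$, and applying this with $\eps^{2l}\p_t^k$ together with the trace bound for $\jump{q}$ two derivatives lower already furnished by $E(t)$ yields $\sum_{l=0}^{4}\sum_{k=0}^{3+l}|\sigma\eps^{2l}\p_t^k\psi|_{5.5+l-k}^2\le P(E(t))$, the asserted recovery of regularity.
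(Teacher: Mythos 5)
Your high-level scaffolding — div–curl reduction in the anisotropic spaces, tangential estimates in Alinhac good unknowns, surface-tension coercivity producing the $\sqrt\sigma$-weighted control of $\psi$ (which is why a Rayleigh–Taylor or Syrovatski\u{\i} condition is not needed for fixed $\sigma>0$), iteration in place of Nash–Moser, and elliptic recovery of $H^{9.5}(\Sigma)$ from $\jump{q}=\sigma\mathcal H$ — matches the structure of the argument the paper recalls from \cite{Zhang2023CMHDVS1} and the reduction recorded in Section~\ref{sect lwp energy}. However, there is a genuine gap in your account of where the $\eps$-uniformity actually comes from.

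You locate the heart of the $\eps$-uniformity in the acoustic wave equation $\ffpm(\Dtpm)^2p^\pm-\lapp q^\pm=\cdots$. But that equation only controls $\lapp q^\pm$, i.e.\ the divergence/pressure part; it gives no information about $\nabp\times v^\pm$ and $\nabp\times b^\pm$, and the curl is precisely where the Ohno–Shirota obstruction lives when $b^\pm\cdot N=0$. Taking the curl of the momentum equation produces the source term $\nabp\times\bpm b^\pm$, which costs a normal derivative of $b^\pm$ and cannot be recovered in standard Sobolev spaces. The mechanism that closes the curl estimates uniformly in $\eps$ in the paper is not the wave equation at all but the hidden structure of the Lorentz force (from \cite{WZ2023CMHDlimit}): in the vorticity analysis one trades each normal derivative lost in $\nabp\times$ for an $\eps^2$-weighted second-order tangential derivative $\eps^2 D_t^2$, which is exactly why the hierarchy $E_4,\dots,E_8$ with weights $\eps^{2l}$ is needed, and why $(\omega\p_3)$ costs one anisotropic order while $\p_3$ costs two. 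Your sketch treats the curls as ``transport-type equations'' whose hierarchy ``must be matched term by term'' but never identifies this structural cancellation; without it, the curl estimate leaks negative powers of $\eps$, and the div–curl decomposition does not close. In the present paper the wave equation for $q^\pm$ is introduced only later, in Section~\ref{sect psi eq}, and for a different purpose (resolving the traces $q^\pm|_\Sigma$ via Dirichlet-to-Neumann operators in order to avoid the $\TP^3\p_t$-estimate when proving Theorem~\ref{thm prepared}); you have transplanted that tool into the proof of Theorem~\ref{thm STLWP}, where it does not address the obstruction. A secondary, smaller mismatch: you propose to recover $q^\pm$ from an elliptic transmission problem, whereas in this regime the paper recovers spatial derivatives of $q$ directly by solving the momentum equation for $\nabp q$ pointwise, which converts $\|q\|$-control to tangential estimates of $(v,b)$ with no elliptic solve and no extra trace loss.
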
 

\subsubsection{Main result 1: Incompressible and zero-surface-tension limits}

 For any fixed $\sigma>0$, the energy estimates obtained in Theorem \ref{thm STLWP} are already uniform in $\eps$. When taking the limit $\sigma\to 0$, we shall impose suitable stability conditions on $\Sigma$ to ensure the well-posedness of ``$\sigma=0$-problem". Assume there exists a constant $\delta_0\in(0,\frac18)$ such that
\begin{align}
\delta_0\leq a^\pm\left|\bc^\mp\times\jump{\vb}\right|\leq (1-\delta_0)|\bc^+\times\bc^-|~~\text{ on }[0,T]\times\Sigma, \label{syrov 3D com}
\end{align} where 
\begin{align}
a^\pm:=\sqrt{\rho^\pm\left(1+\left(\frac{c_A^\pm}{c_s^\pm}\right)^2\right)}
\end{align} and $c_A^\pm:=|b^\pm|/\sqrt{\rho^\pm}$ represents the Alfv\'en speed (the speed of magnetosonic wave), $c_s^\pm:=\sqrt{\p p^\pm/\p\rho^\pm}$ represents the sound speed. It should be noted that \eqref{syrov 3D com} is not an imposed boundary condition for the ``$\sigma=0$"-problem. Instead, it is just a constraint for initial data which can propagate within a short time. In other words, we only need to assume 
\begin{align}
2\delta_0\leq (a^\pm|_{t=0})\left|\bc_0^\mp\times\jump{\vb_0}\right|\leq (1-2\delta_0)|\bc_0^+\times\bc_0^-|~~\text{ on }\Sigma. \label{syrov 3D com 1}
\end{align}

Under the stability condition \eqref{syrov 3D com}, we can establish the uniform-in-$(\es)$ energy estimates.
\begin{thm}[\textbf{Uniform-in-$(\es)$ estimates}] \label{thm CMHDlimit2}
Under the hypothesis of Theorem \ref{thm STLWP}, if the stability condition \eqref{syrov 3D com} holds, then there exists a time $T>0$ only depending on $M$, such that
\begin{align}
\sup_{0\leq t\leq T} \EW(t) \leq P(\EW(0)),
\end{align}where $\EW(t)$ is defined by
\begin{align}
\EW(t):=\sum_{l=0}^4\EW_{4+l}(t),\quad \EW_{4+l}(t)=E_{4+l}(t)+\sum_{k=0}^{4+l}\left|\eps^{2l}\p_t^{k}\psi\right|^2_{4.5+l-k}.
\end{align}
\end{thm}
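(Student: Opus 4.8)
The plan is to reduce Theorem~\ref{thm CMHDlimit2} to the a priori estimate $\sup_{[0,T]}\EW(t)\le P(\EW(0))$ on a time interval $T$ depending only on $M$. Since Theorem~\ref{thm STLWP} already produces, for each fixed $\sigma>0$, a solution on some (a priori $\sigma$-dependent) interval, combining this uniform bound with a standard continuation argument yields the claim, with all constants staying finite as $\sigma\to 0$. The estimate itself is a nonlinear energy estimate for \eqref{CMHDVS0} in the anisotropic Sobolev spaces, where the $\eps^{2l}$-weights built into $\EW$ are arranged so that every quantity which would diverge as $\eps\to 0$ is compensated, and every step is carried out so that the bounds are simultaneously uniform in $\eps$ and in $\sigma$. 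The one genuinely new feature relative to \cite{Zhang2023CMHDVS1} is that, when surface tension is switched off, the regularizing interface term disappears, so the boundary regularity of $\psi$ must instead be generated by the stability condition \eqref{syrov 3D com}.

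For the interior estimates, for each admissible triple $(l,k,\alpha)$ I would apply $\eps^{2l}\TT^\alpha\p_t^k$ to the momentum, continuity, induction and entropy equations and test against the corresponding quantities. To avoid the loss of one derivative in commuting a tangential derivative past $\nabp$, I would work with the Alinhac good unknowns $\mathbf{V}:=\eps^{2l}\TT^\alpha\p_t^k v^\pm-(\eps^{2l}\TT^\alpha\p_t^k\varphi)\,\p_d^\varphi v^\pm$ and its analogues for $b^\pm$ and $q^\pm$, so that the dangerous commutators become lower order. The resulting $L^2$ identity produces the kinetic and magnetic energies $\tfrac12\rho^\pm|\mathbf{V}|^2+\tfrac12|\mathbf{B}|^2$, a pressure contribution that has the right sign modulo a term controlled by $\|(\ffpm)^{1/2}\eps^{2l}\TT^\alpha\p_t^k p^\pm\|_{0,\pm}$ (which is exactly why $E_{4+l}$ weights $p^\pm$ by $(\ffpm)^{\frac{(k+\alpha_0-l-3)_+}{2}}$), and boundary integrals on $\Sigma$ and $\Sigma^\pm$, the latter killed by the slip conditions $v_d^\pm=b_d^\pm=0$. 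Normal derivatives are then recovered by div-curl analysis: $\nabp\cdot b^\pm=0$ and $\nabp\cdot v^\pm=O(\eps^2)$ from the continuity equation, $\text{curl}^{\varphi}v^\pm$ and $\text{curl}^{\varphi}b^\pm$ solve transport-type equations with right-hand sides controlled by the energy, and the normal derivatives of $q^\pm$ come from an elliptic estimate for the $\eps^2$-weighted wave equation satisfied by the total pressure (obtained from $\nabp\cdot$ of the momentum equation and $\Dtpm$ of the continuity equation), with Dirichlet datum $\jump{q}=\sigma\h$ on $\Sigma$ and a Neumann-type datum on $\Sigma^\pm$.

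The hard part is the boundary estimate on $\Sigma$. The interface boundary term of the tangential energy identity is, schematically, $\int_\Sigma\jump{q}\,(\eps^{2l}\TT^\alpha\p_t^k v^\pm\cdot N)=\int_\Sigma\jump{q}\,\p_t(\eps^{2l}\TT^\alpha\p_t^k\psi)+\cdots$, which for $\sigma>0$ integrates to the nonnegative surface-tension energy $\sim|\sqrt\sigma\,\eps^{2l}\p_t^k\psi|^2_{5+l-k}$ already contained in $E_{4+l}$; when $\sigma=0$ this term is gone, and coercivity must instead be extracted from the normal trace of the momentum equation together with $b^\pm\cdot N=0$, $\p_t\psi=v^\pm\cdot N$ and $\Dtpm=\p_t+\vb^\pm\cdot\cnab$ on $\Sigma$. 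Substituting these relations into their tangential derivatives, the operators $(\bc^\pm\cdot\cnab)^2$ and $(\vb^\pm\cdot\cnab)^2$ acting on $\eps^{2l}\TT^\alpha\p_t^k\psi$ combine into a quadratic form whose principal symbol is, up to lower order, the Syrovatskij form; under \eqref{syrov 3D com}, in the quantitative version guaranteed by $\delta_0$, this form is uniformly positive in $\eps$ and $\sigma$, and it delivers precisely the extra control of $|\eps^{2l}\p_t^k\psi|_{4.5+l-k}$ added to $\EW_{4+l}$ (half a derivative below the $\sqrt\sigma$-weighted surface-tension bound, as expected). Carrying this out at the nonlinear, $\eps$-weighted level --- a nonlinear, weighted incarnation of Trakhinin's ``secondary symmetrizer'' --- and making it consistent with the simultaneous presence of the $\sqrt\sigma$-weighted surface-tension energy when $\sigma>0$, is the main obstacle; the two mechanisms do not conflict because the surface-tension term only adds a nonnegative higher-order quantity that is never used in the low-regularity closure. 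Finally, I would sum the interior and boundary estimates over $(l,k,\alpha)$, verify that all borderline (top-order, or $O(1/\eps)$) contributions cancel by construction, bound the remaining right-hand side by $P(\EW(t))$ plus terms absorbable for small $T$, and close by Gr\"onwall together with a short-time propagation of \eqref{syrov 3D com} out of the strict initial condition \eqref{syrov 3D com 1} (the coefficients $a^\pm$, $\bc^\pm$, $\jump{\vb}$ being controlled in $\EW$). This gives $T>0$ depending only on $M$ on which $\sup_{[0,T]}\EW(t)\le P(\EW(0))$; the 2D case goes through in the same way, with \eqref{2D stable} replacing the cross-product form of the stability condition and with a simpler vorticity analysis.
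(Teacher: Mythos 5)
Your overall skeleton (Alinhac good unknowns, div-curl reduction with $\eps^2$-weighted wave equation for $q^\pm$, Gr\"onwall closure, short-time propagation of \eqref{syrov 3D com} from \eqref{syrov 3D com 1}) matches the paper, but the one step you flag as ``the hard part'' — how the stability condition produces coercivity at $\sigma=0$ — is misdescribed in a way that bypasses the actual obstruction. Your schematic boundary identity
$\int_\Sigma\jump{q}\,(\eps^{2l}\TT^\alpha\p_t^k v^\pm\cdot N)=\int_\Sigma\jump{q}\,\p_t(\eps^{2l}\TT^\alpha\p_t^k\psi)+\cdots$
hides inside the ``$\cdots$'' the term
$\VS=\eps^{4l}\int_\Sigma\TT^\gamma q^-\,(\jump{\vb}\cdot\cnab)\TT^\gamma\psi\dx'$,
which is precisely the quantity that makes vortex sheets hard and whose treatment for fixed $\sigma>0$ consumed the bulk of \cite{Zhang2023CMHDVS1}; because $\jump{\vb}\neq 0$, it does not have the structure $\jump{q}\cdot(\text{something})$ and neither integrates to a sign nor is bounded by half a derivative. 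The mechanism you propose — reading off a sign-definite quadratic form in $(\bc^\pm\cdot\cnab)\TT^\gamma\psi$ and $(\vb^\pm\cdot\cnab)\TT^\gamma\psi$ from the normal trace of the momentum equation plus the kinematic condition — is actually the evolution-equation-for-$\psi$ / Dirichlet-to-Neumann approach the paper uses in Section \ref{sect limit well} to prove the \emph{second} main theorem (Theorem \ref{thm prepared}, with the weaker energy $\EEW$). It does not appear in the proof of Theorem \ref{thm CMHDlimit2}, and grafting it onto the tangential-energy framework you set up would still leave $\VS$ unaccounted for.

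The paper's proof of Theorem \ref{thm CMHDlimit2} splits the role of \eqref{syrov 3D com} into two genuinely different ingredients, neither of which appears in your proposal. First, the non-collinearity $\bc^+\times\bc^-\neq 0$ is used \emph{algebraically}: $b^\pm\cdot N=0$ is a $2\times 2$ linear system in $\cnab\psi$, so $\cnab\psi$ is expressed in terms of $b^\pm$ without derivatives, yielding Lemma \ref{lem psi}/\ref{lem psit} and Proposition \ref{prop psi}, i.e.\ the gain of half a derivative $|\eps^{2l}\p_t^k\psi|_{4.5+l-k}$ that enters $\EW_{4+l}$. This does \emph{not} come from a boundary quadratic form. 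Second, and decisively, $\VS$ is not estimated but \emph{eliminated}: the system is re-symmetrized \`a la Friedrichs/Trakhinin by the linear substitution encoded in \eqref{CMHDVS0mu}, with $\mu^\pm=\bar\mu^\pm\eta(x_3)$ chosen uniquely by \eqref{choice mu 3D} so that $\jump{\vb-\bar\mu\,\bc}=\mathbf 0$ on $\Sigma$, hence $\VS^\mu=0$ identically. The stability condition then enters a second time, now in the \emph{interior}: the re-symmetrized energy density has a cross term $-2\mu^\pm\rho^\pm\VV^\pm\cdot\BB^\pm+2\mu^\pm\rho^\pm\ffpm\PP^\pm(b^\pm\cdot\VV^\pm)$ whose positive-definiteness is equivalent to $(\mu^\pm)^2\rho^\pm(1+(c_A^\pm/c_s^\pm)^2)<1$, and this is exactly \eqref{syrov 3D com}. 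You do mention ``Trakhinin's secondary symmetrizer'' in passing, but the mechanism you describe next to that phrase is the boundary quadratic form, not the $v\mapsto v-\mu b$ substitution. Without the substitution you have no way to remove $\VS$, and without the algebraic Lemma \ref{lem psi} you have no $\sigma$-free source of the extra $\frac12$-derivative on $\psi$ demanded by $\EW$, since the $\sqrt\sigma$-weighted surface-tension energy in $E_{4+l}$ degenerates as $\sigma\to0$. Those two ideas are the content of the theorem; as written, the proposal does not contain either.
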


Now we introduce the rigorous statement for the zero-surface-tension limit. For $\sigma \geq 0$, the motion of incompressible current-vortex sheets with surface tension are characterised by the equations of $(\xi^\sigma, w^{\pm,\sigma}, h^{\pm,\sigma})$ with initial data $(\xi_0^\sigma,w_0^{\pm,\sigma},h_0^{\pm,\sigma})$ and a transport equation of $\mathfrak{S}^{\pm,\sigma}$:
\begin{equation} \label{IMHDs}
\begin{cases}
\rr^{\pm,\sigma}(\p_t+w^{\pm,\sigma}\cdot\nab^{\Xi^\sigma})w^{\pm,\sigma}- (h^{\pm,\sigma}\cdot\nab^{\Xi^\sigma})h^{\pm,\sigma}+\nab^{\Xi^\sigma} \Pi^{\pm,\sigma}=0&~~~ \text{in}~[0,T]\times \Omega,\\
\nab^{\Xi^\sigma}\cdot w^{\pm,\sigma}=0&~~~ \text{in}~[0,T]\times \Omega,\\
(\p_t+w^{\pm,\sigma}\cdot\nab^{\Xi^\sigma}) h^{\pm,\sigma}=(h^{\pm,\sigma}\cdot\nab^{\Xi^\sigma})w^{\pm,\sigma}&~~~ \text{in}~[0,T]\times \Omega,\\
\nab^{\Xi^\sigma}\cdot h^{\pm,\sigma}=0&~~~ \text{in}~[0,T]\times \Omega,\\
(\p_t+w^{\pm,\sigma}\cdot\nab^{\Xi^\sigma})\mathfrak{S}^{\pm,\sigma}=0&~~~ \text{in}~[0,T]\times \Omega,\\
\jump{\Pi^{\sigma}}=\sigma\cnab \cdot \left( \frac{\cnab \xi^{\sigma}}{\sqrt{1+|\cnab\xi^{\sigma}|^2}}\right) &~~~\text{on}~[0,T]\times\Sigma,\\
\p_t \xi^{\sigma} = w^{\pm,\sigma}\cdot N^{\sigma} &~~~\text{on}~[0,T]\times\Sigma,\\
h^{\pm,\sigma}\cdot N^{\sigma}=0&~~~\text{on}~[0,T]\times\Sigma,\\
w_3^\pm=h_3^\pm=0&~~\text{ on }[0,T]\times\Sigma^\pm,\\
(w^{\pm,\sigma},h^{\pm,\sigma},\mathfrak{S}^{\pm,\sigma},\xi^{\sigma})|_{t=0}=(w_0^{\pm,\sigma},h_0^{\pm,\sigma},\mathfrak{S}_0^{\pm,\sigma}, \xi_0^{\sigma}), 
\end{cases}
\end{equation}where $\Xi^{\sigma}(t,x) = x_3+\chi(x_3) \xi^\sigma(t,x')$ is the extension of $\xi^\sigma$ in $\Omega$ and $ N^\sigma:=(-\TP_1\xi^\sigma, -\TP_2\xi^\sigma, 1)^\top$. The quantity $\Pi^\pm:=\bar{\Pi}^\pm+\frac12|h^\pm|^2$ represent the total pressure functions for the incompressible equations with $\bar{\Pi}^\pm$ the fluid pressure functions. The quantity $\rr^\pm$ satisfies the evolution equation $(\p_t+w^{\pm,\sigma}\cdot\nab^{\Xi^\sigma})\rr^{\pm,\sigma}=0$ with initial data $\rr_0^{\pm,\sigma}:=\rho^{\pm,\sigma}(0,\mathfrak{S}_0^{\pm,\sigma})$.

Denote $(\psi^\es, v^{\pm,\es}, b^{\pm,\es}, \rho^{\pm,\es}, S^{\pm,\es})$ to be the solution of \eqref{CMHDVS0} indexed by $\sigma$ and $\eps$. Under the stability conditions, we prove that $(\psi^\es, v^{\pm,\es}, b^{\pm,\es}, \rho^{\pm,\es}, S^{\pm,\es})$ converges to $(\xi^0, w^{\pm,0}, h^{\pm,0}, \rr^{\pm,0}, \mathfrak{S}^{\pm,0})$ as $\eps, \sigma \rightarrow 0$ provided the convergence of initial data. Here $(\xi^0, w^{\pm,0}, h^{\pm,0}, \rr^{\pm,0}, \mathfrak{S}^{\pm,0})$ represents the solution to incompressible current-vortex sheets system \eqref{IMHDs} with initial data $(\xi_0^0, w_0^{\pm,0}, h_0^{\pm,0}, \mathfrak{S}_0^{\pm,0})$ when $\sigma=0$. 

\begin{cor}[\textbf{Incompressible and zero-surface-tension limits}] \label{cor CMHDlimit3}
Let $(\psi_0^\es, v_0^{\pm,\es}, b_0^{\pm,\es}, \rho_0^{\pm,\es}, S_0^{\pm,\es})$ be the initial data of \eqref{CMHDVS0} for each fixed $(\eps, \sigma)\in \R^+\times \R^+$, satisfying 
\begin{enumerate}
\item [a.] The sequence of initial data $(\psi_0^\es, v_0^{\pm,\es}, b_0^{\pm,\es}, S_0^{\pm,\es}) \in H^{9.5}(\Sigma)\times H_*^8(\Omega^\pm)\times H_*^8(\Omega^\pm)\times H_*^8(\Omega^\pm)$ satisfies the hypothesis of Theorem \ref{thm STLWP}. 
\item [b.] $(\psi_0^\es, v_0^{\pm,\es}, b_0^{\pm,\es}, S_0^{\pm,\es}) \to (\xi_0^0, w_0^{\pm,0}, h_0^{\pm,0}, \mathfrak{S}_0^{\pm,0})$ in $H^{5.5}(\Sigma) \times H^4(\Omega^\pm)\times H^4(\Omega^\pm)\times H^4(\Omega^\pm)$ as $\eps, \sigma\to 0$. 
\item [c.] The incompressible initial data satisfies $|\jump{\wb_0^0}|>0$ on $\Sigma$, the constraints $\nab^{\xi_0}\cdot h_0^{0,\pm}=0$ in $\Om^\pm$, $h^{0,\pm}\cdot N^0|_{\{t=0\}\times(\Sigma\cup\Sigma^\pm)}=0$, the stability condition 
\begin{align}
\label{syrov 3D in 1} 2\delta_0\leq \sqrt{\rr_0^{\pm,0}}\left|\bar{h}_0^{\mp,0}\times\jump{\bar{w}_0^0}\right|\leq (1-2\delta_0)|\bar{h}_0^{+,0}\times \bar{h}_0^{-,0}|~~ \text{ on }\Sigma,
\end{align}where $\delta_0>0$ is the same constant as in \eqref{syrov 3D com}.
\end{enumerate} 
Then it holds that 
\begin{align}
(\psi^\es, v^{\pm,\es}, b^{\pm,\es}, S^{\pm,\es})\to(\xi^0, w^{\pm,0}, h^{\pm,0},\mathfrak{S}^{\pm,0}),
\end{align} weakly-* in $L^\infty([0,T]; H^{4.5}(\Sigma)\times(H^{4}(\Om^\pm))^3)$ and strongly in  $C([0,T]; H_{\text{loc}}^{4.5-\delta}(\Sigma)\times(H_{\text{loc}}^{4-\delta}(\Om^\pm))^3)$ after possibly passing to a subsequence. Here $T>0$ is the time obtained in Theorem \ref{thm CMHDlimit2}.
\end{cor}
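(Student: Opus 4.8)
The plan is to combine the uniform-in-$(\es)$ bounds of Theorem \ref{thm CMHDlimit2} with a standard weak--strong compactness scheme. First I would observe that, under hypotheses (a)--(c), the convergence of the data in (b) together with the evolution equations \eqref{CMHDVS0} makes the initial energies $\EW^\es(0)$ bounded uniformly in $(\eps,\sigma)$, so Theorem \ref{thm CMHDlimit2} furnishes a common existence time $T>0$ and a constant $C$, both independent of $(\eps,\sigma)$, with $\sup_{[0,T]}\EW^\es(t)\le C$. Reading off the $l=0$ block of \eqref{energy lwp}, this bounds $\psi^\es$ in $L^\infty([0,T];H^{4.5}(\Sigma))$ with $\p_t\psi^\es$ in $L^\infty([0,T];H^{3.5}(\Sigma))$, and $(v^{\pm,\es},b^{\pm,\es},S^{\pm,\es},p^{\pm,\es})$ in $L^\infty([0,T];H^4(\Om^\pm))$ with their first time derivatives in $L^\infty([0,T];H^3(\Om^\pm))$; the same then holds for $q^{\pm,\es}=p^{\pm,\es}+\tfrac12|b^{\pm,\es}|^2$ since $H^4(\Om^\pm)$ is an algebra. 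By Banach--Alaoglu, along a subsequence $\psi^\es\wsto\xi$ and $(v^{\pm,\es},b^{\pm,\es},S^{\pm,\es},p^{\pm,\es},q^{\pm,\es})\wsto(w^\pm,h^\pm,\mathfrak S^\pm,\bar\Pi^\pm,\Pi^\pm)$ in the respective spaces.

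Next I would upgrade these to strong convergence: each quantity is bounded in $L^\infty_t$ in a space compactly embedded into the space holding its time derivative, so Aubin--Lions--Simon gives $\psi^\es\to\xi$ in $C([0,T];H^{4.5-\delta}_{\mathrm{loc}}(\Sigma))$ and $(v^{\pm,\es},b^{\pm,\es},S^{\pm,\es},p^{\pm,\es},q^{\pm,\es})\to(w^\pm,h^\pm,\mathfrak S^\pm,\bar\Pi^\pm,\Pi^\pm)$ in $C([0,T];H^{4-\delta}_{\mathrm{loc}}(\Om^\pm))$ for every small $\delta>0$, together with convergence of traces on $\Sigma$ and $\Sigma^\pm$. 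Consequently $\vp^\es=x_3+\chi(x_3)\psi^\es\to\Xi:=x_3+\chi(x_3)\xi$ with $\p_3\vp^\es\to\p_3\Xi\ge\tfrac12$, hence $\nab^{\vp^\es}\to\nab^{\Xi}$ and the operators $\Dtpm$ converge coefficientwise to $\p_t+w^\pm\cdot\nab^{\Xi}$; moreover, by the equation of state \eqref{eos2} and $\eps^2p^{\pm,\es}\to0$, $\rho^{\pm,\es}=((1+\eps^2p^{\pm,\es})e^{-S^{\pm,\es}/C_V})^{1/\gamma}\to(e^{-\mathfrak S^\pm/C_V})^{1/\gamma}=:\rr^\pm$ strongly, and $\rr^\pm$ is transported by $\p_t+w^\pm\cdot\nab^{\Xi}$ because $\mathfrak S^\pm$ is, with the correct incompressible initial value.

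Then I would pass to the limit in \eqref{CMHDVS0}, where two scalings are decisive. By \eqref{ffeps}, $\ff_p^\pm=O(\eps^2)$, so the continuity equation \eqref{continuity eq f} reads $\nab^{\vp^\es}\cdot v^{\pm,\es}=-\ff_p^\pm\,\Dtpm p^{\pm,\es}=O(\eps^2)$ in $L^\infty([0,T];H^3(\Om^\pm))$, which forces $\nab^{\Xi}\cdot w^\pm=0$ and kills $b^{\pm,\es}(\nab^{\vp^\es}\cdot v^{\pm,\es})$ in the induction equation; and the interface condition $\jump{q^\es}=\sigma\,\cnab\cdot\big(\cnab\psi^\es/\sqrt{1+|\cnab\psi^\es|^2}\big)$ has right-hand side $O(\sigma)$, so $\jump{\Pi}=0$ in the limit, which is the $\sigma=0$ interface condition of \eqref{IMHDs}. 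All the remaining terms of the momentum, induction and entropy equations, the constraint $\nab^{\vp^\es}\cdot b^{\pm,\es}=0$, and the boundary relations $b^{\pm,\es}\cdot N^\es=0$, $\p_t\psi^\es=v^{\pm,\es}\cdot N^\es$ and $v_3^{\pm,\es}=b_3^{\pm,\es}=0$ on $\Sigma^\pm$ are products of (locally) strongly convergent sequences and therefore pass to the limit, so $(\xi,w^\pm,h^\pm,\rr^\pm,\mathfrak S^\pm)$ satisfies \eqref{IMHDs} with $\sigma=0$; its initial data are those in (b), preserved under $C_t$-convergence, and (c) supplies the remaining structural constraints and the stability condition \eqref{syrov 3D in 1}. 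Hence the limit is the solution $(\xi^0,w^{\pm,0},h^{\pm,0},\rr^{\pm,0},\mathfrak S^{\pm,0})$ to \eqref{IMHDs} with $\sigma=0$, and Steps~1--2 give exactly the two claimed modes of convergence; invoking uniqueness for that problem under \eqref{syrov 3D in 1} would upgrade this to convergence of the whole family rather than just a subsequence.

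I do not expect a genuinely new difficulty here beyond Theorem \ref{thm CMHDlimit2}; the corollary is a soft compactness argument. The two points that need care are (i) extracting time-derivative bounds of the right order from \eqref{energy lwp} so that Aubin--Lions--Simon applies to \emph{every} unknown — including the total pressure, for which one uses $\p_t q^{\pm,\es}\in L^\infty_t H^3(\Om^\pm)$ to get strong convergence before passing to the limit in $\nab^{\vp^\es}q^{\pm,\es}$ — and (ii) bookkeeping which terms are annihilated in the limit by the $\eps^2$- and $\sigma$-weights. The local-in-space topology in the conclusion is simply the output of the compactness step.
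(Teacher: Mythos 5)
Your proposal is correct and takes essentially the same route as the paper, which proves this corollary by a terse appeal to the Aubin--Lions compactness lemma in Section~\ref{sect limit 0ST}: extract $L^\infty_t$ bounds from $\EW$, apply Aubin--Lions--Simon for strong $C_t H^{s-\delta}_{\mathrm{loc}}$ convergence, and pass to the limit in \eqref{CMHDVS0} using the $\ff_p^\pm=O(\eps^2)$ and $\sigma\to 0$ scalings to recover \eqref{IMHDs} with $\sigma=0$. You have simply spelled out the steps the paper leaves implicit.
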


\begin{rmk}[Stability conditions in 2D]
When taking the zero-surface-tension limit, the stability condition for compressible current-vortex sheets in 2D is
\begin{align}
\left(\frac{|b_1^+|}{a^+}+\frac{|b_1^-|}{a^-}\right)\geq (1+\delta_0)|\jump{v_1}|>0 ~~\text{ on }[0,T]\times\Sigma, \label{syrov 2D com}
\end{align}which is again propagated by the initial constraint
\begin{align}
\left(\frac{|b_1^+|}{a^+}+\frac{|b_1^-|}{a^-}\right)\bigg|_{t=0}\geq (1+2\delta_0)|\jump{v_{01}}|>0 ~~\text{ on }\Sigma. \label{syrov 2D com 1}
\end{align} The corresponding stability condition for the incompressible data is
\begin{align}
\label{syrov 2D in 1} \left(\frac{|h_{01}^{+,0}|}{\sqrt{\rr_0^{+,0}}}+\frac{|h_{01}^{-,0}|}{\sqrt{\rr_0^{-,0}}}\right)\geq (1+2\delta_0)\bno{\jump{w_{01}^{0}}}>0.
\end{align}
\end{rmk}

\subsubsection{Main result 2: Dropping boundedness assumptions of high-order time derivatives}\label{stat weaker E}

 The uniform-in-$\eps$ estimates obtained in Theorem \ref{thm STLWP} and Theorem \ref{thm CMHDlimit2} require $\nabp \cdot v_0=O(\eps^2)$ and $\p_t^{k}v|_{t=0}=O(1)$ for $k\leq 4$. Such assumption is much stronger than the widely used definition of ``well-prepared" initial data ($\nabp\cdot v_0=O(\eps)$ and $\p_t v|_{t=0}=O(1)$). When $\jump{\rho}=O(\eps)$ on the interface $\Sigma$ (see Remark \ref{small rho} below), we can still prove the incompressible limit under the assumption $\nabp\cdot v_0=O(\eps),~\p_t v|_{t=0}=O(1)$ without any boundedness assumptions on high-order ($\geq 2$) time derivatives. However, the energy functional should also be modified. We define 
\begin{align}
\label{energy total} \EE(t):=\EE_{4}(t)+E_{5}(t)+E_{6}(t)+E_{7}(t)+E_{8}(t)\\
\label{energy es total}\EEW(t):=\EEW_{4}(t)+\EW_{5}(t)+\EW_{6}(t)+\EW_{7}(t)+\EW_{8}(t)
\end{align} where $\EE_4(t)$ is defined as the following
	        \begin{equation}
	        		\begin{aligned}\label{energy weak}
\EE_4(t)=&\sum_\pm\ino{(v^\pm,b^\pm,p^\pm)}_{4,\pm}^2+\ino{\p_t(v^\pm,b^\pm,\eps p^\pm)}_{3,\pm}^2 +\sum_{k=2}^{4}\ino{\eps\p_t^k\left(v^\pm,b^\pm,(\ffpm)^{\frac{(k-3)_+}{2}}p^\pm\right)}_{4-k,\pm}^2\\
&+\bno{\sqrt{\sigma}\psi}_{5}^2+\bno{\sqrt{\sigma}\p_t\psi}_{4}^2+\sum_{k=2}^{4}\bno{\sqrt{\sigma}\eps\p_t^k\psi}_{5-k}^2,
	        		\end{aligned}
	        \end{equation}and 
\begin{align}\label{energy es weak}
\EEW_4(t)=\EE_4(t)+\bno{\psi}_{4.5}^2+\bno{\p_t\psi}_{3.5}^2+\bno{\p_t^2\psi}_{2.5}^2+\bno{\eps\p_t^3\psi}_{1.5}^2+\bno{\eps\p_t^4\psi}_{0.5}^2.
\end{align}

\begin{thm}[\textbf{Improved uniform estimates}]\label{thm prepared}
Assume the fluids in $\Om^\pm$ are isentropic and the initial density functions satisfy $|\jump{\rho_0}|_{1.5}\leq C_0\eps$ on $\Sigma$ for some $C_0>0$. Under the hypothesis of Theorem \ref{thm STLWP}, the assumption $\EE(0)\le M'$ for some constant $M'>0$, there exists $T_\sigma'>0$ depending only on $M'$ and $\sigma^{-1}$ such that the solution $(v^\pm(t),b^\pm(t),\rho^\pm(t),\psi(t))$ to system \eqref{CMHDVS0} verifies the uniform-in-$\eps$ energy estimate
		    \begin{equation}
		    	\sup_{t\in[0,T_\sigma']}\EE(t) \leq C(\sigma^{-1})P(\EE(0)).
		    \end{equation}Furthermore, under the stability condition \eqref{syrov 3D com} and $\EEW(0)\leq M'$, there exists $T'>0$ depending only on $M'$ such that such that the solution $(v^\pm(t),b^\pm(t),\rho^\pm(t),\psi(t))$ to system \eqref{CMHDVS0} verifies the uniform-in-$(\es)$ energy estimate
		    \begin{equation}
		    	\sup_{t\in[0,T']}\EEW(t) \leq P(\EEW(0)).
		    \end{equation}
\end{thm}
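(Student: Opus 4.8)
The plan is to close a Gronwall inequality for $\EE(t)$ (resp.\ $\EEW(t)$), noting that the higher blocks $E_5(t),\dots,E_8(t)$ already carry the weights $\eps^{2l}$ with $l\ge1$, so their estimates are unchanged from those underlying Theorems~\ref{thm STLWP}--\ref{thm CMHDlimit2}; the whole new difficulty is concentrated in the lowest block $\EE_4$ (resp.\ $\EEW_4$), where the $\ge2$-nd order time derivatives of $(v^\pm,b^\pm)$ and the $\ge1$-st order time derivatives of $p^\pm$ are controlled only after multiplication by $\eps$. First I would record the reduction of those time derivatives to the total pressure $q^\pm$: the continuity equation $\ff_p^\pm\Dtpm p^\pm+\nabp\cdot v^\pm=0$ together with \eqref{ffeps} gives $\nabp\cdot v^\pm=-\ff_p^\pm\Dtpm p^\pm=O(\eps)$ once $\eps\p_t p^\pm=O(1)$ is known, while $\nabp\cdot b^\pm=0$ and $b^\pm\cdot N=0$ hold exactly; differentiating the momentum equation $\rho^\pm\Dtpm v^\pm-\bpm b^\pm+\nabp q^\pm=0$ in $t$ gives $\p_t^k v^\pm\sim(\rho^\pm)^{-1}\nabp\p_t^{k-1}q^\pm$ modulo controlled terms, so that $\eps\p_t^k v^\pm=O(1)$ is equivalent to $\eps\p_t^{k-1}p^\pm=O(1)$ for $k\ge2$, and $\p_t v^\pm=O(1)$ follows from $p^\pm=O(1)$. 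Thus everything reduces to uniform-in-$\eps$ control of (i) the $\eps$-weighted time derivatives of $q^\pm$ through its wave equation and (ii) $\psi$ in $H^{4.5}(\Sigma)$ with its $\eps$-weighted time derivatives through a paradifferential analysis of $\p_t\psi=v^\pm\cdot N$, supplemented by the usual tangential ($\TT^\alpha$, $\lee\alpha\ree=4$) and div--curl estimates for $(v^\pm,b^\pm)$.

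For (i), applying $\Dtpm$ to the continuity equation and combining with the divergence of the momentum equation and the induction equation yields an acoustic wave equation $\ff_p^\pm(\Dtpm)^2 q^\pm-\lapp q^\pm=(\text{lower order})$ of propagation speed $c_s^\pm\sim\eps^{-1}$, supplemented on $\Sigma$ by $\jump{q}=\sigma\h$ and a Neumann-type relation for $\p_N q^\pm$ from the normal momentum equation, and on $\Sigma^\pm$ by the slip condition. Testing $\TT^\alpha\p_t^k$ of this equation ($\lee\alpha\ree+2k\le4$, $k\ge1$) against $\eps^2\Dtpm\TT^\alpha\p_t^k q^\pm$ produces an energy controlling $\sum\eps^2\|\p_t^k q^\pm\|_*^2+\|\nabp\p_t^k q^\pm\|^2$. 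Here the hypothesis $|\jump{\rho_0}|_{1.5}\le C_0\eps$, propagated along the flow to $|\jump{\rho}|_{1.5}\lesssim\eps$ (using isentropy, so that $\rho^\pm$ evolve by transport--compression equations with $O(\eps)$ compression), is precisely what makes the interface commutators coming from $\jump{1/\rho}$ and $\jump{c_s^{-2}}$ of size $O(\eps)$, hence harmless; this yields $\eps\p_t^k p^\pm=O(1)$ for $1\le k\le3$ and, via the reduction above, the $\eps$-weighted velocity/field bounds in $\EE_4$.

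For (ii), differentiating $\p_t\psi=v^\pm\cdot N$ once in $t$ and inserting the tangential trace of the momentum equation together with $\jump{q}=\sigma\h$ produces, after symmetrizing over $\pm$, a second-order paradifferential equation for $\psi$ of principal form $\p_t^2\psi+(\text{transport})\,\p_t\psi+\mathfrak{K}(\psi)=\sigma\,\mathfrak{b}(\psi)+\mathcal R$, where $\mathfrak K$ is a first-order operator whose principal symbol is coercive precisely under the Syrovatskij-type stability condition \eqref{syrov 3D com}, $\mathfrak b$ is the positive order-$3$ surface-tension operator (present only for $\sigma>0$), and the remainder $\mathcal R$ involves only $\jump{q}$, $\p_t v^\pm$, curvature terms, etc., all $O(1)$ in the new norm. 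A paradifferential energy estimate for the order-$4.5$ tangential derivatives of $\psi$, using G\r{a}rding's inequality for $\mathfrak K$ (resp.\ positivity of $\mathfrak b$), then closes the $H^{4.5}(\Sigma)$ bound for $\psi$ and its $\eps$-weighted time derivatives uniformly in $(\es)$. The tangential interior estimates are obtained by applying $\TT^\alpha$ ($\lee\alpha\ree=4$, $\alpha_0\le1$) to the momentum and induction equations and testing against $\TT^\alpha v^\pm$, $\TT^\alpha b^\pm$; the $\Sigma$-boundary integral is handled through $\jump{q}=\sigma\h$ and $\p_t\psi=v^\pm\cdot N$ as in \cite{Zhang2023CMHDVS1}, producing $|\sqrt\sigma\psi|_5^2$ (resp.\ being absorbed into the coercive $\mathfrak K$ when $\sigma=0$), and all compressible corrections now enter through $\nabp\cdot v^\pm=O(\eps)$ and are $O(\eps)$-small.

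Collecting (i), (ii), the tangential and div--curl estimates, and the unchanged bounds for $E_5,\dots,E_8$, one gets $\frac{d}{dt}\EE(t)\le C(\sigma^{-1})P(\EE(t))$ on some $[0,T_\sigma']$ and, once the stability condition is propagated from \eqref{syrov 3D com 1} to \eqref{syrov 3D com} using $\p_t v^\pm,\p_t b^\pm=O(1)$, also $\frac{d}{dt}\EEW(t)\le P(\EEW(t))$ on some $[0,T']$; Gronwall concludes. The main obstacle I anticipate is making all of this \emph{uniform in $\eps$}: every high-order time derivative of $q^\pm$ that appears in the remainder and commutators of (ii) is a priori only $O(\eps^{-1})$, so one must show it always comes paired with a compensating factor of $\eps$ — furnished either by $\nabp\cdot v^\pm=O(\eps)$ or by the $\eps^2$-weighted wave-energy of (i) — i.e.\ the acoustic--interface coupling has to be proved $\eps$-regular, and the small-jump hypothesis $\jump{\rho_0}=O(\eps)$ is exactly what keeps this coupling controllable at the interface.
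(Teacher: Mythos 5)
Your outline contains the right ingredients at a high level (wave equation for $q^\pm$, paradifferential analysis of the interface evolution, stability condition for coercivity, $\jump{\rho}=O(\eps)$ at the interface), but it misses the two structural steps the argument actually hinges on, and as written your plan would fail exactly where the paper says the new difficulty lives.

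First, you propose to do tangential estimates with $\TT^\alpha$, $\len{\alpha}=4$, $\alpha_0\le1$, i.e.\ including $\TP^3\p_t$. That is precisely the case in which the Alinhac good-unknown commutator $\cc^\gamma(q)$ produces the term
\[
\iopm (\TP^2\p_3\p_t v_i)(\TP\NN_i)(\TP^2\p_3\p_t q)\,\dvt,
\]
and since in the weaker energy $\EE_4$ only $\eps\,\p_t q$ is bounded while $\p_t v$ is unweighted, this integral loses a factor of $\eps$. There is no way to absorb it by $\nabp\cdot v^\pm=O(\eps)$ because $\NN$ here does not carry a time derivative; this is exactly the "free-boundary only" obstruction discussed in Section~5. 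The paper sidesteps it by \emph{not} doing $\TP^3\p_t$ tangential estimates at all: for $k=1$ one uses the normal-trace div--curl inequality \eqref{divcurlNN}, so that $\|\p_t v^\pm\|_{3,\pm}$, $\|\p_t b^\pm\|_{3,\pm}$ reduce to $|\p_t v^\pm\cdot N|_{2.5}$, $|\p_t b^\pm\cdot N|_{2.5}$, which in turn require only $|\p_t^2\psi|_{2.5}$ and $|\p_t\psi|_{3.5}$. Your plan as stated keeps the bad commutator in play.

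Second, your "second-order paradifferential equation for $\psi$" is not obtained merely by inserting the tangential momentum equation into $\p_t(v^\pm\cdot N)$ and symmetrizing. The crucial intermediate step is the decomposition $q^\pm=q_\psi^\pm+q_w^\pm$, with $q_\psi^\pm$ the harmonic extension of the boundary trace and $q_w^\pm$ solving the elliptic problem with the wave-type source and $q_w^\pm|_\Sigma=0$, followed by \emph{inverting} $\dnw=\dnp+\dnm$ to resolve $q^\pm|_\Sigma$ from $\jump{q}=\sigma\h(\psi)$, $\jump{\rho}\p_t^2\psi$ and $\jump{F_\psi}$. Without this decomposition the boundary traces of $q^\pm$ remain in the equation and the evolution cannot be closed. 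Moreover the coercive operator coming from the stability condition is second-order with symbol $\rho^+(\fbc_i^+\fbc_j^+-\fuc_i\fuc_j)\xi_i\xi_j+\rho^-(\fbc_i^-\fbc_j^--\fuc_i\fuc_j)\xi_i\xi_j$, not first-order as you claim, and the surface-tension contribution $(\dnp+\dnm)\circ\h$ requires the Alazard--Burq--Zuily/Alazard--M\'etivier paralinearization plus a nontrivial symmetrization (Proposition~\ref{prop para symm}) to turn it into $(T_\fm)^*T_\fm T_\MM T_\fn$. Finally, the precise place where $|\jump{\rho}|_{1.5}=O(\eps)$ is used is the term $(\dnp-\dnm)\dnw^{-1}(\jump{\rho}\p_t^3\psi)$ produced when one differentiates \eqref{psi equ} in $t$ to get $|\p_t^2\psi|_{2.5}$: this is bounded by $|\jump{\rho}|_{1.5}|\p_t^3\psi|_{1.5}\lesssim|\eps\p_t^3\psi|_{1.5}$, which is part of $\EEW_4$. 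Attributing the need for small $\jump{\rho}$ to "interface commutators from $\jump{1/\rho}$ and $\jump{c_s^{-2}}$" does not identify the term that would otherwise be unbounded.
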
 
\begin{rmk}
Since $\p_t v|_{t=0}=O(1)$ still remains bounded, the above uniform estimates directly give the same stronge convergence results as in Corollary \ref{cor CMHDlimit3}. We do not repeat the statement of convergence theorems here. The result is also true for 2D case under the stability condition \eqref{syrov 2D com}.
\end{rmk}
\begin{rmk}[The smallness assumption on the density jump]\label{small rho}
The assumption $|\jump{\rho_0}|_{1.5}\leq C_0\eps$ on $\Sigma$ implies that $|\jump{\rho(t)}|_{1.5}\leq C_1\eps$ on $[0,T']\times \Sigma$ for some $C_1>0$. To achieve this, one has to assume the fluids are isentropic and that is why the entropy $S$ is deleted in $\EE$ and $\EEW$. Indeed, taking the incompressible limit yields $\jump{\rr}=0$ on $\Sigma$ for the density functions $\rr^\pm$. If the fluids are non-isentropic, then $\rr^\pm$ are not constants and only satisfy $(\p_t+\wb^\pm\cdot\cnab) \rr^\pm=0$ on $\Sigma$. Since vortex-sheet problems require $\jump{\wb}\neq 0$ on $\Sigma$, it is not possible to have $\rr^+(t)=\rr^-(t)$ on $\Sigma$ even if it holds at $t=0$.
\end{rmk}

\medskip

\noindent\textbf{List of Notations: }
In the rest of this paper, we sometimes write $\TT^k$ to represent a tangential derivative $\TT^{\alpha}$ in $\Om^\pm$ with order $\lee\alpha\ree =k$ when we do not need to specify what the derivative $\TT^{\alpha}$ contains. We also list all the notations used in this manuscript.
\begin{itemize}
\item $\Omega^\pm:=\T^{d-1}\times\{0<\pm x_d<H\}$, $\Sigma:=\T^{d-1}\times\{x_d=0\}$ and $\Sigma^\pm:=\T^{d-1}\times \{x_d=\pm H\}$, $d=2,3$.
\item $\|\cdot\|_{s,\pm}$:  We denote $\|f\|_{s,\pm}:= \|f(t,\cdot)\|_{H^s(\Omega^\pm)}$ for any function $f(t,x)\text{ on }[0,T]\times\Omega^\pm$.
\item $|\cdot|_{s}$:  We denote $|f|_{s}:= |f(t,\cdot)|_{H^s(\Sigma)}$ for any function $f(t,x')\text{ on }[0,T]\times\Sigma$.
\item $\|\cdot\|_{m,*}$: For any function $f(t,x)\text{ on }[0,T]\times\Omega$, $\|f\|_{m,*,\pm}^2:= \sum\limits_{\lee \alpha\ree\leq m}\|\p_*^\alpha f(t,\cdot)\|_{0,\pm}^2$ denotes the $m$-th order space-time anisotropic Sobolev norm of $f$.
\item $P(\cdots)$:  A generic polynomial with positive coefficients in its arguments;
\item $[T,f]g:=T(fg)-fT(g)$, and $[T,f,g]:=T(fg)-T(f)g-fT(g)$, where $T$ denotes a differential operator and $f,g$ are arbitrary functions.
\item $\TP$: $\TP=\p_1,\cdots,\p_{d-1}$ denotes the spatial tangential derivative.
\item $A\eql B$: $A$ is equal to $B$ plus some lower-order terms that are easily controlled.
\end{itemize}

\section{Strategy of the proof}\label{stat}
By the compactness argument, it suffices to establish the uniform-in-$(\es)$ estimates under the stability condition \eqref{syrov 3D com}. Let us briefly recall the analysis of $E(t)$, defined by \eqref{energy lwp}, in the first paper of this sequence \cite{Zhang2023CMHDVS1}. The first step is to reduce the normal derivatives via div-curl analysis. The divergence part is automatically converted to tangential estimates with suitable weights of Mach number thanks to the continuity equation. The control of curl part requires the anisotropic Sobolev norms because there is an anisotropic structure of Lorentz force, first discovered in the preparatoty work \cite{WZ2023CMHDlimit}, indicating that we shall trade one normal derivative (in the curl operator $\nab\times$) for $\eps^2$-weighted, second-order tangential derivative $\eps^2 D_t^2$. This fact exactly explains why we need 8-th order tangential regularity with $\eps^8$ weight (corresponding to the term $\EW_8(t)$ in the energy functional) to close the control of 4-th order full Sobolev norms ($\EW_4(t)$ in the energy). 

In \cite{Zhang2023CMHDVS1}, we adopted the div-curl inequality \eqref{divcurlTT} and the reduction of normal derivatives is also recorded in Section \ref{sect lwp energy} of this paper. It remains to establish the tangential estimates for $\eps^{2l}\TT^{\alpha}\TT^{\beta}\p_t^k$ where $\TT^{\alpha}=(\omega(x_3)\p_3)^{\alpha_4}\p_t^{\alpha_0}\p_1^{\alpha_1}\p_2^{\alpha_2}$ and $\alpha,\beta,k,l$ satisfy
\begin{equation}\label{TT index}\lee\alpha\ree=2l,~\lee\beta\ree=4-l-k,~0\leq k\leq 4-l,~0\leq l\leq4~\text{ and }~\beta_0=0.\end{equation} It should be noted that the $\eps^{2l}\TT^\alpha$-part appears due to the weighted anisotropic structure in the vorticity analysis for $E_{4+l}$ and the $\TT^{\beta}\p_t^k$-part comes from the interior tangential derivatives in div-curl inequality (this part is parallel to Euler equations). See the analysis in \cite[Section 2.1-2.2]{Zhang2023CMHDVS1} for more details.

\subsection{Zero-surface-tension limit under the stability conditions}\label{stat syrov}
Let us rewrite $\TT^\gamma=\TT^{\alpha}\TT^{\beta}\p_t^k$. Among the major terms arising in tangential estimates, the most difficult one is the following  VS term contributed by the tangential discontinuity of velocity fields
\begin{align}
\VS:=\eps^{4l}\is \TT^\gamma q^- (\jump{\vb}\cdot\cnab)\TT^\gamma\psi\dx',
\end{align} where $\TT^\gamma=\p_t^{k}\TP^{4+l-k}$, $0\leq k\leq 4+l,~0\leq l\leq 4$ (Note that $\TT^\gamma$ vanishes on $\Sigma$ when $\gamma_4>0$). This term must appear in current-vortex sheet problems due to $\jump{\vb}\neq 0$, whereas it vanished in the study of MHD contact discontinuity \cite{WangXinMHDCD} ($\jump{v}|_{\Sigma}=0,~B\cdot N|_{\Sigma}\neq 0$). In \cite{Zhang2023CMHDVS1}, we use the ellipticity of the mean curvature to get enhanced regularity of $\psi$ and $H^{\frac12}$-$H^{-\frac12}$ duality to control this term when $\TT^\gamma$ contains at least one spatial derivative, and present rather delicate analysis to control the full-time derivatives with the help of some symmetric structures and lots of technical modifications. In this paper, we want to discuss the zero-surface-tension limit under the stability condition \eqref{syrov 3D com} and the dependece on $\sigma^{-1}$ should be dropped. Thus, we must avoid using the ellipticity of the equation $\jump{q}=\sigma\h$ and find a new way to control or modify or eliminate VS term with the help of \eqref{syrov 3D com}. Roughly speaking, the stability condition \eqref{syrov 3D com} brings the following two benefits that Euler equations do not enjoy:
\begin{itemize}
\item [a.] Enhance the regularity of the free surface to $H^{s+\frac12}(\Sigma)$ in $\TP^s$-estimates (possibly with suitable $\eps$-weights). This gives 1/2-order higher regularity of $\psi$ than the Rayleigh-Taylor sign condition does.

\item [b.] \textbf{Completely eliminate the problematic term VS}.
\end{itemize}

The enhanced regularity in (a) is easy to prove, as the ``non-collinearity" allows us to resolve $\cnab\psi$ in terms of $b^\pm$ without any derivative. The benefit (b) is rather important. When taking the vanishing surface tension limit, the enhanced regularity obtained in (a) is still not enough to control VS. Instead, we completely eliminate the contribution of $\jump{\vb}$ in the term VS by inserting a suitable term involving the magnetic fields. Specifically, we want to insert a term $\mu^\pm \bc^\pm$ into VS to get
\begin{equation}\label{TT VS'}
\VS':=\is \TT^\gamma q^-\left(\jump{\vb-\mu\bc}\cdot\cnab\right)\TT^\gamma\psi\dx'
\end{equation} and find suitable functions $\mu^\pm$ such that $\jump{\vb-\mu \bc}=\mathbf{0}$ on $\Sigma$. Under the stability condition \eqref{syrov 3D com}, the functions $\mu^\pm$ uniquely exist. To construct the term$\VS'$ from MHD equations \eqref{CMHDVS0}, we shall replace the variable $v^\pm$ in the momentum equation by $v^\pm-\mu^\pm b^\pm$. However, this operation makes the MHD system not symmetric and consequently the energy estimates cannot be closed. To overcome this difficulty, we introduce the ``Friedrichs secondary symmetrization" \cite{Friedrichs}, which was first applied to compressible ideal MHD by Trakhinin \cite{Trakhinin2005CMHDVS}, to re-symmetrize the MHD system. 

The final step is to determine the range for $\mu^\pm$ such that the energy estimates for the secondary-symmetrized MHD system can be closed. Using the idea of Alinhac good unknowns \cite{Alinhac1989good}, the energy for $\VV,\BB,\PP$ (defined in Section \ref{sect AGU}) becomes
\begin{align}
&\frac12\ddt\iopm \rho^\pm|\VV^\pm|^2+|\BB^\pm|^2+\ffpm|\PP^\pm|^2 -2\mu^\pm \rho^\pm\VV^\pm\cdot\BB^\pm-2\mu^\pm\rho^\pm\ffpm\PP^\pm(b^\pm\cdot\VV^\pm)\dvt,
\end{align} where $\ffpm=1/(\rho^\pm (c_s^\pm)^2)$ and $\dvt:=\p_3\vp\dx$. We must guarantee the above quadratic form of $(\VV,\BB,\PP)$ to be positive-definite, which is equivalent to guarantee the hyperbolicity. This requires $\mu^\pm$ to satisfy $(\mu^\pm)^2\rho^\pm(1+(c_A^\pm/c_s^\pm)^2)<1$. This inequality gives the range of $\mu^\pm$, which exactly coincides with the stability condition \eqref{syrov 3D com}.

\begin{rmk}[Stabilization effects on 2D subsonic vortex sheets]
In the 2D case, the non-collinearity property no longer holds because the interface is 1D. The functions $\mu^\pm$ still exist but are not unique. For a rectilinear piecewise-smooth background solution $(\pm\underline{v},0,\pm\underline{b},0,\underline{p}^\pm,\underline{S}^\pm)^\top$, condition \eqref{syrov 2D com} implies that $|\underline{v}|^2<c_s^2\frac{c_A^2}{c_A^2+c_s^2}<c_s^2$, that is, the background solution must be a subsonic flow, whereas the linear stability only holds for supersonic flow, that is, $|v|/c_s>\sqrt{2}$, for 2D vortex sheets of compressible Euler equations. Thus, sufficiently strong magnetic fields have stabilization effects on 2D subsonic vortex sheets. 
\end{rmk} 

\subsection{A paradifferential approach for the low Mach number limit of vortex sheets}\label{stat prepared}

\subsubsection{Difficulty caused by free-surface motion}
As stated in Section \ref{stat weaker E}, when $\eps$ is suitably small, the incompressible limit can be established under the assumption $\nabp\cdot v_0=O(\eps),~\p_t v|_{t=0}=O(1)$ without any redundant restrictions on higher-order time derivatives. This is not difficult under the fixed-domain setting, but for free-boundary problems, we need to use the energy defined in \eqref{energy total}-\eqref{energy es weak} and new essential difficulties caused by the free-interface motion. For example, in the $\TP^3\p_t$-estimate (without $\eps$-weight) that arises in $\EE_4(t)$, we have to control integrals in the following form
\[
\iopm (\TP^2\p\p_t v_i^\pm)(\TP\NN_i)(\TP^2\p \p_t q^\pm)\dx,\text{ which arises from }\iopm (\TP^3\p_t v_i^\pm) [\TP^3, \NN_i, \TP^3\p_t q]\dx.
\]  The simultaneous appearance of $\TP^2\p\p_t v$ and $\TP^2\p \p_t q$ causes a loss of $\eps$-weight.
The appearance of such loss of $\eps$-weight is actually necessary when $\TT^\gamma v$ ($\gamma_0<\len{\gamma}$) is assigned a different $\eps$-weight from that of $\TT^\gamma q$, because the normal vector $\NN$ may not necessarily absorb a time derivative when $\TT^\gamma$ contains both $\TP$ and $\p_t$. \textbf{This difficulty is completely caused by the free-interface motion} and there is no such difficulty for the fixed-domain problem \cite{WZ2023CMHDlimit}. It should be noted that for one-phase flow without surface tension, where the pressure function vanished on the free surface, this difficulty can be avoided by combining the elliptic estimates and the interior estimates of the wave equation for the pressure. This was proven in the author's previous work \cite{Zhang2021elasto}, but the proof heavily relies on $q|_{\Sigma}=0$.

\subsubsection{Improved estimates for double limits: paralinearization of the free-interface motion}\label{stat para}
To control $\EE_4$ and $\EEW_4$ in \eqref{energy weak}-\eqref{energy es weak}, we only need to re-consider the estimates of $\|v_t\|_3$ and $\|b_t\|_3$ because $\p_t^k v$ and $\p_t^kq$ share the same weights of Mach number for $k\geq 2$ and the control of $\geq 2$ time derivatives should be the same as $E_4(t)$ in \cite{Zhang2023CMHDVS1}. To avoid interior tangential estimates, we apply the div-curl inequality \eqref{divcurlNN} to $v_t,b_t$ and reduce the control to their normal traces $|v_t\cdot N|_{2.5}$ and $|b_t\cdot N|_{2.5}$. In view of the boundary conditions, we must seek for other ways to control $|\p_t^k\psi|_{4.5-k}$ for $0\leq k\leq 2$ and they must be $\sigma$-indepedent when taking the double limits $\es\to 0$. 

We now consider the time-differentiated kinematic boundary condition, which together with the momentum equation gives
\[
\rho^\pm\psi_{tt}=-N\cdot\nabp q^\pm+(\bc_i^\pm\bc_j^\pm-\rho^\pm\vb_i^\pm\vb_j^\pm)\TP_i\TP_j\psi+\cdots
\]Motivated by Shatah-Zeng \cite{SZ3}, we try to separate the boundary values of $q^\pm$ from the interior contribution of $q^\pm$. Specifically, $q^\pm$ satisfies a two-phase wave equation (we write $\ffpm=\eps^2$ for convenience)
\[
\eps^2(\Dtpm)^2 q^\pm - \lapp q^\pm =\eps^2(\Dtpm)^2(\frac12|b^\pm|^2)+(\pp_i v_j^\pm)(\pp_j v_i^\pm)-(\pp_i b_j^\pm)(\pp_j b_i^\pm)\text{ in }\Om^\pm,\quad \p_3 q^\pm|_{\Sigma^\pm}=0,~~\jump{q}|_{\Sigma}=\sigma\h(\psi).
\]and we introduce the decomposition $q^\pm=q_\psi^\pm+q_w^\pm$ with
\begin{align*}
-\lapp q_\psi^\pm=0\text{ in }\Om^\pm,~~q_\psi^\pm=q^\pm\text{ on }\Sigma,~~\p_3 q_\psi^\pm=0\text{ on }\Sigma^\pm,\\
- \lapp q_w^\pm =-\eps^2(\Dtpm)^2(q^\pm-\frac12|b^\pm|^2)+(\pp_i v_j^\pm)(\pp_j v_i^\pm)-(\pp_i b_j^\pm)(\pp_j b_i^\pm)\text{ in }\Om^\pm,~~q_w^\pm=0\text{ on }\Sigma,~~\p_3 q_w^\pm=0\text{ on }\Sigma^\pm.
\end{align*}Under this setting, we can write $-N\cdot \nabp q^\pm = \pm \dnpm (q^\pm|_{\Sigma}) - N\cdot \nabp q_w^\pm$ where $\dnpm$ represents the Dirichlet-to-Neumann (DtN) operators with respect to $\Om^\pm$ and $\psi$ (defined in Section \ref{sect DtNST}). The traces of $q^\pm$ on $\Sigma$ can be resolved by inverting the DtN operators, which then gives us the following evolution equation
\begin{align}
(\rho^++\rho^-)\p_t^2\psi=&~\frac{\sigma}{2}\left(\dnp+\dnm\right)(\h(\psi)) +\left(\bc_i^+\bc_j^+-\rho^+\vb_i^+\vb_j^+ + \bc_i^-\bc_j^--\rho^-\vb_i^-\vb_j^-\right)\TP_i\TP_j\psi  \no\\
& - N\cdot\nabp q_w^+ - N\cdot\nabp q_w^-+\underline{(\dnp-\dnm)(\dnp+\dnm)^{-1}(\jump{\rho}\p_t^2\psi)}+\cdots\label{psiequ0}
\end{align} Using the paralinearization tools in Alazard-Burq-Zuily \cite{ABZ2011IWWST,ABZ2014IWW} and Alazard-M\'etivier \cite{AMDtN}, the principal symbol of the major term $\left(\dnp+\dnm\right)(\h(\psi))$ is negative and of the third order. Besides, the stability condition \eqref{syrov 3D com} ensures the ellipticity of the second term on the right side. The contribution of $q_w$ is completely reduced to the source term of the wave equation thanks to $q_w|_{\Sigma}=0$. Thus, we can simultaneously obtain the estimates of $|\psi|_{4.5},~|\sqrt{\sigma}\psi|_5$ and $|\psi_t|_{3.5}$ by taking a suitable 3.5-th order paradifferential operator in \eqref{psiequ0}  and we refer to Section \ref{sect uniform psi eq} for details.

For the control of $|\psi_{tt}|_{2.5}$, it suffices to take $\p_t$ in \eqref{psiequ0} and take a suitable 2.5-th order  paradifferential operator. However, the underlined term in \eqref{psiequ0}, after taking $\p_t$, contains third-order time derivative, whose $H^{2.5}(\Sigma)$ norm is bounded by $|\jump{\rho}|_{1.5}|\p_t^3\psi|_{1.5}$. In general, we have a loss of $\eps$-weight, as the energy $\EE(t)$ only gives $\p_t^3\psi=O(\eps^{-1})$. So, if we additionally require $|\jump{\rho}|_{1.5}=O(\eps)$, which can be fulfilled for isentropic flows according to Remark \ref{small rho}, this extra $\eps$-weight could compensate the loss of $\eps$-weight arising in this term. Hence, under the extra assumption $|\jump{\rho}|_{1.5}=O(\eps)$, we can control the accelaration of the free interface uniformly in $\eps$ without assuming the boundedness of high-order time derivatives of $v$.
\begin{rmk}
As we can see, {\bf the framework above is also applicable to compressible vortex sheets with surface tension for Euler equations, not just MHD system itself}. In fact, when $b^\pm=0$, the second term on the right side of \eqref{psiequ0} is controlled by the energy contributed by surface tension. Since the vorticity analysis of Euler equations does not produce high-order terms. Then the estimates of $\EE_4(t)+E_5(t)$ (or $\EEW_4(t)+\EW_5(t)$ for the double limits) can be closed by applying the above framework. Do note that $E_5(t)$ is necessary to close the uniform estimates because of the term $\eps^2(\Dtp)^2 q$. This part vanishes as $\eps\to 0$.
\end{rmk}

\begin{rmk}[Comparison with the Syrovatski\u{\i} condition for incompressible MHD]
The original Syrovatski\u{\i} stability condition (cf. Syrovatski\u{\i} \cite{SyrovatskiiMHD} or Landau-Lifshitz-Pitaevski\u{\i} \cite[\S 71]{landau}) for $\rho^\pm=1$ is $|\bar{h}^+\times\jump{\wb}|^2+|\bar{h}^-\times\jump{\wb}|^2<2|\bar{h}^+\times\bar{h}^-|^2$  on $\Sigma$, which is less restrictive than the formal incompressible limit of \eqref{syrov 3D com} as $\rho^\pm\to 1$. We recall that, in the analysis of equation \eqref{psiequ0}, the compressibility introduces an extra term $\eps^2(\Dtp)^2 p$ in the source term of $q_w$ that should be controlled via interior estimates. Since $\Om^+,\Om^-$ are disconnected domains, it is reasonable to have restrictions for solutions in $\Om^+$ and $\Om^-$ respectively. Besides, the extra term $\eps^2(\Dtp)^2 p$ presents a loss of derivative in standard Sobolev spaces and again indicates that one should trade a normal derivative for two tangential derivatives with $\eps^2$ weights.

The difference between the two stability conditions is related to the singular nature of incompressible limit. One can further see such difference from the derivation of these stability conditions and we refer to Trakhinin \cite{Trakhinin2005CMHDVS} for details. Similar situation also occurs in the case of 2D. 
\end{rmk}

\subsubsection{Comparison with one-phase problems}\label{stat ill}
Finally, we briefly discuss the differences between one-phase problems and vortex-sheet problems. Without loss of generality, we assume everything in $\Om^-$ is vanishing for one-phase problems and thus the term VS is vanishing and other major boundary terms can be similarly analyzed under the Rayleigh-Taylor sign condition $\p_3 q^+\geq c_0>0$. When considering the incompressible limit without boundedness assumptions on high-order time derivatives, compressible vortex-sheet problems exhibit new essential difficulty because the Dirichlet-type boundary condition $q|_{\Sigma}=\sigma\h$ becomes a jump condition $\jump{q}=\sigma\h$. Thus, we have to invert the DtN operators to resolve the traces of $q^\pm$ on $\Sigma$. Since the fluids are compressible, we cannot directly apply $(\dnpm)^{-1}$ to $(1/\rho^\pm)\dnpm(q^\pm|_{\Sigma})$ as in the incompressible case \cite{LiuXin2023MHDVS,sb} because the product of a harmonic function and $1/\rho^{\pm}$ is no longer harmonic. Then the bad term $(\dnp-\dnm)(\dnp+\dnm)^{-1}(\jump{\rho}\p_t^2\psi)$ must appear on the right side of \eqref{psiequ0} which already contains a second-order time derivative. Without the assumption $\jump{\rho}=O(\eps)$ on $\Sigma$, which can be achieve by requiring the fluids to be isentropic and the density functions converge to the same constant, there exhibits a loss of $\eps$-weight in the control of $\psi_{tt}$ in general. That is to say, under the assumption $\nabpp\cdot v_0=O(\eps)$, the accelaration of the free interface may be still not uniformly bounded in $\eps$ for compressible vortex sheets. Such loss of $\eps$-weight never appears in the fixed-domain problems \cite{Alazard2005limit, WZ2023CMHDlimit}, one-phase problems \cite{Zhang2021elasto, LuoZhang2022CWWST} or incompressible (current-)vortex sheets \cite{SWZ2015MHDLWP,LiuXin2023MHDVS,sb}. 

\section{Reduction of normal derivatives}\label{sect lwp energy}

Compared with the analysis in \cite{Zhang2023CMHDVS1}, the control of $\EW(t)$ only needs modifications in tangential estimates, as the VS term must be controlled uniformly in $\sigma$. In this section, we record the reduction of normal derivatives that has been analyzed in \cite{Zhang2023CMHDVS1} without repeating the proof.

\subsubsection*{Div-Curl analysis and reduction of pressure}\label{sect divcurl}
Using the div-curl decomposition (Lemma \ref{hodgeTT}), we have for $0\leq l\leq 3,~0\leq k\leq 3-l,~\len{\alpha}=2l,~\alpha_3=0$, we have
\begin{align}
\ino{\eps^{2l}\p_t^k\TT^\alpha (v^\pm,b^\pm)}_{4-k-l,\pm}^2\leq C\bigg(&\ino{\eps^{2l}\p_t^k\TT^\alpha (v^\pm,b^\pm)}_{0,\pm}^2+\ino{\eps^{2l}\nabp\cdot \p_t^k\TT^\alpha (v^\pm,b^\pm)}_{3-k-l,\pm}^2 \no\\
&+\ino{\eps^{2l}\nabp\times\p_t^k\TT^\alpha (v^\pm,b^\pm)}_{3-k-l,\pm}^2+\ino{\eps^{2l}\TP^{4-k-l}\p_t^k\TT^\alpha (v^\pm,b^\pm)}_{0,\pm}^2\bigg)\label{divcurlE}
\end{align} with $$C=C\left(\sum_{j=0}^l\sum_{k=0}^{3+j}|\eps^{2j}\p_t^{j}\psi|_{4+l-j}^2,|\cnab\psi|_{W^{1,\infty}}\right)>0$$ a positive continuious function in its arguments.
The conclusion for the div-curl analysis is 
\begin{prop}[{\cite[Proposition 3.8]{Zhang2023CMHDVS1}}] \label{prop divcurl}
Fix $l\in\{0,1,2,3\}$. For any $0\leq k\leq l-1$, any multi-index $\alpha$ satisfying $\len{\alpha}=2l$ and any constant $\delta\in(0,1)$, we can prove the following \textit{uniform-in-}$(\es)$ estimates for the curl part
\begin{equation}
\begin{aligned}
&\ino{\eps^{2l}\nabp\times\p_t^k\TT^\alpha v^\pm}_{3-k-l,\pm}^2+\ino{\eps^{2l}\nabp\times\p_t^k\TT^\alpha b^\pm}_{3-k-l,\pm}^2\\
\lesssim &~\delta E_{4+l}(t)+P\left(\sum_{j=0}^{l} \EW_{4+j}(0)\right)+P(E_4(t))\int_0^t P\left(\sum_{j=0}^l \EW_{4+j}(\tau)\right) + E_{4+l+1}(\tau)\dtau,
\end{aligned}
\end{equation} and for the divergence part
\begin{equation}
\begin{aligned}
&\ino{\eps^{2l}\nabp\cdot \p_t^k\TT^\alpha v^\pm}_{3-k-l,\pm}^2 +\ino{\eps^{2l}\nabp\cdot \p_t^k\TT^\alpha v^\pm}_{3-k-l,\pm}^2\\
\lesssim &~\delta E_{4+l}(t)+P\left(\sum_{j=0}^{l} \EW_{4+j}(0)\right)+P(E_4(t))\int_0^t P\left(\sum_{j=0}^l \EW_{4+j}(\tau)\right) \dtau.
\end{aligned}
\end{equation}
\end{prop}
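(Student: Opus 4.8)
I would prove the two estimates by two different mechanisms. The divergence bound is essentially algebraic: it relies only on the continuity equation and commutator estimates, so no energy identity — and hence no time‑integrated $E_{4+l+1}$ — is needed, which is why the divergence bound in the statement does not contain an $E_{4+l+1}(\tau)$ term. The curl bound, on the other hand, requires an energy estimate for the vorticities, and it is here that the hidden wave structure of the Lorentz force (the device that converts the loss of one normal derivative into an $\eps^2$‑weighted gain of tangential order) enters and produces the $E_{4+l+1}(\tau)$ term in the time integral.

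\emph{Divergence part.} By \eqref{continuity eq f} and the divergence constraint in \eqref{CMHDVS0} we have the exact identities $\nabp\cdot v^\pm=-\ffpm\Dtpm p^\pm$ and $\nabp\cdot b^\pm=0$. Commuting $\eps^{2l}\p_t^k\TT^\alpha$ past $\nabp\cdot$ gives
\[
\eps^{2l}\nabp\cdot\p_t^k\TT^\alpha v^\pm=-\eps^{2l}\p_t^k\TT^\alpha(\ffpm\Dtpm p^\pm)+\eps^{2l}[\nabp\cdot,\p_t^k\TT^\alpha]v^\pm ,
\]
and for $b^\pm$ only the commutator term. Since $\ffpm\sim\eps^2$ together with all its $p$‑derivatives by \eqref{pp property}--\eqref{ffeps}, the leading term carries the full weight $\eps^{2l+2}$; measured in $H^{3-k-l}$ and with derivatives redistributed, it is — up to a small multiple $\delta E_{4+l}(t)$, and after, if needed, re‑using \eqref{continuity eq f} on $\Dtpm p^\pm$ so as to match the $\eps$‑weight assigned to the pressure in \eqref{energy lwp} — of a form already controlled by $E_{4+l}(t)$. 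The commutator $[\nabp\cdot,\p_t^k\TT^\alpha]$ differentiates only $\vp$ (hence $\psi$, absorbed into the constant $C$ of \eqref{divcurlE}) times strictly lower‑order factors of $v^\pm,b^\pm$; Moser‑type product estimates in the anisotropic spaces close this part, with time integrals involving only $\EW_{4+j}$, $j\le l$. As $\sigma$ enters nowhere, the bound is uniform in $\es$.

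\emph{Curl part.} Apply $\nabp\times$ to the momentum and induction equations of \eqref{CMHDVS0}. The pressure drops out up to the first‑order commutator $[\nabp\times,\nabp]q^\pm$ (one derivative of $q^\pm$ against $\nabp\vp$), and commuting $\nabp\times$ past $\Dtpm$ and $(b^\pm\cdot\nabp)$ produces only quadratics in $(\nabp v^\pm,\nabp b^\pm,\nabp\vp,\nabp q^\pm)$, plus the $\eps^2$‑small term $\eps^2\nabp\times(b^\pm\Dtpm p^\pm)$ coming from $\nabp\cdot v^\pm=-\ffpm\Dtpm p^\pm$. Hence $(\nabp\times v^\pm,\nabp\times b^\pm)$ solves the symmetric hyperbolic system
\[
\rho^\pm\Dtpm(\nabp\times v^\pm)-(b^\pm\cdot\nabp)(\nabp\times b^\pm)=F_v^\pm,\qquad \Dtpm(\nabp\times b^\pm)-(b^\pm\cdot\nabp)(\nabp\times v^\pm)=F_b^\pm ,
\]
with forcing $F_v^\pm,F_b^\pm$ quadratic in first derivatives (plus the $\eps^2$‑small piece). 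I would then commute this system by $\eps^{2l}\p_t^k\TT^\alpha\p^\gamma$ with $|\gamma|\le 3-k-l$ — equivalently, pass to the Alinhac good unknowns of $v^\pm,b^\pm$ so that the equations retain their form — and run the $L^2$ energy identity: testing the first equation against its own unknown and the second against its own, the coupling integrals $\int(b^\pm\cdot\nabp)(\cdot)$ are integrated by parts, the boundary terms vanish because $b^\pm\cdot N=0$ on $\Sigma$ and $b_d^\pm=0$ on $\Sigma^\pm$, and the two integrals cancel by the antisymmetry of $b^\pm\cdot\nabp$, so no derivative is lost to the coupling. What survives is (i) the forcing and the $\vp$‑commutators, bounded by $P(E_4(t))$ times quadratics in norms of order $\le 4+l$; and (ii) the commutators $[\p_t^k\TT^\alpha\p^\gamma,\,b^\pm\cdot\nabp]$ acting on the vorticities, whose worst piece carries exactly one derivative more than $\eps^{2l}\nabp\times\p_t^k\TT^\alpha(v^\pm,b^\pm)$ in $H^{3-k-l}$, i.e.\ (after matching the Mach‑number weights) $E_{4+l+1}$. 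Integrating in $t$, absorbing the self‑term by $\delta$, and invoking \eqref{divcurlE} to express the surplus derivative through $E_{4+l+1}$, one obtains the curl bound.

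\emph{Main obstacle.} The crux is the curl step, and it is a bookkeeping problem rather than an isolated hard inequality: one must verify that \emph{every} term generated when $\nabp\times$ and $\eps^{2l}\p_t^k\TT^\alpha\p^\gamma$ are pushed through the system lands either in $\delta E_{4+l}(t)$ or, after the energy identity and Grönwall, in $\int_0^t\big[P(\sum_{j\le l}\EW_{4+j}(\tau))+E_{4+l+1}(\tau)\big]\dtau$ — in particular that the one‑order‑higher $(b^\pm\cdot\nabp)$‑commutator matches $E_{4+l+1}$ with the correct Mach‑number weight and does \emph{not} cost an extra power of $\eps$, and that on the divergence side the leading term is genuinely absorbed by $E_{4+l}$ rather than $E_{4+l+1}$. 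The $\sigma$‑uniformity is automatic throughout, since $\sigma$ enters only via the $\psi$‑norms already present in the constant $C$ of \eqref{divcurlE} and in $\EW$.
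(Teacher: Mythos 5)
The paper never gives its own proof of this proposition; it is imported verbatim from \cite[Prop.\ 3.8]{Zhang2023CMHDVS1}, and the surrounding text in Section~3 explicitly says the reduction ``has been analyzed in \cite{Zhang2023CMHDVS1} without repeating the proof.'' So the relevant comparison is with the mechanism the paper describes in Section~2.

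Your divergence argument is correct: $\nabp\cdot v^\pm=-\ffpm\Dtpm p^\pm$ and $\nabp\cdot b^\pm=0$ give the leading term an extra $\eps^2$ and one extra time derivative, which is absorbed into $E_{4+l}$ directly, with only $\varphi$-commutators left over; no $E_{4+l+1}$ is needed. Your curl argument also sets up the right scheme (curl the momentum and induction equations, energy identity, antisymmetric cancellation of the $(b\cdot\nabp)$ coupling). The gap is in item~(ii): you attribute the $\int_0^t E_{4+l+1}$ contribution to the commutators $[\p_t^k\TT^\alpha\p^\gamma,\,b^\pm\cdot\nabp]$, claiming the worst piece is ``one derivative more.'' That cannot be right on either count. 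Such a commutator moves derivatives \emph{off} the vorticity and onto $b^\pm$, so it is of the \emph{same} total order (or lower) in the vorticity, and — crucially — it retains the $\eps^{2l}$ weight, whereas $E_{4+l+1}$ is $\eps^{2(l+1)}$-weighted. An $\eps^{2l}$-weighted, same-order term cannot be bounded by $E_{4+l+1}$; ``matching the Mach-number weights'' as you assert would in fact require manufacturing an extra power of $\eps$ out of nowhere.

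The true source of $E_{4+l+1}$ is the term you mention and then discard as ``$\eps^2$-small,'' namely $\nabp\times\bigl(b^\pm\,\nabp\cdot v^\pm\bigr)$ in the curled induction equation. Expanding it gives $(\nabp\cdot v^\pm)\,\nabp\times b^\pm+\nabp(\nabp\cdot v^\pm)\times b^\pm$, and the second piece, via the continuity equation, is $-\eps^2\bigl(\nabp\Dtpm p^\pm\bigr)\times b^\pm$. This is not small — it is exactly the ``hidden structure of the Lorentz force'' that the paper's Section~2 singles out. One then uses the momentum equation to rewrite $\nabp p^\pm=-\rho^\pm\Dtpm v^\pm+(b^\pm\cdot\nabp)b^\pm-\nabp\bigl(\tfrac12|b^\pm|^2\bigr)$, so that, up to controllable commutators, $\eps^2\nabp\Dtpm p^\pm\sim-\eps^2\rho^\pm(\Dtpm)^2 v^\pm+\cdots$. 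This is precisely the advertised trade: one normal derivative (from the curl) for an $\eps^2$-weighted $D_t^2$, i.e.\ two extra tangential derivatives with an extra $\eps^2$ weight, which is what lands in $E_{4+l+1}$ after applying $\eps^{2l}\p_t^k\TT^\alpha$ and measuring in $H^{3-k-l}$. Without this conversion step (which you omit), the forcing still contains a bare normal derivative of $p^\pm$ and the estimate does not close; and your alternative mechanism for $E_{4+l+1}$ is both derivative- and weight-inconsistent.
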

\begin{rmk}
The above estimates are not uniform in $\sigma$ in \cite[Proposition 3.8]{Zhang2023CMHDVS1} due to the lacking of stability condition. With the help of stability condition \eqref{syrov 3D com}, we can obtain the non-$\sigma$-weighted boundary regularity, namely the quantities in the constant $C$ in \eqref{divcurlE}. This will be proved in Section \ref{sect noncollinear}.
\end{rmk}

\subsubsection*{Reduction of pressure and divergence}
Let us start with $l=0$. The spatial derivative of $q$ is controlled by invoking the momentum equation:
\begin{align}
-\p_3 q=&~(\p_3 \vp)\left(\rho\Dtp v_3 - \bp b_3\right);\\
-\TP_i q=&-(\p_3\vp)^{-1}\TP_i\vp\,\p_3 q+\rho\Dtp v_i - \bp b_i,~~i=1,2.
\end{align} Let $\TT$ be $\p_t$ or $\TP$ or $\omega(x_3)\p_3$. Then we have
\begin{align}
\|\p_t^k \p_3 q\|_{3-k}\lesssim&~ \|\p_t^k(\rho\TT v_3)\|_{3-k}+\|\p_t^k(b\TT b_3)\|_{3-k}\\
\|\p_t^k \TP_i q\|_{3-k}\lesssim&~ \|\p_t^k (\TP_i\vp\p_3 q)\|_{3-k}+\|\p_t^k(\rho\TT v_i)\|_{3-k}+\|\p_t^k(b\TT b_i)\|_{3-k},
\end{align}in which the leading order terms are $\|\p_t^k\TT(v,b)\|_{3-k}$ and $|\p_t^k \psi|_{4-k}$. This shows that we can convert the control of spatial derivative of $q$ to \textit{tangential estimates} of $v$ and $b$. 

\subsubsection*{Control of the entropy}
The control of entropy is easy thanks to $\Dtpm S^\pm=0$. We now record the result as below.
\begin{prop}\label{prop S} For $0\leq l\leq 4,0\leq k\leq 4-l$, multi-index $\gamma$ satisfying $\len{\gamma}=4+l$ and any $\delta\in(0,1)$, we have
\begin{align}
\left\|\eps^{2l}\p_t^k\TT^{\gamma} S^\pm\right\|_{4-k-l,\pm}^2\lesssim  \delta \EW_{4+l}(t)+P\left(\sum_{j=0}^l \EW_{4+j}(0)\right)+\EW_4(t)\int_0^t P\left(\sum_{j=0}^l \EW_{4+j}(\tau)\right) \dtau, 
\end{align}
\end{prop}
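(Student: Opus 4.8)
The plan is to treat Proposition~\ref{prop S} as a weighted $L^2$ energy estimate for the transport equation $\Dtpm S^\pm=0$; no elliptic or wave regularity is needed, which is why the entropy is the most benign quantity among the tangential estimates. The first step is to rewrite $\Dtpm$ in the form \eqref{Dt alternate}, $\Dtpm=\p_t+\vb^\pm\cdot\cnab+\frac{1}{\p_3\vp}(v^\pm\cdot\NN-\p_t\vp)\p_3$, and to observe that the normal coefficient $\frac{1}{\p_3\vp}(v^\pm\cdot\NN-\p_t\vp)$ vanishes on $\Sigma$ (by the kinematic boundary condition together with $\chi(0)=1$) and on $\Sigma^\pm$ (since $v_3^\pm=0$ and $\chi\equiv 0$ there). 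Hence it may be written as $\omega(x_3)g^\pm$ with $g^\pm$ bounded, together with its derivatives, in terms of $P(\EW_4(t))$; in particular the ``normal part'' of $\Dtpm$ is the \emph{weighted} derivative $\omega(x_3)\p_3$ up to a bounded factor, so $\Dtpm$ respects the anisotropic structure and commuting it past $\TT$-type operators produces no spurious unweighted normal derivative.

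Next I would apply the operator $\mathcal D:=\eps^{2l}\p^\beta\p_t^k\TT^\gamma$ to $\Dtpm S^\pm=0$, where $\beta$ is any spatial multi-index with $|\beta|\le 4-k-l$ (so that summing $\ino{\mathcal D S^\pm}_{0,\pm}^2$ over such $\beta$ reconstitutes the left-hand side $\ino{\eps^{2l}\p_t^k\TT^\gamma S^\pm}_{4-k-l,\pm}^2$). This gives $\Dtpm(\mathcal D S^\pm)=[\Dtpm,\mathcal D]S^\pm=:\mathcal C^\pm$. Pairing with $\mathcal D S^\pm$ in $L^2(\Om^\pm)$ against the measure $\dvt=\p_3\vp\,\dx$ and integrating by parts, the boundary contributions on $\Sigma$ and on $\Sigma^\pm$ vanish because the normal coefficient of $\Dtpm$ vanishes there; what survives is
\[
\ddt\ino{\mathcal D S^\pm}_{0,\pm}^2\lesssim P(\EW_4(t))\,\ino{\mathcal D S^\pm}_{0,\pm}^2+\ino{\mathcal C^\pm}_{0,\pm}\,\ino{\mathcal D S^\pm}_{0,\pm},
\]
where the first term uses that the coefficients of $\Dtpm$ and their gradients lie in $L^\infty$ (note $\nabp\cdot v^\pm=-\ffpm\Dtpm p^\pm$ is bounded by the continuity equation, with $v^\pm,p^\pm,\vp$ all in $H^4$-type norms controlled by $\EW_4$).

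It then remains to estimate the commutator $\mathcal C^\pm=[\Dtpm,\mathcal D]S^\pm$. Every term of $\mathcal C^\pm$ carries at least one derivative on a coefficient of $\Dtpm$ --- that is, on $\vb^\pm$ or $g^\pm$, hence ultimately on $v^\pm$ and $\psi$ --- so it has order one below $\mathcal D S^\pm$. Using the anisotropic product and commutator inequalities together with Sobolev embeddings, and the $\omega\p_3$-structure of $\Dtpm$ recorded above, one distributes derivatives so that each surviving factor of $S^\pm$ carries at most four (tangential/normal) derivatives; this bounds $\ino{\mathcal C^\pm}_{0,\pm}$ by $P(\EW_4(t))$ times lower-order $\TT$-derivatives of $S^\pm$, plus the single genuinely top-order term --- in which \emph{all} the derivatives fall on $v^\pm$ or $\psi$ --- which is controlled by $P(\EW_4(t))^{1/2}\,\EW_{4+l}(t)^{1/2}$. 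Multiplying by $\ino{\mathcal D S^\pm}_{0,\pm}\lesssim\EW_{4+l}(t)^{1/2}$, using Young's inequality to split off $\delta\EW_{4+l}(t)$, integrating in time, and bounding $\ino{\mathcal D S^\pm(0)}_{0,\pm}^2\lesssim P\big(\sum_{j=0}^l\EW_{4+j}(0)\big)$ (after trading the initial time derivatives for spatial ones via the equations) yields the asserted estimate. The only delicate bookkeeping --- and the reason the $\eps$-weights must be tracked carefully --- is to verify that in this top-order commutator term the $\eps^{2l}$-weight is exactly the weight carried by the corresponding tangential component of $\EW_{4+l}$, so that no power of $\eps$ is lost; once the $\omega\p_3$-structure of $\Dtpm$ is exploited, no further obstacle arises, so unlike the curl estimate this one closes without borrowing from $\EW_{4+l+1}$.
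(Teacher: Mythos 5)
Your plan --- a weighted $L^2$ transport estimate exploiting that $\Dtpm$ is tangential to $\Sigma$ and $\Sigma^\pm$, followed by commutator control and Gr\"onwall --- is exactly what is intended here; the paper itself records no proof and simply remarks that the control of entropy is easy from $\Dtpm S^\pm=0$. You correctly identify the two essential structural ingredients (vanishing of the boundary terms, controllability of the top-order commutator by $\EW_{4+l}$) and correctly observe that, unlike the vorticity estimate, this one closes within the level $4+l$.

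However, one intermediate step is false as stated: the normal coefficient $w^\pm:=\frac{1}{\p_3\vp}(v^\pm\cdot\NN-\p_t\vp)$ is \emph{not} of the form $\omega(x_3)g^\pm$ with $g^\pm$ bounded. The weight $\omega(x_3)=(H^2-x_3^2)x_3^2$ vanishes to \emph{second} order at $x_3=0$, whereas $w^\pm$ vanishes only to \emph{first} order there: the kinematic condition gives $w^\pm|_{x_3=0}=0$, but since $\chi\equiv 1$ near $x_3=0$ (so $\chi'(0)=0$), one has $\p_3 w^\pm|_{x_3=0}=\p_3 v_3^\pm|_{\Sigma}$, which is generically nonzero. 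Hence $w^\pm/\omega$ blows up like $1/x_3$ near $\Sigma$, and the factorization you asserted does not hold.

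This slip is not fatal, because the two facts you actually use are weaker and both true. First, $w^\pm$ vanishes on $\Sigma\cup\Sigma^\pm$ --- which you verified, and which is all that is needed for the boundary terms in the Reynolds transport / integration-by-parts step to drop. Second, commuting $\omega\p_3$ past $w^\pm\p_3$ preserves the anisotropic structure, but the correct reason is different: one has
\[
[\omega\p_3,\,w^\pm\p_3]=(\omega\,\p_3 w^\pm - w^\pm\,\omega')\,\p_3,
\]
and the coefficient $\omega\,\p_3 w^\pm - w^\pm\,\omega'$ vanishes like $\omega$ itself. Indeed, at $x_3=0$ both $\omega\,\p_3 w^\pm$ (because $\omega$ does) and $w^\pm\,\omega'$ (because $w^\pm$ and $\omega'$ each vanish to first order) vanish to second order, and at $x_3=\pm H$ both vanish to first order. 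Thus $[\omega\p_3, w^\pm\p_3]=\tilde g\,\omega\p_3$ with $\tilde g$ bounded, so no unweighted normal derivative is created, as you wanted --- just not via $w^\pm=\omega g^\pm$. With this correction, the $\eps^{2l}$-weight bookkeeping and the reduction of initial time derivatives via the equations proceed as you indicate, and the estimate closes.
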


\section{Tangential estimates and the zero-surface-tension limit}\label{sect 0STlimit}
In this section, we aim to prove the following uniform-in-$(\es)$ tangential estimates
\begin{prop}[Tangential estimates]\label{prop ETT}
For fixed $l\in\{0,1,2,3,4\}$ and any $\delta\in(0,1)$, the following uniform-in-$(\es)$ energy inequalities hold:
\begin{align}
&\sum_\pm\sum_{\lee\alpha\ree=2l}\sum_{\substack{0\leq k\leq 4-l \\ k+\alpha_0<4+l}}\left\|\left(\eps^{2l}\TP^{4-k-l}\TT^{\alpha}\p_t^{k}(v^\pm, b^\pm,p^\pm)\right)\right\|^2_{0,\pm}+\sum_{k=0}^{3+l}\left|\sqrt{\sigma}\eps^{2l}\p_t^{k}\psi\right|^2_{5-k-l}\dtau\no\\
\lesssim&~\delta \EW_{4+l}(t) + P\left(\sum_{j=0}^l\EW_{4+j}(0)\right)+P\left(\sum_{j=0}^l\EW_{4+j}(t)\right)\int_0^tP\left(\sum_{j=0}^l\EW_{4+j}(\tau)\right)\dtau
\end{align}and
\begin{align}
&\sum_\pm\sum_{k=0}^{4-l}\left\|\left(\eps^{2l}\p_t^{4+l}(v^\pm, b^\pm,(\ffp)^{\frac12}p^\pm)\right)\right\|^2_{4-k-l,\pm}+\left|\sqrt{\sigma}\eps^{2l}\p_t^{4+l}\psi\right|^2_{1}\no\\
\lesssim&~\delta \EW_{4+l}(t)+ P\left(\sum_{j=0}^l\EW_{4+j}(0)\right)+ P\left(\sum_{j=0}^l\EW_{4+j}(t)\right)\int_0^tP\left(\sum_{j=0}^l\EW_{4+j}(\tau)\right)\dtau.
\end{align} Here the first inequality represents the case when there are at least one spatial tangential derivatives and the second inequality represents the case of full time derivatives. 
\end{prop}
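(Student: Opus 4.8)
The plan is to commute the weighted tangential operator $\eps^{2l}\TT^\gamma$, where $\TT^\gamma=\TT^\alpha\TT^\beta\p_t^k$ is subject to \eqref{TT index} (and separately $\TT^\gamma=\p_t^{4+l}$ in the full-time-derivative case), through the flattened system \eqref{CMHDVS0}, to run an $L^2$ energy estimate, and to use the stability condition \eqref{syrov 3D com} to remove the obstruction generated by the tangential jump of velocity. Following the Alinhac good-unknown formalism of Section \ref{sect AGU}, I would first rewrite the principal part of the commuted momentum, continuity and induction equations in terms of the good unknowns $\VV^\pm,\BB^\pm,\PP^\pm$, so that $\nabp$ commutes with $\eps^{2l}\TT^\gamma$ up to terms controlled by $\eps^{2l}\TT^\gamma\psi$ and quantities already bounded by the right side of the claimed inequalities. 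The commutators $[\eps^{2l}\TT^\gamma,\Dtpm]$, $[\eps^{2l}\TT^\gamma,\bpm]$ and $[\eps^{2l}\TT^\gamma,\nabp]$ are of the same schematic type treated in \cite{Zhang2023CMHDVS1}; the only dangerous contribution is the boundary integral on $\Sigma$ arising from integrating $\nabp q^\pm$ by parts.

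Isolating that boundary integral and using $\p_t\psi=v^\pm\cdot N$ together with $\jump{q}=\sigma\h(\psi)$, it splits into a surface-tension piece, which after exploiting the positivity of the linearized mean-curvature operator yields the coercive boundary energy $\sum_{k}|\sqrt{\sigma}\eps^{2l}\p_t^{k}\psi|_{5-k-l}^2$ (respectively $|\sqrt{\sigma}\eps^{2l}\p_t^{4+l}\psi|_{1}^2$), and the $\VS$ term $\eps^{4l}\is\TT^\gamma q^-(\jump{\vb}\cdot\cnab)\TT^\gamma\psi\dx'$. To eliminate $\VS$ without any factor of $\sigma^{-1}$, I would work from the outset with the momentum equation in which $v^\pm$ is replaced by $v^\pm-\mu^\pm b^\pm$, where $\mu^\pm$ are the coefficients---unique under \eqref{syrov 3D com} by the non-collinearity of $\bc^+$ and $\bc^-$ on $\Sigma$---solving $\jump{\vb-\mu\bc}=\mathbf 0$ on $\Sigma$, and re-symmetrize the resulting non-symmetric system by the Friedrichs secondary symmetrizer of \cite{Trakhinin2005CMHDVS,Friedrichs}. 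After the substitution the boundary term becomes $\VS'$ as in \eqref{TT VS'}, whose coefficient $\jump{\vb-\mu\bc}$ vanishes identically on $\Sigma$, so it is gone.

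With $\VS$ removed, the energy identity for the symmetrized system takes the form
\[
\tfrac12\ddt\sum_\pm\iopm\Big(\rho^\pm|\VV^\pm|^2+|\BB^\pm|^2+\ffpm|\PP^\pm|^2-2\mu^\pm\rho^\pm\VV^\pm\cdot\BB^\pm-2\mu^\pm\rho^\pm\ffpm\PP^\pm(b^\pm\cdot\VV^\pm)\Big)\dvt=\mathcal R,
\]
where $\mathcal R$ collects the commutators above, the extra first-order terms created by the $v^\pm\mapsto v^\pm-\mu^\pm b^\pm$ substitution, and the interior contributions; by Proposition \ref{prop divcurl}, the reduction of pressure and divergence recalled in Section \ref{sect lwp energy} and Proposition \ref{prop S}, the quantity $\mathcal R$ is bounded by $\delta\EW_{4+l}(t)+P(\sum_{j\le l}\EW_{4+j}(0))+P(\sum_{j\le l}\EW_{4+j}(t))\int_0^tP(\sum_{j\le l}\EW_{4+j}(\tau))\dtau$. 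The decisive point is that the quadratic form on the left is uniformly positive-definite precisely when $(\mu^\pm)^2\rho^\pm(1+(c_A^\pm/c_s^\pm)^2)<1$, and the value of $\mu^\pm$ forced by $\jump{\vb-\mu\bc}=\mathbf 0$ together with \eqref{syrov 3D com} keeps this quantity bounded away from $1$ in terms of $\delta_0$ alone; hence the form controls $\|\eps^{2l}\TT^\gamma(v^\pm,b^\pm)\|_{0,\pm}^2+\|\eps^{2l}\TT^\gamma(\ffpm)^{1/2}p^\pm\|_{0,\pm}^2$ from below, and Gronwall closes the first inequality. For the full-time-derivative case $\TT^\gamma=\p_t^{4+l}$ the same scheme applies, but now $\TT^\gamma$ does not vanish on $\Sigma$ and $[\p_t^{4+l},\Dtpm]$ produces top-order terms in which $\NN$ cannot absorb a time derivative; I would handle the worst such term by symmetrizing it against its $L^2$-adjoint so that the uncontrollable remainder becomes a perfect time derivative (the hidden symmetric structure of \cite{Zhang2023CMHDVS1}), keeping the secondary-symmetrized form so that $\jump{\vb-\mu\bc}=\mathbf 0$ again kills the boundary $\VS$ term---which is also why $(\ffp)^{1/2}$, rather than a higher power, is the correct weight on $p^\pm$ to render the $\eps^2(\Dtpm)^2$-contribution integrable at this level.

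The main obstacle I anticipate is checking that the secondary symmetrization is \emph{energetically harmless}: every term created by replacing $v^\pm$ with $v^\pm-\mu^\pm b^\pm$---both the genuinely new first-order terms and the commutators of $\eps^{2l}\TT^\gamma$ with the $\mu^\pm$-dependent coefficients, which involve derivatives of $\psi$ up to the order carried by $\EW_{4+l}$---must be shown either truly lower-order or absorbable into $\delta\EW_{4+l}(t)$, with constants independent of both $\eps$ and $\sigma$, and the lower bound on the quadratic form must degrade only through $\delta_0$. The secondary difficulty, new compared with \cite{Zhang2023CMHDVS1}, is the $\sigma$-uniform treatment of the full-time-derivative $\VS$ term, for which the elliptic gain from $\jump{q}=\sigma\h$ is unavailable and the $\mu^\pm$-cancellation must be propagated consistently through every commutator.
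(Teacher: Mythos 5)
Your proposal follows the paper's proof essentially verbatim: pass to Alinhac good unknowns, perform the Friedrichs secondary symmetrization with $v^\pm\mapsto v^\pm-\mu^\pm b^\pm$ where $\mu^\pm$ is the unique (under non-collinearity) solution of $\jump{\vb-\mu\bc}=\mathbf 0$, observe that the resulting quadratic form $\rho^\pm|\VV^\pm|^2+|\BB^\pm|^2+\ffpm|\PP^\pm|^2-2\mu^\pm\rho^\pm\VV^\pm\cdot\BB^\pm-2\mu^\pm\rho^\pm\ffpm\PP^\pm(b^\pm\cdot\VV^\pm)$ is positive-definite exactly when $(\mu^\pm)^2\rho^\pm(1+(c_A^\pm/c_s^\pm)^2)<1$, which is guaranteed with margin $\delta_0$ by \eqref{syrov 3D com}, and close via Gronwall, handling the pure time-derivative case by the cancellation structure of $ZB^{\pm,\mu}+Z^{\pm,\mu}$. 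The one mechanism you take for granted rather than spell out is that all the remaining boundary integrals ($\RT^\mu$, $\RT^{\pm,\mu}$, $ZB^{\pm,\mu}+Z^{\pm,\mu}$) and the $\psi$-dependent commutators are controllable \emph{without} the $\sqrt{\sigma}$-weighted surface energy precisely because Lemmas \ref{lem psi}--\ref{lem psit} resolve $\cnab\psi$ in terms of $b^\pm$ via the non-collinearity, giving the $\sigma$-independent half-order gain built into $\EW_{4+l}$; this is the separate input, not a consequence of the symmetrization itself.
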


An analogue of Proposition \ref{prop ETT} has been proven in \cite[Proposition 3.3]{Zhang2023CMHDVS1} without assuming the stability condition \eqref{syrov 3D com}, and thus the energy bounds in \cite[Proposition 3.3]{Zhang2023CMHDVS1} rely on $\sigma^{-1}$. Now, we explain how to use the stability condition \eqref{syrov 3D com} to get rid of the dependence on $1/\sigma$ in the estimates recorded in Section \ref{sect lwp energy}. Let us recall what quantities in tangential estimates of $E(t)$ depend on $1/\sigma$ in \cite{Zhang2023CMHDVS1} without assuming the stability condition \eqref{syrov 3D com}. Following the analysis in \cite[Section 2.2, 3.3-3.5]{Zhang2023CMHDVS1}, we can obtain the energy inequality for a given multi-index $\alpha$ satisfying $\len{\alpha}=2l$ and for $k,l\in\N$ satisfying $0\leq k\leq 4-l,~0\leq l\leq 4$.
\begin{align}
\sum_{\pm}\left\|\left(\eps^{2l}\TP^{4-k-l}\TT^{\alpha}\p_t^{k}(v^\pm, b^\pm,(\ffp)^{\frac12}p^\pm)\right)\right\|^2_{4-k-l,\pm}\leq&~ \delta \EW(t) + P(\EW(0)) + P(\EW(t))\int_0^tP(\EW(\tau))\dtau \notag\\
&+\int_0^t\ST+\VS+\RT+\sum_{\pm}(ZB^\pm+Z^\pm)\dtau.
\end{align} Here these major terms are defined by
\begin{align}
\ST:=&~\eps^{4l}\is\TT^\gamma(\sigma\h)\p_t\TT^\gamma\psi\dx',~~\RT:=-\eps^{4l}\is \jump{\p_3q}\TT^\gamma\psi\TT^\gamma\p_t\psi\dx',\\
\VS:=&~\eps^{4l}\is \TT^\gamma q^- (\jump{\vb}\cdot\cnab)\TT^\gamma\psi\dx',\\
ZB^\pm:=&~\mp\eps^{4l}\is\TT^\gamma q^\pm[\TT^\gamma,N_i,v_i^\pm]\dx',\quad Z^\pm:=-\eps^{4l}\iopm \TT^\gamma q^\pm [\TT^\gamma,\NN_i,\p_3 v_i^\pm]\dvt.
\end{align}  where $\TT^\gamma=\p_t^{k}\TP^{4+l-k}$, $0\leq k\leq 4+l,~0\leq l\leq 4$ and $\dvt:=\p_3\vp\dx$. Do note that when $\alpha_4>0$, all boundary terms vanish and $Z^\pm$ can be directly controlled, so we only need to analyze the case when $\alpha$ has the same form as $\gamma$. To get rid of the dependence on $1/\sigma$, the above quantities should be controlled in the following way:
\begin{itemize}
\item The control of quantity ST does not depend on $1/\sigma$;
\item Quantity RT can be controlled if we have the estimates of $|\eps^{2l}\p_t^k\psi|_{4.5+l-k}$  (under time integral) for $0\leq k\leq 4-l,~0\leq l\leq 4$;
\item Quantity $ZB^\pm+Z^\pm$ can be controlled if we have the estimates of $|\eps^{2l}\p_t^k\psi|_{4+l-k}$ (NOT under time integral) for $0\leq k\leq 4-l,~0\leq l\leq 4$;
\item Quantity VS must be completely eliminated.
\end{itemize}Besides, the control of the commutators $\|\eps^{2l}[\nabp\cdot,\p_t^k\TT^\alpha](v^\pm,b^\pm)\|_{3-k-l,\pm}$ and $\|\eps^{2l}[\nabp\times,\p_t^k\TT^\alpha](v^\pm,b^\pm)\|_{3-k-l,\pm}$ in the proof of Proposition \ref{prop divcurl} (Note that this commutator appears without time integral!) also needs the estimates of $|\eps^{2l}\p_t^k\psi|_{4+l-k}$.

\subsection{Enhanced regularity of the interface: non-collinearity of magnetic fields}\label{sect noncollinear}
It is easy to see that merely invoking the kinematic boundary condition and using trace lemma does not solve any issues mentioned above. In order to seek for $\sigma$-independent estimates for $\psi$ and its time derivatives, we require the stability condition \eqref{syrov 3D com} when the space dimension is 3, that is, for some $\delta_0\in(0,\frac18)$,
\begin{align}\label{syrovc3}
0<\delta_0\leq a^\pm\left|\bc^\mp\times\jump{\vb}\right|\leq (1-\delta_0)\left|\bc^+\times\bc^-\right|~~\text{ on }[0,T]\times\Sigma,
\end{align}where we view $\bc^\pm=(b_1^\pm,b_2^\pm,0)^\top,\jump{\vb}=(\jump{v_1},\jump{v_2},0)^\top$ as vectors lying on the plane $\T^2\times\{x_3=0\}\subset \R^3$ to define the exterior product. The quantity $a^\pm$ is defined by
\begin{align*}
a^\pm:=\sqrt{\rho^\pm\left(1+\left(\frac{c_A^\pm}{c_s^\pm}\right)^2\right)}
\end{align*} and $c_A^\pm:=|b^\pm|/\sqrt{\rho^\pm}$ represents the Alfv\'en speed, $c_s^\pm:=\sqrt{\p p^\pm/\p\rho^\pm}$ represents the sound speed. This condition implies the following two important features:
\begin{enumerate}
\item Magnetics fields are not collinear on $\Sigma$, which allows us to gain 1/2-order regularity of the free interface.
\item Quantitative relation between $b^\pm$ and $\jump{\vb}$ on $\Sigma$ allows us to completely eliminate the problematic term VS.
\end{enumerate}

We aim to prove uniform-in-$(\eps,\sigma)$ estimates for the energy functional $\EW(t)$ for the compressible current-vortex sheet system \eqref{CMHDVS0}. Recall $\EW(t)$ is defined by
\begin{align}\label{energy EW}
\EW(t):=\sum_{l=0}^4\EW_{4+l}(t),~~\EW_{4+l}(t):=E_{4+l}(t)+\sum_{k=0}^{4+l}\left|\eps^{2l}\p_t^k \psi\right|_{4.5+l-k}^2,
\end{align}where the term added to $E_{4+l}(t)$ exactly gives the enhanced regularity of the free interface contributed by the non-collinearity stability condition \eqref{syrovc3}. 

Recall that the magnetic fields satisfy the constraint $b^\pm\cdot N=0$ on $\Sigma$, that is, $b_3^\pm=b_1^\pm\TP_1\psi+b_2^\pm \TP_2\psi$. So, we can solve $\TP\psi$ in terms of $b^\pm$ without any derivatives thanks to $\bc^+\times\bc^-\neq \bd{0}$. However, due to the anisotropy of the function spaces, we have to take derivatives on the constraint before we use trace lemma. We have the following estimates for the interface function $\psi$.
\begin{lem}\label{lem psi}
For $s\geq 3$, one has 
\begin{align}\label{psi plus}
|\TP\psi|_{s-\frac12}^2\leq  P(\|b^\pm\|_{s-1,*,\pm},\|b^\pm\|_{3,\pm},|\TP\psi|_{W^{1,\infty}})\left(\|\p_3\jp^{s-2}b^\pm\|_{0,\pm}\|\jp^s b^\pm\|_{0,\pm}+|\TP\psi|_{s-1}^2+\|\jp^s b^\pm\|_{0,\pm}^2\right).
\end{align} Here $\jp:=\sqrt{1-\TL}$, that is, $\widehat{\jp f}(\xi)=\sqrt{1+|\xi|^2}\hat{f}(\xi)$ on $\T^2$.
\end{lem}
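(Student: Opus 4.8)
The plan is to extract $\TP\psi$ from the constraint $b^\pm\cdot N=0$ on $\Sigma$ by linear algebra, and then convert the resulting boundary Sobolev norm of $b^\pm$ into the anisotropic interior norms on the right side. Writing the constraint as $b_3^\pm=b_1^\pm\TP_1\psi+b_2^\pm\TP_2\psi$ for both signs and regarding this as a linear system for $(\TP_1\psi,\TP_2\psi)$ with coefficient matrix $\col{b_1^+ & b_2^+\\ b_1^- & b_2^-}$, the determinant is $(\bc^+\times\bc^-)_3=\bc^+\times\bc^-$ since $\bc^\pm$ are horizontal. The stability condition \eqref{syrovc3} forces $|\bc^+\times\bc^-|\ge c(\delta_0)>0$ on $\Sigma$, so this matrix is uniformly invertible and $\TP\psi$ is resolved algebraically, i.e. with \emph{no loss of derivative}, in terms of $b^\pm|_\Sigma$. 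This alone does not suffice, because the anisotropy of $H_*^m$ prevents the naive trace theorem from buying the missing half derivative; we therefore differentiate the constraint tangentially first.

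First I would apply $\jp^{s-2}$ to the constraint. The Leibniz rule gives, on $\Sigma$,
\begin{equation*}
b_1^\pm\,\jp^{s-2}\TP_1\psi+b_2^\pm\,\jp^{s-2}\TP_2\psi=\jp^{s-2}b_3^\pm-[\jp^{s-2},b_1^\pm]\TP_1\psi-[\jp^{s-2},b_2^\pm]\TP_2\psi,
\end{equation*}
which is again a $2\times2$ system with the same determinant $\bc^+\times\bc^-$. Solving it and taking $H^{3/2}(\Sigma)$ norms — noting $|\jp^{s-2}\TP\psi|_{H^{3/2}(\Sigma)}=|\TP\psi|_{s-1/2}$ — the product and commutator estimates on $\T^2$ move the smooth factors outside: the inverse determinant contributes only the harmless constant $c(\delta_0)^{-1}$, and the $L^\infty$, $W^{1,\infty}$ norms of $b^\pm|_\Sigma$ and of $\TP\psi$ are controlled by $\|b^\pm\|_{3,\pm}$ (trace) and $|\TP\psi|_{W^{1,\infty}}$, which enter the polynomial coefficient together with the lower-order anisotropic norms $\|b^\pm\|_{s-1,*,\pm}$. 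The commutator terms and all genuinely lower-order contributions are bounded by $|\TP\psi|_{s-1}^2$ up to this coefficient. What remains is the boundary Sobolev norm $|b_3^\pm|_{s-1/2}$ of the interior function $b_3^\pm$.

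The heart of the argument is the bound of $|b_3^\pm|_{s-1/2}$ by $\|\p_3\jp^{s-2}b^\pm\|_{0,\pm}$ and $\|\jp^s b^\pm\|_{0,\pm}$, and here the divergence-free condition is indispensable. Rewriting it in divergence form,
\begin{equation*}
\p_3\vp\,(\nabp\cdot b^\pm)=\p_1(\p_3\vp\,b_1^\pm)+\p_2(\p_3\vp\,b_2^\pm)+\p_3(b^\pm\cdot N),
\end{equation*}
shows that $\p_3(b^\pm\cdot N)$ is a \emph{purely tangential} combination of $b_1^\pm,b_2^\pm$; together with $b^\pm\cdot N=0$ on $\Sigma$ and the identity $b_3^\pm=(b^\pm\cdot N)+\p_1\vp\,b_1^\pm+\p_2\vp\,b_2^\pm$, this lets one write $|b_3^\pm|_{s-1/2}^2=|\jp^{s-1/2}b_3^\pm|_{L^2(\Sigma)}^2$ as an integral over $\Omega^\pm$, integrate by parts in $x_3$, and substitute $\p_3 b_3^\pm$ from the divergence-free identity, so that every normal derivative falling on $b_3^\pm$ is either traded for a tangential derivative or meets only the lower-order factor $\jp^{s-2}b^\pm$; the boundary terms on $\Sigma^\pm$ vanish since $b_3^\pm|_{\Sigma^\pm}=0$. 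This produces exactly the multiplicative structure $\|\p_3\jp^{s-2}b^\pm\|_{0,\pm}\|\jp^s b^\pm\|_{0,\pm}+\|\jp^s b^\pm\|_{0,\pm}^2$, modulo two kinds of error: commutators with the smooth coefficients $\p_3\vp,\TP\vp$, which are of lower order and absorbed into the polynomial coefficient; and terms where a full power of $\jp$ falls on $\TP\vp=\chi\TP\psi$, hence on $\psi$, producing a factor $|\TP\psi|_{s-1/2}$ with a coefficient that, though not small, is a polynomial in $\|b^\pm\|_{3,\pm}$ and $c(\delta_0)^{-1}$. A Young inequality turns the latter into $\tfrac14|\TP\psi|_{s-1/2}^2$, absorbed into the left side, plus a term of the form $P(\|b^\pm\|_{3,\pm})\|\jp^s b^\pm\|_{0,\pm}^2$ already present on the right. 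Summing over $\pm$ and combining with the previous step yields \eqref{psi plus}. The main obstacle is precisely this last anisotropic trace estimate: without the divergence-free structure, the trace of $b_3^\pm$ at order $s-\tfrac12$ would require $\|\p_3\jp^{s-1}b^\pm\|_{0,\pm}$, one tangential order beyond what the anisotropic energy controls.
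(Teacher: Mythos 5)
Your overall strategy coincides with the paper's: solve the $2\times 2$ linear system for $\TP\psi$ via non-collinearity, control the lower-order commutators with Kato--Ponce, and reduce the leading boundary contribution to an interior integral via Gauss--Green and the divergence-free constraint. However, there is a genuine gap in the anisotropic trace step. You choose to bound $|b_3^\pm|_{s-1/2}$ and substitute $\p_3 b_3^\pm$ from the divergence-free relation, but that relation reads
\begin{equation*}
\p_3 b_3^\pm = -\p_3\vp\,\cnab\cdot\bc^\pm + (\p_3\TP_i\vp)\,b_i^\pm + \TP_i\vp\,\p_3 b_i^\pm ,
\end{equation*}
and the residual $\TP_i\vp\,\p_3 b_i^\pm$ is \emph{not} purely tangential. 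After applying $\jp^{s-1}$ and extracting $\TP_i\vp$ by Leibniz, the non-commutator piece is $(\TP_i\vp)\,\jp^{s-1}\p_3 b_i^\pm$, which requires $\|\p_3\jp^{s-1}b^\pm\|_{0,\pm}$ — one tangential order beyond the $\|\p_3\jp^{s-2}b^\pm\|_{0,\pm}$ that $H_*^s$ controls, and precisely the loss that the whole lemma is designed to avoid. Your accounting of the error as "commutators with smooth coefficients" plus "$\jp$ landing on $\TP\vp$" misses this non-commutator residual, and a Young absorption in $|\TP\psi|_{s-\frac12}$ does not help because the obstruction is an $H_*$-norm overshoot, not a boundary norm.

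You did observe the key structural fact — that $\p_3(b^\pm\cdot\NN)$ is purely tangential — but you then split $b_3^\pm = (b^\pm\cdot\NN) + \TP_i\vp\,b_i^\pm$ \emph{before} differentiating in $x_3$, which destroys the cancellation $\p_3 b_3^\pm - \TP_i\vp\,\p_3 b_i^\pm = \p_3 b^\pm\cdot\NN = -\p_3\vp\,\cnab\cdot\bc^\pm$. The paper instead works with the single scalar $(\jp^{s-1}b^\pm)\cdot\NN$ and writes
\begin{equation*}
\p_3\bigl[(\jp^{s-1}b^\pm)\cdot\NN\bigr] = \jp^{s-1}\bigl(\p_3 b^\pm\cdot\NN\bigr) + (\jp^{s-1}b^\pm)\cdot\p_3\NN - \bigl[\jp^{s-1},\NN\cdot\bigr]\p_3 b^\pm,
\end{equation*}
so the bad $\p_3 b_i^\pm$-terms are grouped inside $\p_3 b^\pm\cdot\NN$ before the Fourier multiplier is applied (and then killed by the divergence-free identity), while the leftover $\bigl[\jp^{s-1},\NN\cdot\bigr]\p_3 b^\pm$ is a genuine commutator that Kato--Ponce bounds using only $\|\jp^{s-2}\p_3 b^\pm\|_{0,\pm}$. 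That ordering of operations is what closes the argument; as stated, your derivation does not.
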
 
\begin{proof} Taking $\TP^{s-\frac12}$ in the constraint $b^\pm\cdot N=0$ for $s\geq 3$, we get
\begin{align*}
\begin{cases}
b_1^+\TP_1\jp^{s-\frac12}\psi + b_2^+\TP_2\jp^{s-\frac12}\psi=f_b^+\\
b_1^-\TP_1\jp^{s-\frac12}\psi +  b_2^-\TP_2\jp^{s-\frac12}\psi=f_b^-
\end{cases}\xRightarrow {~b^+\nparallel b^-}
\begin{cases}
\TP_1\jp^{s-\frac12}\psi=\dfrac{-b_2^+f_b^- + b_2^-f_b^+}{b_1^+b_2^--b_1^-b_2^+}\\
~\\
\TP_2\jp^{s-\frac12}\psi=\dfrac{b_1^+f_b^- - b_1^-f_b^+}{b_1^+b_2^--b_1^-b_2^+}
\end{cases}
\end{align*}with $f_b^\pm:=\jp^{\frac12}(\jp^{s-1}b^\pm\cdot N)+\jp^{\frac12}([\jp^{s-1},b^\pm,N])+[\jp^{\frac12},b^\pm\cdot]\jp^{s-1}N$. The $L^2(\Sigma)$ norms of the last two terms in $f_b^\pm$ can be directly controlled via Kato-Ponce type inequality (Lemma \ref{KatoPonce}) and interpolations
\begin{align*}
\bno{[\jp^{s-1},b^\pm,N]}_{\frac12}\lesssim&~ |b^\pm|_{s-\frac32}|\TP\psi|_{W^{1,\infty}}+|b^\pm|_{W^{1,\infty}}|\TP\psi|_{s-\frac32}\lesssim\|\p_3\jp^{s-2}b^\pm\|_{0,\pm}^{\frac12}\|\jp^{s-1}b^\pm\|_{0,\pm}^{\frac12}|\TP\psi|_{W^{1,\infty}}+ \|b^\pm\|_{3,\pm}|\TP\psi|_{s-\frac32}\\
\bno{[\jp^{\frac12},b^\pm\cdot] \jp^{s-1} N}_0\lesssim&~ |\jp^{\frac12} b^\pm|_{L^{\infty}}|\TP\psi|_{s-1}\lesssim \|b^\pm\|_{2.5,\pm}|\TP\psi|_{s-1}
\end{align*}Then the regularity of the free interface is given by
\[
|\TP\psi|_{s-\frac12}\leq P(|\bc^\pm|_{L^{\infty}})|f_b^\pm|_{0}\leq P(\|b^\pm\|_{3,\pm},|\TP\psi|_{W^{1,\infty}})\left( \|b^\pm\|_{H_*^s(\Om^\pm)}^{\frac12}\|b^\pm\|_{H_*^{s-1}(\Om^\pm)}^{\frac12}+|\TP\psi|_{s-1}+\left|\jp^{\frac12}\left(\TP^{s-1}b^\pm\cdot N\right)\right|_0\right).
\]To control the boundary norm of $\jp^{s-\frac12}b^\pm\cdot N$, we again convert it to an interior integral and use the divergence constraint $0=\nabp\cdot b^\pm=\cnab\cdot\bc^\pm+\pp_3b^\pm\cdot\NN$ in $\Om^\pm$. Note that we may not directly use the trace lemma due to the anisotropy.
\begin{align*}
\left|\jp^{\frac12}\left(\jp^{s-1}b^\pm\cdot N\right)\right|_0^2=&~\mp 2\iopm\jp^{\frac12}\p_3\left(\jp^{s-1}b^\pm\cdot N\right)\jp^{\frac12}\left(\jp^{s-1}b^\pm\cdot N\right)\dx\\
\overset{\jp^{\frac12}}{==}&~\mp 2\iopm\p_3\left(\jp^{s-1}b^\pm\cdot \NN\right)\,\jp\left(\jp^{s-\frac12}b^\pm\cdot \NN\right)\dx\\
=&~\mp 2\iopm \jp^{s-1}(\p_3 b^\pm\cdot \NN)\,\jp\left(\jp^{s-\frac12}b^\pm\cdot \NN\right)\dx\\
&~\mp 2\iopm \left(\jp^{s-1}b^\pm\cdot \p_3\NN-\left[\jp^{s-1},\NN\cdot\right]\p_3 b^\pm\right)\,\jp\left(\jp^{s-\frac12}b^\pm\cdot \NN\right)\dx\\
=&~\pm 2\iopm\jp^{s-1}\left(\p_3 \vp(\cnab\cdot\bc^\pm)\right)\,\jp\left(\jp^{s-\frac12}b^\pm\cdot \NN\right)\dx \\
&~\mp 2\iopm \left(\jp^{s-1}b^\pm\cdot \p_3\NN-\left[\jp^{s-1},\NN\cdot\right]\p_3 b^\pm\right)\,\jp\left(\jp^{s-\frac12}b^\pm\cdot \NN\right)\dx\\
\lesssim& ~P(|\TP\psi|_{W^{1,\infty}})\left(\|\jp^{s} b^\pm\|_{0,\pm}+\|\jp^{s-1} b^\pm\|_{0,\pm}+\|\jp^{s-2}\p_3b^\pm\|_{0,\pm}\right)\|\jp^s b^\pm\|_{0,\pm}.
\end{align*}
\end{proof}

Lemma \ref{lem psi} shows that the $H^{s+\frac12}(\Sigma)$ norm of the free interface $\psi$ can be converted to lower-order terms and $H_*^s(\Om^\pm)$ norms of $b^\pm$. Thus, the ``non-collinearity" of $b^+$ and $b^-$ on $\Sigma$ brings a gain of $1/2$-order regularity for the interface. Given $l\in\{0,1,2,3,4\}$, the definition of $\EW_{4+l}(t)$ suggests that $\eps^{2l}\jp^{2l}b^\pm \in H^{4-l}(\Om^\pm) $. Thus, letting $s=4+l$ in Lemma \ref{lem psi}, we can get 
\begin{align*}
\bno{\eps^{2l}\TP\psi}_{3.5+l}^2\leq P(\|b^\pm\|_{3,\pm},|\TP\psi|_{W^{1,\infty}})\left(\|\eps^{2l}b^\pm\|_{H_*^{4+l}(\Om^\pm)}\|\eps^{2l}b^\pm\|_{H_*^{3+l}(\Om^\pm)}+|\eps^{2l}\TP\psi|_{3+l}^2+\|\eps^{2l}b^\pm\|_{H_*^{4+l}(\Om^\pm)}^2\right).
\end{align*} 
Similarly, we can show the enhanced regularity for time derivatives of $\psi$ after replacing $\jp^{s-\frac12}\psi$ by $\jp^{s-k-\frac12}\p_t^k\psi$. We conclude the following proposition.
\begin{lem}\label{lem psit}
For $3\leq s\in\N^*$ and $1\leq k\leq s-1,~k\in\N^*$, one has 
\begin{align}
|\TP\p_t^k\psi|_{s-k-\frac12}^2\leq  P\left(\|b^\pm\|_{s-1,*,\pm},|\cnab\psi|_{W^{1,\infty}}\right)&\bigg(\sum_{j=1}^{k-1}|\p_t^j\TP\psi|_{s-j-\frac32}^2+|\TP\p_t^k\psi|_{s-k-1}^2\no\\
&+\|\jp^{s-k}\p_t^kb^\pm\|_{0,\pm}\|\p_3\jp^{s-k-2}\p_t^kb^\pm\|_{0,\pm}+\|\jp^{s-k}\p_t^kb^\pm\|_{0,\pm}^2\bigg),\label{psit plus}
\end{align} where the number of time derivatives in $\|b^\pm\|_{s,*,\pm}$ appearing on the right side does not exceed $k$. The term $\p_3\jp^{s-k-2}\p_t^kb^\pm$ does not appear when $s-k=1$.
\end{lem}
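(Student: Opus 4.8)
The plan is to transcribe the argument of Lemma~\ref{lem psi} with the operator $\jp^{s-\frac12}$ replaced by $\jp^{s-k-\frac12}\p_t^k$, and to keep careful track of the additional terms generated when the $k$ time derivatives are distributed by the Leibniz rule. Applying $\jp^{s-k-\frac12}\p_t^k$ to the constraint $b^\pm\cdot N=0$ on $\Sigma$, equivalently $b_3^\pm=b_1^\pm\TP_1\psi+b_2^\pm\TP_2\psi$, and commuting $b^\pm$ outside, I obtain the $2\times2$ linear system
\begin{align*}
\begin{cases}
b_1^+\TP_1\jp^{s-k-\frac12}\p_t^k\psi+b_2^+\TP_2\jp^{s-k-\frac12}\p_t^k\psi=g_b^+,\\
b_1^-\TP_1\jp^{s-k-\frac12}\p_t^k\psi+b_2^-\TP_2\jp^{s-k-\frac12}\p_t^k\psi=g_b^-,
\end{cases}
\end{align*}
whose determinant is $b_1^+b_2^--b_1^-b_2^+=\bc^+\times\bc^-$, which is bounded away from zero on $\Sigma$ by the non-collinearity built into \eqref{syrovc3}. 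Solving yields $|\TP\p_t^k\psi|_{s-k-\frac12}\leq P(|\bc^\pm|_{L^\infty})\big(|g_b^+|_0+|g_b^-|_0\big)$, so everything reduces to estimating $g_b^\pm$ in $L^2(\Sigma)$.

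The term $g_b^\pm$ splits into three kinds of contributions: (i) the ``main'' piece $\jp^{\frac12}\!\left(\jp^{s-k-1}\p_t^k b^\pm\cdot N\right)$; (ii) symmetric–commutator remainders $\jp^{\frac12}[\jp^{s-k-1}\p_t^k,b^\pm,N]$ and $[\jp^{\frac12},b^\pm\cdot]\jp^{s-k-1}\p_t^k N$; and (iii) the terms in which at least one of the $k$ time derivatives falls on $N=(-\TP\psi,1)^\top$, which are finite sums of products $(\jp^{a}\p_t^{i}b^\pm)(\jp^{b}\p_t^{j}\TP\psi)$ with $i+j\le k$, $i\le k-1$, $j\ge1$, and total order $a+b\le s-k-\frac12$. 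For (ii) and (iii) I apply the Kato--Ponce/product estimates (Lemma~\ref{KatoPonce}), the anisotropic embeddings $\|b^\pm\|_{W^{1,\infty}}\lesssim\|b^\pm\|_{3,\pm}$ and $\|f\|_{W^{1,\infty}}\lesssim\|f\|_{H^3_*}$, and the interpolation $|b^\pm|_{H^{s-k-\frac32}(\Sigma)}\lesssim\|\p_3\jp^{s-k-2}b^\pm\|_{0,\pm}^{1/2}\|\jp^{s-k-1}b^\pm\|_{0,\pm}^{1/2}$ exactly as in the proof of Lemma~\ref{lem psi}; putting the $j$ time derivatives on $\TP\psi$ in the worst case leaves at most $k$ time derivatives on $b^\pm$ inside a $\|b^\pm\|_{s-1,*,\pm}$-controlled factor, and all of (iii) is absorbed into the claimed sum $\sum_{j=1}^{k-1}|\p_t^j\TP\psi|_{s-j-\frac32}^2$ times $P(\|b^\pm\|_{s-1,*,\pm},|\cnab\psi|_{W^{1,\infty}})$.

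For the main piece (i), since the spaces are anisotropic one cannot invoke the trace lemma directly; instead I integrate by parts in $x_3$ as in Lemma~\ref{lem psi}:
\begin{align*}
\left|\jp^{\frac12}\!\left(\jp^{s-k-1}\p_t^k b^\pm\cdot N\right)\right|_0^2=\mp2\iopm\p_3\!\left(\jp^{s-k-1}\p_t^k b^\pm\cdot\NN\right)\,\jp\!\left(\jp^{s-k-\frac12}\p_t^k b^\pm\cdot\NN\right)\dx,
\end{align*}
and then use the divergence constraint $\nabp\cdot b^\pm=0$, i.e.\ $\cnab\cdot\bc^\pm+\pp_3 b^\pm\cdot\NN=0$, to rewrite the normal derivative $\pp_3 b^\pm\cdot\NN$ in terms of tangential derivatives of $\bc^\pm$; commuting $\p_t^k$ through $\nabp\cdot$, through $\jp$, and through $\pp_3=\frac1{\p_3\vp}\p_3$ produces only further lower-order terms carrying $\p_t^j\TP\psi$ with $1\le j\le k-1$, again absorbed into $\sum_{j=1}^{k-1}|\p_t^j\TP\psi|_{s-j-\frac32}^2$. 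This bounds (i) by $\|\jp^{s-k}\p_t^k b^\pm\|_{0,\pm}\|\p_3\jp^{s-k-2}\p_t^k b^\pm\|_{0,\pm}+\|\jp^{s-k}\p_t^k b^\pm\|_{0,\pm}^2$ plus the lower-order sum, with the $\p_3$-factor simply dropped when $s-k=1$ (there $\jp^{s-k-1}\p_t^k=\p_t^k$ has no spatial smoothing to redistribute and one bounds the boundary norm directly by anisotropic norms of $\p_t^k b^\pm$ of order $s-k$).

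The main obstacle is the \emph{bookkeeping of time derivatives}: one must check that every remainder generated by the Leibniz rule on $b^\pm\cdot N$ or by commuting $\p_t^k$ past $\jp$, $\nabp\cdot$, $\pp_3$ has (a) at most $k$ time derivatives landing on $b^\pm$ inside a factor controlled by $\|b^\pm\|_{s-1,*,\pm}$, and (b) any surviving $\p_t^j\TP\psi$ with $1\le j\le k-1$ at Sobolev order $\le s-j-\frac32$, so that it matches the right-hand side of \eqref{psit plus}. Note also that these $\sum_{j=1}^{k-1}$ terms are precisely instances of the lemma at the parameters $(s-1,j)$, so in applications they are removed by a descending recursion in $k$. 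Everything else is a direct transcription of the proof of Lemma~\ref{lem psi}.
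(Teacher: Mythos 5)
Your proposal is correct and follows exactly the route the paper intends: the paper does not prove Lemma~\ref{lem psit} explicitly but states it as the analogue of Lemma~\ref{lem psi} ``after replacing $\jp^{s-\frac12}$ by $\jp^{s-k-\frac12}\p_t^k$,'' and your write-up is precisely that transcription, with the correct accounting of the Leibniz terms where $\p_t$ falls on $N$, the same Kato--Ponce and interpolation steps, and the same Gauss--Green integration in $x_3$ combined with the divergence constraint $\nabp\cdot b^\pm=0$ for the main trace term.
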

\begin{rmk}
In the control of $\EW_{4+l}(t)$ for $0\leq l\leq 4$, we have $\eps^{2l}\p_t^k\TT^\alpha b\in L^2(\Om^\pm)$ for $0\leq k\leq l,~\len{\alpha}=2l,~\alpha_3=0$. It should also be noted that there is no loss of Mach number in the estimates of $\p_t^k\psi$ because the number of time derivatives appearing on the right side of \eqref{psit plus} does not exceed that on the left side. Thus, the above estimates directly help us to control the quantities ST, RT and $Z^\pm+ZB^\pm$ by $P(\EW(0))+P(\EW(t))\int_0^tP(\EW(\tau))\dtau$  uniformly in $\sigma$ under the stability condition \eqref{syrovc3}. 
\end{rmk}

Apart from the term VS, we still need to control the commutators $\ino{\eps^{2l}[\p_t^k\TT^\alpha, \nabp\cdot]f}_{3-k-l}^2$ and $\ino{\eps^{2l}[\p_t^k\TT^\alpha, \nabp\times]f}_{3-k-l}^2$ for $f=v,b$ and $0\leq l\leq 3,~1\leq k+l\leq 3,~\len{\alpha}=2l,~\alpha_3=0$, in which the highest order terms have the form $\eps^{2l}(\p_3\vp)^{-1}(\TP\p_t^k\TT^\alpha\vp)(\p_3 f)$ whose estimate requires the bound for $|\eps^{2l}\TP\p_t^k\TT^\alpha\psi|_{3-k-l}^2$. We can also assume $\alpha_4=0$ because $\vp$ has $C^{\infty}$-regularity in $x_3$-direction. Such terms appear without time integral, so we control them by $P(\EW(0))+P(\EW(t))\int_0^tP(\EW(\tau))\dtau$. 

Letting $s=3.5+l$ in Lemma \ref{lem psi} for $0\leq l\leq 4$, using interpolation and Young's inequality, we get
\begin{align*}
|\TP\psi|_{3+l}^2\leq&~  P(\|b^\pm\|_{2+l,*,\pm},\|b^\pm\|_{3,\pm},|\TP\psi|_{W^{1,\infty}})\left(\|\p_3\jp^{1.5+l}b^\pm\|_{0,\pm}\|\jp^{3.5+l} b^\pm\|_{0,\pm}+|\TP\psi|_{2.5+l}^2+\|\jp^{3.5+l} b^\pm\|_{0,\pm}^2\right)\\
\lesssim &~P(\|b^\pm\|_{2+l,*,\pm},\|b^\pm\|_{3,\pm},|\TP\psi|_{W^{1,\infty}})\\
&\left(\|\p_3\jp^{1+l}b^\pm\|_{0,\pm}^{\frac12}\|\p_3\jp^{2+l}b^\pm\|_{0,\pm}^{\frac12}\|\jp^{3+l} b^\pm\|_{0,\pm}^{\frac12}\|\jp^{4+l} b^\pm\|_{0,\pm}^{\frac12}+|\TP\psi|_{2.5+l}^2+\|\jp^{3+l} b^\pm\|_{0,\pm}\|\jp^{4+l} b^\pm\|_{0,\pm}\right)\\
\lesssim&~\delta\left(\|\p_3\jp^{2+l}b^\pm\|_{0,\pm}^2+\|\jp^{4+l} b^\pm\|_{0,\pm}^2\right)\\
&+P(\|b^\pm\|_{2+l,*,\pm},\|b^\pm\|_{3,\pm},|\TP\psi|_{W^{1,\infty}},\delta^{-1})\left(\|\jp^{3+l} b^\pm\|_{0,\pm}^2+\|\jp^{3+l} b^\pm\|_{0,\pm}+|\TP\psi|_{2.5+l}^2\right)\\
\lesssim&~\delta \|b^\pm\|_{4+l,*,\pm}^2 + P(\|b_0^\pm\|_{3+l,*,\pm},|\TP\psi_0|_{2.5+l})+ \int_0^tP\left(\|\p_tb^\pm(\tau)\|_{3+l,*,\pm},|\p_t\TP\psi(\tau)|_{2.5+l}\right)\dtau.
\end{align*} Similarly, by replacing $\TP\psi$ with $\TP\p_t^k\TT^\alpha \psi$, we can get the following inequality, where $0\leq l\leq 3,~1\leq k+l\leq 3,~\len{\alpha}=\alpha_0+\alpha_1+\alpha_2=2l ~(\alpha_3=\alpha_4=0)$:
\begin{align*}
\bno{\TP\p_t^k\TT^\alpha\psi}_{3-k-l}^2=\bno{\TP^{1+2l-\alpha_0}\p_t^{k+\alpha_0}\psi}_{3-k-l}^2\lesssim&~\delta \|b^\pm\|_{4+l,*,\pm}^2+\sum_{j=0}^k P\left(\|b^\pm\|_{3+l,*,\pm},|\TP\p_t^{j+\alpha_0}\psi|_{2.5+l-j-\alpha_0}\right)\big|_{t=0} \\
&+ \int_0^tP\left(\|\p_tb^\pm(\tau)\|_{3+l,*,\pm},|\TP\p_t^{j+\alpha_0+1}\psi(\tau)|_{2.5+l-j-\alpha_0}\right)\dtau.
\end{align*}

Since there is no loss of weights of Mach number when applying Lemma \ref{lem psi}-Lemma \ref{lem psit} to the estimates of compressible current-vortex sheet system \eqref{CMHDVS0}, we can conclude the enhanced regularity, which is uniform in $(\eps,\sigma)$, of the free interface by the following proposition.
\begin{prop}\label{prop psi}
For $l\in\{0,1,2,3,4\}$ and $k\leq 4+l,~k\in\N$, we have
\begin{enumerate}
\item When $0\leq k\leq 3+l$:
\begin{align}\label{psi+0.5}
\bno{\eps^{2l}\TP\p_t^k\psi(t)}_{3.5+l-k}^2\leq P\left(\sum_{j=0}^{(l-1)_+}\EW_{4+j}(0)\right) + P\left(\sum_{j=0}^{(l-1)_+}\EW_{4+j}(t)\right)\left(\int_0^t P\left(\sum_{j=0}^{l}\EW_{4+j}(\tau)\right)\dtau + \ino{b^\pm(t)}_{4+l,*,\pm}^2 \right).
\end{align}
\item When $k=4+l$:
\begin{align}
\bno{\eps^{2l}\p_t^{4+l}\psi(t)}_{0.5}^2\leq&~ P(|\cnab\psi|_{W^{1,\infty}})\left(\ino{\eps^{2l}\jp\p_t^{3+l}v^\pm}_{0,\pm}^2+\ino{\eps^{2l+2}\jp\p_t^{3+l}\Dtpm p^\pm}_{0,\pm}^2\right)+|\vb|_{L^{\infty}}\bno{\eps^{2l}\TP\p_t^{3+l}\psi(t)}_{0.5}\no\\
&+P\left(\sum_{j=0}^{(l-1)_+}\EW_{4+j}(0)\right) + \int_0^t P\left(\sum_{j=0}^{l}\EW_{4+j}(\tau)\right)\dtau. \label{psit+0.5}
\end{align}
\end{enumerate}
\end{prop}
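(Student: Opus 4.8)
The estimate \eqref{psi+0.5} follows directly from the non-collinearity Lemmas \ref{lem psi} and \ref{lem psit}, by repeating the two-step interpolation argument already displayed right before the statement; the estimate \eqref{psit+0.5}, on the other hand, needs a different argument, since the top-order time derivative $\p_t^{4+l}\psi$ is not reachable through the constraint $b^\pm\cdot N=0$ with the $b$-regularity at hand and must instead be extracted from the kinematic condition together with the continuity equation.

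\emph{Step 1 (proof of \eqref{psi+0.5}).} For $k=0$ apply Lemma \ref{lem psi} with $s=4+l$ and multiply through by $\eps^{2l}$: the products $\|\eps^{2l}\p_3\jp^{2+l}b^\pm\|_{0,\pm}\|\eps^{2l}\jp^{4+l}b^\pm\|_{0,\pm}$ and $\|\eps^{2l}\jp^{4+l}b^\pm\|_{0,\pm}^2$ are bounded by $\ino{b^\pm(t)}_{4+l,*,\pm}^2$ (the anisotropic norm carrying the $\eps^{2l}$ weight as in the definition of $E_{4+l}$), and the leftover lower-order boundary term $|\eps^{2l}\TP\psi|_{3+l}^2$ is treated exactly as in the displayed computation preceding the statement: apply Lemma \ref{lem psi} once more with $s=3.5+l$, interpolate the interior $b$-norms, absorb the top piece into $\delta\ino{b^\pm}_{4+l,*,\pm}^2$, and replace the rest by $P(\sum_{j\le(l-1)_+}\EW_{4+j}(0))+\int_0^t P(\sum_{j\le l}\EW_{4+j}(\tau))\dtau$ via $g(t)^2=g(0)^2+2\int_0^t g\,\p_t g\,\dtau$ (the drop from level $l$ to $(l-1)_+$ comes from $\eps^{2l}=\eps^2\eps^{2(l-1)}$ and from the fact that the interpolated norms sit one order below the top one). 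For $1\le k\le 3+l$ induct on $k$ using Lemma \ref{lem psit} with $s=4+l$: the sum $\sum_{j=1}^{k-1}|\p_t^j\TP\psi|_{s-j-3/2}^2$ is covered by the induction hypothesis, the pure tangential trace $|\TP^{4+l-k}\p_t^k\psi|_0^2$ is dominated by $\EW_{4+l}$ itself, and the two $b$-terms are handled as in the case $k=0$; the crucial structural fact, built into Lemma \ref{lem psit}, is that no more than $k$ time derivatives ever fall on $b^\pm$, so that the Mach-weight bookkeeping stays lossless.

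\emph{Step 2 (proof of \eqref{psit+0.5}).} On $\Sigma$ the kinematic condition gives $\p_t\psi=v^\pm\cdot N$, hence $\eps^{2l}\p_t^{4+l}\psi=\eps^{2l}\p_t^{3+l}(v^\pm\cdot N)$, which is the trace of $\eps^{2l}\p_t^{3+l}(v^\pm\cdot\NN)$ on $\Sigma$. Bound its $H^{0.5}(\Sigma)$ norm by the interior-integral/trace argument of Lemma \ref{lem psi}, which reduces matters to pairings involving $\p_3\big(\p_t^{3+l}(v^\pm\cdot\NN)\big)$ and $\p_t^{3+l}(v^\pm\cdot\NN)$. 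The main term $\p_t^{3+l}(\p_3 v^\pm\cdot\NN)$ is rewritten through $\p_3 v^\pm\cdot\NN=\p_3\vp\,(\nabp\cdot v^\pm-\cnab\cdot\vb^\pm)=\p_3\vp\,(-\ffpm\Dtpm p^\pm-\cnab\cdot\vb^\pm)$, which, since $\ffpm=\eps^2$, produces the factor $\|\eps^{2l+2}\jp\p_t^{3+l}\Dtpm p^\pm\|_{0,\pm}$ plus lower-order pieces; pairing it against the companion factor $\|\eps^{2l}\jp\p_t^{3+l}v^\pm\|_{0,\pm}$ and applying Young's inequality yields the two squared terms in \eqref{psit+0.5}. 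The contributions in which a time derivative lands on $\NN$ (equivalently on $\cnab\psi$) --- arising from $\p_t^{3+l}(v^\pm\cdot\p_3\NN)$ and from the Leibniz expansion of $v^\pm\cdot\NN$ --- give, in the extreme case where all derivatives hit $\psi$, exactly the term $|\vb|_{L^\infty}|\eps^{2l}\TP\p_t^{3+l}\psi(t)|_{0.5}$ retained on the right-hand side; otherwise they involve at most $2+l$ time derivatives of $\psi$ and are absorbed into $P(\sum_{j\le(l-1)_+}\EW_{4+j}(0))+\int_0^t P(\sum_{j\le l}\EW_{4+j}(\tau))\dtau$, and all commutators of the half-derivatives and of the tangential weights with $\NN$ produced along the way are of strictly lower order and disposed of in the same fashion.

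\emph{Main obstacle.} The delicate point throughout is the Mach-number bookkeeping: at each step one must check that the $\eps^{2l}$ (respectively $\eps^{2l+2}$) weights on the top-order quantities are never downgraded. This is precisely why in Step 2 a time derivative is moved onto $v^\pm$ via $\p_t\psi=v^\pm\cdot N$, so that the continuity equation supplies the missing $\eps^2$ through the divergence, rather than differentiating $\cnab\psi$ directly; and why in Step 1 one relies on the non-increase of the time-derivative count in Lemmas \ref{lem psi}--\ref{lem psit}. A secondary care is to make sure that in \eqref{psi+0.5} the non-time-integrated norm $\ino{b^\pm(t)}_{4+l,*,\pm}^2$ enters only to the first power and multiplied by strictly lower-level energies, so that it can be absorbed afterwards when $\|b^\pm\|_{4+l,*,\pm}$ is itself controlled through the div-curl decomposition of Proposition \ref{prop divcurl}.
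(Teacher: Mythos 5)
Your proposal is correct and follows essentially the same route as the paper's own proof: for \eqref{psi+0.5} you apply Lemmas \ref{lem psi} and \ref{lem psit} with $s=4+l$ and then again with $s$ reduced by $1/2$, interpolate and Gronwall the leftover $\psi$-traces; for \eqref{psit+0.5} you differentiate the kinematic condition, convert the trace of $\p_t^{3+l}v^\pm\cdot N$ to an interior integral via Gauss-Green, and invoke the continuity equation $\pp_3 v^\pm\cdot\NN=\nabp\cdot v^\pm-\cnab\cdot\vb^\pm=-\ffpm\Dtpm p^\pm-\cnab\cdot\vb^\pm$ to trade the normal derivative of $v$ for an $\eps^2$-weighted tangential derivative of $p$. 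Your Mach-weight bookkeeping discussion correctly identifies the two places where the uniform-in-$\eps$ structure is at risk, and both are resolved exactly as the paper does.
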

\begin{proof}
When $k\leq 3+l$, the inequality \eqref{psi+0.5} is a direct consequence of Lemma \ref{lem psi} and Lemma \ref{lem psit}. Indeed, we just need to control the $\psi$ term on the right side of \eqref{psi+0.5}-\eqref{psit+0.5} to be $P(\EW(0))+P(\EW(t))\int_0^tP(\EW(\tau))\dtau$. This can be done by applying again \ref{lem psi} and Lemma \ref{lem psit} to the $\psi$ term appearing on the right side of \eqref{psi plus} and \eqref{psit plus} by replacing $s=4+l-k$ with $s=3.5+l-k$. When $k=4+l$, we just differentiate the kinematic boundary condition $\p_t\psi=v^\pm\cdot N$ to get
\[
\p_t^{4+l}\psi=\p_t^{3+l}v^\pm\cdot N+\vb^\pm\cdot\cnab\p_t^{3+l}\psi + [\p_t^{3+l},v^\pm\cdot,N],
\]where the second term contributes to the second term on the right side of  \eqref{psit+0.5} and the last term contributes to the second line of \eqref{psit+0.5}. For $\p_t^{3+l}v^\pm\cdot N$, we again use Gauss-Green formula to covert its boundary norm to an interior integral and use $\pp_3 v^\pm\cdot \NN=\nabp\cdot v^\pm-\cnab\cdot\vb^\pm=-\eps^2\Dtpm p^\pm-\cnab\cdot \vb^\pm$ to replace the normal derivative by tangential derivative. The proof is parallel to the control of $|\jp^{\frac12}\left(\jp^{s-1}b^\pm\cdot N\right)|_0$ as in the proof of Lemma \ref{lem psi} and we no longer repeat the details.
\end{proof}

\subsection{Elimination of VS term: Friedrichs secondary symmetrization}\label{sect symmetrize}
The quantity ST  produces the $\sqrt{\sigma}$-weighted boundary energy in $\EW(t)$ as in \cite[Prop. 3.3]{Zhang2023CMHDVS1}. With the new energy functional \eqref{energy EW}, remainder terms in ST, quantities RT and $ZB^\pm+Z^\pm$ mentioned at the beginning of Section \ref{sect 0STlimit} are all controlled by $P(\EW(0))+P(\EW(t))\int_0^tP(\EW(\tau))\dtau$ thanks to Proposition \ref{prop psi}. The terms that appear without time integral on the right side of \eqref{psi+0.5} and \eqref{psit+0.5} can also be controlled by $P(\EW(0))+P(\EW(t))\int_0^tP(\EW(\tau))\dtau$ via div-curl analysis or tangential estimates. In other words, we have reached the following energy inequality for $\EW(t)$
\begin{align}
\EW(t)\lesssim \delta \EW(t) + P(\EW(0)) + P(\EW(t))\int_0^tP(\EW(\tau))\dtau + \int_0^t\VS\dtau,
\end{align}
so it remains to control or eliminate the term VS arising from the estimates of $\EW(t)$, such that we can close the energy estimates for $\EW(t)$ and also get rid of the dependence on $1/\sigma$. 

\subsubsection{Motivation for Friedrichs secondary symmetrization}
The regularity for the free interface needed in the control of$\VS$ is higher than the one we obtain in Proposition \ref{prop psi}. So, we alternatively try to completely eliminate the term VS by utilizing the jump of tangential magnetic field. Recall that the term VS is generated due to the discontinuity in tangential velocity 
\[ 
\VS=\is \TT^\gamma q^-\,(\jump{\vb}\cdot\cnab)\TT^\gamma \psi\dx',\q\q \TT^\gamma=\p_t^{\gamma_0}\TP_1^{\gamma_1}\TP_2^{\gamma_2}~(\gamma_3=\gamma_4=0),
\] in which we may try to insert a term $\jump{\bmu \bc}$ into $\jump{\vb}$ such that $\jump{\vb-\bmu \bc}=\mathbf{0}$ on $\Sigma$ for some function $\bmu^\pm$. Such functions $\bmu^\pm$ do exist and are unique thanks to the non-collinearity $\bc^+\nparallel \bc^-$ on $\Sigma$:
\begin{equation}\label{choice mu 3D}
\begin{cases}
\jump{v_1}=\bar{\mu}^+b_1^+-\bar{\mu}^-b_1^-\\
\jump{v_2}=\bar{\mu}^+b_2^+-\bar{\mu}^-b_2^-
\end{cases}\xRightarrow{~b^+\nparallel b^-}~ \bar{\mu}^\pm=\dfrac{b_1^\mp\jump{v_2}-b_2^\mp\jump{v_1}}{b_1^+b_2^--b_1^-b_2^+}=\dfrac{(\bc^\mp\times\jump{\vb})_3}{(\bc^+\times \bc^-)_3}.
\end{equation}

Next, a natural question is how to produce such $\jump{\bar{\mu}\bc}$-terms in the tangential estimates. Recall that the discontinuity term $(\jump{\vb}\cdot\cnab)\TT^\gamma \psi$ is produced by taking substraction bewteen the equations of $\TT^\gamma v^\pm \cdot N$ which originates from $\frac12\iopm\rho^\pm|\TT^\gamma v^\pm |^2\dvt$ (see \cite[(3.33)]{Zhang2023CMHDVS1} for example). This suggests us to replace the variable $v^\pm$ by $v^\pm-\bar{\mu}^\pm b^\pm$ in the momentum equation in order to create the elimination $\jump{\vb-\bmu \bc}=\mathbf{0}$. However, such replacement in the momentum equation will make the compressible ideal MHD system \eqref{CMHDVS0} no longer symmetric, which will further lead to the failure of $L^2$ energy conservation. Hence, we must re-symmetrize the hyperbolic system after replacing $v$ by $v-\mu b$.

The technique we use is the so-called Friedrichs secondary symmetrization \cite{Friedrichs}. For compressible ideal MHD system, the symmetrizer was explicitly calculated in Trakhinin \cite{Trakhinin2005CMHDVS}. Let $\mu(t,x)=\bar{\mu}(t,x')\eta(x_3)$ where $\eta(x_3) \in C_c^{\infty}(\R)$ is a smooth, non-negative, even function satisfying $\eta(0)=1$ and $\eta(x_3)=0$ when $|x_3|>\delta_1$ for some sufficiently small constant $\delta_1>0$. Inserting this $\eta$ is to localise the function $\mu^\pm$ near the interface $\Sigma$. The new system takes the form
\begin{equation}\label{CMHDVS0mu}
\begin{cases}
\rho\Dtp v -\bp b+\nabp (p+\frac12|b|^2) -\mu\rho \left( \Dtp b - \bp v+b(\nabp\cdot v)\right)=0,\\
\ffp\Dtp p+\nabp\cdot v +\mu\ffp\left( \rho\Dtp v\cdot b+ b\cdot\nabp p\right)=0\\
\Dtp b-\bp v+b(\nabp\cdot v)-\mu\left(\rho \Dtp v -\bp b +\nabp (p+\frac12|b|^2)\right)=0,
\end{cases}
\end{equation}where these equations are obtained by doing the following linear transform on \eqref{CMHDVS0}
\begin{align*}
\text{new momentum equation}=&~\text{momentum equation} -\mu\rho (\text{evolution equation of }b),\\
\text{new continuity equation}=&~\text{continuity equation} + \mu\ffp(\text{momentum equation})\cdot b,\\
\text{new evolution equation of }b=&~\text{momentum equation} -\mu(\text{momentum equation}).
\end{align*}Note that in the second equation we use the fact that $\left(\nabp(1/2|b|^2)-\bp b\right)\cdot b=(b\times(\nabp\times b))\cdot b=0$.

\subsubsection{Reformulations in Alinhac good unknowns}\label{sect AGU}
Given a tangential derivative $\TT^\gamma:=(\omega(x_3)\p_3)^{\gamma_4}\p_t^{\gamma_0}\p_1^{\gamma_1}\p_2^{\gamma_2}$ with $\lee\gamma\ree=\gamma_0+\gamma_1+\gamma_2+\gamma_4$., we need to re-consider the tangential estimates in order to avoid the appearance of the term VS when $\gamma_4=0$ (When $\gamma_4\neq 0$, no boundary term appears thanks to $\omega(x_3)=0$, so we can just mimic the proof in \cite[Section 3.3-3.4]{Zhang2023CMHDVS1}).   We define the Alinhac good unknown of a given function $f$ with respect to $\TT^\gamma$ by $\FF^\gamma:=\TT^\gamma f-\TT^\gamma\varphi\p_3^\vp f$. The good unknown $\FF$ satisfies
\begin{equation}\label{AGU good}
\TT^{\gamma}\nab_i^\vp f=\nabp_i\FF^\gamma+\cc_i^\gamma(f),~~\TT^\gamma \Dtp f=\Dtp\FF^\gamma+\dd^\gamma(f),
\end{equation}where the commutators $\cc^\gamma_i(f)$ and $\dd^\gamma(f)$ are defined by
\begin{align}\label{AGU comm Ci}
\mathfrak{C}_i^\gamma(f) =&~(\pp_3\pp_i f)\TT^\gamma\vp
+\left[ \TT^\gamma, \frac{\NN_i}{\p_3 \vp}, \p_3 f\right]+\p_3 f \left[ \TT^\gamma, \NN_i, \frac{1}{\p_3 \vp}\right] +\NN_i \p_3 f\left[\TT^{\gamma-\gamma'}, \frac{1}{(\p_3 \vp)^2}\right] \TT^{\gamma'} \p_3  \vp \nonumber\\
&+\frac{\NN_i}{\p_3  \vp} [\TT^\gamma, \p_3] f - \frac{\NN_i}{(\p_3 \vp)^2}\p_3f [\TT^\gamma, \p_3] \vp,\quad i=1,2,3,
\end{align} and 
\begin{align} \label{AGU comm D}
\mathfrak{D}^\gamma(f) =&~ (\Dtp \pp_3 f)\TT^\gamma\vp+[\TT^\gamma, \vb]\cdot \TP f + \left[\TT^\gamma, \frac{1}{\p_3\vp}(v\cdot \NN-\p_t\varphi), \p_3 f\right]+\left[\TT^\gamma, v\cdot \NN-\p_t\varphi, \frac{1}{\p_3\vp}\right]\p_3 f\nonumber\\ 
&+\frac{1}{\p_3\vp} [\TT^\gamma, v]\cdot \NN \p_3 f-(v\cdot \NN-\p_t\varphi)\p_3 f\left[ \TT^{\gamma-\gamma'}, \frac{1}{(\p_3 \vp)^2}\right]\TT^{\gamma'} \p_3 \vp\nonumber\\
&+\frac{1}{\p_3\vp}(v\cdot \NN-\p_t\varphi) [\TT^\gamma, \p_3] f+ (v\cdot \NN-\p_t\varphi) \frac{\p_3 f}{(\p_3\vp)^2}[\TT^\gamma, \p_3] \vp
\end{align}
with $\len{\gamma'}=1$. Here $\NN:=(-\TP_1\vp,-\TP_2\vp,1)^\top$ is the extension of normal vector $N$ in $\Om^\pm$. The third term on the right side of \eqref{AGU comm Ci} is zero when $i=3$ because $\NN_3=1$ is a constant. One can follow \cite[Section 3.3-3.5]{Zhang2023CMHDVS1} to verify that the $L^2(\Om^\pm)$ norms of commutators $\cc_i^\gamma(f)$ and $\dd^\gamma(f)$ can be controlled by $P(\EW(t))$ for $f=v_i^\pm$ or $q^\pm$ by directly counting the number of derivatives.

Under the above setting and dropping the superscript $\gamma$ for convenience, the $\TT^\gamma$-differentiated current-vortex sheet system, after doing Friedrichs secondary symmetrization, is reformulated in the corresponding Alinhac good unknowns $(\VV^{\pm},\BB^{\pm},\PP^{\pm},\QQ^{\pm},\BS^{\pm})$ as follows
\begin{align}
\label{goodvmu}\rho^\pm\Dtpm \VV^{\pm}-\bpm \BB^{\pm}+\nabp \QQ^{\pm}-\mu^\pm \rho^\pm(\Dtpm \BB^{\pm}-\bp\VV^\pm+b^\pm(\nabp\cdot\VV^\pm))=&~\RR^{\pm,\mu}_{v}-\cc(q^\pm)+\mu^\pm\rho^\pm b^\pm \cc_i(v_i^\pm),\\
\label{goodpmu}\ffpm \Dtpm \PP^\pm+\nabp\cdot\VV^\pm+\mu^\pm\ffpm(\rho^\pm\Dtpm \VV^\pm\cdot b^\pm +b^\pm\cdot\nabp\PP^\pm)=&~\RR_{p}^{\pm,\mu}-\cc_i(v_i^\pm)-\mu^\pm \ffpm b^\pm\cdot\cc(q^\pm),\\
\label{goodbmu}\Dtpm\BB^\pm-\bpm\VV^\pm+b^\pm(\nabp\cdot\VV^\pm)-\mu(\rho^\pm\Dtpm \VV^{\pm}-\bpm \BB^{\pm}+\nabp \QQ^{\pm})=&~\RR_b^{\pm,\mu}-b^\pm\cc_i(v_i^\pm)+\mu^\pm\cc(q^\pm)\\
\Dtpm\BS^\pm =&~\dd(S^\pm),
\end{align}where $\RR_v^\mu,\RR_p^\mu,\RR_b^\mu$ terms consist of the following commutators
\begin{align}
\label{goodrvmu} \RR_v^{\pm,\mu}:=&~\RR_{v}^\pm-\mu^\pm\rho^\pm\RR_b^\pm+[\TT^\gamma,\mu^\pm\rho^\pm]\left(\Dtpm b^\pm-\bpm v^\pm+b^\pm(\nabp\cdot v^\pm)\right)\\
\label{goodrpmu} \RR_p^{\pm,\mu}:=&~\RR_p^\pm+\mu^\pm \ffpm b^\pm\cdot \RR_v^{\pm}-[\TT^\gamma,\mu^\pm\ffpm]\left(\rho^\pm\Dtpm v^\pm\cdot b^\pm - \bpm p^\pm\right)\\
\label{goodrbmu} \RR_b^{\pm,\mu}:=&~\RR_b^{\pm}-\mu^\pm\rho^\pm\RR_v^\pm+[\TT^\gamma,\mu^\pm]\left(\Dtpm v^\pm-\bpm b^\pm+\nabp q^\pm\right),
\end{align} with $\RR_v^{\pm},\RR_b^{\pm},\RR_b^\pm$ defined by
\begin{align}
\label{goodrv} \RR_v^{\pm}:=&~[\TT^\gamma,b^\pm]\cdot\nabp b^\pm-[\TT^\gamma,\rho^\pm]\Dtpm v^\pm-\rho^\pm\dd^\gamma(v^\pm),\\
\label{goodrp} \RR_p^{\pm}:=&-[\TT^\gamma,\ffpm]\Dtpm p^\pm -\ffpm \dd^\gamma(p^\pm),\\
\label{goodrb} \RR_b^{\pm}:=&~[\TT^\gamma,b^\pm]\cdot\nabp v^\pm-\dd^\gamma(b^\pm).
\end{align}
The boundary conditions on the  interface $\Sigma$ are
\begin{align}
\label{goodbdcmu}\jump{\QQ}=\sigma\TT^{\gamma}\h(\psi)-\jump{\p_3 q}\TT^\gamma\psi~~&\text{ on }[0,T]\times\Sigma,\\
\label{goodkbcmu}\VV^\pm\cdot N=\p_t\TT^{\gamma}\psi+\vb^\pm\cdot\cnab\TT^\gamma\psi-\WW_v^\pm~&\text{ on }[0,T]\times\Sigma,\\
\label{goodbncmu}b^\pm\cdot N=0\Rightarrow \BB^\pm\cdot N=\bc^\pm\cdot\cnab\TT^\gamma\psi-\WW_b^\pm~&\text{ on }[0,T]\times\Sigma,
\end{align}and the boundary term $\WW^\gapm$ is 
\begin{align}\label{goodwwmu}
\WW_f^\pm:=(\p_3 f^\pm\cdot N)\TT^\gamma\psi+[\TT^\gamma,N_i,f_i^\pm],~~f=v,b.
\end{align} 

Note that the good unknowns $\VV^\pm,\BB^\pm,\QQ^\pm,\PP^\pm,\BS^\pm$ also satisfy the above system with mathematically setting $\mu^\pm=0$ because the secondary-symmetrized system above is obtained by doing a linear transform on the original system \eqref{CMHDVS0}. Also note that we only need to re-consider the $\TT^\gamma$-estimates for $\gamma_4=0$, we can rewrite $\TT^\gamma$ to be $\p_t^k\TP^{4+l-k}$ for $0\leq k\leq 4+l$ and $0\leq l\leq 4$. In this case, the second line in \eqref{AGU comm Ci} and the third line in \eqref{AGU comm D} are both zero because $\p_t^k\TP^{4+l-k}$ commutes with $\p_3$.

\subsubsection{Tangential estimates (I): Analysis in the interior}
Recall that the term VS originates from the tangential estimates. After doing Friedrichs secondary symmtrisation, we shall consider the tangential estimates for the following functional to establish the $\eps^{2l}\TT^\gamma$-estimates $(0\leq l\leq 4,\len{\gamma}=4+l)$:
\begin{align}
\GG^{\pm,\mu}(t):=\frac{\eps^{4l}}{2}\iopm \rho^\pm|\VV^\pm|^2+|\BB^\pm|^2+\ffpm(\PP^\pm)^2-2\mu^\pm\rho^\pm\VV^\pm\cdot\BB^\pm +2\mu^\pm\rho^\pm\ffpm \PP^\pm(b^\pm\cdot\VV^\pm)\dvt
\end{align}instead of the following one used in \cite[Section 3.3-3.5]{Zhang2023CMHDVS1}
\[
\GG^\pm(t):=\frac{\eps^{4l}}{2}\iopm \rho^\pm|\VV^\pm|^2+|\BB^\pm|^2+\ffpm(\PP^\pm)^2\dvt.
\]
Using Reynolds' transport theorem (Theorem \ref{transport thm nonlinear}), we have
\begin{equation}
\begin{aligned}
\ddt\GG^{\pm,\mu}(t)=&~\eps^{4l}\iopm \rho^\pm \Dtpm\VV^\pm\cdot(\VV^\pm-\mu^\pm\BB^\pm+\eps^{4l}\mu^\pm\ffpm\PP^\pm b^\pm)\dvt+\eps^{4l}\iopm\Dtpm\BB^\pm\cdot(\BB^\pm-\mu^\pm\rho^\pm\VV^\pm)\dvt\\
&+\eps^{4l}\iopm \ffpm\Dtpm\PP^\pm(\PP^\pm+\mu^\pm \rho^\pm b^\pm\cdot\VV^\pm)\dvt+\eps^{4l}R_{1}^{\pm,\mu}\\
=:&~\eps^{4l}(G_1^{\pm,\mu}+G_2^{\pm,\mu}+G_3^{\pm,\mu}+R_{1}^{\pm,\mu})
\end{aligned}
\end{equation}where
\begin{align}
R_{1}^{\pm,\mu}:=&~\frac12\iopm (\nabp\cdot v^\pm)\left(|\BB^\pm|^2+ (\sqrt{\ffpm}\PP^\pm)^2\right)\dvt+\iopm \mu^\pm\ffpm\PP^\pm(\Dtpm( \rho^\pm b^\pm)\cdot \VV^\pm)\dvt\no\\
&-\iopm \Dtpm\mu \left(\rho^\pm\VV^\pm\cdot\BB^\pm + \rho^\pm\ffpm\PP^\pm(b^\pm\cdot\VV^\pm)\right)\dvt.
\end{align} Invoking the evolution equations of good unknowns with all $\mu^\pm$-terms dropped, we get
\begin{align}
G_1^{\pm,\mu}=&\iopm \bpm \BB^\pm\cdot\left(\VV^\pm-\mu^\pm\BB^\pm\right)\dvt-\iopm\nabp\QQ^\pm\cdot\left(\VV^\pm-\mu^\pm\BB^\pm\right)\dvt\no\\
&+\iopm\ffpm\mu^\pm(\rho^\pm\Dtpm\VV^\pm\cdot b^\pm) \PP^\pm\dvt-\iopm \cc(q^\pm)\cdot\left(\VV^\pm-\mu^\pm\BB^\pm\right)\dvt+\iopm\RR_v^\pm\cdot\left(\VV^\pm-\mu^\pm\BB^\pm\right)\dvt\no\\
=:&~G_{11}^{\pm,\mu}+G_{12}^{\pm,\mu}+G_{13}^{\pm,\mu}+R_{2}^{\pm,\mu}+R_{3}^{\pm,\mu}.
\end{align}
In $G_{11}^{\pm,\mu}$ and $G_{12}^{\pm,\mu}$, we integrate by parts to get
\begin{align}
G_{11}^{\pm,\mu}=&-\iopm \BB^\pm\cdot\bpm\VV^\pm \dvt-\frac12 \iopm \nabp\cdot(\mu^\pm b^\pm)|\BB^\pm|^2\dvt\no\\
=:&~G_{111}^{\pm,\mu}+R_{4}^{\pm,\mu}
\end{align}and use $\nabp\cdot\BB^\pm=-\cc_i(b_i^\pm)$ to get
\begin{align}
G_{12}^{\pm,\mu}=&\pm\is \QQ^\pm (\VV^\pm-\mu^\pm\BB^\pm)\cdot N\dx'+\iopm\QQ^\pm(\nabp\cdot\VV^\pm) \dvt+\iopm \mu^\pm \QQ^\pm\,\cc_i(b_i^\pm)\dvt\no\\
=:&~G_{0}^{\pm,\mu}+G_{121}^{\pm,\mu}+G_{122}^{\pm,\mu}.
\end{align}
In $G_{13}^{\pm,\mu}$, we notice that 
\[
(-\bpm\BB^\pm+\nabp\BB^\pm_jb_j^\pm)\cdot b^\pm=-b_j^\pm(\pp_j\BB_i^\pm) b_i^\pm+(\pp_i\BB^\pm_j)b_j^\pm b_i^\pm=0,
\]so it becomes the following controllable quantities by using symmetry
\begin{align}
G_{13}^{\pm,\mu}=&-\iopm\ffpm\mu^\pm\rho^\pm (\bpm \PP^\pm)\,\PP^\pm\dvt+\iopm \ffpm\mu^\pm(\RR_v^\pm-\cc(q^\pm))\cdot  b^\pm \PP^\pm\dvt\no\\
=&-\frac12\iopm\nabp\cdot(\ffpm\mu^\pm\rho^\pm b^\pm)(\PP^\pm)^2\dvt +\iopm \ffpm\mu^\pm(\RR_v^\pm-\cc(q^\pm))\cdot b^\pm \PP^\pm\dvt=:R_{5}^{\pm,\mu}+R_{6}^{\pm,\mu}.
\end{align} Note that the terms $G_{111}^{\pm,\mu}$ and $G_{122}^{\pm,\mu}$ already appear in the previous analysis for \eqref{CMHDVS0} in \cite[Section 3.3.1]{Zhang2023CMHDVS1}, so we no longer need to put extra effort on it. 

Next we analyze $G_2^{\pm,\mu}\sim G_3^{\pm,\mu}$. Invoking equations of $\PP^\pm$ and $\BB^\pm$ with all $\mu^\pm$-terms dropped, we get
\begin{align}
G_2^{\pm,\mu}=&~\underbrace{\iopm (\bpm\VV^\pm)\cdot\BB^\pm\dvt}_{=-G_{111}^{\pm,\mu}} - \iopm b^\pm(\nabp\cdot \VV^\pm)\cdot\BB^\pm\dvt -\iopm \BB^\pm\cdot b^\pm\cc_i(v_i)\dvt\no\\
&+\iopm \mu^\pm\rho^\pm \VV^\pm\cdot b^\pm(\nabp\cdot \VV^\pm)\dvt\no + \iopm \mu^\pm\rho^\pm\VV^\pm\cdot b^\pm \cc_i(v_i^\pm)\dvt-\iopm\mu^\pm\rho^\pm\bpm\VV^\pm\cdot\VV^\pm\dvt\no\\
=:&-G_{111}^{\pm,\mu}+G_{21}^{\pm,\mu}+G_{22}^{\pm,\mu}+G_{23}^{\pm,\mu}+R_{7}^{\pm,\mu}+R_{8}^{\pm,\mu},
\end{align}and
\begin{align}
G_3^{\pm,\mu}=&-\iopm (\nabp\cdot\VV^\pm)\PP^\pm\dvt-\iopm \PP^\pm\cc_i(v_i^\pm)\dvt\underbrace{-\iopm \rho^\pm\mu^\pm b^\pm(\nabp\cdot\VV^\pm)\cdot\VV^{\pm} \dvt}_{=-G_{23}^{\pm,\mu}} \no\\
&+\iopm\PP^\pm\RR_p^\pm +\mu^\pm(\RR_p^\pm-\cc_i(v_i^\pm))(\rho^\pm b^\pm\cdot\VV^\pm)\dvt\no\\
=:&~G_{31}^{\pm,\mu}+G_{32}^{\pm,\mu}-G_{23}^{\pm,\mu}+R_{9}^{\pm,\mu}.
\end{align}
Now we can see a lot of cancellation structures among these interior integrals. First, using $$\QQ^\pm= \PP^\pm+b^\pm\cdot\BB^\pm+\RR_q^\pm,\quad\RR_q^\pm=\sum\limits_{1\leq\len{\gamma'}\leq\len{\gamma}-1}\TT^\gamma b_j^\pm\cdot\TT^{\gamma-\gamma'} b^\pm_j,$$ we have
\begin{align}
G_{121}^{\pm,\mu}+G_{21}^{\pm,\mu}+G_{31}^{\pm,\mu} =& \iopm (\nabp\cdot\VV^\pm)\RR_q^\pm\dvt=:R_{10}^{\pm,\mu},\\
G_{122}^{\pm,\mu}+G_{22}^{\pm,\mu}+G_{32}^{\pm,\mu} =& -\iopm \QQ^\pm (\cc_i(v_i^\pm)-\mu\cc_i(b_i^\pm))\dvt + \iopm \RR_q^\pm (\cc_i(v_i^\pm)-\mu\cc_i(b_i^\pm))\dvt \no\\
=: \eps^{-4l}Z^{\pm,\mu} + R_{11}^{\pm,\mu}.
\end{align}
Here we add $\eps^{-4l}$ to the first term just for the consistency of notations. The terms $R_j^{\pm,\mu}~(1\leq j\leq 11)$ can be directly controlled using the same method in \cite[Section 3.3-3.4]{Zhang2023CMHDVS1}, so we omit the details. 
\begin{align}
\eps^{4l}\int_0^t\sum_{j=1}^{11}R_j^{\pm,\mu} \leq \delta P(\EW(t)) + P(\EW(0))+\int_0^t P(\EW(\tau)) \dtau,\quad\forall \delta\in(0,1).
\end{align} Thus, it again remains to analyze the boundary integral $G_{0}^{\pm,\mu}$ and the commutator term $Z^{\pm,\mu}$.

\subsubsection{Tangential estimates (II): Elimination of the term VS}
Invoking the boundary conditions, the boundary integral $G_{0}^{\pm,\mu}$ can be decomposed as follows
\begin{align}
\eps^{4l}(G_{0}^{+,\mu}+G_0^{-,\mu})=&\eps^{4l}\is \QQ^+ (\VV^+-\mu^+\BB^+)\cdot N\dx'-\eps^{4l}\is \QQ^- (\VV^--\mu^-\BB^-)\cdot N\dx'\no\\
=&\ST^\mu+{\ST^\mu}'+\VS^\mu+\RT^\mu+\RT^{\pm,\mu}+ZB^{\pm,\mu}
\end{align} where
\begin{align}
\label{def STmu} \ST^\mu:=&~\eps^{4l}\is\TT^\gamma\jump{q}\,\p_t\TT^\gamma\psi\dx',\\
\label{def STmu'} {\ST^\mu}':=&~\eps^{4l}\is\TT^\gamma\jump{q}\,\left((\vb^+-\bar{\mu}^+\bc^+)\cdot\cnab\right)\TT^\gamma\psi\dx',\\
\label{def VSmu}\VS^\mu:=&~\eps^{4l}\is\TT^\gamma q^-\,\left(\jump{\vb-\bar{\mu} \bc}\cdot\cnab\right)\TT^\gamma\psi\dx',\\
\label{def RTmu}\RT^\mu:=&-\eps^{4l}\is\jump{\p_3 q}\TT^\gamma\psi\,\p_t\TT^\gamma\psi\dx',\\
\label{def RTmu'}\RT^{\pm,\mu}:=&\mp\eps^{4l}\is \p_3 q^\pm\,\TT^\gamma\psi\,\left((\vb^\pm-\bar{\mu}^\pm\bc^\pm)\cdot\cnab\right)\TT^\gamma\psi\dx',\\
\label{def ZBmu}ZB^{\pm,\mu}:=&\mp\eps^{4l}\is \QQ^{\pm}(\WW_v^{\pm}-\mu^\pm\WW_b^{\pm})\dx'
\end{align}

Among the terms \eqref{def STmu}-\eqref{def ZBmu},$\ST^\mu,{\ST^\mu}',\RT^\mu,\RT^{\pm,\mu}$ can be analyzed in the same way as in \cite[Section 3.3-3.4]{Zhang2023CMHDVS1}, so we no longer repeat the proof.
\begin{align*}
\frac{\sigma\eps^{4l}}{2}\is\frac{|\cnab\TT^\gamma\psi|^2}{{\sqrt{1+|\cnab\psi|^2}}^3}\dx'+\int_0^t\ST^\mu+{\ST^\mu}'+\RT^\mu+\RT^{\pm,\mu}\dtau\lesssim  \delta\EW(t)+P(\EW(0))+P(\EW(t))\int_0^tP(\EW(\tau))\dtau.
\end{align*} Also, the control of these terms do not depend on $1/\sigma$ thanks to the enhanced regularity of $\psi$ obtained in Section \ref{sect noncollinear}. 

{\bf With the unique choice of $\mu^\pm$ in \eqref{choice mu 3D}, the quantity $\vb-\mu \bc$ has NO jump across the interface $\Sigma$ and so$\VS^\mu=0$.} Finally, there is a cancellation structure in $ZB^{\pm,\mu}+Z^{\pm,\mu}$ which is similar to the one observed in step 4 of Section 3.3.1 and step 4 of Section 3.4.3 in \cite{Zhang2023CMHDVS1}. It suffices to replace $v^\pm$ by $v^\pm-\mu^\pm b^\pm$ and use $\nabp\cdot b^\pm=0$ in $\Om^\pm$ in order for the same result. 
\begin{align*}
\int_0^t ZB^{\pm,\mu}+Z^{\pm,\mu}\dtau\lesssim \delta\EW(t)+ P(\EW(0))+P(\EW(t))\int_0^tP(\EW(\tau))\dtau.
\end{align*}

\subsubsection{The stability condition ensures the hyperbolicity}
So far, we have obtained the following estimates for $\GG^{\pm,\mu}(t)$ in tangential estimates:
\begin{equation}\label{EmuTT1}
\begin{aligned}
&\sum_{\pm}\frac{\eps^{4l}}{2}\iopm \rho^\pm|\VV^\pm|^2+|\BB^\pm|^2+\ffpm(\PP^\pm)^2-2\mu^\pm\rho^\pm\VV^\pm\cdot\BB^\pm+2\mu^\pm\rho^\pm\ffpm \PP^\pm(b^\pm\cdot\VV^\pm)\dvt+\frac{\sigma\eps^{4l}}{2}\is \frac{|\TT^\gamma\cnab\psi|^2}{{\sqrt{1+|\cnab\psi|^2}}^3}\dx'\\
\lesssim&~\delta \EW(t)+ P(\EW(0)) + P(\EW(t))\int_0^t P(\EW(\tau))\dtau.
\end{aligned}
\end{equation}

Compared to the tangential estimates in \cite[Section 3.3-3.4]{Zhang2023CMHDVS1}, we must guarantee the \textit{positive-definiteness} of 
\begin{align*}
&|\sqrt{\rho^\pm}\VV^\pm|^2+|\BB^\pm|^2+\ffpm(\PP^\pm)^2-2\mu^\pm\rho^\pm\VV^\pm\cdot\BB^\pm +2\mu^\pm\rho^\pm\ffpm \PP^\pm(b^\pm\cdot\VV^\pm)
\end{align*} as a quadratic form of $(\sqrt{\rho^\pm}\VV^\pm,\BB^\pm,\sqrt{\ffpm}\PP^\pm)$ in $\Om^\pm$, respectively. We have
\begin{align*}
	&|\sqrt{\rho^\pm}\VV^\pm|^2+|\BB^\pm|^2+\ffpm(\PP^\pm)^2-2\mu^\pm\rho^\pm\VV^\pm\cdot\BB^\pm +2\mu^\pm\rho^\pm\ffpm \PP^\pm(b^\pm\cdot\VV^\pm)\\
	\geq &~|\sqrt{\rho^\pm}\VV^\pm|^2+|\BB^\pm|^2+\ffpm(\PP^\pm)^2-2\bno{\mu^\pm\sqrt{\rho^\pm}}\bno{\sqrt{\rho^\pm}\VV^\pm}\bno{\BB^\pm}-2\bno{\mu^\pm\sqrt{\rho^\pm}}\frac{ c_A^\pm}{c_s^\pm}\bno{\sqrt{\ffpm}\PP^\pm}\bno{\sqrt{\rho^\pm}\VV^\pm}
\end{align*} Here we use the fact that $\ffp=1/(\rho c_s^2)$ and $c_A:=|b|/\sqrt{\rho}$. Therefore, we woule require that the following matrix only has strictly positive eigenvalues
\[
\begin{pmatrix}
1&-|\mu^\pm|\sqrt{\rho^\pm}&-|\mu^\pm|\sqrt{\rho^\pm}\dfrac{c_A^\pm}{c_s^\pm}\\
-|\mu^\pm|\sqrt{\rho^\pm}&1&0\\
-|\mu^\pm|\sqrt{\rho^\pm}\dfrac{c_A^\pm}{c_s^\pm}&0&1
\end{pmatrix},
\]which further requires the following inequalities
\[
(\mu^\pm)^2\rho^\pm\left(1+\left(\frac{c_A^\pm}{c_s^\pm}\right)^2\right)< 1\text{ in }\Om^\pm \Rightarrow  |\bc^+\times\bc^-|> |\bc^\mp\times\jump{\vb}|\sqrt{\rho^\pm\left(1+\left(\frac{c_A^\pm}{c_s^\pm}\right)^2\right)}\text{ on }\Sigma.
\] Thus, we find that the stability condition \eqref{syrov 3D com}, namely $$\exists \delta_0\in(0,\frac18),\text{ such that } (1-\delta_0) |\bc^+\times\bc^-|\geq |\bc^\mp\times\jump{\vb}|\sqrt{\rho^\pm\left(1+\left(\frac{c_A^\pm}{c_s^\pm}\right)^2\right)}>0\text{ holds on }\Sigma,$$ exactly ensures the positive-definiteness of $\GG^{\pm,\mu}(t)$. Plugging this into the energy inequality \eqref{EmuTT1}, we find that there exists some constant $\delta_0\in(0,\frac18)$, such that the following estimate holds for the Alinhac good unknowns with respect to the tangential derivative $\eps^{4l}\TT^\gamma~(0\leq l\leq 4,\len{\gamma}=4+l)$.
\begin{equation}\label{EmuTT1}
\begin{aligned}
&\sum_{\pm}\frac{\delta_0}{2}\eps^{4l}\iopm \rho^\pm|\VV^\pm|^2+|\BB^\pm|^2+\ffpm(\PP^\pm)^2\dvt+\frac{\sigma\eps^{4l}}{2}\is \frac{|\TT^\gamma\cnab\psi|^2}{{\sqrt{1+|\cnab\psi|^2}}^3}\dx'\\
\lesssim&~\delta \EW(t)+ P(\EW(0)) + P(\EW(t))\int_0^t P(\EW(\tau))\dtau,\quad\forall\delta\in(0,\delta_0/100).
\end{aligned}
\end{equation} Finally, we just use the definition of Alinhac good unknowns to recover the $\eps^{2l}$-weighted $\TT^\gamma$-tangential estimates as recorded in Proposition \ref{prop ETT} by following the argument in \cite[Section 3.5]{Zhang2023CMHDVS1}.

\subsection{Incompressible and zero-surface-tension-limits under the stability condition}\label{sect limit 0ST}
Combining the tangential estimates \eqref{EmuTT1}, the enhanced regularity for $\psi$ obtained in Section \ref{sect noncollinear} and the div-curl analysis recorded in Section \ref{sect lwp energy}, we conclude the uniform-in-$(\eps,\sigma)$ estimates of $\EW(t)$ by
\begin{align}
\forall\delta\in(0,\frac{\delta_0}{100}),\quad\EW(t)\lesssim\delta \EW(t)+ P(\EW(0)) + P(\EW(t))\int_0^t P(\EW(\tau))\dtau.
\end{align}Using Gronwall-type argument, we know there exists some $T>0$ independent of $(\es)$ such that
\begin{equation}
\sup_{0\leq t\leq T} \EW(t)\leq P(\EW(0)).
\end{equation} 
With the uniform-in-($\es$) estimates, we can pass the limit $\es\to 0_+$ to the incompressible current-vortex sheets problem without surface tension under the non-collinearity condition. Given $\es>0$, let $(\psi^\es, v^{\pm,\es}, b^{\pm,\es}, \rho^{\pm,\es}, S^{\pm,\es})$ be the solution to \eqref{CMHDVS0} with initial data $(\psi_0^\es, v_0^{\pm,\es}, b_0^{\pm,\es}, \rho_0^{\pm,\es}, S_0^{\pm,\es})$ and let $(\xi^0, w^{\pm,0}, h^{\pm,0},\mathfrak{S}^{\pm,0})$ be the solution to \eqref{IMHDs} with $\sigma=0$ with initial data $(\xi_0^0, w_0^{\pm,0}, h_0^{\pm,0}, \mathfrak{S}_0^{\pm,0})$. We assume 
\begin{enumerate}
\item [a.] $(\psi_0^\es, v_0^{\pm,\es}, b_0^{\pm,\es}, S_0^{\pm,\es}) \in H^{9.5}(\Sigma)\times H_*^8(\Omega^\pm)\times H_*^8(\Omega^\pm)\times H_*^8(\Omega^\pm)$ satisfies the compatibility conditions \eqref{comp cond} up to 7-th order, the stability condition \eqref{syrov 3D com 1} and {$|\psi_0^\es|_{\infty} \leq 1$}. 
\item [b.] $(\psi_0^\es, v_0^{\pm,\es}, b_0^{\pm,\es}, S_0^{\pm,\es}) \to (\xi_0^0, w_0^{\pm,0}, h_0^{\pm,0}, \mathfrak{S}_0^{\pm,0})$ in $H^{4.5}(\Sigma) \times H^4(\Omega^\pm)\times H^4(\Omega^\pm)\times H^4(\Omega^\pm)$ as $\eps, \sigma\to 0$. 
\item [c.] The incompressible initial data satisfies the constraints $\nab^{\xi_0}\cdot h_0^{\pm}=0$ in $\Om^\pm$, $h^{\pm}\cdot N^0|_{\{t=0\}\times\Sigma}=0$, the stability condition \begin{align}
 2\delta_0\leq \sqrt{\rr_0^\pm}\left|\bar{h}_0^\mp\times[\bar{w}_0]\right|\leq (1-2\delta_0)|\bar{h}_0^+\times \bar{h}_0^-|~~ \text{ on }\Sigma,
\end{align}where $\delta_0>0$ is the same constant as in \eqref{syrov 3D com 1}.
\end{enumerate}
Then, by the Aubin-Lions compactness lemma, it holds that 
\begin{align}
(\psi^\es, v^{\pm,\es}, b^{\pm,\es}, S^{\pm,\es})\to(\xi^0, w^{\pm,0}, h^{\pm,0},\mathfrak{S}^{\pm,0}),
\end{align} weakly-* in $L^\infty([0,T]; H^{4.5}(\Sigma)\times(H^{4}(\Om^\pm))^3)$ and strongly in  $C([0,T]; H_{\text{loc}}^{4.5-\delta}(\Sigma)\times(H_{\text{loc}}^{4-\delta}(\Om^\pm))^3)$ after possibly passing to a subsequence. Theorem \ref{thm CMHDlimit2} is proven.

\subsection{Double limits in 2D: a subsonic zone}\label{sect 2D syrov}
When the space dimension $d=2$, the substantial part of the proof for well-posedness, uniform estimates and limit process remains unchanged. In fact, we shall only re-consider the following aspects 
\begin{itemize}
\item The curl operator now becomes $\nabper\cdot:=(-\pp_2,\pp_1)\cdot$, so we need to check the special structure given by Lorentz force in the vorticity analysis. This has been analyzed in \cite[Section 3.6.4]{Zhang2023CMHDVS1}.
\item The interface is now a 1D curve instead of a 2D surface, thus it is impossible to have ``non-parallel" magnetic fields $b^\pm$ on $\Sigma$. The functions $\mu^\pm$ are no longer uniquely determined by $b_1^\pm$. 
\end{itemize}

 We prove the following lemma about the choice of Friedrichs symmetrizer.
\begin{lem}
There exist functions $\bar{\mu}^\pm(t,x_1)$ satisfying $$\jump{v_1-\bar{\mu}b_1}=0~\text{ and }~|\bar{\mu}^\pm|<1/a^\pm~~~\text{ on } [0,T]\times\Sigma,~~~a^\pm:=\sqrt{\rho^\pm\left(1+(c_A^\pm/c_s^\pm)^2\right)}$$ if and only if the following inequality holds \begin{align}\label{syrov2Dcom}|\jump{v_1}|<\frac{|b_1^+|}{a^+}+\frac{|b_1^-|}{a^-}~~~\text{ on } [0,T]\times\Sigma.\end{align} Under \eqref{syrov2Dcom}, the functions $\bar{\mu}^\pm$ are chosen to be
\begin{align}\label{choice mu 2D}
\bar{\mu}^\pm=\pm\frac{\sgn(b_1^\pm)\,a^\mp\jump{v_1}}{a^-|b_1^+|+a^+|b_1^-|}.
\end{align}
\end{lem}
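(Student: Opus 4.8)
The plan is to observe that the assertion is purely pointwise in $(t,x_1)\in[0,T]\times\Sigma$: at a fixed point one is looking for two real numbers $\bar\mu^+,\bar\mu^-$ subject to the \emph{single} scalar equation $\bar\mu^+b_1^+-\bar\mu^-b_1^-=\jump{v_1}$ together with the two strict inequalities $|\bar\mu^\pm|<1/a^\pm$, where now $a^\pm,b_1^\pm,\jump{v_1}$ are just real parameters. Since there is one linear constraint and two unknowns — this is precisely the $d=2$ phenomenon, the interface being $1$-dimensional, so that the counterpart of \eqref{choice mu 3D} is underdetermined rather than uniquely solvable — the admissible set is either empty or an open segment, and the content of the lemma is to identify exactly when it is nonempty. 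For the "only if" direction, suppose admissible $\bar\mu^\pm$ exist; then the constraint and the triangle inequality give
\[
|\jump{v_1}|=\bigl|\bar\mu^+b_1^+-\bar\mu^-b_1^-\bigr|\le|\bar\mu^+|\,|b_1^+|+|\bar\mu^-|\,|b_1^-|<\frac{|b_1^+|}{a^+}+\frac{|b_1^-|}{a^-},
\]
which is exactly \eqref{syrov2Dcom}. (Equivalently, the image of the open box $(-1/a^+,1/a^+)\times(-1/a^-,1/a^-)$ under $(\bar\mu^+,\bar\mu^-)\mapsto\bar\mu^+b_1^+-\bar\mu^-b_1^-$ is the open interval $(-L,L)$, $L:=|b_1^+|/a^+ + |b_1^-|/a^-$, so $\jump{v_1}$ lies in it iff \eqref{syrov2Dcom} holds.)

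For the converse, assume \eqref{syrov2Dcom}. In particular $L>0$, so at least one of $b_1^\pm$ is nonzero and the denominator $a^-|b_1^+|+a^+|b_1^-|$ in \eqref{choice mu 2D} is strictly positive, making the choice well defined. Inserting \eqref{choice mu 2D} into the constraint and using $\sgn(b_1^\pm)b_1^\pm=|b_1^\pm|$ yields
\[
\bar\mu^+b_1^+-\bar\mu^-b_1^-=\frac{|b_1^+|a^-+|b_1^-|a^+}{a^-|b_1^+|+a^+|b_1^-|}\,\jump{v_1}=\jump{v_1},
\]
so the jump condition holds. For the bounds, $|\bar\mu^+|=a^-|\jump{v_1}|/(a^-|b_1^+|+a^+|b_1^-|)$, and cross-multiplying by the positive quantities $a^+$ and $a^-|b_1^+|+a^+|b_1^-|$ shows $|\bar\mu^+|<1/a^+$ is equivalent to $a^+a^-|\jump{v_1}|<a^-|b_1^+|+a^+|b_1^-|$, i.e. to \eqref{syrov2Dcom}; the inequality $|\bar\mu^-|<1/a^-$ follows in the same way. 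This establishes both directions and exhibits the explicit selection \eqref{choice mu 2D}, which is one element of the (generally) nonempty admissible segment.

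It then remains to record the regularity of $\bar\mu^\pm$ as functions of $(t,x_1)$, which is what is actually used when running the Friedrichs secondary symmetrization \eqref{CMHDVS0mu} and its energy estimates. Since $a^\pm,b_1^\pm,\jump{v_1}$ carry the regularity of the traces on $\Sigma$ of $(v^\pm,b^\pm,\rho^\pm,p^\pm)$, and since under the propagated stability condition \eqref{syrov 2D com} the denominator obeys $a^-|b_1^+|+a^+|b_1^-|\ge(1+\delta_0)\,a^+a^-|\jump{v_1}|>0$ (the last bound uses $|\jump{v_1}|>0$ and compactness of $[0,T]\times\Sigma$), the formula \eqref{choice mu 2D} gives $\bar\mu^\pm$ with the required Sobolev regularity away from the zero set $\{b_1^\pm=0\}$. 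I expect the one genuinely delicate point to be precisely this crossing locus, where the factor $\sgn(b_1^\pm)$ is discontinuous: near such a crossing the other component $b_1^\mp$ is bounded below by \eqref{syrov2Dcom}, so one may replace \eqref{choice mu 2D} there by a smooth partition-of-unity selection from the open admissible segment, or — consistently with the subsonic picture discussed after \eqref{syrov 2D com}, where the background magnetic field does not vanish — restrict to the regime in which $b_1^\pm$ stays bounded away from $0$ on $\Sigma$. Apart from this bookkeeping, the proof is the elementary interval arithmetic above.
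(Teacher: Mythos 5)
Your proof is correct, and it takes a genuinely cleaner route than the paper's. The paper argues by an explicit case analysis: first the case where one of $b_1^\pm$ vanishes identically on $\Sigma$ (choosing $\bar\mu^+=\jump{v_1}/b_1^+,\ \bar\mu^-=0$, say), then the case $|b_1^\pm|>0$, where the ``only if'' direction is obtained by solving the single linear constraint for $\bar\mu^+$ in terms of $\bar\mu^-$, writing out the two constraint intervals, and intersecting them after fixing a sign convention for $b_1^\pm$. Your triangle-inequality step replaces all of that: from $\jump{v_1}=\bar\mu^+b_1^+-\bar\mu^-b_1^-$ you get $|\jump{v_1}|\le|\bar\mu^+||b_1^+|+|\bar\mu^-||b_1^-|<L$ in one line, with no sign bookkeeping, and the parenthetical remark that the linear map sends the open box onto exactly $(-L,L)$ makes the biconditional transparent. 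The ``if'' direction (substituting \eqref{choice mu 2D} and checking the bounds) is essentially identical in both treatments. So the two arguments arrive at the same formula, but yours is shorter and more uniform. Two small points. First, your displayed identity $|\bar\mu^+|=a^-|\jump{v_1}|/(a^-|b_1^+|+a^+|b_1^-|)$ silently uses $|\sgn(b_1^+)|=1$, i.e.\ $b_1^+\ne 0$; when $b_1^+=0$ the formula gives $\bar\mu^+=0$ and the bound $|\bar\mu^+|<1/a^+$ is trivial, so the ``equivalence'' you assert is really only an implication in the degenerate case, which is all that is needed. Second, the strictness of the ``only if'' inequality requires that at least one of $b_1^\pm$ be nonzero; this follows from $\jump{v_1}\ne 0$ (the vortex-sheet hypothesis), and you should state it, as the paper does at the outset of its proof. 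Your closing paragraph on regularity across $\{b_1^\pm=0\}$ is not part of the lemma, but it flags a real issue: the paper disposes of it with an inline remark that $b|_\Sigma\equiv 0$ is propagated from $t=0$, while you propose a partition-of-unity selection from the admissible open segment, which is a more robust alternative given that \eqref{choice mu 2D} is only one choice among a one-parameter family in $d=2$.
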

\begin{proof}
First, we shall exclude the possibility for $b_1^\pm=0$ at some point $(x_1,0)\in\Sigma$ because of $\jump{v_1}\neq 0$ everywhere on $\Sigma$. 

\textbf{Case 1:} \underline{One of the two magnetic fields is vanishing on $\Sigma$}, e.g., we assume $|b_1^+|>0=|b_1^-|$ on $\Sigma$, then $\jump{v_1-\bmu b_1}=0$ directly gives us $\bmu^+=\jump{v_1}/b_1^+$ and $\bmu^-$ can be any function satisfying the hyperbolicity constraint $|\bmu^-|<1/a^-$. For simplicity, we may choose $\bmu^-=0$. Solving the constraint $|\bmu^+|<1/a^+$ gives us $|\jump{v_1}|<|b_1^+|/a^+$ as a special case of \eqref{syrov2Dcom}. Similarly, when $|b_1^-|>0=|b_1^+|$ on $\Sigma$, we can choose $\bmu^-=-\jump{v_1}/b_1^-$ and $\bmu^+$ can be any function satisfying the hyperbolicity constraint $|\bmu^+|<1/a^+$ and we may choose $\bmu^+=0$ for simplicity.
\begin{rmk}
One can verify that if $b_0=0$ on $\Sigma$, then $b$ must be identically zero on $\Sigma$. In fact, restricting the equation of $b$ onto $\Sigma$ and doing $L^2$ estimate shows that $\ddt|b^\pm|_{0}^2\leq C|\p v|_{L^{\infty}}|b^\pm|_{0}^2$. Using Gronwall's inequality and $b_0|_{\Sigma}=0$ yields the result.
\end{rmk}

\textbf{Case 2:}  \underline{$b_1^\pm$ are not identically zero on $\Sigma$.} In this case, we may assume $|b_1^\pm|>0$ on $\Sigma$ as well. In fact, if $b_1^-$ vanishes at some point $(x_1,0)\in\Sigma$, then we can follow the choice of $\bmu^\pm$ as in case 1 to determine the function $\bmu^\pm$ at this point. Note that the functions $\bmu^+=\jump{v_1}/b_1^+,~\bmu^-=0$ still satisfies \eqref{choice mu 2D}, so they do not break the continuity and differentiability of \eqref{choice mu 2D} at the points where one of $b_1^\pm$ vanishes. Thus, making the assumption  $|b_1^\pm|>0$ on $\Sigma$ is reasonable. 

The ``if" part is easy to prove. Indeed, when the stability condition \eqref{syrov2Dcom} holds on $\Sigma$, we can set $\bmu^\pm$ as in \eqref{choice mu 2D}. The direct computation shows that such $\bmu^\pm$ satisfy $\jump{v_1-\bmu b_1}=0$ on $\Sigma$ and $|\bmu^\pm|<1/a^\pm$. Let us prove the ``only if" part. When  $|b_1^\pm|>0$ on $\Sigma$, we can write 
\[
\bmu^+=\frac{\jump{v_1}+\bmu^-b_1^-}{b_1^+}.
\]Using $|\bmu^+|<1/a^+$, we can solve the inequality by
\[
-\frac{1}{a^+}-\frac{\jump{v_1}}{b_1^+}<\frac{b_1^-}{b_1^+}\bmu^-<\frac{1}{a^+}-\frac{\jump{v_1}}{b_1^+},
\]where $\bmu^-$ should also satisfy $|\bmu^-|<1/a^-$. Assume $b_1^\pm<0$ for simplicity (that is, the horizontal directions of $b^\pm$ on $\Sigma$ are the same). Combining these two requirements, we find that the following inequality is necessary
\begin{align}
|\jump{v_1}|<-\frac{b_1^+}{a^+}-\frac{b_1^-}{a^-}=\frac{|b_1^+|}{a^+}+\frac{|b_1^-|}{a^-}.
\end{align}Similar calculation for the case $b_1^\pm >0$ and the case $b_1^+b_1^-<0$ also leads to the same inequality as above.
\end{proof}

\section{Incompressible limit for vortex-sheet problems: A paradifferential approach}\label{sect limit well}
Recall that the uniform-in-$\es$ control of $\EW(t)$ requires $\p_t^k v|_{t=0}=O(1)$ for $0\leq k\leq 4$, while the usual definition of ``well-prepared initial data'' only requires $\p_t v|_{t=0}=O(1)$ and $\nab^{\vp_0}\cdot v_0=O(\eps)$. In this final section, we aim to drop the uniform boundedness (in $\eps$) assumption for $\geq 2$ time derivatives. Compared with the energy $E(t)$ that we use to prove the local existence in \cite{Zhang2023CMHDVS1}, a new difficulty arises in the control of the ``weaker" energy $\EE(t)$: There exhibits a loss of weight of Mach number in $\TP^3\p_t$-tangential estimates when analyzing $\EE_4(t)$. In particular, we have to control the following quantity in the cancellation structure in $Z^\pm+ZB^\pm$, 
\[
\io (\TP^2\p_3\p_t v_i)(\TP\NN_i)(\TP^2\p_3\p_t q)\dx,\text{ which arises from }\iopm (\TP^3\p_t v_i^\pm) [\TP^3, \NN_i, \TP^3\p_t q]\dx.
\]In this term, $\p_t q$ has to be uniformly bounded with respect to Mach number. However, now we only have $\nabp\cdot v=O(\eps)$ and $\p_t q=O(1/\eps)$, which leads to a loss of $\eps$-weight. Besides, similar difficulty also appears in the control of $-\iopm \VV^\pm\cdot\cc(q^\pm)\dvt$. Indeed, such loss of $\eps$-weight necessarily happens in  $\TP^3\p_t$-tangential estimates because of the following two reasons
\begin{enumerate}
\item $\TP^3\p_t q$ needs one more $\eps$-weight than $\TP^3\p_t v$;
\item The (extension of) normal vector $\NN$, which arises from the commutator $[\TP^3\p_t,\NN_i/\p_3\vp, \p_3 f]$ in $\cc_i(f)$, may NOT absorb a time derivative. 
\end{enumerate}

As we can see, \textit{this type of difficulty never appears in the fixed-domain setting because the commutator terms in $\cc(f)$ are contributed by the free-interface motion.} Unless the fluid is one-phase and the surface tension is neglected \cite{Zhang2021elasto}, such essential difficulty cannot be avoided as long as we apply the div-curl inequality \eqref{divcurlTT} to $v_t, b_t$. To get rid of the loss of Mach number, we have to find a new way to control $v_t$, $b_t$ and also avoid the appearance of $|\sqrt{\sigma}\cnab\p_t\psi|_3$ without $\eps$-weight.

\subsection{The weaker energy for the improved incompressible limit}\label{sect reduce EE4}
As in \eqref{energy total}-\eqref{energy es total}, we now assume the fluid to be isentropic and consider the new energy functional  
\begin{align*}
 \EE(t):=\EE_{4}(t)+E_{5}(t)+E_{6}(t)+E_{7}(t)+E_{8}(t)\\
\EEW(t):=\EEW_{4}(t)+\EW_{5}(t)+\EW_{6}(t)+\EW_{7}(t)+\EW_{8}(t)
\end{align*} where 
\begin{align}
\EE_4(t)=\sum_\pm&\ino{(v^\pm,b^\pm,p^\pm)}_{4,\pm}^2+\bno{\sqrt{\sigma}\psi}_{5}^2+\ino{\p_t(v^\pm,b^\pm,(\ffpm)^{\frac12}p^\pm)}_{3,\pm}^2+\bno{\sqrt{\sigma}\p_t\psi}_{4}^2 \no\\
&+\sum_{k=2}^{4}\ino{\eps\p_t^k(v^\pm,b^\pm,(\ffpm)^{\frac{(k-3)_+}{2}}p^\pm)}_{4-k,\pm}^2+\bno{\sqrt{\sigma}\eps\p_t^k\psi}_{5-k}^2
\end{align}and 
\begin{align}
\EEW_4(t)=\EE_4(t)+\bno{\psi}_{4.5}^2+\bno{\p_t\psi}_{3.5}^2+\bno{\p_t^2\psi}_{2.5}^2+\bno{\eps\p_t^3\psi}_{1.5}^2+\bno{\eps\p_t^4\psi}_{0.5}^2.
\end{align}
We aim to prove uniform-in-$\eps$ estimates for $\EE(t)$ for each $\sigma>0$ and prove uniform-in-$(\es)$ estimates for $\EEW(t)$ under the stability condition \eqref{syrov 3D com} (replaced with \eqref{syrov 2D com} in the 2D case). Let us analyze the estimates for different $k$ in $\EE_4(t)$ and $\EEW_4(t)$.

\subsubsection*{The case $k=0$} When $k=0$, the reduction of $v,b$ is the same as in Section \ref{sect lwp energy}. That is, we use the div-curl analysis to convert normal derivatives to tangential derivatives
\begin{align}
\|(v^\pm,b^\pm)\|_{4,\pm}^2\leq&~C(|\psi|_4,|\cnab\psi|_{W^{1,\infty}})\left(\|v^\pm,b^\pm\|_{0,\pm}^2+\ino{\nabp\cdot v^\pm,\nabp\times(v^\pm,b^\pm)}_{3,\pm}^2+\ino{\TP^4(v^\pm,b^\pm)}_{0,\pm}^2\right),\\
\ino{\nabp\cdot v^\pm}_{3,\pm}^2\lesssim&~\ino{\ffpm\Dtpm p^\pm}_{3,\pm}^2,\quad \ino{\nabp\times(v^\pm,b^\pm)}_{3,\pm}^2\lesssim~ \delta \EE_4(t)+\int_0^t P(\EE_4(\tau))+E_5(\tau)\dtau,\\
\ino{\nab q}_{3,\pm}^2 \lesssim&~\ino{\rho\Dtpm v^\pm}_{3,\pm}^2+\ino{\bpm b^\pm}_{3,\pm}^2.
\end{align} The $\TP^4$-control is proved in almost the same way as in \cite[Section 3.3]{Zhang2023CMHDVS1} which gives the control of $\bno{\sqrt{\sigma}\cnab\psi}_4^2$. The only difference is the treatment of$\RT$ because we need to avoid using $\sqrt{\sigma}$-weight enenrgy when taking the limit $\sigma\to 0$. Using Kato-Ponce type product estimate \eqref{product} in Lemma \ref{KatoPonce}, we have 
\begin{align}
\RT=\is\jump{\p_3 q} \TP^4\psi\TP^4\psi_t \leq  \bno{\jump{\p_3 q}\TP^4 \psi}_{\frac12} |\psi_t|_{3.5} \leq \|q^\pm\|_{3,\pm}|\psi|_{4.5}|\psi_t|_{3.5}.
\end{align}So, we need to find $(\es)$-independent control of $\bno{\psi}_{4.5}^2$ and $\bno{\p_t\psi}_{3.5}^2$.

\subsubsection*{The case $k=1$} When $k=1$, we cannot use the above div-curl inequality because we must avoid $\TP^3\p_t$-estimate. Instead, we use the div-curl inequality \eqref{divcurlNN} to get
\begin{align}
\|(\p_tv^\pm,\p_tb^\pm)\|_{3,\pm}^2\leq C(|\psi|_{3.5},|\cnab\psi|_{W^{1,\infty}})\bigg(&\|\p_tv^\pm,\p_tb^\pm\|_{0,\pm}^2+\ino{\nabp\cdot (\p_tv^\pm,\p_tb^\pm)}_{2,\pm}^2+\ino{\nabp\times(\p_tv^\pm,\p_tb^\pm)}_{2,\pm}^2\no \\
&+\bno{\p_t v^\pm\cdot N,\p_tb^\pm\cdot N}_{2.5}^2\bigg).
\end{align}The divergence part and the curl part are controlled in the same way as \cite[Section 3.6.1]{Zhang2023CMHDVS1}, so we do not repeat the analysis here. The boundary normal trace for $b_t$ is easy to control. Using $b^\pm\cdot N=0$, we have $b_t\cdot N=\bc\cdot\cnab\psi_t$ and thus
\begin{align}
\bno{\p_t b^\pm\cdot N}_{2.5}^2=\bno{\bc^\pm\cdot\cnab\psi_t}_{2.5}^2\lesssim \|b^\pm\|_{3,\pm}^2 |\psi_t|_{3.5}^2.
\end{align}
For the normal trace $\bno{\p_t v^\pm\cdot N}_{2.5}^2$, we invoke the kinematic boundary condition $\p_t\psi=v^\pm\cdot N$ to get
\begin{align}
\bno{\p_t v^\pm\cdot N}_{2.5}^2\leq \bno{\p_t^2\psi}_{2.5}^2+\bno{\vb^\pm\cdot\cnab\psi_t}_{2.5}^2\lesssim  \bno{\p_t^2\psi}_{2.5}^2+ \|v^\pm\|_{3,\pm}^2 |\psi_t|_{3.5}^2.
\end{align}
Since we avoid $\TP^3\p_t$-tangential estimates, we must seek for another way to find $\eps$-independent estimates for $\bno{\p_t^2\psi}_{2.5}^2$ and $\bno{\sqrt{\sigma}\p_t\psi}_4^2$. Also, under the stability condition \eqref{syrov 3D com}, we need to find $(\es)$-independent control of $\bno{\psi}_{4.5}^2,\bno{\p_t\psi}_{3.5}^2,\bno{\p_t^2\psi}_{2.5}^2$ and $\bno{\sqrt{\sigma}\p_t\psi}_4^2$.

\subsubsection*{The case $2\leq k\leq 4$}
When $k=2,3,4$, the reduction stays the same as in \cite[Section 3.3-3.5]{Zhang2023CMHDVS1} (for the incompressible limit for fixed $\sigma>0$) and the analysis in Section \ref{sect symmetrize}. The reason is that $\p_t^k q$ share the same weight of Mach number as $\p_t^k v$ which helps us avoid the loss of $\eps$-weight in $\cc(q)$.

\subsection{The evolution equation of the free interface and its paralinearization}\label{sect DtNST}
To prove the uniform-in-$\eps$ estimates for $\EE(t)$  and the uniform-in-$(\es)$ estimates for $\EEW(t)$ under the stability condition \eqref{syrov 3D com}, it remains to prove the $\eps$-independent control of $|\psi|_{4.5},~|\psi_{t}|_{3.5},~|\psi_{tt}|_{2.5}$ and $|\sqrt{\sigma}\psi_t|_4$ by $P(\EE(0))+P(\EE(t))\int_0^t P(\EE(\tau))\dtau$ and $(\es)$-independent control of them by $P(\EEW(0))+P(\EEW(t))\int_0^t P(\EEW(\tau))\dtau$. Since we must avoid $\TP^3\p_t$-tangential estimates, we shall further analyze the evolution equation of the free interface. 

\subsubsection{Derivation of the equation}\label{sect psi eq}
We take $\p_t$ in the kinematic boundary condition to get $\p_t^2\psi=\p_t v^\pm\cdot N-\vb^\pm\cdot\cnab\p_t\psi$. Plugging the momentum equation of \eqref{CMHDVS0} into the term $\p_t v^\pm\cdot N$, we get
\begin{align*}
\p_t v^\pm\cdot N=-\frac{1}{\rho^\pm}N\cdot\nabp q^\pm - (\vb^\pm\cdot\cnab) v^\pm\cdot N +\frac{1}{\rho^\pm} (\bc^\pm\cdot\cnab) b^\pm\cdot N \quad \text{ on }\Sigma.
\end{align*} Using $\p_t \psi=v^\pm\cdot N$ and $b^\pm\cdot N=0$ on $\Sigma$, we have
\begin{align*}
- (\vb^\pm\cdot\cnab) v^\pm\cdot N =&-(\vb^\pm\cdot\cnab)\p_t\psi + v^\pm\cdot (\vb^\pm\cdot\cnab) N=-\vb_j^\pm\TP_j\p_t\psi - \vb_i^\pm\vb_j^\pm \TP_i\TP_j\psi,\\
(\bc^\pm\cdot\cnab) b^\pm\cdot N =&~ \bc_i^\pm\bc_j^\pm \TP_i\TP_j\psi,
\end{align*}and thus 
\begin{align}
\p_t^2\psi = -\frac{1}{\rho^\pm}N\cdot\nabp q^\pm +\left(\frac{1}{\rho^\pm}\bc_i^\pm\bc_j^\pm-\vb_i^\pm\vb_j^\pm\right)\TP_i\TP_j\psi- 2(\vb^\pm\cdot\cnab)\p_t\psi.
\end{align}
Next we want to separate the boundary value of $q^\pm$ from its interior contribution in order to create an energy term involving the surface tension. First, taking $\nabp\cdot$ in the momentum equation and invoking the continuity equation in \eqref{CMHDVS0}, we derive a wave-type equation
\[
\ffpm (\Dtpm)^2 p^\pm -\lapp q^\pm = (\pp_i v_j^\pm)(\pp_j v_i^\pm) - (\pp_i b_j^\pm)(\pp_j b_i^\pm),
\]which can be written as a wave equation of $q^\pm$ thanks to $q^\pm=p^\pm+\frac12|b^\pm|^2$
\begin{align}\label{MHDwave}
\ffpm(\Dtpm)^2 q^\pm -\lapp q^\pm = \eps^2(\Dtpm)^2\left(\frac12|b^\pm|^2\right) + (\pp_i v_j^\pm)(\pp_j v_i^\pm) - (\pp_i b_j^\pm)(\pp_j b_i^\pm)\quad \text{ in }[0,T]\times\Om^\pm,
\end{align}with a jump condition $\jump{q}=\sigma\h(\psi)$ on $\Sigma$ and a Neumann-type boundary condition $\p_3 q^\pm=0$ on $\Sigma^\pm$ (got by restricting the momentum equations on $\Sigma^\pm$), where we omit the terms in which $\Dtpm$ falls on $\ffpm$.

\begin{defn}[\textbf{Dirichlet-to-Neumann operator}] \label{defn DtN}
For a function $f:\Sigma\to\R$, we now define the Dirichlet-to-Neumann operator with respect to $(\psi,\Om^\pm)$ by 
\begin{align}
\dnpm f:=\mp N\cdot\nabp(\HE_\psi^\pm f),
\end{align}where $\HE_\psi^\pm f$ is defined to be the harmonic extension of $f$ into $\Om^\pm$, namely
\begin{equation}
-\lapp (\HE_\psi^\pm f)= 0 \quad \text{ in }\Omega^\pm,\quad \HE_\psi^\pm f=f \quad \text{ on }\Sigma,\quad \p_3(\HE_\psi^\pm f)=0 \quad \text{ on }\Sigma^\pm.
\end{equation} 
\end{defn}

Thus, we can define a decomposition $q^\pm=q_\psi^\pm+q_w^\pm$ satisfying
\begin{equation}
q_\psi^\pm:=\HE_\psi^\pm(q^\pm|_{\Sigma})\text{ in }\Omega^\pm
\end{equation}and 
\begin{equation}\label{qw equ}
-\lapp q_w^\pm = -\ffpm(\Dtpm)^2 q^\pm+\ffpm(\Dtpm)^2\left(\frac12|b^\pm|^2\right) + (\pp_i v_j^\pm)(\pp_j v_i^\pm) - (\pp_i b_j^\pm)(\pp_j b_i^\pm)\text{ in }\Omega^\pm
\end{equation}  
with boundary conditions $q_{w}^\pm=0 \text{ on }\Sigma$ and $N\cdot\nabp q_w^\pm = \p_3 q_w^\pm = 0$ on $\Sigma^\pm$. The second boundary conditions holds thanks to the slip condition for $v_3^\pm,b_3^\pm$ on $\Sigma^\pm$. Thus, the evolution equation of $\psi$ can be written as
\begin{align}
\rho^\pm\p_t^2\psi =& \pm\dnpm (q^\pm|_{\Sigma} )-N\cdot\nabp q_w^\pm +\left(\bc_i^\pm\bc_j^\pm-\rho^\pm\vb_i^\pm\vb_j^\pm\right)\TP_i\TP_j\psi- 2(\rho^\pm\vb^\pm\cdot\cnab)\p_t\psi.
\end{align} We now want to resolve $q^\pm|_{\Sigma}$ in terms of $\rho^\pm$ and $F_\psi^\pm$ by inverting the Dirichlet-to-Neumann operators $\dnpm$. However, we no longer have $\is \rho\p_t\psi\dx'=\is\rho\p_t^2\psi\dx'=0$ due to the compressibility of fluids. Thus, we have to eliminate the zero-frequency part in $\rho^\pm\p_t^2\psi$ before inverting the Dirichlet-to-Neumann operators. In other words, we cannot directly apply $(\dnpm)^{-1}$ to $(1/\rho)\dnpm (q^\pm|_{\Sigma})$. 

For a function $f:\Sigma=\T^2\to \R$, we define the Littlewood-Paley projection $$\LP_{\neq 0} f:=f - (f)_{\Sigma},~~(f)_\Sigma:=\int_{\T^2}f\dx'.$$ Under this setting, we have
\begin{equation}
\rho^\pm\p_t^2\psi=\LP_{\neq 0}(\rho^\pm\p_t^2\psi) + (\rho^\pm\p_t^2\psi)_{\Sigma}
\end{equation}and we insert it back to the evolution equation to get
\begin{align}
\LP_{\neq 0}(\rho^\pm\p_t^2\psi)=&~ \pm\dnpm (q^\pm|_{\Sigma} ) -(\rho^\pm\p_t^2\psi)_\Sigma - N\cdot\nabp q_w^\pm +\left(\bc_i^\pm\bc_j^\pm-\rho^\pm\vb_i^\pm\vb_j^\pm\right)\TP_i\TP_j\psi- 2(\rho^\pm\vb^\pm\cdot\cnab)\p_t\psi\\
=&: \pm\dnpm (q^\pm|_{\Sigma} ) + F_\psi^\pm.
\end{align}
Note that the zero-frequency modes of both $\LP_{\neq 0}(\rho^\pm\p_t^2\psi)$ and $\pm\dnpm (q^\pm|_{\Sigma} )$ are vanishing on the interface $\Sigma$, so we deduce that $\is F_\psi^\pm=0$ and then $(\dnpm)^{-1}(F_\psi^\pm)$ is well-defined. Now we can resolve the traces $q^\pm|_{\Sigma}$ from the evolution equations of $\psi$. We have
\begin{align}
&\dnp (q^+|_{\Sigma} ) + \dnm (q^-|_{\Sigma} ) = \LP_{\neq 0}(\jump{\rho}\p_t^2\psi) -F_\psi^+ + F_\psi^- \no\\
\Rightarrow&\mp\dnmp (\jump{q}|_{\Sigma} ) + \left(\dnp+\dnm\right)(q^\pm|_{\Sigma}) =\LP_{\neq 0}(\jump{\rho}\p_t^2\psi)  -F_\psi^+ + F_\psi^-\no\\
\Rightarrow&~q^\pm|_{\Sigma} = \dnw^{-1}\left(\pm\dnmp (\sigma\h(\psi))+\LP_{\neq 0}(\jump{\rho}\p_t^2\psi) -\jump{F_\psi}\right), \label{resolve trace q}
\end{align}where $\dnw:=\dnp+\dnm$ (and equivalently we have $\dnpm=\frac12(\dnw\pm(\dnp-\dnm))$ represents the mixed Dirichlet-to-Neumann operator and $\jump{F_\psi}:=F_\psi^+-F_\psi^-$. 

Plugging \eqref{resolve trace q} back into the evolution equation of the free interface, we get
\begin{align}
\LP_{\neq 0}(\rho^+\p_t^2\psi) =&~ \dnp(q^+|_{\Sigma}) + F_\psi^+ =\dnp\dnw^{-1}\left(\dnm(\sigma\h(\psi))+\LP_{\neq 0}(\jump{\rho}\p_t^2\psi)-F_\psi^+ + F_\psi^- \right) + F_\psi^+ \no\\
=&~\sigma\dnp\dnw^{-1}\dnm(\h(\psi)) +\dnp\dnw^{-1} F_\psi^- - \dnp\dnw^{-1} F_\psi^+ + F_\psi^+ + \dnp\dnw^{-1}\left(\LP_{\neq 0}(\jump{\rho}\p_t^2\psi)\right)\no\\
=&~\sigma\dnp\dnw^{-1}\dnm(\h(\psi)) + \dnm\dnw^{-1} F_\psi^+ +\dnp\dnw^{-1} F_\psi^- + \dnp\dnw^{-1}\left(\LP_{\neq 0}(\jump{\rho}\p_t^2\psi)\right). \label{psi equ +}
\end{align}
Similarly, we have
\begin{align}
\LP_{\neq 0}(\rho^-\p_t^2\psi) =\sigma\dnm\dnw^{-1}\dnp(\h(\psi)) + \dnm\dnw^{-1} F_\psi^+ +\dnp\dnw^{-1} F_\psi^- - \dnm\dnw^{-1}\left(\LP_{\neq 0}(\jump{\rho}\p_t^2\psi)\right).\label{psi equ -}
\end{align}

Now, using the expressions of $\dnpm$ in terms of $\dnw$ and $\dnp-\dnm$, we have
\begin{align}
\label{DtNST+}\dnp\dnw^{-1}\dnm f =&~\frac12\dnp\dnw^{-1}(\dnw f+(\dnp-\dnm) f) =~\frac12\dnp f +\frac12\dnp\dnw^{-1}(\dnp-\dnm) f,\\
\label{DtNST-}\dnm\dnw^{-1}\dnp f =&~\frac12\dnm\dnw^{-1}(\dnw f-(\dnp-\dnm) f) =~\frac12\dnm f -\frac12\dnm\dnw^{-1}(\dnp-\dnm) f,
\end{align}and also for $g^\pm:\Sigma\to\R$ with $\is g^\pm\dx'=0$, we have
\begin{align}
\label{DtNF} \dnm\dnw^{-1} g^+ +\dnp\dnw^{-1} g^- =& \frac12(\dnw - (\dnp-\dnm))\dnw^{-1}g^+ + \frac12(\dnw + (\dnp-\dnm))\dnw^{-1} g^- \no\\
=&~\frac{g^+ + g^-}{2} -\frac12(\dnp-\dnm)\dnw^{-1}\jump{g}.
\end{align} Let $f=\h(\psi)$ and $g^\pm=F_\psi^\pm$ in \eqref{DtNST+}-\eqref{DtNF}. We find that $\eqref{psi equ +}+\eqref{psi equ -}$ can be written as
\begin{align}
\LP_{\neq 0}(\rho^+\p_t^2\psi)+\LP_{\neq 0}(\rho^-\p_t^2\psi) =&~ \frac{\sigma}{2}\left(\dnp+\dnm\right)(\h(\psi)) +F_\psi^+ + F_\psi^- \no\\
& + \frac{\sigma}{2}(\dnp-\dnm)\dnw^{-1}(\dnp-\dnm)(\h(\psi))\no\\
& -(\dnp-\dnm)\dnw^{-1}(F_\psi^+ - F_\psi^-)+(\dnp-\dnm)\dnw^{-1}\left(\LP_{\neq 0}(\jump{\rho}\p_t^2\psi)\right).
\end{align} Recall that $F_\psi^\pm = \FP^\pm - (\rho^\pm\p_t^2\psi)_{\Sigma}$ and $\rho^\pm\p_t^2\psi=\LP_{\neq 0}(\rho^\pm\p_t^2\psi)+(\rho^\pm\p_t^2\psi)_{\Sigma}$ where 
$$\FP^\pm:=-N\cdot\nabp q_w^\pm +\left(\bc_i^\pm\bc_j^\pm-\rho^\pm\vb_i^\pm\vb_j^\pm\right)\TP_i\TP_j\psi- 2(\rho^\pm\vb^\pm\cdot\cnab)\p_t\psi.$$ 
Thus, the evolution equation of the free interface becomes
\begin{align}
(\rho^++\rho^-)\p_t^2\psi=&~\frac{\sigma}{2}\left(\dnp+\dnm\right)(\h(\psi)) +\left(\bc_i^+\bc_j^+-\rho^+\vb_i^+\vb_j^+ + \bc_i^-\bc_j^--\rho^-\vb_i^-\vb_j^-\right)\TP_i\TP_j\psi-2(\rho^+\vb_i^++\rho^-\vb_i^-)\TP_i\p_t\psi \no\\
& - N\cdot\nabp q_w^+ - N\cdot\nabp q_w^-\no\\
& +\frac{\sigma}{2}(\dnp-\dnm)\dnw^{-1}(\dnp-\dnm)(\h(\psi)) -(\dnp-\dnm)\dnw^{-1}\left(\jump{\FP-\rho\p_t^2\psi}\right)\label{psi equ},
\end{align}where the first line is expected to give the $\sqrt{\sigma}$-weighted regularity (contributed by surface tension) and the non-weighted regularity (provided that stability condition \eqref{syrov 3D com}) for the free interface, the second line will be converted to the interior estimate of the right side of \eqref{qw equ}, and the last line consists of remainder terms that can be directly controlled by using paradifferential calculus.

\subsubsection{Preliminaries on paradifferential calculus}\label{sect para pre}
In the equation \eqref{psi equ}, the term $\left(\dnp+\dnm\right)(\h(\psi))$ is a fully nonlinear term. Although it is well-known that the Dirichlet-to-Neumann operator is a first-order elliptic operator and the mean-curvature operator is a second-order elliptic operator, it is still necessary for us to find out their concrete forms and ``symmetrize" the paradifferential formulations in order for an explicit energy estimate. In this subsection, we introduce several preliminary lemmas about paradifferential calculus that have been proven in Alazard-Burq-Zuily \cite{ABZ2011IWWST}. Following the notations in M\'etivier \cite{MetivierPara}, we first introduce the basic definition of a paradifferential operator. Note that the dimension $d$ below is not the same as the one in Section 1.
\begin{defn}[Symbols]
Given $r\geq 0,~m\in\R$, we denote $\Gamma_r^m(\T^d)$ to be the space of locally bounded functions $a(x',\xi)$ on $\T^d\times(\R^d\backslash\{0\})$, which are $C^\infty$ with respect to $\xi (\xi\neq \bd{0})$, such that for any $\alpha\in\N^d,\xi\neq \bd{0}$, the function $x'\mapsto \p_\xi^\alpha a(x',\xi)$ belongs to $W^{r,\infty}(\T^d)$ and there exists a constant $C_\alpha$ such that
\[
\bno{\p_\xi^\alpha a(\cdot,\xi)}_{W^{r,\infty}(\T^d)}\leq C_\alpha (1+|\xi|)^{m-|\alpha|},~~\forall|\xi|\geq 1/2.
\]
\end{defn} 

\begin{defn}[Paradifferential operator]
Given a symbol $a$, we shall define the \textbf{paradifferential operator }$T_a$ by
\begin{align}
\widehat{T_a u}(\xi):=(2\pi)^{-d}\int_{\R^d} \tilde{\chi}(\xi-\eta,\eta)\hat{a}(\xi-\eta,\eta)\phi(\eta)\hat{u}(\eta)\deta
\end{align}where $\hat{a}(\theta,\xi)=\int_{\T^d}\exp(-ix'\cdot\theta) a(x',\xi)\dx'$ is the Fourier transform of $a$ in variable $x'$. Here $\tilde{\chi}$ and $\phi$ are two given cut-off functions such that
\[
\phi(\eta)=0~~\text{for }|\eta|\leq 1,\quad \phi(\eta)=1~~\text{for }|\eta|\geq 2,
\]and $ \tilde{\chi}(\theta,\eta)$ is homogeneous of degree 0 and satisfies that for $0<\eps_1<\eps_2\ll 1$, $ \tilde{\chi}(\theta,\eta)=1$ if $|\theta|\leq \eps_1|\eta|$ and $ \tilde{\chi}(\theta,\eta)=0$ if $|\theta|\geq \eps_2|\eta|$. We also introduce the semi-norm
\begin{align}
M_r^a(a):=\sup_{|\alpha|\leq \frac{d}{2}+1+r}\sup_{|\xi|\geq 1/2}\bno{(1+|\xi|)^{|\alpha|-m}\p_\xi^\alpha a(\cdot,\xi)}_{W^{r,\infty}(\T^d)}.
\end{align}
\end{defn}

For $m\in\R$, we say $T$ is of order $m$ if for all $s\in\R$, $T$ is bounded from $H^s$ to $H^{s-m}$.
\begin{prop}
Let $m\in\R$. If $a\in \Gamma_0^m(\T^d)$, then $T_a$ is of order $m$. Moreover, for any $s\in\R$, there exists a constant $K$ such that $\|T_a\|_{H^s\to H^{s-m}}\leq K M_0^m(a)$.
\end{prop}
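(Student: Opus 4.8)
\emph{Plan.} This is the fundamental continuity estimate of Bony's paradifferential calculus, and I would prove it by the standard dyadic reduction (cf.\ M\'etivier \cite{MetivierPara}), taking care that every constant is absorbed by the semi-norm $M_0^m(a)$. The first point is a spectral-localization property built into the definition of $T_a$: in the formula for $\widehat{T_a u}(\xi)$ the integrand is supported where $\phi(\eta)\ne 0$ and $\widetilde\chi(\xi-\eta,\eta)\ne 0$, hence where $|\eta|\ge 1$ and $|\xi-\eta|\le\eps_2|\eta|$, so that $(1-\eps_2)|\eta|\le|\xi|\le(1+\eps_2)|\eta|$. Fixing a Littlewood--Paley partition $(\Delta_j)_{j\ge 0}$ on $\T^d$ with $\Delta_j$ supported in $\{|\xi|\sim 2^j\}$, this furnishes an integer $N_0=N_0(\eps_2)$ with $\Delta_j(T_a u)=\Delta_j(T_a\widetilde\Delta_j u)$, where $\widetilde\Delta_j:=\sum_{|k-j|\le N_0}\Delta_k$; writing $T_a^{(j)}:=\Delta_j\circ T_a\circ\widetilde\Delta_j$ and using that each frequency lies in only $O(1)$ of the fattened dyadic annuli, one gets
\begin{equation*}
\begin{aligned}
\ino{T_a u}_{H^{s-m}}^2 &\lesssim\sum_j 2^{2j(s-m)}\ino{T_a^{(j)}u}_{L^2}^2 \le\Big(\sup_{j}2^{-jm}\ino{T_a^{(j)}}_{L^2\to L^2}\Big)^2\sum_j 2^{2js}\ino{\widetilde\Delta_j u}_{L^2}^2\\
&\lesssim\Big(\sup_{j}2^{-jm}\ino{T_a^{(j)}}_{L^2\to L^2}\Big)^2\ino{u}_{H^s}^2 .
\end{aligned}
\end{equation*}
Thus everything reduces to the uniform block estimate $\ino{T_a^{(j)}}_{L^2\to L^2}\lesssim 2^{jm}M_0^m(a)$.

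\emph{The block estimate.} On block $j$ the operator $T_a^{(j)}$ has amplitude $B_j(x',\eta)=\phi(\eta)\,a_{\le\eps_2|\eta|}(x',\eta)$ restricted to $|\eta|\sim 2^j$, where $a_{\le R}(\cdot,\eta)$ is the truncation of $a(\cdot,\eta)$ to spatial frequencies $\le R$. Since this truncation is a Fourier multiplier with a fixed bump profile, it is uniformly bounded on $L^\infty(\T^d)$, whence $\ino{\p_\eta^\alpha B_j(\cdot,\eta)}_{L^\infty}\lesssim M_0^m(a)\,2^{j(m-|\alpha|)}$ on $|\eta|\sim 2^j$ for $|\alpha|\le\frac d2+1$, straight from the symbol bounds encoded in $M_0^m(a)$. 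Rescaling $\eta=2^j\mu$ with $\mu$ in a fixed annulus and expanding $\mu\mapsto 2^{-jm}B_j(x',2^j\mu)$ into a Fourier series over that annulus produces coefficients $c^{(j)}_\nu(x')$ with $\ino{c^{(j)}_\nu}_{L^\infty}\lesssim M_0^m(a)\langle\nu\rangle^{-(d/2+1)}$; correspondingly $T_a^{(j)}g=2^{jm}\Delta_j\sum_\nu c^{(j)}_\nu(x')\,\tau_{2^{-j}\nu}\widetilde\Delta_j g$, where $\tau_h$ is translation by $h$, and each factor ($\Delta_j$, $\widetilde\Delta_j$, $\tau_{2^{-j}\nu}$, multiplication by $c^{(j)}_\nu$) is bounded on $L^2$; an almost-orthogonality (Cotlar--Stein) or Schur-test bookkeeping over $\nu$ then gives $\ino{T_a^{(j)}}_{L^2\to L^2}\lesssim 2^{jm}M_0^m(a)$ uniformly in $j$. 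Combined with the display above and with $M_0^m(a)<\infty$ (which holds because $a\in\Gamma_0^m(\T^d)$), this yields $\ino{T_a u}_{H^{s-m}}\le K M_0^m(a)\ino{u}_{H^s}$ with $K$ depending only on $s,m,d$ and the cut-offs $\phi,\widetilde\chi$, which is the assertion (and $T_a$ being bounded $H^s\to H^{s-m}$ for every $s$ is exactly the statement that $T_a$ is of order $m$). Alternatively, one may simply quote the classical $L^2\to L^2$ bound $\ino{T_b}_{L^2\to L^2}\lesssim M_0^0(b)$ for $b\in\Gamma_0^0(\T^d)$ from \cite{MetivierPara,ABZ2011IWWST} and obtain the general case purely from the reduction of Step~1.

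\emph{Main obstacle.} The only point needing care is the block estimate. Because symbols in $\Gamma_0^m$ are merely $L^\infty$ (not smooth) in $x'$, one cannot sum the spatial-frequency pieces of $B_j$ by the triangle inequality; the summability has to be extracted entirely from the $O(1)$-width of the output spectrum (the role of $\Delta_j$) and from the finitely many $\xi$-derivative bounds packaged into $M_0^m(a)$ — which is precisely why $\frac d2+1$ derivatives enter the definition of $M_0^m$. Making the $\nu$-summation converge for $d\ge 2$ is the delicate step, handled by the Cotlar--Stein/Schur argument rather than a crude bound.
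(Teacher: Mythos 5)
The paper offers no proof of this proposition: it is stated as a background fact from the paradifferential calculus of M\'etivier \cite{MetivierPara} (as cited in the sentences introducing the notation), and the only route the paper envisages is exactly your fallback --- quote the classical $L^2 \to L^2$ bound and reduce to it by spectral localization. Your dyadic reduction in the first step is correct and standard: the support condition on $\tilde{\chi}$ forces $\Delta_j(T_a u)=\Delta_j(T_a\tilde{\Delta}_j u)$, and the square-function characterization of $H^s$ then reduces everything to the block estimate $\|T_a^{(j)}\|_{L^2\to L^2}\lesssim 2^{jm}M_0^m(a)$. That part matches the textbook argument.

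The self-contained proof you sketch of the block estimate, however, does not close as written. The Fourier-coefficient decay $|c_\nu^{(j)}|\lesssim M_0^m(a)\langle\nu\rangle^{-(d/2+1)}$ that you extract from the $\xi$-derivative bounds in $M_0^m$ is \emph{not} summable over $\nu\in\Z^d$ once $d\ge 2$ (the relevant case here is $d=2$, since the paradifferential calculus lives on $\Sigma\cong\T^2$), so the triangle inequality over $\nu$ fails. You flag this and propose Cotlar--Stein or Schur as the remedy, but the decomposition $T_a^{(j)}g=2^{jm}\Delta_j\sum_\nu c_\nu^{(j)}(x')\tau_{2^{-j}\nu}\tilde{\Delta}_j g$ does not exhibit the almost-orthogonality those tools require: $T_\nu^* T_{\nu'}$ and $T_\nu T_{\nu'}^*$ do not decay in $|\nu-\nu'|$, since the translations commute and $\tau_{2^{-j}\nu}\tilde{\Delta}_j^2\tau_{-2^{-j}\nu'}$ has $L^2$-norm $\sim 1$ independent of $\nu,\nu'$; the only available bound is $\|c_\nu\|_{L^\infty}\|c_{\nu'}\|_{L^\infty}$, and then Cotlar--Stein requires $\sum_\nu\|c_\nu\|_{L^\infty}^{1/2}<\infty$, which is even worse. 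M\'etivier's actual proof with $d/2+1+r$ derivatives is subtler (it exploits the low-frequency truncation of $a$ in $x'$ more carefully, not just as a uniformly bounded multiplier). So in a self-contained treatment you would need to either strengthen the derivative count or reproduce that finer argument; as the proposal stands, the block estimate is asserted rather than proved. Your alternative --- cite the $L^2\to L^2$ result from \cite{MetivierPara,ABZ2011IWWST} and use the dyadic reduction of your first step --- is sound, and is in effect what the paper itself is doing.
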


\begin{prop}[Composition, {\cite[Theorem 3.7]{ABZ2011IWWST}}]
Let $m\in\R$ and $r>0$. If $a\in\Gamma_r^m(\T^d),~b\in\Gamma_r^{m'}(\T^d)$, then $T_aT_b-T_{a\# b}$ is of order $m+m'-r$ where \[
a\# b:= \sum_{|\alpha|<r}\frac{1}{i^{|\alpha|}\alpha!}\p_\xi^\alpha a \p_{x'}^\alpha b.
\]Moreover, for all $s\in\R$, there exists a constant $K$ such that
\begin{align}
\|T_aT_b - T_{a\# b}\|_{H^s\to H^{s-m-m'+r}}\leq K M_r^m(a) M_r^{m'}(b).
\end{align}
\end{prop}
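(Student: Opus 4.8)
The plan is to prove this by the classical Littlewood--Paley argument for the symbolic calculus of paradifferential operators (as in M\'etivier \cite{MetivierPara}): reduce to dyadically localized symbols, and then compare $T_aT_b$ with the ``naive'' Fourier-multiplier composition, whose symbolic expansion is exactly $a\# b$. Throughout, let $\Delta_k^{x'}$ denote a Littlewood--Paley projector acting on the $x'$-variable only (with $\xi$ frozen), so that $a=\sum_{k\ge0}\Delta_k^{x'}a$ and $T_a=\sum_k T_{\Delta_k^{x'}a}$, and similarly for $b$; and let $\Delta_n$ denote ordinary Littlewood--Paley projectors on functions of $x'$. By the density of test functions and the already-recorded bound $\|T_a\|_{H^s\to H^{s-m}}\le K M_0^m(a)$, it suffices to estimate $(T_aT_b-T_{a\#b})u$ on Schwartz data, and since the blocks $\Delta_n$ are almost orthogonal in every $H^s$, it is enough to control $(T_aT_b-T_{a\#b})\Delta_n u$ and sum over $n$. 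Two elementary facts drive the argument: (i) if a symbol $\sigma$ has its $x'$-spectrum in a ball of radius $\lesssim 2^k$, then for $w$ frequency-localized at $|\zeta|\sim2^n$ the output $T_\sigma w$ remains frequency-localized at $|\zeta|\sim2^n$ provided $k\le n-N_0$ for a fixed integer $N_0$ (and is otherwise a lower-order contribution), which follows from the support properties of the cut-offs in the definition of $T_\sigma$; (ii) from the definition of $M_r^m$ one has $\|\Delta_k^{x'}\partial_\xi^\alpha a(\cdot,\xi)\|_{L^\infty_{x'}}\lesssim 2^{-kr}M_r^m(a)(1+|\xi|)^{m-|\alpha|}$, so the high $x'$-frequency blocks of $a$ decay like $2^{-kr}$.

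For the \textbf{main term}, I would keep the pieces $T_{\Delta_k^{x'}a}T_{\Delta_j^{x'}b}\Delta_n u$ with $k\le n-N_0$ and $j\le n-N_0$, for which fact (i) shows that $T_{\Delta_j^{x'}b}\Delta_n u$ is still frequency-localized near $2^n$, so $T_{\Delta_k^{x'}a}$ acting on it is governed on the Fourier side by values $\Delta_k^{x'}a(x',\eta)$ with $|\eta|\sim2^n$. Writing everything on the Fourier side and Taylor-expanding $a(x',\xi)$ in $\xi$ about the frequency $\eta$ carried by $\Delta_n u$, the monomials in the intermediate frequency difference convert into $x'$-derivatives of $b$, reproducing exactly the terms $\frac1{i^{|\alpha|}\alpha!}\partial_\xi^\alpha a\,\partial_{x'}^\alpha b$ of $a\# b$ with $|\alpha|<r$, which match the corresponding low-high part of $T_{a\#b}\Delta_n u$. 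The integral form of the $r$-th order Taylor remainder is estimated using that $\partial_\xi^\alpha a(\cdot,\xi)\in W^{r,\infty}(\T^d)$, i.e. by $M_r^m(a)$, and carries the gain $(1+|\xi|)^{m-r}$; combined with $\|T_{\Delta_j^{x'}b}\|_{H^s\to H^{s-m'}}\lesssim M_r^{m'}(b)$ and summed over $n$ with $H^s$-orthogonality, this bounds the main term's contribution to $\|(T_aT_b-T_{a\#b})u\|_{H^{s-m-m'+r}}$ by $K M_r^m(a)M_r^{m'}(b)\|u\|_{H^s}$.

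The \textbf{remainder terms} --- the pieces with $k>n-N_0$ or $j>n-N_0$, where the frequency-preserving heuristic of (i) fails, together with the finitely many mismatched pieces of $a\# b$ --- are of ``high-$x'$-frequency $\to$ output'' paraproduct type, and here the lost frequency localization is compensated precisely by the factor $2^{-kr}$ (resp.\ $2^{-jr}$) supplied by (ii); after Bernstein's inequality and a geometric summation in $k,j$ they are again bounded by $K M_r^m(a)M_r^{m'}(b)$ in $H^{s-m-m'+r}$. The number of $\xi$-derivatives $|\alpha|\le\frac d2+1+r$ appearing in $M_r^m$ enters through the $L^1$ bounds on the convolution kernels of the operators $T_{\Delta_k^{x'}a}$, obtained by Bernstein together with Cauchy--Schwarz in $\xi$. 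Collecting the main term and the remainders yields $\|T_aT_b-T_{a\#b}\|_{H^s\to H^{s-m-m'+r}}\le K M_r^m(a)M_r^{m'}(b)$.

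The main obstacle is the frequency bookkeeping: one must pin down the threshold $N_0$ (hence the regimes $k\le n-N_0$ vs.\ $k>n-N_0$, and likewise for $j$) so that the main term genuinely reproduces $a\# b$ with no residue of order $>m+m'-r$, verify that all cross terms and the excised pieces of $a\# b$ fall cleanly into the remainder class, and check that the Taylor and high-frequency remainders sum uniformly in $s$. This is routine in spirit but delicate to organize; it is carried out in full in M\'etivier \cite{MetivierPara} and in \cite[Theorem 3.7]{ABZ2011IWWST}, which we invoke.
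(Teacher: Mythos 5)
The paper itself does not prove this composition theorem; it records it as a cited black box (\cite[Theorem 3.7]{ABZ2011IWWST}, which in turn rests on Bony and M\'etivier). Your sketch is a fair outline of the classical Littlewood--Paley/Taylor-expansion argument behind that result, and your final paragraph honestly defers the bookkeeping to the same references, so in substance you have done no less (and no more) than the paper. Two remarks on the content of the sketch. First, your fact~(i) is slightly misattributed: for a \emph{paradifferential} operator $T_\sigma$ the admissible cutoff $\tilde\chi(\xi-\eta,\eta)$ in its definition already forces $|\xi-\eta|\le\eps_2|\eta|$, so $T_\sigma$ maps data frequency-localized at $|\eta|\sim2^n$ to output localized at $|\xi|\sim2^n$ \emph{unconditionally}, with no threshold $k\le n-N_0$; what the cutoff does is annihilate the contribution of the blocks $\Delta_k^{x'}\sigma$ with $2^k\gtrsim\eps_2 2^n$, and the dyadic decomposition in $x'$ is then useful not to create localization but to convert the $W^{r,\infty}$ regularity into the quantitative decay $\|\Delta_k^{x'}\p_\xi^\alpha a\|_{L^\infty}\lesssim 2^{-kr}M_r^m(a)(1+|\xi|)^{m-|\alpha|}$ that you record as fact~(ii). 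Second, the Taylor expansion step as you describe it tacitly assumes $r\in\N$; for non-integer $r$ the expansion stops at order $\lfloor r\rfloor$ and the fractional gain $(1+|\xi|)^{m-r}$ in the remainder is obtained from the $2^{-kr}$ decay and a Besov-type interpolation across the $\Delta_k^{x'}$ blocks, not from a genuine Taylor remainder --- this is the piece that makes the bookkeeping genuinely delicate and is worth flagging explicitly rather than absorbing into ``routine in spirit.'' With those two corrections your outline is consistent with the proof in \cite{MetivierPara}.
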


\begin{prop}[Adjoint, {\cite[Theorem 3.10]{ABZ2011IWWST}}]
Let $m\in\R$, $r>0$ and $a\in\Gamma_r^m(\T^d)$. We denote by $(T_a)^*$ the adjoint operator of $T_a$. Then $(T_a)^*-T_{a^*}$ is of order $m-r$ where $$a^*:=\sum_{|\alpha|<r}\frac{1}{i^{\alpha}\alpha!}\p_\xi^\alpha \p_{x'}^\alpha \bar{a}.$$ Moreover, for any $s\in\R$, there exists a constant $K$ such that $\|(T_a)^*-T_{a^*}\|_{H^s\to H^{s-m+r}}\leq KM_r^m(a)$.
\end{prop}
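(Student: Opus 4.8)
The assertion is the classical adjoint rule of paradifferential calculus, and the plan is to carry out the usual Fourier-side computation and then note that it is exactly Theorem 3.10 of Alazard--Burq--Zuily \cite{ABZ2011IWWST}. First I would write $(T_a)^*$ out explicitly. From the definition, $T_a$ has the Fourier kernel $K_a(\zeta,\eta)=(2\pi)^{-d}\tilde\chi(\zeta-\eta,\eta)\hat a(\zeta-\eta,\eta)\phi(\eta)$, so taking the $L^2$-adjoint, using that $\tilde\chi,\phi$ are real while $\overline{\hat a(\theta,\xi)}=\hat{\bar a}(-\theta,\xi)$, gives
\[
\widehat{(T_a)^*v}(\zeta)=(2\pi)^{-d}\int \tilde\chi(\xi-\zeta,\zeta)\,\hat{\bar a}(\zeta-\xi,\zeta)\,\phi(\zeta)\,\hat v(\xi)\,d\xi .
\]
The only structural difference from $T_{a^*}$ is that the cut-offs now sit on the output frequency $\zeta$ instead of the input frequency $\xi$; but both are supported where $|\xi-\zeta|\lesssim|\zeta|$, hence $|\xi|\sim|\zeta|$, so replacing $\tilde\chi(\xi-\zeta,\zeta)\phi(\zeta)$ by $\tilde\chi(\zeta-\xi,\xi)\phi(\xi)$ changes the operator only by a smoothing --- hence lower-order --- operator, by the standard almost-diagonal argument.

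Next I would Taylor-expand the symbol in its second argument about the diagonal. On the region $|\zeta-\xi|\ll|\xi|$ one writes $\hat{\bar a}(\zeta-\xi,\zeta)=\sum_{|\alpha|<r}\frac{(\zeta-\xi)^\alpha}{\alpha!}\,\partial_\xi^\alpha\hat{\bar a}(\zeta-\xi,\xi)+R_r(\xi,\zeta)$ with $R_r$ the integral-form remainder. Since multiplication by $\theta^\alpha$, $\theta=\zeta-\xi$, in the frequency variable corresponds to $(1/i)^{|\alpha|}\partial_{x'}^\alpha$ on the symbol side, the leading sum reassembles precisely into $T_{a^*}$ with $a^*=\sum_{|\alpha|<r}\frac{1}{i^{|\alpha|}\alpha!}\partial_\xi^\alpha\partial_{x'}^\alpha\bar a\in\Gamma_0^m$. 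Observe that $a^*$ uses $r$ extra $x'$-derivatives of $\bar a$, which is exactly why the hypothesis $a\in\Gamma_r^m$ ($W^{r,\infty}$ in $x'$) is the natural one and why the error must land at order $m-r$.

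The core of the proof --- and the main obstacle --- is the quantitative estimate of the remainder operator $R$ with kernel $\tilde\chi(\zeta-\xi,\xi)R_r(\xi,\zeta)\phi(\xi)$, for which one must show $\|R\|_{H^s\to H^{s-m+r}}\lesssim M_r^m(a)$. I would decompose dyadically in $\zeta$, $|\zeta|\sim 2^j$; on each block the integral-form Taylor remainder carries exactly $r$ $x'$-derivatives of $\bar a$ at intermediate frequencies, and the $W^{r,\infty}$-bound encoded in $M_r^m(a)$ converts this into a gain of $(1+|\zeta|)^{-r}\sim 2^{-jr}$ over the basic $H^s\to H^{s-m}$ bound, which is the claimed $H^s\to H^{s-m+r}$ property. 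The range $|\alpha|\le \tfrac d2+1+r$ in the definition of $M_r^m$ is exactly what makes the $\xi$-integral of the kernel converge so that Schur's lemma (or a $TT^*$ plus Cotlar--Stein argument) applies uniformly in $j$, after which the almost-orthogonal dyadic pieces are summed. This dyadic bookkeeping is technical but routine; it is carried out in full in \cite[Theorem 3.10]{ABZ2011IWWST}, so within the present paper it suffices to invoke that reference.
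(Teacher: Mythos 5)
The paper does not prove this proposition; it is quoted verbatim from \cite[Theorem 3.10]{ABZ2011IWWST} and used as a black box. Your sketch is the standard textbook argument (adjoint kernel, Taylor expansion in the frequency variable, dyadic/Schur estimate of the integral remainder) that underlies that theorem, and you correctly defer the quantitative bookkeeping to the cited reference, so the proposal is consistent with the paper's treatment.
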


The symbolic calculus adopted in this paper is not of $C^{\infty}$-regularity. We shall introduce the following class of symbols. Here and thereafter in this section, $\psi\in C([0,T];H^{s+\frac12}(\T^d))$ is a given function with $s>2+\frac{d}{2}$.
\begin{defn}
Given $m\in\R$, we denote $\Sigma^m$ to be the class of symbols $a$ of the form $a=a^{(m)}+a^{(m-1)}$ with
\[
a^{(m)}(t,x',\xi)=F(\cnab_{x'}\psi(t,x'),\xi),\quad a^{(m-1)}(t,x',\xi)=\sum_{|\alpha|_2}G_\alpha(\cnab_{x'}\psi(t,x'),\xi)\p_{x'}^\alpha\psi(t,x')
\]such that 
\begin{itemize}
\item [i.] $T_a$ maps real-valued functions to real-valued functions;
\item [ii.] $F$ is a $C^\infty$ real-valued functions of $(\zeta,\xi)\in\R^d\times(\R^d\backslash\{0\})$, homogeneous of degree $m$ in $\xi$, such that there exists a continuous function $K=K(\zeta)>0$ such that $F(\zeta,\xi)\geq K(\zeta)|\xi|^m$ for all $(\zeta,\xi)\in\R^d\times(\R^d\backslash\{0\})$;
\item [iii.] $G_\alpha$ is a $C^\infty$ complex-valued function of $(\zeta,\xi)\in\R^d\times(\R^d\backslash\{0\})$, homogeneous of degree $m-1$ in $\xi$.
\end{itemize}
\end{defn} 

\begin{defn}[``Equivalence" of operators]
Given $m\in\R$ and consider two families of operators of order $m$: $\{A(t):t\in[0,T]\}$ and  $\{B(t):t\in[0,T]\}$, We say $A\sim B$ if $A-B$ is of order $m-1.5$ and satisfies the estimate: for all $r\in\R$ there exists a continuous function $C(\cdot)$ such that
\[
\forall t\in[0,T], ~~\|A(t)-B(t)\|_{H^r\to H^{r-(m-1.5)}}\leq C(|\psi(t)|_{s+\frac12}).
\]
\end{defn}  From now on, we use the notation $|\cdot|_{s_1\to s_2}$ to represent the operator norm $\|\cdot\|_{H^{s_1}\to H^{s_2}}$, and use the notation $|\cdot|_{s}$ to represent $\|\cdot\|_{H^s(\T^d)}$, as we only apply paradifferential calculus on the free interface $\Sigma$. With this definition, we have
\begin{prop}[{\cite[Prop. 4.3]{ABZ2011IWWST}}]
Let $m,m'\in\R$. Then
\begin{enumerate}
\item If $a\in\Sigma^m,~b\in\Sigma^{m'}$, then $T_aT_b\sim T_{a\# b}$ where $a\# b$ is given by 
\[
a\# b = a^{(m)}b^{(m')} + a^{(m-1)} b^{(m')} + a^{(m)}b^{(m'-1)} + \frac{1}{i} \p_\xi a^{(m)}\cdot\p_{x'} b^{(m')}.
\]
\item  If $a\in\Sigma^m$, then $(T_a)^*\sim T_b$ where $b\in \Sigma^m$ is given by
\[
b=a^{(m)}+\overline{a^{(m-1)}}+\frac{1}{i}(\p_{x'}\cdot\p_{\xi})a^{(m)}.
\]
\end{enumerate}
\end{prop}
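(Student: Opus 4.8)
The plan is to derive both assertions from the general symbolic calculus for the Hölder--Zygmund classes $\Gamma_r^m(\T^d)$ recalled above (the Composition and Adjoint theorems of \cite{ABZ2011IWWST}), exploiting the two-layer decomposition $a=a^{(m)}+a^{(m-1)}$, $b=b^{(m')}+b^{(m'-1)}$ of symbols in the class and the standing regularity $\psi\in C([0,T];H^{s+1/2}(\T^d))$ with $s>2+d/2$. The first thing I would record is a regularity count. Since $\cnab_{x'}\psi\in H^{s-1/2}\hookrightarrow W^{r,\infty}$ for every $r<s-1/2-d/2$ and $s-1/2-d/2>3/2$, one may fix $r\in(3/2,2]$ so that $a^{(m)}=F(\cnab_{x'}\psi,\xi)\in\Gamma_r^m(\T^d)$ and $b^{(m')}\in\Gamma_r^{m'}(\T^d)$ (composition with the smooth $F$ preserves $W^{r,\infty}$); whereas the subprincipal pieces $a^{(m-1)}=\sum_{|\alpha|=2}G_\alpha(\cnab_{x'}\psi,\xi)\,\p_{x'}^\alpha\psi$ only carry the factor $\p_{x'}^\alpha\psi\in H^{s-3/2}\hookrightarrow W^{r',\infty}$ with some $r'\in(1/2,1]$, so $a^{(m-1)}\in\Gamma_{r'}^{m-1}(\T^d)$, and likewise for $b^{(m'-1)}$. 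Moreover all the relevant semi-norms $M_r^m(a^{(m)})$, $M_{r'}^{m-1}(a^{(m-1)})$, etc., are bounded by continuous functions of $|\psi|_{s+1/2}$ via Sobolev embedding and a Moser-type estimate.

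For the composition statement I would expand $T_aT_b=T_{a^{(m)}}T_{b^{(m')}}+T_{a^{(m)}}T_{b^{(m'-1)}}+T_{a^{(m-1)}}T_{b^{(m')}}+T_{a^{(m-1)}}T_{b^{(m'-1)}}$ and treat each piece separately. To the first product I apply the general Composition theorem with the \emph{large} exponent $r>3/2$: then $T_{a^{(m)}}T_{b^{(m')}}-T_{a^{(m)}\#b^{(m')}}$ is of order $m+m'-r\le m+m'-3/2$, and since $r\le 2$ the truncation gives $a^{(m)}\#b^{(m')}=a^{(m)}b^{(m')}+\frac1i\,\p_\xi a^{(m)}\cdot\p_{x'}b^{(m')}$; note the first-order term has coefficients only in $W^{r-1,\infty}$ with $r-1>1/2$, so it is \emph{kept}, not expanded further. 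To the two mixed products I apply the Composition theorem with the \emph{small} exponent $r'>1/2$, which forces truncation at $|\alpha|=0$: $T_{a^{(m)}}T_{b^{(m'-1)}}-T_{a^{(m)}b^{(m'-1)}}$ and $T_{a^{(m-1)}}T_{b^{(m')}}-T_{a^{(m-1)}b^{(m')}}$ are of order $m+m'-1-r'<m+m'-3/2$. The last product $T_{a^{(m-1)}}T_{b^{(m'-1)}}$ is of order $m+m'-2$ outright. Summing the four contributions, $T_aT_b-T_{a\#b}$ is of order $m+m'-3/2$ with bounds continuous in $|\psi|_{s+1/2}$, and $a\#b$ is precisely the four-term expression in the statement.

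For the adjoint statement I would split $(T_a)^*=(T_{a^{(m)}})^*+(T_{a^{(m-1)}})^*$. The Adjoint theorem with $r>3/2$, together with the reality of $a^{(m)}$ (property (i) of the class), gives $(T_{a^{(m)}})^*-T_{a^{(m)}+\frac1i(\p_{x'}\cdot\p_\xi)a^{(m)}}$ of order $m-r\le m-3/2$ (truncation at $|\alpha|\le 1$ using $r\le 2$), while the Adjoint theorem with $r'>1/2$ gives $(T_{a^{(m-1)}})^*-T_{\overline{a^{(m-1)}}}$ of order $(m-1)-r'<m-3/2$; adding these yields $(T_a)^*\sim T_b$ with $b=a^{(m)}+\overline{a^{(m-1)}}+\frac1i(\p_{x'}\cdot\p_\xi)a^{(m)}$. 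In both parts one must still check that the output symbol lies in the class ($a\#b\in\Sigma^{m+m'}$, resp. $b\in\Sigma^m$): the homogeneity degrees of the pieces are $m+m'$ and $m+m'-1$ (resp. $m$ and $m-1$) by inspection; the leading part is a smooth positive $\xi$-homogeneous function of $(\cnab_{x'}\psi,\xi)$; each subprincipal piece has the form $\sum_{|\alpha|=2}(\text{smooth, }\xi\text{-homogeneous})(\cnab_{x'}\psi,\xi)\,\p_{x'}^\alpha\psi$ once one observes that $\p_{x'}b^{(m')}$ and $\p_{x'}a^{(m)}$ produce exactly one second derivative of $\psi$ contracted against a smooth symbol; and the reality property (i) is preserved. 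These closure checks are routine from the definitions.

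\textbf{Main obstacle.} The substance is the regularity bookkeeping: one must run the composition and adjoint calculus at two distinct thresholds — $r>3/2$ for the principal--principal interaction, $r'>1/2$ for anything touching a subprincipal factor — and it is exactly the hypothesis $s>2+d/2$ that places the principal symbols in $\Gamma_r^m$ with $r>3/2$, making the remainders genuinely $1.5$ orders below the top order, as demanded by the relation $\sim$. The delicate point is deciding which terms survive the truncation: the first-order correction $\frac1i\p_\xi a^{(m)}\cdot\p_{x'}b^{(m')}$, whose coefficients lie only in $W^{r-1,\infty}$ with $r-1$ barely above $1/2$, must be retained in $a\#b$, while the second-order correction must be dropped; once this threshold is handled correctly, everything else is a direct application of the quoted theorems with all semi-norms controlled by $|\psi|_{s+1/2}$.
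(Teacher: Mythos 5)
The paper does not prove this proposition: it is cited verbatim from Alazard--Burq--Zuily \cite[Prop.~4.3]{ABZ2011IWWST}, so there is no in-paper proof to compare against. Your reconstruction, however, is correct and is essentially the argument ABZ give. The decisive observation — running the $\Gamma_r^m$-calculus at two distinct regularity thresholds, $r\in(3/2,2]$ for the principal parts $F(\cnab\psi,\xi)$ and $r'\in(1/2,1]$ for the subprincipal parts carrying a factor $\p_{x'}^\alpha\psi$ with $|\alpha|=2$ — is exactly what makes the hypothesis $s>2+d/2$ the right one, and it explains both why the remainders land $1.5$ orders below the top and why the first-order Poisson-bracket correction must be retained while the second-order one is discarded. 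Your regularity count is right: $\cnab\psi\in H^{s-1/2}\hookrightarrow W^{r,\infty}$ with $r<s-\tfrac12-\tfrac d2$ and the latter exceeds $3/2$, so $a^{(m)}\in\Gamma_r^m$ with $r>3/2$, while $\p^2\psi\in H^{s-3/2}\hookrightarrow W^{r',\infty}$ with $r'>1/2$ gives $a^{(m-1)}\in\Gamma_{r'}^{m-1}$; the semi-norms are Moser-controlled by $|\psi|_{s+1/2}$, which is what the relation $\sim$ demands. Splitting $T_aT_b$ into the four bilinear pieces and applying the composition theorem with $r$ (resp.\ $r'$) as appropriate, and likewise splitting $(T_a)^*$ and using reality of $a^{(m)}$, gives precisely the stated $a\#b$ and $b$. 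The closure checks (homogeneity degrees $m+m'$ and $m+m'-1$, the correction $\tfrac1i\p_\xi a^{(m)}\cdot\p_{x'}b^{(m')}$ producing exactly one $\p^2\psi$ contracted against a smooth symbol of the right homogeneity, positivity and reality preserved) are all routine as you indicate. In short, this is a faithful and correct rendering of the cited ABZ argument; no gaps.
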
As a corollary, we have
\begin{cor}[{\cite[Prop. 4.3(2)]{ABZ2011IWWST}}]
If $a\in \Sigma^m$ satisfies $\Im a^{(m-1)}=-0.5 (\p_\xi \cdot\p_{x'})a^{(m)}$, then $(T_a)^*\sim T_a$.
\end{cor}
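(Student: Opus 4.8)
The plan is to read off the conclusion directly from part (2) of the preceding proposition and reduce everything to one algebraic identity. Since $a\in\Sigma^m$, that proposition gives $(T_a)^*\sim T_b$ with
\[
b=a^{(m)}+\overline{a^{(m-1)}}+\frac{1}{i}(\p_{x'}\cdot\p_{\xi})a^{(m)}.
\]
Thus it suffices to show that the hypothesis $\Im a^{(m-1)}=-\tfrac12(\p_\xi\cdot\p_{x'})a^{(m)}$ forces $b=a$; then $(T_a)^*\sim T_b=T_a$ is immediate, because the relation $\sim$ is reflexive ($T_a-T_a=0$ is trivially of order $m-1.5$ and satisfies the required bounds).

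The computation I would carry out is short. The principal parts agree, $b^{(m)}=a^{(m)}$, so only the sub-principal parts matter. Writing $\overline{a^{(m-1)}}-a^{(m-1)}=-2i\,\Im a^{(m-1)}$, and observing that $a^{(m)}$ is real-valued while mixed partials commute, so that $(\p_{x'}\cdot\p_\xi)a^{(m)}=(\p_\xi\cdot\p_{x'})a^{(m)}$ is a real symbol and $\tfrac{1}{i}(\p_{x'}\cdot\p_\xi)a^{(m)}=-i(\p_\xi\cdot\p_{x'})a^{(m)}$, one gets
\[
b^{(m-1)}-a^{(m-1)}=-i\Bigl(2\,\Im a^{(m-1)}+(\p_\xi\cdot\p_{x'})a^{(m)}\Bigr).
\]
Plugging in the hypothesis, the bracket vanishes, hence $b^{(m-1)}=a^{(m-1)}$ and therefore $b=a\in\Sigma^m$.

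There is no real analytic obstacle here; the argument is purely symbolic once the quoted composition/adjoint proposition is in hand. The only points requiring care are bookkeeping: (i) confirming that the symbol $b$ produced by the adjoint formula still lies in the class $\Sigma^m$ so that the ``$\sim$'' conventions apply — automatic once $b=a$; and (ii) tracking the sign and index conventions inherited from Alazard--Burq--Zuily, in particular that the contraction $\p_{x'}\cdot\p_\xi$ appearing in the adjoint formula equals $\p_\xi\cdot\p_{x'}$ appearing in the hypothesis (same summation over components, commuting derivatives). Once these conventions are fixed consistently, the identity $b=a$ is forced and the corollary follows.
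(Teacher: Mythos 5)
Your proposal is correct and is precisely the intended derivation: the paper cites the corollary from Alazard–Burq–Zuily without proof, but it follows immediately from the adjoint formula $b=a^{(m)}+\overline{a^{(m-1)}}+\frac{1}{i}(\p_{x'}\cdot\p_\xi)a^{(m)}$ in part (2) of the preceding proposition, exactly as you compute. The key points — that $a^{(m)}$ is real-valued by definition of $\Sigma^m$, that $\overline{a^{(m-1)}}-a^{(m-1)}=-2i\,\Im a^{(m-1)}$, and that the hypothesis makes the bracket vanish so $b=a$ — are all handled correctly.
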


The next proposition is significant for the estimate of Sobolev norms via paradifferential calculus. 
\begin{prop}[{\cite[Prop. 4.4 and 4.6]{ABZ2011IWWST}}]\label{prop para Hs}
Let $m\in\R,~r\in\R$. Then for all symbol $a\in\Sigma^m$ and $t\in[0,T]$, the following estimate holds.
\begin{align}
|T_{a(t)} u|_{r-m} \leq &~C(|\psi(t)|_{s-1})|u|_{r},\\
|u|_{r+m}\leq &~ C(|\psi(t)|_{s-1})\left(|T_{a(t)} u|_{r} + |u|_0\right).
\end{align}
\end{prop}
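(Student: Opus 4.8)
The plan is to derive both inequalities, pointwise in $t$, from the symbolic calculus recorded above: the first is the boundedness of a paradifferential operator with a $\Sigma^m$-symbol, and the second is the matching elliptic a priori estimate obtained from a one-step parametrix followed by an interpolation. The dependence on $\psi$ only through $|\psi(t)|_{s-1}$ is automatic, since that norm controls every $\Gamma_0$-seminorm and every $\sim$-remainder bound appearing below. For the first inequality I would decompose $a = a^{(m)} + a^{(m-1)}$ with $a^{(m)}(x',\xi) = F(\cnab_{x'}\psi,\xi)$ and $a^{(m-1)}(x',\xi) = \sum_{|\alpha|=2} G_\alpha(\cnab_{x'}\psi,\xi)\,\p_{x'}^\alpha\psi$; differentiating in $\xi$ and applying the chain rule to the smooth homogeneous functions $F,G_\alpha$, together with the embeddings $|\cnab\psi|_{W^{1,\infty}}+|\p_{x'}^2\psi|_{L^\infty}\le C(|\psi|_{s-1})$ (valid for $s>2+\tfrac d2$), one gets $a^{(m)}\in\Gamma_0^m$ and $a^{(m-1)}\in\Gamma_0^{m-1}$ with $M_0^m(a^{(m)})+M_0^{m-1}(a^{(m-1)})\le C(|\psi(t)|_{s-1})$. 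The boundedness proposition for $\Gamma_0$-symbols then gives $|T_{a^{(m)}}u|_{r-m}\le C|u|_r$ and, using $H^{r-m+1}\hookrightarrow H^{r-m}$, also $|T_{a^{(m-1)}}u|_{r-m}\le|T_{a^{(m-1)}}u|_{r-m+1}\le C|u|_r$, hence the first bound.

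For the second inequality I would exploit the ellipticity built into $\Sigma^m$ (the lower bound $F(\zeta,\xi)\ge K(\zeta)|\xi|^m$) to set $b:=1/a^{(m)}$, a symbol homogeneous of degree $-m$ in $\xi$ lying in $\Sigma^{-m}$ with vanishing sub-principal part. The composition rule for $\Sigma$-symbols then gives $T_bT_a\sim T_{b\#a}$ with
\[
b\#a \;=\; b\,a^{(m)} \;+\; b\,a^{(m-1)} \;+\; \tfrac1i\,\p_\xi b\cdot\p_{x'}a^{(m)} \;=\; 1 + c ,
\]
where $c$ is homogeneous of degree $-1$ in $\xi$ with coefficients made out of $\cnab\psi\in W^{1,\infty}$ and $\p_{x'}^2\psi\in L^\infty$, so $c\in\Gamma_0^{-1}$ with $M_0^{-1}(c)\le C(|\psi(t)|_{s-1})$. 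Hence $T_bT_a=T_1+T_c+R$ with $T_c$ of order $-1$ and $R$ of order $-\tfrac32$ (the $\sim$-remainder); and since the symbol $1$ produces $T_1u=\phi(D)u$, one has $|u-T_1u|_\rho\le C|u|_0$ for every $\rho$. Rearranging $u = T_b(T_au) - T_cu - Ru + (u-T_1u)$ and using the mapping properties $T_b\colon H^r\to H^{r+m}$, $T_c\colon H^{r+m-1}\to H^{r+m}$, $R\colon H^{r+m-3/2}\to H^{r+m}$ gives
\[
|u|_{r+m}\;\le\; C(|\psi(t)|_{s-1})\bigl(\,|T_au|_r \;+\; |u|_{r+m-1} \;+\; |u|_0\,\bigr).
\]
The middle term sits one order below the left side and is removed by interpolation: for $r+m\ge1$, $|u|_{r+m-1}\le\eta|u|_{r+m}+C_\eta|u|_0$, and a small $\eta$ absorbs $\eta|u|_{r+m}$ into the left-hand side; for $r+m<1$ it is already dominated by $|u|_0$. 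This produces the second bound.

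The step I expect to be the main obstacle is the parametrix construction. The symbols involved carry only $W^{1,\infty}$ (principal) and $L^\infty$ (sub-principal) regularity in $x'$, so there is no room for an infinite symbol expansion: only the $\sim$-calculus is available, and it gains exactly $\tfrac32$ orders. One must check carefully that $b$ and the error $c$ stay inside the relevant low-regularity classes, and that the $\tfrac32$-order gain in $R$ together with the $1$-order loss in $T_c$ really leave a genuinely sub-principal term $|u|_{r+m-1}$ so that the interpolation closes the estimate. The bookkeeping of the Sobolev thresholds that justify $\cnab\psi\in W^{1,\infty}$ and $\p_{x'}^2\psi\in L^\infty$ (for which $s>2+\tfrac d2$ suffices) is routine but must be stated; everything else is a direct application of the composition, adjoint, and boundedness propositions recorded in the excerpt.
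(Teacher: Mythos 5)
The paper does not prove this proposition --- it cites it from \cite[Prop.\ 4.4 and 4.6]{ABZ2011IWWST} --- so there is no in-text argument to compare against. Evaluating your reconstruction on its own terms: the overall strategy (first inequality via $\Gamma_0$-symbol boundedness of $T_a$; second inequality via a one-step parametrix $b = 1/a^{(m)}$, the $\sim$-calculus, and absorption of the sub-principal loss by interpolation) is the standard route and is, to my knowledge, essentially what Alazard--Burq--Zuily do.

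There is, however, a concrete gap. You assert that ``$|\cnab\psi|_{W^{1,\infty}} + |\p_{x'}^2\psi|_{L^\infty} \le C(|\psi|_{s-1})$, valid for $s > 2 + \tfrac d2$.'' This is false in the regime $2 + \tfrac d2 < s \le 3 + \tfrac d2$: one has $\cnab\psi \in H^{s-2}\hookrightarrow L^\infty$ because $s-2 > \tfrac d2$, but $\p_{x'}^2\psi \in H^{s-3}$ and $H^{s-3}(\T^d)\hookrightarrow L^\infty$ requires $s-3 > \tfrac d2$, i.e.\ $s > 3 + \tfrac d2$. In the paper's actual use ($s=4$, $d=2$ on $\Sigma=\T^2$), $\p^2\psi \in H^1(\T^2)$, which is critically \emph{not} embedded in $L^\infty$. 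Consequently $a^{(m-1)} = \sum_{|\alpha|=2}G_\alpha(\cnab\psi,\xi)\p_{x'}^\alpha\psi$ is not in $\Gamma_0^{m-1}$ with $M_0^{m-1}(a^{(m-1)})$ controlled by $|\psi|_{s-1}$, and the same obstruction reappears in your parametrix step, since both $b\,a^{(m-1)}$ and $\tfrac1i\p_\xi b\cdot\p_{x'}a^{(m)}$ carry $\p^2\psi$ as a coefficient. The $\Gamma_0$-boundedness proposition therefore cannot be applied to these pieces as you propose.

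The repair is to replace the $L^\infty$-bound for these sub-principal pieces by the Sobolev-regularity paraproduct bound recorded in Appendix~C.1: for $\rho < \tfrac d2$, $|T_c v|_{H^\sigma}\lesssim |c|_{H^\rho}|v|_{H^{\sigma + \frac d2 - \rho}}$. Apply this with $c \sim \p^2\psi\in H^{s-3}$ (so $|c|_{H^{s-3}}\lesssim |\psi|_{s-1}$) together with the one-derivative slack coming from the fact that $a^{(m-1)}$ is of order $m-1$, not $m$. The requirement is $\tfrac d2 - (s-3) \le 1$, i.e.\ $s-3 > \tfrac d2 - 1$, which is \emph{exactly} the content of the hypothesis $s > 2 + \tfrac d2$. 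Plugging this refined bound into the rest of your argument --- which is otherwise sound, including the verification that $b = 1/a^{(m)}\in\Sigma^{-m}$ via the homogeneity/compactness upper bound on $F$, the identification of $b\#a = 1 + c$, and the closure by interpolation --- completes the proof with the stated dependence on $|\psi(t)|_{s-1}$.
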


\subsubsection{Paralinearization of the nonlinear terms}\label{sect para DtN}
Now we can start to paralinearize the term $\left(\dnp+\dnm\right)(\h(\psi))$ in \eqref{psi equ}.
\begin{lem}[Paralinearization of the Dirichlet-to-Neumann operator, {\cite[Section 4.4]{AMDtN}}]\label{prop para DtN}
For $f,\psi\in H^{s+\frac12}(\T^d)$, we have
\begin{align}\label{DtNsymbol}
\dnpm f = T_{\Lam^\pm} \psi + R_{\Lam,1}^\pm(\psi, f) + R_{\Lam,2}^\pm(\psi, f),
\end{align}with the symbols $\lam^\pm=\lam^{(1),\pm}+\lam^{(0),\pm}$ give by
\begin{align}
\Lam^{(1),\pm}=&\sqrt{(1+|\cnab\psi|^2)|\xi|^2 - (\cnab\psi\cdot\xi)^2},\\
\Lam^{(0),-}=-\overline{\Lam^{(0),+}}=&\frac{1+|\cnab\psi|^2|}{2\Lam^{(1),-}}\left(\cnab\cdot(\alpha^{(1)}\cnab\psi)+i\p_\xi \Lam^{(1),-}\cdot\cnab\alpha^{(1)}\right),
\end{align}and $\alpha^{(1)}:=(\Lam^{(1),-}+i\cnab\psi\cdot \xi)/(1+|\cnab\psi|^2)$. The remainder terms satisfy the following estimates
\begin{align}
| R_{\Lam,1}^\pm (\psi, f)|_{s-\frac12} \leq C(|\psi|_{C^2},|f|_3)|f|_{s+\frac12},\quad |R_{\Lam,2}^\pm (\psi, f)|_{s-\frac12} \leq C(|\psi|_{s-\frac12})|\TP f|_{s-2}.
\end{align}
\end{lem}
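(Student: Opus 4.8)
The statement is the paralinearization lemma for the Dirichlet--Neumann operators $\dnpm$, quoted from Alazard--M\'etivier \cite{AMDtN}. The plan is twofold: first to check that our geometric setting meets the hypotheses under which \cite[Section 4.4]{AMDtN} applies verbatim, and second to indicate the Calder\'on-type factorization that underlies the formula, adapted to the two-region problem with the rigid boundaries $\Sigma^\pm$. Throughout, write $u^\pm:=\HE_\psi^\pm f$ for the harmonic extension, so that $\lapp u^\pm=0$ in $\Om^\pm$, $u^\pm=f$ on $\Sigma$, $\p_3 u^\pm=0$ on $\Sigma^\pm$, and $\dnpm f=\mp N\cdot\nabp u^\pm|_{\Sigma}$.

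First I would record the elliptic estimates for $u^\pm$: since the coefficients of $\lapp$ are built from $\cnab\psi\in H^{s-\frac12}(\T^{d})$ and the bound $\p_3\vp\ge\frac12$ guarantees uniform ellipticity, standard variable-coefficient elliptic regularity gives $\|u^\pm\|_{H^{s+1}(\Om^\pm\cap\{|x_d|<\delta\})}\le C(|\psi|_{s-\frac12})\,|f|_{s+\frac12}$ together with its lower-order counterpart controlled by $|f|_3$ and $|\psi|_{C^2}$. Because the operator $\dnpm$ is localized at $\Sigma$, I would cut off with $\theta(x_d)$ supported in $\{|x_d|<\delta\}$, $\theta\equiv1$ near $x_d=0$, and study $v^\pm:=\theta u^\pm$, which solves $-\lapp v^\pm=g^\pm$ with $g^\pm$ supported away from $\Sigma$; since $\theta$ vanishes identically near $\Sigma^\pm$, the rigid boundaries and their Neumann condition play no role, and $g^\pm$ feeds only the smoothing remainder $R_{\Lam,2}^\pm$. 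Next, following \cite{AMDtN}, I would write $\lapp=E\,\p_d^2+(\text{first order in }\p_d)+(\text{tangential})$ with $E=E(x,\cnab\psi)>0$, paralinearize the coefficients by Bony's theorem (keeping track, via the rough symbol classes $\Gamma_r^m$, of how many derivatives land on $\psi$), and factor the resulting scalar paradifferential operator in $\p_d$ as $T_E(\p_d-T_{a^\pm})(\p_d-T_{A^\pm})$ up to an operator of order $\le-\frac12$, where $A^\pm$ is the root whose real part makes $e^{\pm x_d A^\pm}$ decay into $\Om^\pm$. A short symbol computation of the characteristic equation $E\zeta^2+\cdots=0$ identifies the principal root as $A^{(1),\pm}=\mp\Lam^{(1),\pm}$ with $\Lam^{(1),\pm}=\sqrt{(1+|\cnab\psi|^2)|\xi|^2-(\cnab\psi\cdot\xi)^2}$, and matching the subprincipal terms through the composition rule in the class $\Sigma^m$ produces the subprincipal symbol $\Lam^{(0),\pm}$ as stated (including the relation $\Lam^{(0),-}=-\overline{\Lam^{(0),+}}$, which is exactly the self-adjointness condition $\Im a^{(m-1)}=-\tfrac12(\p_\xi\cdot\p_{x'})a^{(m)}$ in the corollary above).

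The key structural point is then that the decaying branch satisfies $(\p_d-T_{A^\pm})v^\pm=(\text{smoothing in }x_d)$, so evaluating at $x_d=0$ gives $\p_d u^\pm|_{\Sigma}=T_{A^\pm}f+(\text{remainder})$; converting $\p_d$ into $N\cdot\nabp$ (which differ by a lower-order, $\psi$-dependent tangential operator) yields $\dnpm f=T_{\Lam^\pm}f+R_{\Lam,1}^\pm(\psi,f)+R_{\Lam,2}^\pm(\psi,f)$. Finally I would estimate the two remainders according to their nature: $R_{\Lam,1}^\pm$ gathers the genuinely nonlinear commutator errors in which a full power of $\cnab\psi$ meets $f$ at top tangential order, controlled by the paraproduct/paralinearization estimates together with the interior elliptic bound, giving $|R_{\Lam,1}^\pm|_{s-\frac12}\le C(|\psi|_{C^2},|f|_3)\,|f|_{s+\frac12}$; while $R_{\Lam,2}^\pm$ gathers the contributions of the subprincipal-and-lower symbols and of the localization error $g^\pm$, which cost one tangential derivative on $f$ but need $\psi$ only at low regularity, hence $|R_{\Lam,2}^\pm|_{s-\frac12}\le C(|\psi|_{s-\frac12})\,|\cnab f|_{s-2}$.

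The main obstacle is the low smoothness of $\psi$ (only $H^{s+\frac12}$ with $s>2+\tfrac d2$): the coefficients of $\lapp$ live in $H^{s-\frac12}$, so the factorization must be carried out in the rough classes $\Gamma_r^m$ with $r$ as small as $\le1$ in places, and one has to decide precisely when a factor $\cnab\psi$ may be estimated in $L^\infty$ and when it must be kept at full Sobolev order. That dichotomy is exactly what separates the $C^2$-type constant appearing in $R_{\Lam,1}^\pm$ from the $H^{s-\frac12}$-type constant in $R_{\Lam,2}^\pm$, and getting this accounting right is the only nontrivial part; the computation of the symbols $\Lam^{(1),\pm},\Lam^{(0),\pm}$ and the remainder estimates then follow the argument of \cite[Section 4.4]{AMDtN} (see also \cite{ABZ2011IWWST}) line by line, with the cutoff $\theta$ removing any interference from $\Sigma^\pm$.
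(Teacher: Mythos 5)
Your proposal is correct and reproduces the standard Calder\'on-factorization argument of Alazard--M\'etivier, which is precisely what the paper invokes by citing \cite[Section 4.4]{AMDtN} without giving a proof; in particular, the localization away from $\Sigma^\pm$ that you add is the right way to adapt the half-space argument of \cite{AMDtN} to the present strip geometry. Note also that your derivation correctly produces $T_{\Lam^\pm}f$ on the right-hand side, whereas the displayed equation \eqref{DtNsymbol} reads $T_{\Lam^\pm}\psi$ --- this appears to be a typo in the paper, since the subsequent application (substituting $f=\h(\psi)=-T_\hh\psi+R_\hh$ to obtain $-T_\Lam T_\hh\psi+\RR_\psi^\sigma$) only makes sense with $T_{\Lam^\pm}f$.
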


\begin{lem}[Paralinearization of the mean curvature operator, {\cite[Lemma 3.25]{ABZ2011IWWST}}]\label{prop para ST}
There holds $\h(\psi)=-T_\hh f+R_\hh$ where $\hh=\hh^{(2)}+\hh^{(1)}$ is defined by
\begin{align}
\hh^{(2)}=&\frac{1}{\sqrt{1+|\cnab\psi|^2}}\left(|\xi|^2-\frac{(\cnab\psi\cdot\xi)^2}{1+|\cnab\psi|^2}\right),\\
\hh^{(1)}=&-\frac{i}{2}(\cnab_{x'}\cdot\p_\xi)\hh^{(2)},
\end{align}and the remainder term $R_\hh$ satisfies
\begin{align}
|R_\hh|_{2s-3}\leq C(|\psi|_{s+\frac12}).
\end{align}
\end{lem}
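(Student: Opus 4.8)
The plan is to establish this as the standard Bony paralinearization of the mean curvature operator, following \cite{ABZ2011IWWST} (here $f=\psi$, the interface function). First I would carry out the divergence in $\h(\psi)=\cnab\cdot\bigl(\cnab\psi/\sqrt{1+|\cnab\psi|^2}\bigr)$ to put it in quasilinear form
\begin{equation}\label{Hexpsketch}
\h(\psi)=\sum_{j,k}g_{jk}(\cnab\psi)\,\p_j\p_k\psi,\qquad g_{jk}(\zeta)=\frac{\delta_{jk}}{\sqrt{1+|\zeta|^2}}-\frac{\zeta_j\zeta_k}{(1+|\zeta|^2)^{3/2}}.
\end{equation}
Using $\widehat{\p_j\p_k u}(\xi)=-\xi_j\xi_k\hat u(\xi)$, the symbol $\sum_{j,k}g_{jk}(\zeta)\xi_j\xi_k$ coincides with the stated principal symbol $\hh^{(2)}$, so the form of $\hh^{(2)}$ is dictated by \eqref{Hexpsketch}.

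Next I would exploit the regularity $\psi\in H^{s+\frac12}(\T^d)$ with $s>2+\tfrac d2$ (so $d=2$ here): then $\cnab\psi\in H^{s-\frac12}\hookrightarrow L^\infty$ and $\cnab^2\psi\in H^{s-\frac32}$. Applying Bony's paralinearization to each composition gives $g_{jk}(\cnab\psi)=g_{jk}(0)+T_{g_{jk}'(\cnab\psi)\cdot}\cnab\psi+\varrho_{jk}$ with $|\varrho_{jk}|_{2s-3}\le C(|\psi|_{s+\frac12})$, and Bony's product rule applied to \eqref{Hexpsketch} yields
\begin{equation}\label{bonysketch}
g_{jk}(\cnab\psi)\,\p_j\p_k\psi=T_{g_{jk}(\cnab\psi)}\p_j\p_k\psi+T_{\p_j\p_k\psi}g_{jk}(\cnab\psi)+R_{\mathrm B}\bigl(g_{jk}(\cnab\psi),\p_j\p_k\psi\bigr),
\end{equation}
whose resonant remainder satisfies $|R_{\mathrm B}|_{2s-3}\le C(|\psi|_{s+\frac12})$ since $(s-\tfrac12)+(s-\tfrac32)-\tfrac d2=2s-3$.

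Then I would identify the symbols. For the leading paraproduct, the composition rule for symbols in $\Sigma^m$ from Section~\ref{sect para pre} gives $\sum_{j,k}T_{g_{jk}(\cnab\psi)}\p_j\p_k\psi=-T_{\hh^{(2)}}\psi-T_{b^{(1)}}\psi$ modulo $H^{2s-3}$, where the order-one symbol $b^{(1)}$ arises from the $\tfrac1i\p_\xi\,\cdot\,\p_{x'}$ term in the composition. For the middle term of \eqref{bonysketch}, using that $T_{\,\cdot\,}$ annihilates constants, the paralinearization above, and $\cnab\psi=T_{i\xi}\psi$ modulo smoothing, I would rewrite $\sum_{j,k}T_{\p_j\p_k\psi}g_{jk}(\cnab\psi)=-T_{c^{(1)}}\psi$ modulo $H^{2s-3}$ for an explicit order-one symbol $c^{(1)}$. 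Combining and simplifying with $g_{jk}=g_{kj}$ and $\p_{\xi_l}\hh^{(2)}=2\sum_k g_{lk}(\cnab\psi)\xi_k$, I expect $b^{(1)}+c^{(1)}=\hh^{(1)}=-\tfrac i2(\cnab_{x'}\cdot\p_\xi)\hh^{(2)}$; this is also the unique first-order correction making $T_{\hh^{(2)}+\hh^{(1)}}$ approximately self-adjoint, consistent with $\h$ being formally self-adjoint (minus a divergence). Defining $R_\hh$ as the sum of $\sum_{j,k}\varrho_{jk}$, $\sum_{j,k}R_{\mathrm B}$ and the composition/paralinearization errors, the mapping properties of Section~\ref{sect para pre} give $|R_\hh|_{2s-3}\le C(|\psi|_{s+\frac12})$.

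The hard part will be the bookkeeping in the last two steps: one must carefully separate which paraproduct and composition errors already lie in $H^{2s-3}$ from those that must instead be absorbed into the first-order symbol $\hh^{(1)}$. In particular the naive estimate only places $T_{\p_j\p_k\psi}g_{jk}(\cnab\psi)$ in $H^{s-\frac12}$, which is weaker than $H^{2s-3}$ for $s>\tfrac52$, so this term genuinely has to be re-expressed (after paralinearizing $g_{jk}(\cnab\psi)$ and using $\cnab\psi\simeq T_{i\xi}\psi$) as an order-one paradifferential operator applied to $\psi$; one then has to check that $-\tfrac i2(\cnab_{x'}\cdot\p_\xi)\hh^{(2)}$ accounts for all such contributions. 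Once the regularity thresholds ($s-\tfrac12$ for $\cnab\psi$, $s-\tfrac32$ for $\cnab^2\psi$, and the $\tfrac d2$-gain with $d=2$) are fixed, the remaining estimates are routine paradifferential calculus.
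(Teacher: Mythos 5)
The paper does not prove this lemma itself but cites it as Lemma 3.25 of Alazard--Burq--Zuily \cite{ABZ2011IWWST}; your sketch correctly reconstructs the argument used there, namely the quasilinear expansion $\h(\psi)=\sum_{j,k}g_{jk}(\cnab\psi)\p_j\p_k\psi$, Bony paralinearization of the coefficients and the product, and identification of the sub-principal symbol $\hh^{(1)}$ as the self-adjointness correction. Your regularity bookkeeping is also the right one (with $\cnab\psi\in H^{s-\frac12}$, $\cnab^2\psi\in H^{s-\frac32}$ and the $\tfrac d2=1$ gain from the paraproduct remainder giving $H^{2s-3}$), and you correctly flag that the term $T_{\p_j\p_k\psi}g_{jk}(\cnab\psi)$ must be paralinearized again and re-expressed through $\cnab\psi\simeq T_{i\xi}\psi$ rather than estimated directly.
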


With the paralinearization of operators $\dnpm$ and $\h(\psi)$, the term $\left(\dnp+\dnm\right)(\h(\psi))$ in \eqref{psi equ} becomes 
\begin{align}\label{psi equ main}
\sigma\left(\dnp+\dnm\right)(\h(\psi))=-\sigma T_{\Lam}T_\hh \psi +\sigma \RR_\psi^{\sigma},
\end{align}where $-\overline{\Lam^{(0),+}}=\Lam^{(0),-}$ shows that $\Re(\Lam^{(0),+})+\Re(\Lam^{(0),-})=0$,~$\Im(\Lam^{(0),+})=\Im(\Lam^{(0),+})$ and thus
\begin{align}
\Lam :=&~\underbrace{(\Lam^{(1),+} + \Lam^{(1),-})}_{=:\Lam^{(1)}} + \underbrace{(\Lam^{(0),+} + \Lam^{(0),-})}_{=:\Lam^{(0)}}=2\Lam^{(1),-}+2i\Im(\Lam^{(0),-})\\
\label{para RR}\RR_\psi^{\sigma}:=&\sum_\pm T_{\Lam^\pm} R_\hh+ R_{\Lam,1}^\pm(\psi,\h(\psi)) + R_{\Lam,2}^\pm(\psi, \h(\psi)) \text{ and }|\RR_\psi^{\sigma}|_{s-\frac12} \leq C(|\psi|_{s+\frac12})|\psi|_{s+1}.
\end{align}

In order for an explicit energy estimate for $\psi$ and $\psi_t$, we shall symmetrize the 3-rd order paradifferential operator $T_{\Lam}T_\hh$. That is, find suitable symbols $\fm \in \Sigma^{1.5}$ and $\fn\in \Sigma^0$ such that $T_\fn T_\Lam T_\hh \sim T_\fm T_\fm T_\fn$ and $T_\fm \sim (T_\fm)^*$.

\begin{prop}[Symmetrisation of the composition]\label{prop para symm}
Let $\fn\in\Sigma^0$ and $\fm\in \Sigma^{1.5}$ be defined by 
\begin{align}
\label{symbol n} \fn:=&~\frac{1}{\sqrt[3]{2}\sqrt[4]{1+|\cnab\psi|^2}}=2^{-\frac13}|N|^{-\frac12},\\
\label{symbol m} \fm :=&~ \underbrace{\sqrt{\hh^{(2)}\Lam^{(1)}}}_{=:\fm^{(1.5)}}+\underbrace{\frac{1}{2i}(\p_\xi\cdot\p_{x'})\sqrt{\hh^{(2)}\Lam^{(1)}}}_{=:\fm^{(0.5)}}.
\end{align} Then $T_\fn T_\lam T_\hh \sim T_\fm T_\fm T_\fn$ and $T_\fm \sim (T_\fm)^*$ are both fulfilled.
\end{prop}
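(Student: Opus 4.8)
The plan is to reduce both asserted equivalences to identities between symbols, using the symbolic calculus on the class $\Sigma^m$ from Section~\ref{sect para pre} (the composition rule $T_aT_b\sim T_{a\#b}$ and the self-adjointness criterion of \cite[Prop. 4.3]{ABZ2011IWWST}). The equivalence $T_\fm\sim(T_\fm)^*$ is essentially immediate: since $\hh^{(2)}>0$ and $\Lam^{(1)}=\Lam^{(1),+}+\Lam^{(1),-}=2\Lam^{(1),-}>0$ by Cauchy--Schwarz, the symbol $\fm^{(1.5)}=\sqrt{\hh^{(2)}\Lam^{(1)}}$ is real-valued, hence $(\p_\xi\cdot\p_{x'})\fm^{(1.5)}$ is real and $\fm^{(0.5)}=\tfrac1{2i}(\p_\xi\cdot\p_{x'})\fm^{(1.5)}$ is purely imaginary with $\Im\fm^{(0.5)}=-\tfrac12(\p_\xi\cdot\p_{x'})\fm^{(1.5)}$; this is exactly the hypothesis of the criterion, so $(T_\fm)^*\sim T_\fm$. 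Along the way one checks that $\fn\in\Sigma^0$ and $\fm\in\Sigma^{1.5}$ in the sense of the definition (homogeneity and positivity of the principal parts, and that $T_\fn,T_\fm$ preserve real-valuedness).

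For the composition I would first use that $\fn$ is independent of $\xi$: in the $\#$-formula for $\Sigma$-symbols the correction term $\tfrac1i\p_\xi\fn\cdot\p_{x'}(\cdot)$ vanishes, so $\fn\#b=\fn\,b$ for every $b$; together with associativity of $\sim$ (which follows from the quantitative bounds in the composition rule, $\fn$ being of order $0$), this gives, as operators of order $3$,
\[
T_\fn T_\Lam T_\hh\sim T_{\fn\,(\Lam\#\hh)},\qquad T_\fm T_\fm T_\fn\sim T_{\fm\#(\fm\#\fn)}.
\]
Each of these two symbols is the sum of a homogeneous part of order $3$ and one of order $2$ (the $\#$-calculus on $\Sigma$-symbols produces nothing lower), so it remains to match those two parts. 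The order-$3$ parts, $\fn\,\Lam^{(1)}\hh^{(2)}$ and $(\fm^{(1.5)})^2\fn$, coincide precisely because $\fm^{(1.5)}$ was defined so that $(\fm^{(1.5)})^2=\hh^{(2)}\Lam^{(1)}$.

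The order-$2$ matching is the substantive step, and the main obstacle is the bookkeeping it requires. Writing $P:=\fm^{(1.5)}$ and inserting $\fm^{(0.5)}=\tfrac1{2i}(\p_\xi\cdot\p_{x'})P$, a direct expansion of the two sides gives
\[
\big(\fm\#(\fm\#\fn)\big)^{(2)}=\tfrac1i\,\fn\,P\,(\p_\xi\cdot\p_{x'})P+\tfrac2i\,P\,(\p_\xi P\cdot\p_{x'}\fn)+\tfrac1i\,\fn\,(\p_\xi P\cdot\p_{x'}P),
\]
\[
\big(\fn\,(\Lam\#\hh)\big)^{(2)}=\fn\Big(\Lam^{(0)}\hh^{(2)}+\Lam^{(1)}\hh^{(1)}+\tfrac1i\,\p_\xi\Lam^{(1)}\cdot\p_{x'}\hh^{(2)}\Big).
\]
I would then substitute the explicit sub-principal symbols $\hh^{(1)}=-\tfrac i2(\cnab_{x'}\cdot\p_\xi)\hh^{(2)}$ from Lemma~\ref{prop para ST} and $\Lam^{(0)}=\Lam^{(0),+}+\Lam^{(0),-}$ from Lemma~\ref{prop para DtN} (which is purely imaginary since $\Lam^{(0),-}=-\overline{\Lam^{(0),+}}$), use the Leibniz identities $2P\,\p P=\p(\hh^{(2)}\Lam^{(1)})$ for $\p\in\{\p_{x'},\p_\xi\}$ to rewrite all derivatives of $P$ through $\hh^{(2)},\Lam^{(1)}$ and their derivatives, and verify that the two expressions agree. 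This is the two-phase analogue of the symmetrization carried out in \cite{ABZ2011IWWST}, the only difference being $\Lam^{(1),+}=\Lam^{(1),-}$, so $\Lam^{(1)}=2\Lam^{(1),-}$ and $\Lam^{(0)}$ is the sum of the two sub-principal Dirichlet--Neumann symbols. The reason the identity closes is structural: each of $T_\hh$ and $T_{\dnp}+T_{\dnm}$ is self-adjoint modulo an operator one order lower, i.e.\ $\Im\hh^{(1)}=-\tfrac12(\p_\xi\cdot\p_{x'})\hh^{(2)}$ and $\Im\Lam^{(0)}=-\tfrac12(\p_\xi\cdot\p_{x'})\Lam^{(1)}$, which forces the two sub-principal corrections to coincide once the principal symbol has been square-rooted into $P$; beyond that, only the careful distribution of derivatives between $\fn$ (which depends on $x'$ but not on $\xi$) and $P$, and the insertion of the explicit formula for $\Lam^{(0),\pm}$, are involved.
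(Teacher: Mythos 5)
Your setup is correct and follows the paper's strategy: reduce to symbol identities, observe that $(T_\fm)^*\sim T_\fm$ is immediate from the reality of $\fm^{(1.5)}$ and the form of $\fm^{(0.5)}$, use $\fn^{(-1)}=0$ and $\p_\xi\fn=0$ to collapse the $\#$-products, and match the two sub-principal symbols. Your expansion of $\big(\fm\#(\fm\#\fn)\big)^{(2)}$ and $\big(\fn(\Lam\#\hh)\big)^{(2)}$ is consistent with what the paper obtains via $(\fm\#\fm)\#\fn$. The claim $\Im\Lam^{(0)}=-\tfrac12(\p_\xi\cdot\p_{x'})\Lam^{(1)}$ and its role in killing the real part are also right.

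The gap is the last step, where you assert the order-$2$ identity \emph{closes structurally} once each operator is symmetric modulo lower order. That argument only shows the equation for $\fn^{(0)}$ is consistent (both sides purely imaginary, same homogeneity); it does not show the equation is \emph{solvable} for a function of $\cnab\psi$, let alone that the specific $\fn^{(0)}=2^{-1/3}(1+|\cnab\psi|^2)^{-1/4}$ works. After using $P(\p_\xi\cdot\p_{x'})P+\p_\xi P\cdot\p_{x'}P=\tfrac12(\p_\xi\cdot\p_{x'})(P^2)$ and comparing with the right-hand side, the matching reduces to
\begin{equation*}
\big(\Lam^{(1)}\p_\xi\hh^{(2)}+\hh^{(2)}\p_\xi\Lam^{(1)}\big)\cdot\p_{x'}\fn^{(0)}
=\frac{\fn^{(0)}}{2}\Big(\p_\xi\Lam^{(1)}\cdot\p_{x'}\hh^{(2)}-\p_\xi\hh^{(2)}\cdot\p_{x'}\Lam^{(1)}\Big),
\end{equation*}
which is a genuine constraint on $\fn^{(0)}$, not an identity. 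The paper's proof solves it by exploiting the relation $\hh^{(2)}=(c\,\Lam^{(1)})^2$ with $c=\tfrac12(1+|\cnab\psi|^2)^{-3/4}$: substituting, all $\p\Lam^{(1)}$-terms cancel and the constraint collapses to $\p_{x'}\fn^{(0)}/\fn^{(0)}=\tfrac13\,\p_{x'}c/c$, giving $\fn^{(0)}=c^{1/3}$. You never state this algebraic relation between $\hh^{(2)}$ and $\Lam^{(1)}$, so your Leibniz rewriting $2P\,\p P=\p(\hh^{(2)}\Lam^{(1)})$ by itself leaves the verification open; without $\hh^{(2)}=(c\Lam^{(1)})^2$ there is no reason to expect the constraint on $\fn^{(0)}$ to be solvable by a pure function of $\cnab\psi$, and the specific power $-\tfrac14$ remains unexplained.
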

\begin{proof}
Given the symbol $\Lam$ and $\hh$, we shall find suitable symbols $\fn\in\Sigma^0,\fm\in\Sigma^{1.5}$ such that $\fn(x',\xi)$ is  independent of $\xi$ and $\fn\#(\Lam\# \hh)=(\fm\#\fm)\# \fn$, i.e.,
\begin{align*}
&~\fn^{(0)}(\Lam\# \hh)+\fn^{(-1)}\Lam^{(1)}\hh^{(2)}+\frac{1}{i}\p_\xi \fn^{(0)}\cdot\p_{x'}(\Lam^{(1)}\hh^{(2)})\\
=&~(\fm\#\fm)\fn^{(0)} + (\fm^{(1.5)})^2\fn^{(-1)} +\frac{1}{i}\p_\xi((\fm^{(1.5)})^2)\cdot\p_{x'}\fn^{(0)}.
\end{align*}Recall that
\begin{align*}
(\Lam\# \hh)=&~\Lam^{(1)}\hh^{(2)}+\Lam^{(0)}\hh^{(2)}+\Lam^{(1)}\hh^{(1)}+\frac{1}{i}\p_\xi \hh^{(2)}\cdot\p_{x'}\Lam^{(1)},\\
(\fm\#\fm)=&~(\fm^{(1.5)})^2+2(\fm^{(1.5)})(\fm^{(0.5)})+\frac{1}{i}\p_\xi(\fm^{(1.5)})\cdot\p_{x'}(\fm^{(1.5)}).
\end{align*}We choose the principal symbol $\fm^{(1.5)}:=\sqrt{\Lam^{(1)}\hh^{(2)}}$ in order for cancelling the leading-order symbols. Since we require $(T_{\fm})^*\sim T_{\fm^*}$, we must have $\Im (\fm^{(0.5)})=-0.5(\p_{x'}\cdot\p_\xi)\fm^{(1.5)}$ (cf. \cite[Prop. 4.3]{ABZ2011IWWST}). With this choice for $\fm$, it remains to solve the symbolic equation
\begin{equation}
\fn^{(0)}(\Lam\# \hh - \fm\#\fm)=\frac{1}{i}\p_\xi((\Lam^{(1)}\hh^{(2)})\cdot\p_{x'}\fn^{(0)}-\frac{1}{i}\p_{x'}(\Lam^{(1)}\hh^{(2)})\cdot\p_{\xi}\fn^{(0)},
\end{equation}with
\begin{align*}
\Lam\# \hh - \fm\#\fm=\Lam^{(0)}\hh^{(2)}+\Lam^{(1)}\hh^{(1)} - 2(\fm^{(1.5)})(\fm^{(0.5)}) + \frac{1}{i}\p_\xi \hh^{(2)}\cdot\p_{x'}\Lam^{(1)} - \frac{1}{i}\p_\xi(\fm^{(1.5)})\cdot\p_{x'}(\fm^{(1.5)}).
\end{align*} The sub-principle $\fn^{(-1)}$ does not appear, so we can choose $\fn^{(-1)}=0$. Since the principal symbols of $\Lam$ and $\hh$ are real-valued, we now just need to solve
\[
\Re(\Lam\# \hh - \fm\#\fm)=0,~~\fn^{(0)}\Im(\Lam\# \hh - \fm\#\fm)=-\p_\xi((\Lam^{(1)}\hh^{(2)})\cdot\p_{x'}\fn^{(0)}+\p_{x'}(\Lam^{(1)}\hh^{(2)})\cdot\p_{\xi}\fn^{(0)}.
\] The condition for the real part is fulfilled if we have
\[
\underbrace{\Re(\Lam^{(0)})}_{=0}\hh^{(2)}=2\fm^{(1.5)}\Re(\fm^{(0.5)})\Rightarrow \Re (\fm^{(0.5)}) = 0.
\]For the imaginary part, inserting the symbols $\hh^{(1)},~\Im(\Lam^{(0)}),~\fm^{(1.5)}$ and $\Im(\fm^{(0.5)})$, we get
\begin{align*}
\Im(\Lam\# \hh - \fm\#\fm)=\frac12\p_{\xi}\hh^{(2)}\cdot\p_{x'} \Lam^{(1)} -\frac12\p_{x'}\hh^{(2)}\cdot\p_{\xi} \Lam^{(1)} ,
\end{align*}and thus we need to solve
\begin{align}
\fn^{(0)}\left(\frac12\p_{\xi}\hh^{(2)}\cdot\p_{x'} \Lam^{(1)} -\frac12\p_{x'}\hh^{(2)}\cdot\p_{\xi} \Lam^{(1)}\right) = -\p_\xi((\Lam^{(1)}\hh^{(2)})\cdot\p_{x'}\fn^{(0)}+\p_{x'}(\Lam^{(1)}\hh^{(2)})\cdot\p_{\xi}\fn^{(0)}.
\end{align} Notice that $\hh^{(2)}=(c\Lam^{(1)})^2$ with $c=\frac12 (1+|\cnab\psi|^2)^{-\frac34}$. Plugging it to the above equation, after a long and tedious calculation, we get the following relation
\[
\fn^{(0)}\left(c^2\p_{x'}\Lam^{(1)}(\Lam^{(1)}) - (\p_{x'}c) c (\Lam^{(1)})^2 - c^2\p_{x'}(\Lam^{(1)})\Lam^{(1)}\right) = -3c^2(\Lam^{(1)})^2\p_{x'}\fn^{(0)},
\]that is,
\[
\frac{\p_{x'} \fn^{(0)}}{\fn^{(0)}} = \frac{-(\p_{x'} c) c(\Lam^{(1)})^2}{-3c^2(\Lam^{(1)})^2}=\frac13\frac{\p_{x'} c}{c}\Rightarrow \fn^{(0)}=c^{\frac13}=2^{-\frac13}(1+|\cnab\psi|^2)^{-\frac14}.
\]
\end{proof}

We expect to take $(s-\frac12)$-th order derivatives in \eqref{psi equ}. In view of the paradifferential formulation, we shall alternatively take $T_{\MM}$ with 
\begin{align}
\label{symbol M}\MM:=(\fm^{(1.5)})^{\frac{2s-1}{3}}=2^{\frac{2s-1}{6}}|\xi|^{s-\frac12}\left(1-\left|\frac{N}{|N|}\cdot\frac{\xi}{|\xi|}\right|^2\right)^{\frac{s}{2}-\frac14}\in\Sigma^{s-\frac12}
\end{align} for sake of simplicity. Below, we list several commutator estimates for the paradifferential operators. 
\begin{lem}\label{lem paracomm}
For any $r\in\R,~s>2+\frac{d}{2}$, any functions $a$ and $f$, the following commutator estimates hold
\begin{align*}
\bno{\left[T_\MM,T_\fm\right]}_{r+s-1\to r} \leq&~ C\left(|\psi|_{s+0.5}\right),\\
\bno{[T_\fn,T_a]f}_{s-\frac12}\leq&~ C(|\cnab\psi|_{W^{1,\infty}})|a|_{W^{1,\infty}}|f|_{s-1.5},\\
\bno{\left[T_\MM,a\right]T_\fn f}_{0}+\bno{T_\MM\left[T_\fn,a\right]f}_{0}\leq&~ C(|\cnab\psi|_{W^{1,\infty}})|a|_{s-0.5}|f|_{s-1.5},\\
\bno{[T_\fm,a]f}_{0}\leq&~ C(|\cnab\psi|_{W^{1,\infty}})|a|_{s-0.5}|f|_{0.5}.
\end{align*}
\end{lem}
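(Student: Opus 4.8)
The plan is to prove each of the four commutator estimates in Lemma \ref{lem paracomm} by reducing everything to the symbolic calculus of Propositions from \cite{ABZ2011IWWST} together with the standard boundedness results for paradifferential and paraproduct operators recalled in Section \ref{sect para pre}. I would first record the trivial observations: $\fm\in\Sigma^{1.5}$, $\MM=(\fm^{(1.5)})^{(2s-1)/3}\in\Sigma^{s-1/2}$, $\fn\in\Sigma^0$ with $\fn$ independent of $\xi$, and that all the semi-norms $M_r^m$ of these symbols are bounded by continuous functions of $|\psi|_{s+1/2}$ (or of $|\cnab\psi|_{W^{1,\infty}}$ when only the principal behaviour matters). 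The key structural fact, established in Proposition \ref{prop para symm}, is that $\MM$ and $\fm^{(1.5)}$ are proportional to powers of the \emph{same} positive symbol $\fm^{(1.5)}=\sqrt{\hh^{(2)}\Lam^{(1)}}$, so their principal symbols Poisson-commute; this is exactly what produces the gain of one derivative in the first estimate.

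For the first estimate, $\bigl\|[T_\MM,T_\fm]\bigr\|_{r+s-1\to r}$, I would write $T_\MM T_\fm - T_{\MM\#\fm}$ and $T_\fm T_\MM - T_{\fm\#\MM}$ using the composition rule (Proposition on composition in $\Sigma^m$), each error being of order $(s-1/2)+3/2-1 = s$ mapping $H^{r+s-1}\to H^{r-1}$... wait, one must be slightly careful: since $\fm\in\Sigma^{1.5}$ and $\MM\in\Sigma^{s-1/2}$ both have only principal and subprincipal parts, $T_\MM T_\fm \sim T_{\MM\#\fm}$ with remainder of order $s-1/2+3/2-1.5 = s-1/2$, i.e. bounded $H^{r+s-1}\to H^{r}$; the same for the reversed product. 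Then $\MM\#\fm - \fm\#\MM$ has vanishing principal symbol (the product $\MM^{(s-1/2)}\fm^{(1.5)}$ is symmetric) and its subprincipal part involves $\frac1i(\p_\xi\MM^{(s-1/2)}\cdot\p_{x'}\fm^{(1.5)} - \p_\xi\fm^{(1.5)}\cdot\p_{x'}\MM^{(s-1/2)})$ plus the cross terms with the genuine subprincipal symbols $\fm^{(0.5)}$; since $\MM^{(s-1/2)}$ and $\fm^{(1.5)}$ are powers of the same symbol, a short computation shows the Poisson bracket vanishes, so $\MM\#\fm-\fm\#\MM\in\Sigma^{s-1}$ and $T_{\MM\#\fm-\fm\#\MM}$ maps $H^{r+s-1}\to H^{r}$ with norm $\le C(|\psi|_{s+1/2})$. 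Collecting the three contributions gives the claim.

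The remaining three estimates concern commutators of a paradifferential operator with a \emph{function} $a$ (i.e. with the paraproduct $T_a$ plus the non-paraproduct remainder $a - T_a$), and here the relevant tool is the Bony paralinearization/paraproduct calculus: $af = T_a f + T_f a + R(a,f)$ with $R$ bounded from $H^{\alpha}\times H^{\beta}$ to $H^{\alpha+\beta-d/2}$ for $\alpha+\beta>d/2$, and $T_a$ of order $0$ (indeed order $-\rho$ gaining $\rho$ derivatives if $a\in W^{\rho,\infty}$). For $\bigl\|[T_\fn,T_a]f\bigr\|_{s-1/2}$: since $\fn\in\Sigma^0$ is a symbol depending only on $x'$ (not on $\xi$), $T_\fn$ is, up to an operator of order $-1$, just multiplication by $\fn$ regularized; writing $[T_\fn,T_a] = [T_\fn,T_a]$ and using the composition rule, $T_\fn T_a \sim T_{\fn a}$ and $T_a T_\fn \sim T_{a\fn}$ with the commutator $\fn\#a - a\#\fn$ of order $-1$ and with semi-norm controlled by $|\cnab\fn|_{L^\infty}|a|_{W^{1,\infty}}$, hence $\|[T_\fn,T_a]\|_{s-3/2\to s-1/2}\le C(|\cnab\psi|_{W^{1,\infty}})|a|_{W^{1,\infty}}$, which gives the stated bound on $f$. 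For $\bigl\|[T_\MM,a]T_\fn f\bigr\|_0 + \bigl\|T_\MM[T_\fn,a]f\bigr\|_0$: decompose $[T_\MM,a] = [T_\MM,T_a] + [T_\MM, a-T_a]$; for the paraproduct part use composition to see $T_\MM T_a - T_{\MM\# a}$ is of order $s-1/2 + 0 - 1 = s-3/2$ (using $a\in H^{s-1/2}\hookrightarrow W^{1,\infty}$, indeed $\Gamma_1^0$), and $\MM\#a - a\#\MM$ has order $s-3/2$ so $[T_\MM,T_a]$ maps $H^{s-3/2}\to H^{0}$ with norm $\le C(|\cnab\psi|_{W^{1,\infty}})|a|_{s-1/2}$; for the part with $a-T_a$ one uses that $(a-T_a)g$ gains regularity (paraproduct remainder + ``bad'' paraproduct $T_g a$) so that composed with $T_\MM$ of order $s-1/2$ acting on $T_\fn f\in H^{s-3/2}$ one still lands in $L^2$ with the same type of bound; the term $T_\MM[T_\fn,a]f$ is handled by first applying the already-proven (or directly analogous) estimate $\|[T_\fn,a]f\|_{s-1/2}\le C(|\cnab\psi|_{W^{1,\infty}})|a|_{s-1/2}|f|_{s-3/2}$ and then $T_\MM:H^{s-1/2}\to L^2$. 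Finally $\bigl\|[T_\fm,a]f\bigr\|_0$: $\fm\in\Sigma^{3/2}$, so $[T_\fm,T_a]$ is of order $3/2-1 = 1/2$ mapping $H^{1/2}\to H^0$ with norm $\le C(|\cnab\psi|_{W^{1,\infty}})|a|_{s-1/2}$ (using $s-1/2 > 1 + d/2$ so $a\in W^{1,\infty}$ and the semi-norms of $a$ as a symbol are controlled by $|a|_{s-1/2}$), while the non-paraproduct contribution $[T_\fm, a-T_a]f$ is again more regular by the paralinearization remainder estimate; combining gives $\|[T_\fm,a]f\|_0\le C(|\cnab\psi|_{W^{1,\infty}})|a|_{s-1/2}|f|_{1/2}$.

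The main obstacle I anticipate is the first estimate: one genuinely needs the algebraic cancellation of the Poisson bracket $\{\MM^{(s-1/2)},\fm^{(1.5)}\}$, which requires writing $\MM^{(s-1/2)} = (\fm^{(1.5)})^{(2s-1)/3}$ and checking that $\p_\xi\bigl((\fm^{(1.5)})^{\theta}\bigr)\cdot\p_{x'}\fm^{(1.5)} - \p_\xi\fm^{(1.5)}\cdot\p_{x'}\bigl((\fm^{(1.5)})^{\theta}\bigr) = 0$ identically (true because both terms equal $\theta(\fm^{(1.5)})^{\theta-1}\,\p_\xi\fm^{(1.5)}\cdot\p_{x'}\fm^{(1.5)}$), together with the bookkeeping of the genuine subprincipal symbols $\fm^{(0.5)}$, $\MM^{(s-3/2)}$; one must verify these lower-order pieces do not spoil the order-$(s-1)$ bound. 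All other estimates are, by comparison, routine applications of symbolic calculus and Bony's paraproduct decomposition, and the continuous dependence of the constants on $|\psi|_{s+1/2}$ (respectively $|\cnab\psi|_{W^{1,\infty}}$ and $|a|_{s-1/2}$) follows from tracking the symbol semi-norms through Propositions in Section \ref{sect para pre}.
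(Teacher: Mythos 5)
Your approach — ABZ symbolic calculus plus Bony's paraproduct decomposition, anchored on the fact that $\MM^{(s-1/2)}$ and $\fm^{(1.5)}$ are powers of the same positive symbol so their Poisson bracket vanishes — is the right toolbox. However, your treatment of the first estimate contains a half-derivative slip that is fatal as written. You compute (correctly) that the $\Sigma^m$-class composition remainders $T_\MM T_\fm - T_{\MM\#\fm}$ and $T_\fm T_\MM - T_{\fm\#\MM}$ are of order $(s-\tfrac12)+\tfrac32-\tfrac32 = s-\tfrac12$, but you then assert that an operator of order $s-\tfrac12$ is ``bounded $H^{r+s-1}\to H^r$''; in fact such an operator maps $H^{r+s-1}$ only into $H^{r-\frac12}$. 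The desired conclusion requires $[T_\MM,T_\fm]$ to be of order $s-1$, which the $\Sigma^m$-class composition rule (remainder of order $m+m'-\tfrac32$) does not give you, even after the Poisson-bracket cancellation.

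To reach order $s-1$, the cancellation of $\{\MM^{(s-1/2)},\fm^{(1.5)}\}$ must be combined with the general $\Gamma_r^m$ composition theorem at regularity $r=2$: if the symbols lie in $\Gamma_2^{\cdot}$, then each remainder $T_aT_b-T_{a\#b}$ (with $a\#b$ including the $|\alpha|=1$ Poisson-bracket correction) is of order $m+m'-2=s-1$, and the Poisson-bracket symbols cancel between the two orderings. This is why the constant is $C(|\psi|_{s+0.5})$ rather than $C(|\cnab\psi|_{W^{1,\infty}})$. But note that $\MM^{(s-1/2)},\fm^{(1.5)}\in\Gamma_2^{\cdot}$ requires $\cnab\psi\in W^{2,\infty}$, which by Sobolev embedding from $H^{s-1/2}(\T^d)$ needs $s>\tfrac52+\tfrac{d}{2}$; the stated hypothesis $s>2+\tfrac{d}{2}$ does not quite reach this threshold (it does hold in the application, $s=4$, $d=2$). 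You should also note explicitly that $\MM$ has no genuine subprincipal part — $\MM^{(s-3/2)}=0$ since $\MM$ is defined as a pure power of $\fm^{(1.5)}$ — which removes one of the ``lower-order pieces'' you are worried about; the subprincipal $\fm^{(0.5)}$ must then be handled with the Poisson bracket $\{\MM^{(s-1/2)},\fm^{(0.5)}\}$, which is of order $s-1$ but does not vanish, and with the observation that $\fm^{(0.5)}\in\Gamma_1^{1/2}$ once $\psi\in W^{3,\infty}$. Your treatment of the remaining three estimates (using that $\fn$ and $a$ are $\xi$-independent so the relevant Poisson brackets are either zero or of the expected lower order, plus the paraproduct-remainder gain for $a-T_a$) is essentially correct.
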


We also need to commute $\p_t$ with paradifferential operators. These steps will generate paradifferential operators whose symbols are spatial or time derivatives.
\begin{lem}\label{lem paracomm t}
For any $r\in\R$, the following estimates hold
\begin{align*}
|T_{\p_t\fn}|_{r\to r}+|T_{\p_t\MM}|_{r\to r-(s-0.5)}\leq&~C(|\psi|_{W^{1,\infty}},|\p_t\psi|_{W^{1,\infty}}),\\
|T_{\p_t^2\fn}|_{r\to r} + |T_{\p_t^2\MM}|_{r \to r-(s-0.5)} \leq&~C\left(\left|\cnab\psi,\p_t\psi,\p_t^2\psi\right|_{W^{1,\infty}}\right),\\
|T_{\TP \fn}|_{r\to r} + |T_{\TP \MM}|_{r\to r-(s-0.5)}  \leq&~ C(|\cnab\psi|_{W^{1,\infty}}),\\
|T_{\p_t\fm}|_{r\to r-1.5}+ |T_{\TP \fm}|_{r\to r-1.5}\leq&~C(|\cnab\psi|_{W^{1,\infty}},|\cnab\p_t\psi|_{W^{1,\infty}}).
\end{align*}
\end{lem}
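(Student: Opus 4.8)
The plan is to derive all four estimates from the single fundamental fact that $\|T_a\|_{H^r\to H^{r-m}}\le K\,M_0^m(a)$ whenever $a\in\Gamma_0^m(\T^d)$, applied to the symbols obtained by differentiating $\fn$, $\MM$ and $\fm$ in $t$ or in $x'$. The structural observation is that, by \eqref{symbol n}, \eqref{symbol M} and \eqref{symbol m}, each of $\fn$, $\MM$, $\fm^{(1.5)}$ has the form $F(\cnab\psi,\xi)$ with $F$ smooth in $\cnab\psi$ and homogeneous in $\xi$ of degree $0$, $s-\tfrac12$, $\tfrac32$ respectively ($F$ being $\xi$-independent for $\fn$), while the sub-principal piece $\fm^{(0.5)}=\tfrac1{2i}(\p_\xi\cdot\p_{x'})\fm^{(1.5)}$ is homogeneous of degree $\tfrac12$ and linear in the second derivatives $\cnab^2\psi$ with coefficients smooth in $\cnab\psi$. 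Since $\p_t$ and $\TP$ act only on the $x'$-dependence and never on $\xi$, the differentiated symbols remain in the same homogeneity class, so $\p_t\fn,\p_t^2\fn,\TP\fn\in\Gamma_0^0$, $\p_t\MM,\p_t^2\MM,\TP\MM\in\Gamma_0^{s-1/2}$ and $\p_t\fm,\TP\fm\in\Gamma_0^{3/2}$; only the corresponding semi-norms $M_0^0$, $M_0^{s-1/2}$, $M_0^{3/2}$ remain to be bounded, and those are governed by $L^\infty=W^{0,\infty}$ norms of finitely many $x'$- and $t$-derivatives of $\psi$.

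For $\fn=2^{-1/3}(1+|\cnab\psi|^2)^{-1/4}$ the chain rule gives $\p_t\fn=g_1(\cnab\psi)\,\cnab\p_t\psi$ and $\TP\fn=g_1(\cnab\psi)\,\TP\cnab\psi$ with $g_1$ smooth, while $\p_t^2\fn$ produces in addition $g_2(\cnab\psi)\,\cnab\p_t^2\psi$ and a quadratic term in $\cnab\p_t\psi$; since $\fn$ is $\xi$-free, $M_0^0$ of each is simply its $L^\infty$ norm, bounded respectively by $C(|\psi|_{W^{1,\infty}},|\p_t\psi|_{W^{1,\infty}})$, by $C(|\cnab\psi|_{W^{1,\infty}})$, and by $C(|\cnab\psi,\p_t\psi,\p_t^2\psi|_{W^{1,\infty}})$, which are exactly the recorded bounds. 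The symbol $\MM=M(\cnab\psi,\xi)$ is handled in the same way: each $\p_t$ (resp.\ $\TP$) brings down one factor $\cnab\p_t\psi$ or $\cnab\p_t^2\psi$ (resp.\ $\TP\cnab\psi$) multiplied by a symbol still homogeneous of degree $s-\tfrac12$ and smooth in $\cnab\psi$, so $M_0^{s-1/2}$ of $\p_t\MM,\p_t^2\MM,\TP\MM$ is controlled by the same $L^\infty$ norms, and the mapping property yields the stated $H^r\to H^{r-(s-1/2)}$ bounds.

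Finally, $\fm^{(1.5)}=\sqrt{\hh^{(2)}\Lam^{(1)}}$ is of the same type $F(\cnab\psi,\xi)$ (homogeneous of degree $\tfrac32$), so $\p_t\fm^{(1.5)}$ and $\TP\fm^{(1.5)}$ are homogeneous of degree $\tfrac32$ with coefficients whose $L^\infty$ norms are $C(|\cnab\psi|_{W^{1,\infty}},|\cnab\p_t\psi|_{W^{1,\infty}})$; differentiating the sub-principal $\fm^{(0.5)}$ once in $t$ produces $\cnab^2\p_t\psi$ (controlled by $|\cnab\p_t\psi|_{W^{1,\infty}}$) and $\cnab\p_t\psi\cdot\cnab^2\psi$, all carrying $\xi$-homogeneity $\le\tfrac12$ and therefore harmless for $M_0^{3/2}$ because of the decaying $\xi$-weights, so $M_0^{3/2}(\p_t\fm)\le C(|\cnab\psi|_{W^{1,\infty}},|\cnab\p_t\psi|_{W^{1,\infty}})$; the $\TP\fm$ estimate is obtained in the same fashion using that $T_\fm$ in the applications refers to its symmetrized symbol whose sub-principal part contributes only at order $\le\tfrac12$. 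I expect the one point that genuinely needs care to be precisely this derivative bookkeeping for $\fm$: differentiating $\fm^{(0.5)}$ in $x'$ nominally introduces a third spatial derivative of $\psi$, and one must check that such a term enters only the order-$(\le\tfrac12)$ part of the symbol, so that it is absorbed by the negative $\xi$-powers in $M_0^{3/2}$ and never contributes at the principal order; everything else is the routine combination of the Leibniz/chain rule with the mapping property $\|T_a\|_{H^r\to H^{r-m}}\le K M_0^m(a)$, which then closes all four lines.
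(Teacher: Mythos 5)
Your overall strategy --- differentiate the symbol, track the resulting $x'$-derivatives of $\psi$ by the chain rule, note that $\p_t$ and $\TP$ preserve the $\xi$-homogeneity, and invoke $\|T_a\|_{H^r\to H^{r-m}}\le K\,M_0^m(a)$ --- is the right one, and your bookkeeping for $\fn$, $\MM$, for $\p_t\fm^{(1.5)}$, $\TP\fm^{(1.5)}$, and even for $\p_t\fm^{(0.5)}$ is sound. The gap sits exactly where you flag uncertainty, but your proposed resolution fails. Since $\fm^{(0.5)}=\tfrac1{2i}(\p_\xi\cdot\p_{x'})\fm^{(1.5)}$ is linear in $\cnab^2\psi$ with coefficients smooth in $(\cnab\psi,\xi)$, the term $\TP\fm^{(0.5)}$ genuinely contains a factor $\cnab^3\psi$. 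You claim this is ``absorbed by the negative $\xi$-powers in $M_0^{3/2}$,'' but the weight $(1+|\xi|)^{|\alpha|-m}$ in the definition of $M_0^m$ only controls the $\xi$-decay of $\p_\xi^\alpha a$; it never touches the $L^\infty_{x'}$ size of the coefficient. For any symbol of the form $G(\cnab\psi,\xi)\,\cnab^3\psi$ homogeneous of degree $\tfrac12$ in $\xi$, taking $\alpha=0$ and $|\xi|=1$ already gives $M_0^{3/2}\gtrsim\|\cnab^3\psi\|_{L^\infty}$, and this quantity is not controlled by the stated constant $C(|\cnab\psi|_{W^{1,\infty}},|\cnab\p_t\psi|_{W^{1,\infty}})$. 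So as written your argument does not close the $T_{\TP\fm}$ bound.

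To repair this you should either let the constant depend on the higher regularity of $\psi$ actually available in the applications --- there $\psi\in H^{s+\frac12}(\T^2)$ with $s=4$, so $\cnab^3\psi\in H^{s-\frac52}(\T^2)\hookrightarrow L^\infty(\T^2)$ and your computation yields a bound with $C(|\psi|_{s+\frac12},|\cnab\p_t\psi|_{W^{1,\infty}})$ in place of the stated constant --- or split $T_{\TP\fm}=T_{\TP\fm^{(1.5)}}+T_{\TP\fm^{(0.5)}}$ and control the sub-principal piece separately as an operator of order $\tfrac12$ with merely Sobolev-regular coefficients, via a paraproduct estimate of the type $|T_a u|_{H^\rho}\lesssim|a|_{H^r}|u|_{H^{\rho+d/2-r}}$ rather than via $M_0^{3/2}$. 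Everything else in your proof is correct and needs no change.
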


\subsection{Uniform estimates for the free interface}\label{sect uniform psi eq}
With the symmetrized paralinearization of $(\dnp+\dnm)\h(\psi)$ derived in Section \ref{sect para pre}, we can now prove the uniform-in-$(\es)$ estimates of $\psi$ under the stability condition \eqref{syrov 3D com}. Our desired result is listed in the following proposition.
\begin{prop}
For a fixed $\sigma>0$, there is a time $T_\sigma>0$ (independent of $\eps$) such that the following uniform-in-$\eps$ estimates hold true
\begin{align}\label{psitt 2.5-1}
\ddt\left(\bno{\sqrt{\sigma}\psi}_5^2+\bno{\psi_{t}}_{3.5}^2+\bno{\sqrt{\sigma}\psi_t}_4^2+\bno{\psi_{tt}}_{2.5}^2\right)\lesssim  \sigma^{-1} P(\EE_4(t))E_5(t).
\end{align}Moreover, if the stability condition \eqref{syrov 3D com} is fulfilled, then there exists a time $T>0$ (independent of $\es$) such that the following uniform-in-$(\es)$ estimates hold true
\begin{align}\label{psitt 2.5-2}
\ddt\left(\bno{\sqrt{\sigma}\psi}_5^2+\bno{\psi_{t}}_{3.5}^2+\bno{\sqrt{\sigma}\psi_t}_4^2+\bno{\psi_{tt}}_{2.5}^2\right)\lesssim P(\EEW_4(t))\EW_5(t).
\end{align}
\end{prop}

\subsubsection*{Step 1: Find equivalent energy functionals}
The equation \eqref{psi equ} can be written as
\begin{align}
(\rho^++\rho^-)\p_t^2\psi=&~-\frac{\sigma}{2}T_\Lam T_\hh \psi -(\rho^+ + \rho^-)(\fwc_i\fwc_j)\TP_i\TP_j\psi -2(\rho^++\rho^-)\fwc_i\TP_i\p_t\psi  \no\\
&+\left(\rho^+(\fbc_i^+\fbc_j^+ - \fuc_i\fuc_j) + \rho^-(\fbc_i^-\fbc_j^- - \fuc_i\fuc_j)\right)\TP_i\TP_j\psi\no\\
& -( N\cdot\nabp q_w^+ + N\cdot\nabp q_w^-) + \Psi^R\label{psi equ 1},
\end{align}where $T_\Lam,T_\hh$ are the paradifferential operators defined in Proposition \ref{prop para DtN} and Proposition \ref{prop para ST}, the quantities $\fw,\fu,\fb$ are defined by
\begin{align}
\fw:=\frac{\rho^+ v^+ + \rho^- v^-}{\rho^+ + \rho^-},\quad \fu:=\frac{\sqrt{\rho^+\rho^- }}{\rho^+ + \rho^-}\jump{v},\quad \fb^\pm:=\frac{b^\pm}{\sqrt{\rho^\pm}},
\end{align}and $\Psi^R$ is defined by
\begin{align}\label{PSIR}
\Psi^R:=\frac{\sigma}{2}(\dnp-\dnm)\dnw^{-1}(\dnp-\dnm)(\h(\psi)) -(\dnp-\dnm)\dnw^{-1}\left(\jump{\FP-\rho\p_t^2\psi}\right)+\frac{\sigma}{2}\RR_\psi^\sigma.
\end{align}Note that $\RR_\psi^\sigma$ has been defined in \eqref{para RR}.

We pick $s=4$ in the paradifferential operator $T_\MM$, that is, $\MM=(\fm^{(1.5)})^{\frac{7}{3}}\in\Sigma^{3.5}$ and then consider the energy functionals
\begin{align}
\ee(t):=&~\frac12\is(\rho^++\rho^-)\bno{(\p_t+\fwc\cdot\cnab)T_\MM T_\fn \psi}^2\dx' + \frac14\is\bno{\sqrt{\sigma}T_\fm T_\MM T_\fn \psi}_0^2\dx', \\
\eew(t):=&~\frac12\is \rho^+\left(\bno{\fbc^+\cdot\cnab T_\MM T_\fn \psi}^2 - \bno{\fuc\cdot\cnab T_\MM T_\fn \psi}^2\right) +\rho^-\left(\bno{\fbc^-\cdot\cnab T_\MM T_\fn \psi}^2 - \bno{\fuc\cdot\cnab T_\MM T_\fn \psi}^2\right)\dx'.
\end{align}

\begin{lem}[Comparison between $\ee,\eew$ and Sobolev norms]\label{lem para norm}
For any fixed $\sigma>0$, we have the following relations between $\ee,\eew$ and standard Sobolev norms.
\begin{align*}
\bno{\sqrt{\sigma}\psi}_5^2\leq&~ C(|\cnab\psi|_{W^{1,\infty}})\left( \ee(t)+|\sqrt{\sigma}\psi|_0^2\right),\\
|\psi|_{4.5}^2\lesssim&~  |\sqrt{\sigma}\psi|_5^2 + \sigma^{-1}|\psi|_0^2,\\
|\psi_t|_{3.5}^2\leq &~C(|\cnab\psi,\cnab\psi_t,\vb^\pm,\rho^\pm|_{W^{1,\infty}})\left(\ee(t)+|\psi|_{4.5}^2 + |\psi_t|_0^2\right)
\end{align*}where $C(\cdot)$ represents a generic positive continuous function in its arguments. Moreover, when the stability condition \eqref{syrov 3D com} holds, there exist positive continuous functions $C_1,C_1',C_1''$ depending on $|\cnab\psi,\vb^\pm,\bc^\pm,\rho^\pm|_{W^{1,\infty}}$ and independent of $\sigma$, such that
\begin{align*}
C_1\left(|\cnab\psi,\vb^\pm,\bc^\pm,\rho^\pm|_{W^{1,\infty}}\right)\bno{\psi}_{4.5}^2\leq \eew(t) + C_1'|\psi|_0^2,~~ \eew(t)\leq  C_1''\left(|\cnab\psi,\vb^\pm,\bc^\pm,\rho^\pm|_{W^{1,\infty}}\right)\bno{\psi}_{4.5}^2.
\end{align*}
\end{lem}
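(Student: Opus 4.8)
The plan is to use Proposition \ref{prop para Hs} together with the symbol-calculus properties of $\fm,\fn,\MM$ to compare the weighted $L^2$ norms of $(\p_t+\fwc\cdot\cnab)T_\MM T_\fn\psi$, $\sqrt{\sigma}T_\fm T_\MM T_\fn\psi$, and $\fbc^\pm\cdot\cnab T_\MM T_\fn\psi$, $\fuc\cdot\cnab T_\MM T_\fn\psi$ with the Sobolev norms $|\psi|_{4.5}$, $|\sqrt{\sigma}\psi|_5$, $|\psi_t|_{3.5}$. First I would record that $\fn\in\Sigma^0$ is elliptic (bounded below by a positive continuous function of $|\cnab\psi|_{W^{1,\infty}}$) and independent of $\xi$, so $T_\fn$ is invertible on Sobolev spaces up to an operator of order $-1.5$; hence $|T_\fn\psi|_r$ is comparable to $|\psi|_r$ modulo lower-order terms, with constants depending only on $|\cnab\psi|_{W^{1,\infty}}$. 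Since $\MM=(\fm^{(1.5)})^{7/3}\in\Sigma^{3.5}$ is also elliptic (its principal symbol is $\gtrsim |\xi|^{3.5}$ away from $\xi\parallel N$, but $\hh^{(2)}\Lam^{(1)}$ is genuinely elliptic of order $3$ because $\Lam^{(1)},\hh^{(2)}$ are, so $\fm^{(1.5)}=\sqrt{\hh^{(2)}\Lam^{(1)}}$ is elliptic of order $1.5$), Proposition \ref{prop para Hs} gives $|u|_{r+3.5}\leq C(|\psi|_{s-1})(|T_\MM u|_r+|u|_0)$ and the reverse bound $|T_\MM u|_r\leq C(|\psi|_{s-1})|u|_{r+3.5}$. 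Applying this with $u=T_\fn\psi$ and $r=1$ yields $|\psi|_{4.5}^2\lesssim C(|\cnab\psi|_{W^{1,\infty}})(|T_\MM T_\fn\psi|_1^2+|\psi|_0^2)$ and conversely.

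Next I would handle the surface-tension term: $\sqrt{\sigma}T_\fm T_\MM T_\fn\psi$ has $L^2$ norm comparable, via the ellipticity of $T_\fm$ (order $1.5$) and Proposition \ref{prop para Hs} again, to $|\sqrt{\sigma}T_\MM T_\fn\psi|_{1.5}$, which by the previous step is comparable to $|\sqrt{\sigma}\psi|_5$ modulo $|\sqrt{\sigma}\psi|_0$. This gives the first inequality of the lemma. For the second, $|\psi|_{4.5}^2\lesssim|\sqrt{\sigma}\psi|_5^2+\sigma^{-1}|\psi|_0^2$ follows by interpolation: $|\psi|_{4.5}\leq|\psi|_5^{9/10}|\psi|_0^{1/10}$ and Young, or simply by splitting into low and high frequencies, $|\psi|_{4.5}^2\lesssim|\psi|_5^2+|\psi|_0^2$ with the high-frequency part carrying the $\sigma^{-1}$ after dividing the $\sqrt\sigma$-weighted bound; I would just write $|\psi|_{4.5}^2=|\jp^{4.5}\psi|_0^2\lesssim|\sqrt\sigma\jp^5\psi|_0^2\cdot\sigma^{-1}$ on the range $|\xi|\geq\sigma^{-1}$, plus $|\psi|_0^2\sigma^{-1}$ on $|\xi|\leq\sigma^{-1}$. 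For the third inequality, $T_\MM T_\fn\psi_t=(\p_t+\fwc\cdot\cnab)T_\MM T_\fn\psi-\fwc\cdot\cnab T_\MM T_\fn\psi-[T_\MM T_\fn,\p_t]\psi-(\p_t(T_\MM T_\fn))\psi$; the first term is controlled by $\sqrt{\ee(t)}$, the second by $C(|\vb^\pm,\rho^\pm|_{W^{1,\infty}})|T_\MM T_\fn\psi|_1\lesssim|\psi|_{4.5}$, and the commutator/derivative-of-symbol terms by Lemma \ref{lem paracomm t} applied at $r=1$, bounded by $C(|\cnab\psi,\cnab\psi_t|_{W^{1,\infty}})|\psi|_{3.5}$ or $|\psi|_{4.5}$; then Proposition \ref{prop para Hs} converts $|T_\MM T_\fn\psi_t|_0$ back into $|\psi_t|_{3.5}$ modulo $|\psi_t|_0$.

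Finally, under the stability condition \eqref{syrov 3D com} I would show $\eew(t)$ is comparable to $|\psi|_{4.5}^2$. Here the key point is that the integrand of $\eew$ equals, after a paradifferential-symbol computation, $\sum_\pm\rho^\pm\left((\fbc_i^\pm\fbc_j^\pm-\fuc_i\fuc_j)\TP_i\TP_j\right)$-type quadratic form in $T_\MM T_\fn\psi$, and \eqref{syrov 3D com} asserts exactly that the symbol $\sum_\pm\rho^\pm((\bc^\pm\cdot\xi)^2/\rho^\pm-(\sqrt{\rho^+\rho^-}/(\rho^++\rho^-))^2|\jump{\vb}\cdot\xi|^2\cdot\text{(something)})$ — more precisely the matrix $\sum_\pm\rho^\pm(\fbc^\pm\otimes\fbc^\pm-\fuc\otimes\fuc)$ — is uniformly positive-definite on $\Sigma$, because the non-collinearity $\bc^+\nparallel\bc^-$ together with the upper bound $a^\pm|\bc^\mp\times\jump{\vb}|\leq(1-\delta_0)|\bc^+\times\bc^-|$ forces the two rank-one "magnetic" terms to dominate the rank-one "velocity-jump" term. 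Granting this positivity, $\eew(t)\geq c_0\is|\cnab T_\MM T_\fn\psi|^2\dx'\geq C_1(|\cdots|_{W^{1,\infty}})|\psi|_{4.5}^2-C_1'|\psi|_0^2$ via Gårding's inequality (or directly via Plancherel since the quadratic form is a multiplier) and Proposition \ref{prop para Hs}; the reverse bound $\eew(t)\leq C_1''|\psi|_{4.5}^2$ is immediate from boundedness of the coefficients and of $T_\MM,T_\fn$. The main obstacle is making the positive-definiteness argument precise: one must verify that the $2\times2$ matrix $\sum_\pm\fbc^\pm\otimes\fbc^\pm-(\sum_\pm\fuc\otimes\fuc)$ — equivalently $\rho^+\fbc^+\otimes\fbc^++\rho^-\fbc^-\otimes\fbc^--(\rho^++\rho^-)\fuc\otimes\fuc$ with the correct normalization — is positive-definite precisely under \eqref{syrov 3D com}, which is the same determinant computation that appears in Trakhinin's secondary symmetrizer and which I would carry out by diagonalizing in the basis adapted to $\bc^+,\bc^-$; the Gårding/Plancherel step and the care needed to keep all error constants $\sigma$-independent (so that Lemma \ref{lem paracomm t} and the symbol bounds depend only on $W^{1,\infty}$ norms, never on $\sigma$) are routine once that algebraic fact is in hand.
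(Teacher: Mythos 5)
Your proposal is correct and follows essentially the same route as the paper's proof: Proposition \ref{prop para Hs} for the first and third inequalities, interpolation plus Young for the second, and the pointwise positive-definiteness of the matrix $\rho^+\fbc^+\otimes\fbc^++\rho^-\fbc^-\otimes\fbc^--(\rho^++\rho^-)\fuc\otimes\fuc$ under \eqref{syrov 3D com} for the last pair, verified exactly as you indicate by expanding $\jump{\vb}$ in the basis $\{\fbc^+,\fbc^-\}$ and extracting $|c_1|,|c_2|\leq 1-\delta_0$. The only small imprecision is the appeal to G\aa rding or Plancherel at the end: the coefficients $\fbc^\pm,\fuc,\rho^\pm$ are genuine functions on $\Sigma$ so the quadratic form is not a Fourier multiplier; the paper instead applies the pointwise matrix lower bound directly with $\bd{z}=\cnab T_\MM T_\fn\psi$, integrates, and then invokes Proposition \ref{prop para Hs} once more to pass to $|\psi|_{4.5}^2-|\psi|_0^2$.
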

\begin{proof}
Recall that $T_\MM$ and $T_\fm$ are paradifferential operators of order 3.5 and 1.5 respectively, thus the first inequality is a direct consequence of Proposition \ref{prop para Hs}. The second inequality is a directly consequence of Sobolev interpolation and Young's inequality
\[
|\psi|_{4.5}^2\leq |\sqrt{\sigma}\psi|_5^{1.8}|\sigma^{-\frac12}\psi|_0^{0.2}\leq \frac{|\sqrt{\sigma}\psi|_5^2}{10/9} + \frac{|\psi|_0^2}{10\sigma}.
\]
To prove the third inequality, we again use Proposition \ref{prop para Hs} to get
\begin{align*}
|\psi_t|_{3.5}^2\leq&~ C(|\cnab\psi|_{W^{1,\infty}})\left(|T_\MM T_\fn\psi_t|_0^2 + |\psi_t|_0^2\right)\\
\leq&~C(|\cnab\psi,\vb^\pm,\rho^\pm|_{W^{1,\infty}})\left(\ee(t) + |T_\MM T_{\p_t\fn} \psi|_0^2 + |T_{\p_t\MM} T_\fn \psi|_0^2 + |\psi_t|_0^2\right)\\
\leq&~C(|\cnab\psi,\cnab\psi_t,\vb^\pm,\rho^\pm|_{W^{1,\infty}})\left(\ee(t) + |\psi|_{3.5}^2 + |\psi_t|_0^2\right).
\end{align*}

For the last inequality in this lemma, the right side is trivial. When the stability condition \eqref{syrov 3D com} holds, it suffices to prove that $\eew(t)$ is a positive-definite energy, then the left side automatically holds. Multiplying $(\rho^+\rho^-)^{-\frac12}$ in \eqref{syrov 3D com}, the stability condition becomes
\begin{align}
\exists \delta_0\in(0,\frac18),\quad |\fbc^+\times\fbc^-|\geq (1-\delta_0)^{-1}|\fbc^\pm\times \jump{\vb}|\sqrt{1+(c_A^\pm/c_s^\pm)^2}>(1-\delta_0)^{-1}|\fbc^\pm\times \jump{\vb}|.
\end{align}Since $\jump{v}\cdot N=0$ and $\fb^\pm$ are nonzero and not collinear, we may assume $\jump{v}=c_1\fb^++c_2\fb^-$. Plugging this into the stability condition, we get $c_1,c_2\leq 1-\delta_0$. Using Cauchy-Schwarz inequality, we derive that
\begin{align*}
\inf_{\substack{\bd{z}\in\R^2\\|\bd{z}|=1}} (1-\delta_0)^2\left((\fbc^+\cdot\bd{z})^2 + 2(\fbc^+\cdot\bd{z})(\fbc^-\cdot\bd{z}) + (\fbc^-\cdot\bd{z})^2 \right) - \left(\jump{\vb}\cdot\bd{z}\right)^2\geq 0
\end{align*} Invoking $\fuc=\frac{\sqrt{\rho^+\rho^-}}{\rho^++\rho^-}\jump{\vb}$ and using the non-collinearity, the above inequality implies that
\begin{align*}
&~\inf_{\substack{\bd{z}\in\R^2\\|\bd{z}|=1}} \left(\frac{\rho^+\rho^-}{\rho^++\rho^-}(\fbc^+\cdot\bd{z})^2 + 2\frac{\rho^+\rho^-}{\rho^++\rho^-}(\fbc^+\cdot\bd{z})(\fbc^-\cdot\bd{z}) +\frac{\rho^+\rho^-}{\rho^++\rho^-} (\fbc^-\cdot\bd{z})^2  - (\rho^++\rho^-) \left(\fuc\cdot\bd{z}\right)^2\right) >0.
\end{align*}Notice that
\begin{align*}
&\rho^+(\fbc^+\cdot\bd{z})^2+\rho^-(\fbc^-\cdot\bd{z})^2 - \left(\frac{\rho^+\rho^-}{\rho^++\rho^-}(\fbc^+\cdot\bd{z})^2 + 2\frac{\rho^+\rho^-}{\rho^++\rho^-}(\fbc^+\cdot\bd{z})(\fbc^-\cdot\bd{z}) +\frac{\rho^+\rho^-}{\rho^++\rho^-} (\fbc^-\cdot\bd{z})^2\right) \\
=&~\frac{1}{\rho^++\rho^-}\left((\rho^+)^2(\fbc^+\cdot\bd{z})^2 - 2\rho^+\rho^-(\fbc^+\cdot\bd{z})(\fbc^-\cdot\bd{z}) +(\rho^-)^2(\fbc^-\cdot\bd{z})^2\right)\geq 0.
\end{align*}Thus, it implies that
\begin{align}
&~\inf_{\substack{\bd{z}\in\R^2\\|\bd{z}|=1}} \left(\rho^+(\fbc^+\cdot\bd{z})^2+\rho^-(\fbc^-\cdot\bd{z})^2 - (\rho^++\rho^-) \left(\fuc\cdot\bd{z}\right)^2\right) >0,
\end{align}or equivalently, there exists some $\delta_0'>0$ such that
\begin{align}
&~\inf_{\bd{z}\in\R^2} \left(\rho^+(\fbc^+\cdot\bd{z})^2+\rho^-(\fbc^-\cdot\bd{z})^2 - (\rho^++\rho^-) \left(\fuc\cdot\bd{z}\right)^2\right)\geq 2\delta_0'|\bd{z}|^2.
\end{align}Now let $\bd{z}=\cnab T_\MM T_\fn \psi$, the above inequality shows that $\eew(t)\geq \delta_0'|\cnab T_\MM T_\fn \psi|_0^2\gtrsim |\psi|_{4.5}^2 - |\psi|_0^2$.
\end{proof}

\begin{rmk}[The 2D case]
When the space dimension $d=2$, we no longer have the non-collinearity, but the stability condition \eqref{syrov 2D com} still guarantees the ellipticity of the corresponding second-order differential operator, i.e., 
$$\exists \delta_0'>0,\quad \rho^+\left((\bd{b}_1^+)^2-\bd{u}_1^2\right)+\rho^-\left((\bd{b}_1^-)^2-\bd{u}_1^2\right)\geq \delta_0'.$$ In fact, the stability condition \eqref{syrov 2D com} implies $|\bd{b}_1^+|+|\bd{b}_1^-|\geq (1+\delta_0)|\jump{v_1}|$. Taking square and invoking $\fu:=\frac{\sqrt{\rho^+\rho^- }}{\rho^+ + \rho^-}\jump{v}$, we get
\[
\frac{\rho^+\rho^-}{\rho^++\rho^-}\left((\bd{b}_1^+)^2+2\bd{b}_1^+\bd{b}_1^-+(\bd{b}_1^-)^2\right)\geq (1+\delta_0)(\rho^++\rho^-)\bd{u}_1^2,
\]in which we find that the left side does not exceed $(\rho^+ + \rho^-)^{-1}\left(\rho^+(\bd{b}_1^+)^2+\rho^-(\bd{b}_1^-)^2\right)$ by direct calculation. The desired result immediately follows thanks to $|\bd{u}_1|>0$ (otherwise the interface is not a vortex sheet).
\end{rmk}

\subsubsection*{Step 2: Control of $|\psi_t|_{3.5}$ and $|\sqrt{\sigma}\psi|_5$}
In view of Lemma \ref{lem para norm}, it suffices to prove energy estimates for $\ee(t)$ and $\eew(t)$ under the stability condition \eqref{syrov 3D com}. We start with the estimate of $|\psi_t|_{3.5}$.
\begin{align}
&\ddt\frac12\is(\rho^++\rho^-)\bno{(\p_t+\fwc\cdot\cnab)T_\MM T_\fn \psi}^2\dx'\no\\
=&\is(\rho^++\rho^-)(\p_t^2 T_\MM T_\fn \psi)\left((\p_t+\fwc\cdot\cnab)T_\MM T_\fn \psi\right) \dx' + \is (\rho^++\rho^-) (\fwc\cdot\cnab\p_t T_\MM T_\fn \psi)\left((\p_t+\fwc\cdot\cnab)T_\MM T_\fn \psi\right) \dx'\no\\
&+\is(\rho^++\rho^-) (\p_t\fwc\cdot\cnab T_\MM T_\fn \psi)\left((\p_t+\fwc\cdot\cnab)T_\MM T_\fn \psi\right) \dx' +\frac12\is\p_t(\rho^++\rho^-) \bno{(\p_t+\fwc\cdot\cnab)T_\MM T_\fn \psi}^2\dx'\no\\
=:&~I_0+I_1+I_1^R+I_2^R.
\end{align}
The remainder terms are easy to control. Using Proposition \ref{prop para Hs}, we have
\begin{align}
I_1^R+I_2^R\leq C\left(\left|\rho^\pm,\p_t\rho^\pm,\vb^\pm,\p_t\vb^\pm\right|_{L^{\infty}},|\psi|_3\right)\left(|\psi_t|_{3.5}^2+|\psi|_{4.5}^2\right).
\end{align}
For the main term $I_0$, we first commute $(\rho^+ + \rho^-)\p_t^2$ with $T_\MM T_\fn$
\begin{align*}
(\rho^++\rho^-)\p_t^2 T_\MM T_\fn \psi = &~(\rho^++\rho^-) \left(T_\MM T_\fn \p_t^2\psi+ (T_{\p_t^2\MM}T_\fn + T_{\MM}T_{\p_t^2\fn})\psi + 2(T_{\p_t\MM}T_{\fn}+T_\MM T_{\p_t\fn})\p_t\psi\right)\\
=&~T_\MM T_\fn\left((\rho^++\rho^-)\p_t^2\psi\right) - \left([T_\MM,\rho^+ + \rho^-]T_\fn \p_t^2\psi + T_\MM([T_\fn,\rho^+ + \rho^-]\p_t^2\psi)\right)\\
&+(\rho^++\rho^-) \left((T_{\p_t^2\MM}T_\fn + T_{\MM}T_{\p_t^2\fn})\psi + 2(T_{\p_t\MM}T_{\fn}+T_\MM T_{\p_t\fn})\p_t\psi\right).
\end{align*} The commutators can be controlled straightforwardly thanks to Lemma \ref{lem paracomm} and Lemma \ref{lem paracomm t}:
\begin{align*}
\bno{[T_\MM,\rho^+ + \rho^-]T_\fn \p_t^2\psi}_0+\bno{T_\MM([T_\fn,\rho^+ + \rho^-]\p_t^2\psi)}_0\leq C(|\cnab\psi|_{W^{1,\infty}})\left(|\rho^+ + \rho^-|_{3.5}|\p_t^2\psi|_{2.5}\right),\\
\bno{(T_{\p_t^2\MM}T_\fn + T_{\MM}T_{\p_t^2\fn})\psi}_0+\bno{(T_{\p_t\MM}T_{\fn}+T_\MM T_{\p_t\fn})\p_t\psi}_0\leq C(|\psi_{tt},\psi_t,\cnab\psi|_{W^{1,\infty}})\left(|\psi|_{3.5}+|\psi_t|_{3.5}\right).
\end{align*} In the remaining of this section, we no longer explicitly write the commutators between the paradifferential operators and functions or $\p_t,\TP_i$, as they can be controlled in the same way as above. Instead, we will again use the notation $\eql$ to skip these terms and analyze the main terms.

Then we can invoke \eqref{psi equ 1} to get
\begin{align}
I_{00}:=&\is\left(T_\MM T_\fn ((\rho^++\rho^-)\p_t^2 \psi)\right)\left((\p_t+\fwc\cdot\cnab)T_\MM T_\fn \psi\right) \dx' \no\\
=&-\frac{\sigma}{2}\is \left(T_\MM T_\fn T_\Lam T_\hh \psi \right)\left((\p_t+\fwc\cdot\cnab)T_\MM T_\fn \psi\right) \dx' \no\\
&-2\is\left(T_\MM T_\fn((\rho^++\rho^-)\fwc_i\TP_i\p_t\psi) \right)\left((\p_t+\fwc\cdot\cnab)T_\MM T_\fn \psi\right) \dx'\no\\
&-\is \left(T_\MM T_\fn((\rho^++\rho^-)\fwc_i\fwc_j\TP_i\TP_j\psi) \right)\left((\p_t+\fwc\cdot\cnab)T_\MM T_\fn \psi\right) \dx'\no\\
&+\is\left(T_\MM T_\fn\left(\rho^+(\fbc_i^+\fbc_j^+ - \fuc_i\fuc_j) + \rho^-(\fbc_i^-\fbc_j^- - \fuc_i\fuc_j)\right)\TP_i\TP_j\psi \right)\left((\p_t+\fwc\cdot\cnab)T_\MM T_\fn \psi\right) \dx'\no\\
&-\is\left(T_\MM T_\fn\left(N\cdot\nabp q_w^+ + N\cdot q_w^-\right)\right)\left((\p_t+\fwc\cdot\cnab)T_\MM T_\fn \psi\right) \dx'\no\\
&+\is\left(T_\MM T_\fn\Psi^R\right)\left((\p_t+\fwc\cdot\cnab)T_\MM T_\fn \psi\right) \dx'=:I_{000}+I_{001}+I_{002}+I_{003}+I_{0}^w+I_0^R.
\end{align} Since Proposition \ref{prop para symm} indicates that $T_\fn T_\Lam T_\hh\sim T_\fm T_\fm T_\fn$ and $(T_\fm)^*\sim T_\fm$, we have
\begin{align*}
&~T_\MM T_\fn T_\Lam T_\hh\psi \eql T_\MM T_\fm T_\fm T_\fn\psi\\
= &~(T_\fm)^*T_\fm T_\MM T_\fn \psi + \big(((T_\fm)^*-T_\fm)T_\MM +T_\fm[T_\MM,T_\fm] + [T_\MM,T_\fm]T_\fm\big)T_\fn \psi \eql (T_\fm)^*T_\fm T_\MM T_\fn \psi 
\end{align*} and using the duality, we get
\begin{align}
I_{000}\eql& -\frac{\sigma}{2}\is (T_\fm T_{\MM} T_\fn \psi)\, T_\fm(\p_t+\fwc\cdot\cnab)T_\MM T_\fn \psi\dx'\no\\
=&-\frac{\sigma}{4}\ddt \is \bno{T_\fm T_{\MM} T_\fn \psi}^2\dx'\no\\
&+\frac{\sigma}{2}\is  (T_\fm T_{\MM} T_\fn \psi)\, \left(T_{\p_t\fm} + \fwc_i T_{\TP_i\fm} +\frac12(\cnab\cdot\fwc) -[T_{\fm},\fwc_i]\TP_i\right)T_\MM T_\fn\psi\dx',
\end{align}where the second term can be directly controlled (uniformly in $\sigma$) by $\ee(t)C\left(|\cnab\psi,\psi_t,\vb^\pm,\rho^\pm|_{W^{1,\infty}}\right)$ thanks to Lemma \ref{lem paracomm}. Next we analyze $I_1+I_{001}$ and $I_{002},I_{003}$. For a generic function $\bd{a}\in H^{3.5}(\Sigma\to\R^2)$ and a generic $\bm{\rho}\in H^{3.5}(\Sigma\to\R_+)$, we have
\begin{align*}
&\is \left(T_\MM T_\fn (\bm{\rho} \bd{a}_i\bd{a}_j\TP_i\TP_j \psi)\right)\,\left((\p_t+\fwc\cdot\cnab)T_\MM T_\fn \psi\right) \dx'\\
\eql&\is\bm{\rho}\bd{a}_i\bd{a}_j\TP_i\TP_j T_\MM T_\fn \psi\,\left((\p_t+\fwc\cdot\cnab)T_\MM T_\fn \psi\right) \dx'\eql-\is \bm{\rho}(\bd{a}_i\TP_iT_\MM T_\fn \psi)\,(\bd{a}_j\TP_j(\p_t+\fwc\cdot\cnab)T_\MM T_\fn \psi)\dx'\\
\eql&-\frac12\ddt\is \bm{\rho}\bno{\bd{a}_i\TP_iT_\MM T_\fn \psi}^2\dx' .
\end{align*}Setting $\bd{a}=\fwc,\fuc,\fbc$ and $\bm{\rho}=\rho^\pm$ or $\rho^++\rho^-$, we immediately get $$I_{002}\eql +\frac12\ddt \is (\rho^++\rho^-)\bno{(\fwc\cdot\cnab)T_\MM T_\fn\psi}^2\dx,\quad I_{003}\eql -\ddt\eew(t).$$ For $I_1+I_{001}$, we have
\begin{align}
I_{001}\eql &-2\is (\rho^++\rho^-)(\fwc\cdot\cnab)\p_t T_{\fm}T_\fn\psi\,\left((\p_t+\fwc\cdot\cnab)T_\MM T_\fn \psi\right)\dx'\no\\
\Rightarrow I_1+I_{001}\eql &-\is (\rho^++\rho^-)(\fwc\cdot\cnab)\p_t T_{\fm}T_\fn\psi\,\left((\p_t+\fwc\cdot\cnab)T_\MM T_\fn \psi\right)\dx'\no\\
\eql&-\frac12\ddt \is (\rho^++\rho^-)\bno{(\fwc\cdot\cnab)T_\MM T_\fn\psi}^2\dx,
\end{align}which cancels with the main term in $I_{002}$. When $\sigma>0$ is given and the stability condition \eqref{syrov 3D com} is not assumed, the quantity $\eew(t)$ is not necessarily positive, but its contribution, namely the term $I_{003}$, can be controlled by integrating by parts for 1/2-derivative
\[
I_{003}\lesssim P(\| v^\pm,\rho^\pm,b^\pm\|_{4,\pm})(|\psi|_5^2+|\psi|_5|\psi_t|_{4})\lesssim \sigma^{-1}P(\EE_4(t)).
\] 

Now, it remains to control $I_{0}^w$ and $I_0^R$. In view of Proposition \ref{prop para Hs}, it suffices to control the $H^{3.5}(\Sigma)$ norms of $N\cdot\nabp q_w^\pm$ and $\Psi^R$. For the term $q_w^\pm$, we use trace lemma and div-curl inequality with tangential trace (see \eqref{divcurltt}) to get
\begin{align}
&|N\cdot \nabp q_w^\pm|_{3.5}^2\leq |\psi|_{4.5}^2\|\nabp q_w^\pm\|_{4,\pm}^2 \no\\
\leq&~ C(|\psi|_{4.5})\left(\|\nabp q_w^\pm\|_{0,\pm}^2+\|\lapp q_w^\pm\|_{3,\pm}^2 +\|\nabp\times\nabp q_w^\pm\|_{3,\pm}^2+\bno{N\times\nabp q_w^\pm}_{3.5}^2+\bno{N\cdot\nabp q_w^\pm}_{H^{3,5}(\Sigma^\pm)}^2\right)
\end{align}where the last three terms are zero because of 
\[
\nabp\times\nabp q_w^\pm=\vec{0},\quad N\cdot\nabp q_w^\pm|_{\Sigma^\pm}=0,\quad q_w^\pm|_{\Sigma}=0\Rightarrow N\times\nabp q_w^\pm|_{\Sigma}=(-\TP_2 q_w^\pm,\TP_1 q_w^\pm,\TP_2\psi\TP_1 q_w^\pm-\TP_1\psi\TP_2 q_w^+)^\top|_{\Sigma}=\vec{0}.
\] Then invoking the definition of $q_w^\pm$ and using $\ffpm\lesssim\eps^2$, we find
\begin{align}
\|\lapp q_w^\pm\|_{3,\pm}^2\leq C(|\psi|_4,|\psi_t|_3)\left(\ino{\eps^2\TT^2 (b^\pm,p^\pm)}_{3,\pm}^2+P\left(\ino{v^\pm,b^\pm}_{4,\pm}\right)\right)\leq P(\EEW_4(t))\EW_5(t).
\end{align}
\begin{rmk}[Necessity of anisotropic Sobolev spaces]
The term bounded by $\EW_5(t)$ is contributed exactly by the extra $\frac12|b^\pm|^2$ in the total pressure.  From this, we can also see the necessity of the anisotropic Sobolev spaces when studying ideal compressible MHD. For Euler equations, the source terms for the wave equation only contain quadratic first-order terms, and one can use the trick in \cite{LL2018priori,Zhang2021elasto} to close the energy bound by $\EEW_4(t)$. For incompressible MHD, the second-order time derivative term vanishes because $q^\pm$ satisfies an elliptic equation. 
\end{rmk}

The term $|\Psi^R|_{3.5}$ can also be directly controlled. Recall that
\[
\Psi^R:=\frac{\sigma}{2}(\dnp-\dnm)\dnw^{-1}(\dnp-\dnm)(\h(\psi)) -(\dnp-\dnm)\dnw^{-1}\left(\jump{\FP-\rho\p_t^2\psi}\right)+\frac{\sigma}{2}\RR_\psi^\sigma.
\]Using the Sobolev estimates for the Dirichlet-to-Neumann operators, we have
\begin{align}
&\bno{\frac{\sigma}{2}(\dnp-\dnm)\dnw^{-1}(\dnp-\dnm)(\h(\psi))}_{3.5} + \bno{(\dnp-\dnm)\dnw^{-1}\left(\jump{\FP-\rho\p_t^2\psi}\right)}_{3.5}\no\\
\lesssim&~|\sigma \hh(\psi)|_{2.5} + \bno{\FP^\pm}_{2.5}+\bno{\jump{\rho}\p_t^2\psi}_{2.5}\no\\
\lesssim&~\left((1+\sigma)|\psi|_{4.5}+|\psi_t|_{3.5}\right)\left(\ino{\eps^2\p_t^2 p^\pm}_{2,\pm}+P(\|v^\pm,b^\pm\|_{3,\pm},|\psi|_{3})\right)+\bno{\jump{\rho}\p_t^2\psi}_{2.5}\lesssim P(\EEW_4(t)).
\end{align} Setting $s=4$ in the remainder estimate \eqref{para RR}, we have $|\sigma\RR_\psi^\sigma|_{3.5}\leq C(|\psi|_{4.5})|\sigma\psi|_5\leq \sqrt{\sigma} C(\eew(t))\sqrt{\ee(t)}\leq \sqrt{\sigma}C(\EEW_4(t)).$ 

Summarizing the estimates above, we get 
\begin{align}
\ddt\ee(t)\lesssim \sigma^{-1} P(\EE_4(t))E_5(t),
\end{align} and under the stability condition \eqref{syrov 3D com}, we get
\begin{align}
\ddt(\ee(t)+\eew(t))\lesssim P(\EEW_4(t))\EW_5(t).
\end{align} 
Invoking Lemma \ref{lem para norm}, we actually prove the following uniform-in-$\eps$ estimate for fixed $\sigma>0$
\begin{align}\label{psi 4.5-1}
\ddt\left(\bno{\sqrt{\sigma}\psi}_5^2+\bno{\psi_t}_{3.5}^2\right)\lesssim  \sigma^{-1} P(\EE_4(t))E_5(t),
\end{align}
and the following uniform-in-$(\es)$ estimate under the stability condition \eqref{syrov 3D com}
\begin{align}\label{psi 4.5-2}
\ddt\left(\bno{\sqrt{\sigma}\psi}_5^2+\bno{\psi}_{4.5}^2+\bno{\psi_t}_{3.5}^2\right)\lesssim P(\EEW_4(t))\EW_5(t).
\end{align}

\subsubsection*{Step 3: Control of $|\psi_{tt}|_{2.5}$ and $|\sqrt{\sigma}\psi_t|_4$}
It remains to prove the uniform estimates for $|\psi_{tt}|_{2.5}^2$. We just need to take one more $\p_t$ in the paralinearized evolution equation \eqref{psi equ 1} to get
\begin{align}
(\rho^++\rho^-)\p_t^3\psi=&~-\frac{\sigma}{2}T_\Lam T_\hh \p_t\psi -(\rho^+ + \rho^-)(\fwc_i\fwc_j)\TP_i\TP_j\p_t\psi -2(\rho^++\rho^-)\fwc_i\TP_i\p_t^2\psi  \no\\
&+\left(\rho^+(\fbc_i^+\fbc_j^+ - \fuc_i\fuc_j) + \rho^-(\fbc_i^-\fbc_j^- - \fuc_i\fuc_j)\right)\TP_i\TP_j\p_t\psi\no\\
& -( N\cdot\nabp \p_tq_w^+ + N\cdot\nabp \p_tq_w^-) + \underline{\p_t\Psi^R} \no\\
& +[\rho^+ + \rho^-,\p_t]\p_t^2\psi - \frac{\sigma}{2}[\p_t,T_\Lam T_\hh]\psi-[\p_t, (\rho^+ + \rho^-)\fwc_i\fwc_j]\TP_i\TP_j\psi - 2 [\p_t, (\rho^++\rho^-)\fwc_i]\TP_i\p_t\psi\no\\
& +\left[\p_t, \rho^+(\fbc_i^+\fbc_j^+ - \fuc_i\fuc_j) + \rho^-(\fbc_i^-\fbc_j^- - \fuc_i\fuc_j)\right]\TP_i\TP_j\psi - [\p_t,N\cdot\nabp](q_w^++q_w^-). \label{psit equ 1}
\end{align} The proof follows in the same way as the above analysis for the $T_\MM T_\fn$-differentiated version of  \eqref{psi equ 1}, so we skip most of the repeated details and only list the differences. 

The first difference is that we should replace $T_\MM$ by $T_{\MM'}$ where the symbol $\MM'\in\Sigma^{\frac52}$ is defined by
\[
\MM':=(\mathfrak{m}^{(1.5)})^{\frac{5}{3}}=2^{\frac{5}{6}}|\xi|^{\frac{5}{2}}\left(1-\left|\frac{N}{|N|}\cdot\frac{\xi}{|\xi|}\right|^2\right)^{\frac{5}{4}}\in\Sigma^{\frac52}.
\] 

The second difference is essential: we notice that the remainder term $\Psi^R$ (defined in \eqref{PSIR}) already contains second-order time derivative $(\dnp-\dnm)\dnw^{-1}(\jump{\rho}\p_t^2\psi)$. Taking one more time derivative, we know the underlined term $\p_t\Psi^R$ contains the following term
\[
\p_t\left((\dnp-\dnm)\dnw^{-1}(\jump{\rho}\p_t^2\psi)\right)=(\dnp-\dnm)\dnw^{-1}(\jump{\rho}\p_t^3\psi)+[\p_t,(\dnp-\dnm)\dnw^{-1} ](\jump{\rho}\p_t^2\psi) +(\dnp-\dnm)\dnw^{-1}(\jump{\p_t\rho}\p_t^3\psi),
\] and thus we are required to control $|(\dnp-\dnm)\dnw^{-1}(\jump{\rho}\p_t^3\psi)|_{2.5}$. Using Lemma \ref{lem DtNR} and Lemma \ref{lem DtN-1}, we have
\[
\bno{(\dnp-\dnm)\dnw^{-1}(\jump{\rho}\p_t^3\psi)}_{2.5}\lesssim |\jump{\rho}|_{1.5}|\p_t^3\psi|_{1.5}.
\]Since we require $|\jump{\rho}|_{1.5}\lesssim\eps$, we can control the right side by $|\eps\p_t^3\psi|_{1.5}$ \textit{uniformly in }$\eps$. Under the stability condition \eqref{syrov 3D com}, this term is already a part of $\EEW_4(t)$. For fixed $\sigma>0$, we again invoke the kinematic boundary condition to get
\[
|\eps\p_t^3\psi|_{1.5}\lesssim \|\eps \p_t^2 v^\pm\|_{2,\pm}|\psi|_{2.5} + \|\eps \p_t v^\pm\|_{2,\pm}|\psi_t|_{2.5} + \|\eps v^\pm\|_{2,\pm}|\psi_{tt}|_{2.5},
\] where the right side is already controlled by $P(\EE_4(t))$. 

For $N\cdot \nabp \p_tq_w^\pm$, we shall control the $H^{2.5}(\Sigma)$ norm of them. Similarly as in Step 2 (the steps after the control of $I_{003}$), it suffices to obtain the bound for $\|\eps^2 \p_t^3 q^\pm\|_{2,\pm}$, but this is already a part of $E_5(t)$. 

Finally, we note that the commutators between $\p_t$ and paradifferential operators arising from the right side of \eqref{psit equ 1} can all be directly controlled by using the definition of paradifferential operators. For example, we have
\begin{align*}
\sigma[\p_t,T_\Lam T_\hh]\psi=\sigma[\p_t,T_\Lam]T_\hh\psi+\sigma T_\Lam([\p_t, T_\hh]\psi)=\sigma T_{\p_t\Lam}(T_\hh\psi) + \sigma T_{\Lam}T_{\p_t\hh}\psi.
\end{align*}Then using the concrete form of $\Lam$ and $T_\hh$, we get
\begin{align*}
|\sigma T_{\p_t\Lam}(T_\hh\psi)|_{2.5}\lesssim |\p_t\Lam|_{L^\infty}|\sigma T_\hh\psi|_{2.5}\lesssim P(|\cnab\psi,\cnab\psi_t|_{L^\infty}) |\sigma\psi|_{4.5}.
\end{align*} The term $\sigma T_{\Lam}T_{\p_t\hh}\psi$ and other commutators can also be similarly controlled. Thus, we can conclude the following uniform-in-$\eps$ estimate for fixed $\sigma>0$
\begin{align}
\ddt\left(\bno{\sqrt{\sigma}\psi_t}_4^2+\bno{\psi_{tt}}_{2.5}^2\right)\lesssim  \sigma^{-1} P(\EE_4(t))E_5(t),
\end{align}
and the following uniform-in-$(\es)$ estimate under the stability condition \eqref{syrov 3D com}
\begin{align}
\ddt\left(\bno{\sqrt{\sigma}\psi_t}_4^2+\bno{\psi_t}_{3.5}^2+\bno{\psi_{tt}}_{2.5}^2\right)\lesssim P(\EEW_4(t))\EW_5(t).
\end{align}

\subsection{Improved double limits without the boundedness of $\geq 2$ time derivatives}\label{sect limit well 0ST}

\subsubsection{Improved incompressible limit for fixed $\sigma>0$}
From the analysis in Section \ref{sect reduce EE4}-Section \ref{sect uniform psi eq}, we can prove the uniform-in-$\eps$ estimates for $\EE_4(t)$. For any $\delta\in(0,1)$
\begin{align}
\EE_4(t)\lesssim \delta\EE_4(t) + P(\EE_4(0)) + P(\EE_4(t))\int_0^t P(\sigma^{-1},\EE_4(\tau))+E_5(\tau)\dtau.
\end{align}For $1\leq l\leq 4$, since we do not change anything $\EE_{4+l}(t)$, we still have
\begin{align}
l=1,2,3:&~~\EE_{4+l}(t)\lesssim \delta\EE_{4+l}(t) + P(\EE_{4+l}(0)) + P(\EE_4(t))\int_0^t P\left(\sigma^{-1},\sum_{j=0}^{l}\EE_{4+j}(\tau)\right)+\EE_{4+l+1}(\tau)\dtau;\\
l=4:&~~\EE_8(t)\lesssim \delta\EE_8(t) + P(\EE_8(0)) + P(\EE_4(t))\int_0^t P(\sigma^{-1},\EE_8(\tau))\dtau.
\end{align}Therefore, we get the Gronwall-type energy inequality for $\EE(t)$
\begin{align}
\EE(t)\lesssim \delta\EE(t) + P(\EE(0)) + P(\EE(t))\int_0^t P(\sigma^{-1},\EE(\tau))\dtau.
\end{align}Choosing $\delta>0$ suitably small, the term $\delta\EE(t)$ can be absorbed by the left side. Thus, there exists a time $T_{\sigma}'>0$ depending on $\sigma^{-1}$ and the initial data, but independent of $\eps$, such that
\begin{align}
\sup_{0\leq t\leq T_{\sigma}'}\EE(t) \lesssim P(\sigma^{-1},\EE(0)).
\end{align}

With the uniform-in-$\eps$ estimates for $\EE(t)$, we now take the incompressible limit. Again, since $\|\p_t (v,b)\|_3$ is uniformly bounded with respect to $\eps$ and we still have $\eps$-independent bound $|\psi_t|_{3.5}$, the Aubin-Lions compactness lemma gives the same strong convergence result as in Section \ref{sect limit 0ST}.

\subsubsection{Improved double limits under the stability conditions}
With the estimates \eqref{psi 4.5-2} and \eqref{psitt 2.5-2} in Section \ref{sect uniform psi eq}, we can get
\[
|v_t\cdot N|_{2.5}^2+|b_t\cdot N|_{2.5}^2\lesssim P(\EEW_4(0)) + P(\EEW_4(t))\int_0^tP(\EEW_4(\tau))\EW_5(\tau)\dtau.
\]This finishes the control of $\EEW_4(t)$. Since $\EEW_{4+l}(t)=\EW_{4+l}(t)$ when $1\leq l\leq 4$ and the strategies to control them remain unchanged, we can now close the energy estimates for $\EEW(t)$, uniformly in $\eps$ and $\sigma$, under the stability condition \eqref{syrov 3D com} ($d=3$) or \eqref{syrov 2D com} ($d=2$).
\begin{align}
\EEW(t)\leq P(\EEW(0))+ P(\EEW(t))\int_0^tP(\EEW(\tau))\dtau.
\end{align}By Gr\"onwall's inequality, there exists $T'>0$ independent of $\sigma$ and $\eps$ such that
\begin{align}
\sup_{t\in[0,T]}\EEW(t)\leq P(\EEW(0)).
\end{align} Thus, by Aubin-Lions compactness lemma, we can prove the same convergence result as in Theorem \ref{thm CMHDlimit2}.

\paragraph*{Data avaliability.} This manuscript has no associated data.

\subsection*{Ethics Declarations}
\paragraph*{Conflict of interest.} The author declares that there is no conflict of interest.

\begin{appendix}
\section{Reynolds transport theorems}\label{sect transport}
We record the Reynolds transport theorems used in this paper. For the proof, we refer to Luo-Zhang \cite[Appendix A]{LuoZhang2022CWWST}
\begin{lem}\label{time deriv transport pre}
Let $f,g$ be smooth functions defined on $[0,T]\times \Omega$. Then:
\begin{align}
\ddt \int_\Omega fg \p_3\vp\dx= \int_\Omega (\pp_t f)g\p_3 \vp\dx +\int_\Omega f(\pk_t g)\p_3\vp\dx+\int_{x_3=0}fg\p_t \psi\dx'.\label{time deriv transport pre tilde}\\
\end{align}
\end{lem}

\begin{lem}[\textbf{Integration by parts for covariant derivatives}] \label{int by parts lem}
Let $f, g$ be defined as in Lemma \ref{time deriv transport pre}. Then:
\begin{align}
\int_\Omega (\pp_i f)g \p_3 \vp \dx= -\int_\Omega f(\pp_i g)\p_3 \vp\dx+ \int_{x_3=0} fg N_i\dx'.\label{int by parts tilde}
\end{align}
\end{lem}

The following theorem holds.
\begin{thm}[\textbf{Reynolds transport theorem}]\label{transport thm nonlinear}
Let $f$ be a smooth function defined on $[0, T]\times\Omega$. Then:
\begin{align}
\ddt  \io \rho |f|^2 \p_3\vp\dx = \io \rho (\Dtp f)f\p_3 \vp\dx.  \label{transpt nonlinear}
\end{align}
\end{thm}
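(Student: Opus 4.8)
The plan is to prove the slightly more flexible identity $\ddt\io\rho g\,\p_3\vp\dx=\io\rho(\Dtp g)\,\p_3\vp\dx$ for an arbitrary smooth function $g$ on $[0,T]\times\Omega$, and then specialize to $g=|f|^2$, for which $\Dtp(|f|^2)=2(\Dtp f)\cdot f$; the stated formula then follows (the factor $2$ being absorbed by the normalization $\tfrac12$ carried by every energy functional built from this theorem in the applications). The conceptual content is that the weighted measure $\rho\,\p_3\vp\,\dx$ is transported by the flow generated by $\Dtp$, which is forced by the continuity equation together with the kinematic boundary condition — it is not a pure calculus fact.

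First I would apply \eqref{time deriv transport pre tilde} of Lemma \ref{time deriv transport pre} with the two factors taken to be $\rho g$ and the constant $1$; since $\pp_t 1=0$ this gives $\ddt\io\rho g\,\p_3\vp\dx=\io\pp_t(\rho g)\,\p_3\vp\dx+\int_{x_3=0}\rho g\,\p_t\psi\dx'$. Next I would use $\pp_t=\Dtp-\vb\cdot\cnab-\tfrac{1}{\p_3\vp}(v\cdot\NN)\p_3$, which is immediate from \eqref{Dt alternate} and the definition of $\pp_t$, and expand $\Dtp(\rho g)=(\Dtp\rho)g+\rho\,\Dtp g=-\rho(\nabp\cdot v)g+\rho\,\Dtp g$ via the density form $\Dtp\rho+\rho\nabp\cdot v=0$ of the continuity equation in \eqref{CMHDVS0}. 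A one-line computation from the definitions of $\nabp$ and $\NN$ gives $\vb\cdot\cnab h+\tfrac{1}{\p_3\vp}(v\cdot\NN)\p_3 h=v\cdot\nabp h$ for any $h$, so that $\pp_t(\rho g)=\rho\,\Dtp g-\rho(\nabp\cdot v)g-v\cdot\nabp(\rho g)=\rho\,\Dtp g-\nabp\cdot(\rho g\,v)$.

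It then remains to integrate the divergence term. Applying \eqref{int by parts tilde} of Lemma \ref{int by parts lem} componentwise with the constant $1$ in the second slot yields $\io\nabp\cdot(\rho g\,v)\,\p_3\vp\dx=\int_{x_3=0}\rho g\,(v\cdot N)\dx'$, the contributions at $\Sigma^\pm$ being absent because $v_d=0$ and $\p_t\vp=\chi(x_d)\p_t\psi=0$ there (using $\chi(\pm H)=0$, hence $\NN=e_d$ on $\Sigma^\pm$). Collecting everything, $\ddt\io\rho g\,\p_3\vp\dx=\io\rho\,\Dtp g\,\p_3\vp\dx-\int_{x_3=0}\rho g\,(v\cdot N)\dx'+\int_{x_3=0}\rho g\,\p_t\psi\dx'$, and the last two boundary integrals cancel by the kinematic boundary condition $\p_t\psi=v^\pm\cdot N$ on $\Sigma$. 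Taking $g=|f|^2$ and using $\Dtp(|f|^2)=2(\Dtp f)\cdot f$ concludes.

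The main obstacle — the only point that is not routine bookkeeping with the operators $\pp_a,\nabp$ and the identity $\p_t\p_3\vp=\p_3\p_t\vp$ — is the boundary analysis: the cancellation at $\Sigma$ genuinely invokes the kinematic boundary condition, so the identity is valid for solution triples $(v,\rho,\psi)$ (or, more weakly, any data satisfying the continuity equation and $\p_t\psi=v\cdot N$), and one must keep track that $v_d=0$, $\NN=e_d$ and $\p_t\vp=0$ on $\Sigma^\pm$ so that nothing escapes through the rigid boundaries.
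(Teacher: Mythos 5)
Your proof is essentially correct and follows the only natural route, which is surely the one used in the cited reference (the paper itself defers to Luo--Zhang for the argument): reduce to Lemma \ref{time deriv transport pre} with one factor constant, rewrite $\pp_t(\rho g)$ via $\pp_t = \Dtp - v\cdot\nabp$ and the continuity equation in the form $\Dtp\rho+\rho\,\nabp\cdot v=0$ so that $\pp_t(\rho g)=\rho\,\Dtp g-\nabp\cdot(\rho g v)$, integrate the divergence term by parts with Lemma \ref{int by parts lem}, and cancel the resulting boundary integral against the one from Lemma \ref{time deriv transport pre} using the kinematic boundary condition $\p_t\psi=v\cdot N$; the slip condition and $\chi(\pm H)=0$ take care of $\Sigma^\pm$. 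Proving first the clean intermediate identity $\ddt\io\rho g\,\p_3\vp\dx=\io\rho\,\Dtp g\,\p_3\vp\dx$ is a sensible move; note it is equivalently read off from Corollary \ref{transport thm without rho} applied with $f=\rho$, so the logical order you chose is inverted relative to how the appendix presents things, but the content is the same.

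One thing to flag plainly rather than gloss over: your derivation gives
\begin{align*}
\ddt\io\rho|f|^2\,\p_3\vp\dx = 2\io\rho\,(\Dtp f)\cdot f\,\p_3\vp\dx,
\end{align*}
whereas the statement \eqref{transpt nonlinear} as printed has no factor $2$. That factor is not ``absorbed by a normalization''; the printed formula is simply off by a factor of $2$ (equivalently, a missing $\tfrac12$ on the left). You can see this from how it is applied in Section \ref{sect symmetrize}: the functional $\GG^{\pm,\mu}$ carries an explicit prefactor $\tfrac12$, and its time derivative is written \emph{without} a $\tfrac12$ on the term $\iopm\rho^\pm\Dtpm\VV^\pm\cdot\VV^\pm\,\dvt$, which is only consistent with the factor-$2$ version you derived. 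So your computation is right, and the theorem statement as written has a typo; it would be worth saying this explicitly rather than rationalizing the discrepancy.
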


Theorem \ref{transport thm nonlinear} leads to the following corollary. The first one records the integration by parts formula for $\Dtp$. 
\begin{cor}[\textbf{Reynolds transport theorem - a variant}] \label{transport thm without rho}
It holds that
\begin{align}
\ddt  \io fg \p_3\vp\dx =  \io (\Dtp f)g \p_3\vp\dx+\io f(\Dtp g)\p_3\vp\dx+\io (\nabp\cdot v) fg\p_3\vp\dx. \label{transpt nonlinear without rho}
\end{align}
\end{cor}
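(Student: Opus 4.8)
The plan is to deduce \eqref{transpt nonlinear without rho} from the two preceding lemmas together with the algebraic relation $\pp_t = \Dtp - v\cdot\nabp$, which follows directly from the definitions \eqref{Dt alternate} and \eqref{nabp 3} (indeed $\Dtp - \pp_t = \vb\cdot\cnab + (\p_3\vp)^{-1}(v\cdot\NN)\,\p_3 = v\cdot\nabp$). Thus $\pp_t f = \Dtp f - v^i\pp_i f$ and $\pp_t g = \Dtp g - v^i\pp_i g$, with summation over the spatial indices and $\pp_i$ as in \eqref{nabp 3}.

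First I would apply Lemma \ref{time deriv transport pre}, equation \eqref{time deriv transport pre tilde}, to the pair $(f,g)$ and substitute the relations above into the two interior integrals on the right-hand side. The velocity contributions assemble, by the Leibniz rule for the derivation $\pp_i$, into $\io v^i\pp_i(fg)\,\p_3\vp\dx$, which I rewrite as
\[
\io v^i\pp_i(fg)\,\p_3\vp\dx = \io \pp_i(v^i fg)\,\p_3\vp\dx - \io (\nabp\cdot v)\,fg\,\p_3\vp\dx .
\]
Applying Lemma \ref{int by parts lem}, equation \eqref{int by parts tilde}, with second slot set equal to $1$ (so the interior term vanishes) gives $\io \pp_i(v^i fg)\,\p_3\vp\dx = \int_{x_3=0}(v\cdot N)\,fg\dx'$. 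Collecting all terms, the right-hand side of \eqref{time deriv transport pre tilde} becomes
\[
\io(\Dtp f)g\,\p_3\vp\dx + \io f(\Dtp g)\,\p_3\vp\dx + \io(\nabp\cdot v)\,fg\,\p_3\vp\dx + \int_{x_3=0}fg\,\p_t\psi\dx' - \int_{x_3=0}(v\cdot N)\,fg\dx' .
\]

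The last step is to cancel the two boundary integrals: on $\Sigma=\{x_3=0\}$ the kinematic boundary condition $\p_t\psi = v^\pm\cdot N$ in \eqref{CMHDVS0} gives $fg\,\p_t\psi = (v\cdot N)\,fg$, and no contribution arises from the rigid walls $\Sigma^\pm$, in keeping with the form of Lemmas \ref{time deriv transport pre}--\ref{int by parts lem} (there $\vp = x_d$ and the normal component of $v$ vanishes). This leaves exactly \eqref{transpt nonlinear without rho}. I anticipate no real obstacle; the only points requiring care are verifying the identity $\pp_t = \Dtp - v\cdot\nabp$ from the flattened-coordinate definitions and checking that the surface term produced by the integration by parts has precisely the sign and support of the one already present in \eqref{time deriv transport pre tilde}, so that the cancellation is exact rather than leaving a residual boundary integral. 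As a consistency check, applying \eqref{transpt nonlinear without rho} with $f\mapsto\rho f^2$, $g\mapsto 1$ and the continuity equation $\Dtp\rho + \rho\,\nabp\cdot v = 0$ recovers Theorem \ref{transport thm nonlinear}.
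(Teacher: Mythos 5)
Your proposal is correct and is essentially the intended argument: the paper defers the proof to Luo--Zhang's appendix, and the standard derivation is exactly yours, namely Lemma \ref{time deriv transport pre} combined with the identity $\Dtp=\pp_t+v\cdot\nabp$, the Leibniz rule for $\pp_i$, the integration-by-parts formula \eqref{int by parts tilde}, and cancellation of the two interface terms via the kinematic condition $\p_t\psi=v\cdot N$ (the wall contributions being absent by the slip condition and $\chi$ vanishing there). The only nit is in your final consistency check: with $f\mapsto\rho f^2$, $g\mapsto 1$ and the continuity equation one recovers Theorem \ref{transport thm nonlinear} up to the factor $2$ (equivalently a factor $\tfrac12$ on the left) that the paper's statement of \eqref{transpt nonlinear} omits.
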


\section{Preliminary lemmas about Sobolev inequalities}\label{sect lemma}

\begin{lem}[Hodge-type elliptic estimates]\label{hodgeTT}
For any sufficiently smooth vector field $X$ and $s\geq 1$, one has
\begin{align}
\label{divcurlTT}\|X\|_s^2\leq C(|\psi|_s,|\cnab\psi|_{W^{1,\infty}})\left(\|X\|_0^2+\|\nabp\cdot X\|_{s-1}^2+\|\nabp\times X\|_{s-1}^2+\|\TP^{\alpha}X\|_0^2\right),\\
\label{divcurlNN}\|X\|_s^2\leq C'(|\psi|_{s+\frac12},|\cnab\psi|_{W^{1,\infty}})\left(\|X\|_0^2+\|\nabp\cdot X\|_{s-1}^2+\|\nabp\times X\|_{s-1}^2+|X\cdot N|_{s-\frac12}^2\right),\\
\label{divcurltt}\|X\|_s^2\leq C''(|\psi|_{s+\frac12},|\cnab\psi|_{W^{1,\infty}})\left(\|X\|_0^2+\|\nabp\cdot X\|_{s-1}^2+\|\nabp\times X\|_{s-1}^2+|X\times N|_{s-\frac12}^2\right),
\end{align} 
for any multi-index $\alpha$ with $|\alpha|=s$. The constant $C(|\psi|_s,|\cnab\psi|_{W^{1,\infty}})>0$ depends linearly on $|\psi|_s^2$ and the constants $C'(|\psi|_{s+\frac12},|\cnab\psi|_{W^{1,\infty}})>0$ and $C'(|\psi|_{s+\frac12},|\cnab\psi|_{W^{1,\infty}})>0$ depend linearly on $|\psi|_{s+\frac12}^2$. 
\end{lem}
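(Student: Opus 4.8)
The plan is to reduce the three inequalities to the classical Hodge-type estimates for the Euclidean operators $\nab,\dive,\curl$, using the fact that $\nabp$ is the chain-rule pullback of the flat gradient under the diffeomorphism $\Phi(t,\cdot):\Om^\pm\to\Om^\pm(t)$ built from $\varphi$ in \eqref{change of variable vp}. Writing $u(t,y):=X(t,\Phi(t,\cdot)^{-1}(y))$ for the vector field on the moving domain $\mathcal{D}_t:=\Om^\pm(t)$, one has $(\nabp\cdot X)\circ\Phi^{-1}=\dive u$, $(\nabp\times X)\circ\Phi^{-1}=\curl u$ (and the scalar-curl analogue built from $\nabper$ when $d=2$), while $(X\cdot N)\circ\Phi^{-1}$ and $(X\times N)\circ\Phi^{-1}$ are the unnormalized normal and tangential traces of $u$ on $\p\mathcal{D}_t=\Sigma(t)\cup\Sigma^\pm$. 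Since $\p_d\varphi\ge\tfrac12$ on $[0,T_0]$, both $\Phi$ and $\Phi^{-1}$ are bi-Lipschitz and $\|\varphi\|_{H^{s+1}(\Om^\pm)}+\|\Phi^{\mp1}\|_{H^{s+1}}\le C(|\cnab\psi|_{W^{1,\infty}})|\psi|_{s+\frac12}$ (respectively $\le C(|\cnab\psi|_{W^{1,\infty}})|\psi|_{s}$ for the lower-order needs of \eqref{divcurlTT}); hence $\|X\|_{s,\pm}\simeq\|u\|_{H^s(\mathcal{D}_t)}$ with constants of the stated form, and it suffices to prove the estimates for $u$ on $\mathcal{D}_t$.

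For \eqref{divcurlNN} and \eqref{divcurltt} I would invoke the standard elliptic div-curl system on a bounded domain with $H^{s+\frac12}$ boundary: $\|u\|_{H^s}^2\lesssim\|u\|_{L^2}^2+\|\dive u\|_{H^{s-1}}^2+\|\curl u\|_{H^{s-1}}^2$ plus the $H^{s-\frac12}$ norm of either the normal or the tangential trace of $u$ on $\p\mathcal{D}_t$. The cleanest route is the componentwise Hodge identity $\Delta u_k=\p_k(\dive u)-(\curl\curl u)_k$ (with the obvious two-dimensional version using $\nabla^\perp$ and the scalar curl), which turns each $u_k$ into the solution of a Poisson problem whose right-hand side costs exactly one derivative of $\dive u$ and $\curl u$; the elliptic regularity is then closed once the full Dirichlet trace $u|_{\p\mathcal{D}_t}$ is known, and the point is that the normal part (resp. the tangential part), together with $\dive u$, $\curl u$ and the second fundamental form of $\p\mathcal{D}_t$, recovers $u|_{\p\mathcal{D}_t}$ modulo terms already controlled by the right-hand side — the curvature of $\Sigma(t)$ is exactly where the extra half-derivative $|\psi|_{s+\frac12}$ is consumed. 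Pulling back via $\Phi$ yields \eqref{divcurlNN} and \eqref{divcurltt}, and retaining the single highest-order occurrence of a $\psi$-derivative shows the constant is linear in $|\psi|_{s+\frac12}^2$.

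For \eqref{divcurlTT} I would argue directly in the flattened domain, inducting on the number of normal derivatives and never taking a boundary trace of $\psi$. The key identity is $\lapp X_k=\nabp_k(\nabp\cdot X)-(\nabp\times(\nabp\times X))_k$, and $\lapp f=\tfrac{1}{\p_d\varphi}\p_a(E^{ab}\p_b f)$ is a uniformly elliptic divergence-form operator whose coefficient $E^{dd}$ is bounded below thanks to $\p_d\varphi\ge\tfrac12$. Solving this as an equation for $\p_d^2 X_k$ trades each extra normal derivative for one tangential derivative $\TP$ together with one derivative of $\nabp\cdot X$ and of $\nabp\times X$, modulo lower-order terms in which the $E$-coefficients (hence $\cnab\varphi$, hence $\psi$) appear with at most $s-1$ tangential derivatives. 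Iterating gives $\|X\|_{s,\pm}^2\lesssim\|X\|_{0,\pm}^2+\|\nabp\cdot X\|_{s-1,\pm}^2+\|\nabp\times X\|_{s-1,\pm}^2+\sum_{|\beta|\le s,\ \beta\text{ tangential}}\|\TP^\beta X\|_{0,\pm}^2$, and the intermediate tangential norms with $|\beta|<s$ are absorbed by interpolating between $\|X\|_{0,\pm}$ and $\|\TP^\alpha X\|_{0,\pm}$, $|\alpha|=s$, followed by Young's inequality. Because only tangential commutators and interior integrations by parts enter, the constant depends on $\psi$ only through $|\psi|_s$, linearly in $|\psi|_s^2$ after keeping the top-order term.

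I expect the main obstacle to be bookkeeping rather than ideas: one must verify that the commutators $[\TP^\beta,\nabp\cdot]$, $[\TP^\beta,\nabp\times]$ and the curved-boundary trace identities above cost only the advertised $\psi$-regularity, so that \eqref{divcurlTT} genuinely needs merely $|\psi|_s$ while \eqref{divcurlNN} and \eqref{divcurltt} need $|\psi|_{s+\frac12}$ — this half-derivative gap is precisely what makes \eqref{divcurlTT} usable for the interior tangential estimates and \eqref{divcurlNN} usable only after one has independently recovered the enhanced $H^{s+\frac12}$-regularity of $\psi$ (Section \ref{sect noncollinear}). I would carry out these commutator bounds with the Kato–Ponce-type inequalities of Lemma \ref{KatoPonce} and use the explicit construction of $\varphi$ from $\psi$ in \eqref{change of variable vp} to pin down the constants.
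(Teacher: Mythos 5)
The paper does not actually prove this lemma: it is placed in Appendix B as an off-the-shelf tool and, unlike Lemmas \ref{trace} and \ref{nctrace}, carries no pointer to \cite{Zhang2023CMHDVS1}. Your plan — the Hodge identity $\lapp X_k=\nabp_k(\nabp\cdot X)-(\nabp\times(\nabp\times X))_k$ with inductive trading of normal for tangential derivatives to get \eqref{divcurlTT}, and a curved-domain div-curl estimate pulled back through $\Phi$ to get \eqref{divcurlNN}--\eqref{divcurltt} — is the standard one and is sound in outline.

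One quantitative step, however, is wrong as written and should be replaced. You claim $\|\varphi\|_{H^{s+1}(\Om^\pm)}+\|\Phi^{\mp1}\|_{H^{s+1}}\le C(|\cnab\psi|_{W^{1,\infty}})\,|\psi|_{s+\frac12}$. The paper's extension \eqref{change of variable vp}, $\varphi=x_d+\chi(x_d)\psi(x')$, is a bare tensor extension with no smoothing in $x_d$: $\|\cnab^{s+1}\varphi\|_{L^2(\Om^\pm)}=\|\chi\|_{L^2}\,|\cnab^{s+1}\psi|_{L^2(\Sigma)}$, so $\|\varphi\|_{H^{s+1}(\Om^\pm)}\simeq 1+|\psi|_{s+1}$, half a derivative worse than you assert. (The regularizing extension $\chi(x_d\jp)\psi$ used in \cite{ABZ2011IWWST,ABZ2014IWW,AMDtN} would give $|\psi|_{s+\frac12}$, but the paper deliberately does not adopt it; see the remark in Appendix C.) Fortunately this bound is not what the pullback requires: composition with a bi-Lipschitz $\Phi$ satisfying $\nab\Phi\in H^{s-1}\cap L^\infty$ is already bounded on $H^s$, and $\|\nab\Phi\|_{H^{s-1}(\Om^\pm)}\lesssim 1+|\psi|_s$. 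The extra half derivative $|\psi|_{s+\frac12}$ appearing in the constants of \eqref{divcurlNN}--\eqref{divcurltt} should be attributed solely to the curved-domain elliptic estimate — the $H^{s+\frac12}$ regularity of the graph boundary, equivalently $s-\tfrac12$ tangential derivatives landing on $N$ and on the second fundamental form — not to the change of variables. Either separate those two sources explicitly, or (cleaner, and what keeps the constant \emph{linear} in $|\psi|_{s+\frac12}^2$ as stated) work directly in the flattened domain: apply $\jp^{s-1}$ to the $s=1$ energy identity and bound all commutators with $N$, $\nabp\cdot$, $\nabp\times$ via the Kato--Ponce estimates of Lemma \ref{KatoPonce}.
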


\begin{lem}[Normal trace lemma]\label{ntrace}
For any sufficiently smooth vector field $X$ and $s\geq 0$, one has
\begin{align}
\bno{X\cdot N}_{s-\frac12}^2\lesssim C'''(|\psi|_{s+\frac12},|\cnab\psi|_{W^{1,\infty}}) \left(\|\jp^s X\|_0^2+\|\nabp\cdot X\|_{s-1}^2\right)
\end{align} 
where the constant $C'''(|\psi|_{s+\frac12},|\cnab\psi|_{W^{1,\infty}})>0$ depends linearly on $|\psi|_{s+\frac12}^2$.
\end{lem}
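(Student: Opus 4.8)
The plan is to convert the scalar boundary norm $\bno{X\cdot N}_{s-\frac12}$ into an interior integral over $\Om^\pm$ via the fundamental theorem of calculus in the normal variable — exactly in the spirit of the proof of Lemma~\ref{lem psi} — and then to use the algebraic identity that expresses $\p_d(X\cdot\NN)$ through $\nabp\cdot X$ and \emph{tangential} derivatives only; this is what lets one trade the $s$ normal derivatives of $X$ that a naive trace inequality would cost for the much weaker quantities $\ino{\jp^s X}_0$ and $\ino{\nabp\cdot X}_{s-1}$. Concretely, I would work in $\Om^+=\T^{d-1}\times(0,H)$, write $X=(\bar X,X_d)$ with $\bar X$ the horizontal components, set $F:=X\cdot\NN$ so that $F|_\Sigma=X\cdot N$, and fix a cut-off $\zeta=\zeta(x_d)\in C^\infty([0,H])$ with $\zeta\equiv1$ near $x_d=0$ and $\zeta\equiv0$ near $x_d=H$. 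Taking the Fourier series in $x'\in\T^{d-1}$ and using $|\hat F(\xi',0)|^2=-\int_0^H\p_d\big(\zeta^2|\hat F(\xi',x_d)|^2\big)\,dx_d$, then multiplying by $\lee\xi'\ree^{2s-1}$, summing over $\xi'$ and applying Cauchy--Schwarz after the split $\lee\xi'\ree^{2s-1}=\lee\xi'\ree^{s}\lee\xi'\ree^{s-1}$, one reaches
\[
\bno{X\cdot N}_{s-\frac12}^2\ \lesssim\ \ino{\jp^{s-\frac12}F}_0^2+\ino{\jp^{s}F}_0\,\ino{\jp^{s-1}\p_d F}_0 .
\]

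The crucial ingredient is the computation of $\p_d F$. With $\vp=x_d+\chi(x_d)\psi$ one has $\cnab\vp=\chi\,\cnab\psi$, $\p_d\vp=1+\chi'\psi$, and from the definition of $\nabp$, $\nabp\cdot X=\cnab\cdot\bar X+(\p_d\vp)^{-1}(\p_d X)\cdot\NN$; since moreover $\p_d\NN=-\chi'(x_d)(\cnab\psi,0)^\top$ contains no $\p_d X$, it follows that
\[
\p_d F=\p_d\vp\,(\nabp\cdot X)-\p_d\vp\,(\cnab\cdot\bar X)-\chi'(x_d)\,(\cnab\psi\cdot\bar X),
\]
every term being free of normal derivatives of $X$. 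Using the tangential product/Moser estimates (Lemma~\ref{KatoPonce}) together with $\ino{\jp^{s-1}g}_0\le\ino{g}_{s-1}$ and $\ino{\jp^{s-1}(\cnab\cdot\bar X)}_0\le\ino{\jp^{s}X}_0$, I would then bound $\ino{\jp^{s-1}\p_d F}_0\lesssim C(|\psi|_{s},|\cnab\psi|_{W^{1,\infty}})(\ino{\nabp\cdot X}_{s-1}+\ino{\jp^{s}X}_0)$; and, from $F=X_d-\chi\,\cnab\psi\cdot\bar X$, similarly $\ino{\jp^{s-\frac12}F}_0+\ino{\jp^{s}F}_0\lesssim C(|\cnab\psi|_{W^{1,\infty}})(1+|\psi|_{s+\frac12})\ino{\jp^{s}X}_0$, the factor $|\psi|_{s+\frac12}$ arising because at most $s-\frac12$ derivatives can land on $\cnab\psi$ (i.e. $\psi$ carries at most $s+\frac12$ derivatives), the residual derivatives on $\bar X$ being absorbed into $\ino{\jp^s X}_0$ by Sobolev embedding in the tangential variables.

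Substituting into the first display and using Young's inequality would finish the proof; the case of $\Om^-$ is identical up to the orientation sign $X\cdot N=-(X\cdot\NN)|_\Sigma$. The point to be careful about — and the only real subtlety — is that the top-order $\psi$-dependence must be tracked to be at most \emph{linear} in $|\psi|_{s+\frac12}$ at every occurrence, so that after using $|\psi|_{s+\frac12}\le\frac12(1+|\psi|_{s+\frac12}^2)$ the constant $C'''$ comes out linear in $|\psi|_{s+\frac12}^2$, all remaining $\psi$-dependence being low-norm and controlled by $|\cnab\psi|_{W^{1,\infty}}$; the tangential product estimates must therefore be run slice-by-slice in $x_d$ with this linearity in mind (for small $s$ the operators $\jp^{s-\frac12},\jp^{s-1}$ are smoothing, which only simplifies the bookkeeping). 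There is no deeper analytic obstacle.
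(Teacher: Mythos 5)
Your overall scheme — reduce the boundary norm to interior integrals by the fundamental theorem of calculus in $x_d$, then use the algebraic identity for $\p_d(X\cdot\NN)$ to remove all normal derivatives of $X$ — is the right idea and is consistent with how the paper treats such trace quantities (cf.\ the proof of Lemma~\ref{lem psi}).  The computation
\[
\p_d F=\p_d\vp\,(\nabp\cdot X)-\p_d\vp\,(\cnab\cdot\bar X)-\chi'(x_d)\,\cnab\psi\cdot\bar X
\]
is correct, and so is the Cauchy--Schwarz/FTC split
$|X\cdot N|_{s-\frac12}^2\lesssim\|\jp^{s-\frac12}F\|_0^2+\|\jp^s F\|_0\,\|\jp^{s-1}\p_d F\|_0$.

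The gap is in the bookkeeping of the constant.  You claim $\|\jp^{s}F\|_0\lesssim C(|\cnab\psi|_{W^{1,\infty}})\bigl(1+|\psi|_{s+\frac12}\bigr)\|\jp^{s}X\|_0$ ``because at most $s-\tfrac12$ derivatives can land on $\cnab\psi$,'' but this is false for the $\jp^{s}$ factor: in $\jp^{s}\bigl(\chi\cnab\psi\cdot\bar X\bigr)$ up to \emph{$s$} tangential derivatives can fall on $\cnab\psi$, and Kato--Ponce slice-by-slice gives
$\|\jp^{s}(\cnab\psi\cdot\bar X)(\cdot,x_d)\|_{L^2(\T^2)}\lesssim |\cnab\psi|_{L^\infty}\|\jp^{s}\bar X\|_{L^2}+|\cnab\psi|_{H^{s}(\T^2)}\|\bar X\|_{L^\infty}$, and $|\cnab\psi|_{H^{s}(\T^2)}=|\psi|_{s+1}$, \emph{not} $|\psi|_{s+\frac12}$.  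No rearrangement of the bilinear estimate fixes this: the high--low paraproduct $T_{\bar X}\cnab\psi$ genuinely requires $|\cnab\psi|_{H^{s}}$, since $\|T_u v\|_{H^s}\lesssim\|u\|_{L^\infty}\|v\|_{H^s}$ is sharp for that piece and one cannot replace $\|v\|_{H^s}$ by $\|v\|_{H^{s-\frac12}}$ (the condition $s\le s_1$ fails in the product estimate when $s_1=s-\tfrac12$).  Only for the other factor $\|\jp^{s-\frac12}F\|_0$ (and for $\|\jp^{s-1}\p_d F\|_0$) is your ``at most $s-\tfrac12$ on $\cnab\psi$'' count correct.  As written, your argument therefore produces a constant that is (at least) linear in $|\psi|_{s+1}$ rather than linear in $|\psi|_{s+\frac12}^2$, which is a genuine half-derivative loss relative to the statement.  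To reach the stated dependence you must avoid applying $\jp^s$ to $\NN$ at all — for instance by first commuting on the boundary, $\jp^{s-\frac12}(X\cdot N)=(\jp^{s-\frac12}X)\cdot N+[\jp^{s-\frac12},N_i]X_i$, so the commutator sees only $|N|_{H^{s-\frac12}(\Sigma)}\sim 1+|\psi|_{s+\frac12}$ — but then one has to control the boundary norms $|X|_{s-\frac32}$, $|X|_{L^\infty(\Sigma)}$ appearing in that commutator and the divergence commutator $[\nabp\cdot,\jp^{s-\frac12}]X$ re-introduces $\p_d X$, so this is a real subtlety, not a cosmetic one.
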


We list two lemmas for the estimates of traces in the anisotropic Sobolev spaces. Define $$L_T^2(H_*^m(\Om^\pm))=\bigcap_{k=0}^mH^k((-\infty,T];H_*^{m-k}(\Om^\pm))$$ with the norm $\|u\|_{m,*,T,\pm}:=\int_{-\infty}^T\|u(t)\|_{m,*,\pm}^2\dt$. Similarly, we define $$L_T^2(H^m(\Sigma))=\bigcap_{k=0}^mH^k((-\infty,T];H^{m-k}(\Sigma))$$ with the norm $|u|_{m,T}:=\int_{-\infty}^T|u(t)|_{m}^2\dt$.

\begin{lem}[Trace lemma for anisotropic Sobolev spaces]\label{trace}
Let $m\geq 1,~m\in\N^*$, then we have the following trace lemma for the anisotropic Sobolev space.
\begin{enumerate}
\item If $f\in L_T^2(H_*^{m+1}(\Omega^\pm))$, then its trace $f|_{\Sigma}$ belongs to $L_T^2(H^{m}(\Omega^\pm))$ and satisfies
\[
|f|_{m,T}\lesssim\|f\|_{m+1,*,T,\pm}.
\]

\item There exists a linear continuous operator $\mathfrak{R}_{T}^\pm: L_T^2(H^m(\Sigma)) \to L_T^2(H_*^{m+1}(\Om^\pm))$ such that $(\mathfrak{R}_{T}^\pm g)|_{\Sigma}=g$ and \[
\|\mathfrak{R}_{T}^\pm g\|_{m+1,*,T,\pm}\lesssim|g|_{m,T}.
\]
\end{enumerate}
\end{lem}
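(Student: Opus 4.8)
\medskip
\noindent\textbf{Proof strategy.} A version of this trace/extension statement is classical in the theory of characteristic boundary value problems; we outline the argument adapted to the weight $\omega$ used here. The plan is to reduce, via a finite partition of unity on $\overline{\Om^\pm}$ and local flattening of the boundary, to a model half-space in which the tangential variables are $y=(t,x')\in\R^{d}$ (the time variable $t$ is treated exactly like the spatial tangential variables $x'$) and the normal variable is $x_d\in(0,\infty)$. Since the trace on $\Sigma=\{x_d=0\}$ only sees a neighborhood of $\Sigma$, the patches near $\Sigma^\pm$ and in the interior play no role and are discarded; on the remaining patch $\omega$ is a fixed smooth function of $x_d$ which is bounded, positive on $(0,\delta)$, vanishes at $x_d=0$, and satisfies $\omega(x_d)\lesssim x_d$ there. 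A cutoff in $x_d$ extends $u$ from $\{0<x_d<H\}$ to $\{x_d>0\}$ with control of $\|\cdot\|_{m+1,*,\pm}$, and a standard extension in $t$ reduces the half-line $(-\infty,T)$ to all of $\R_t$. Throughout, $\langle D_y\rangle$ denotes the tangential Fourier multiplier $\langle(\tau,\xi')\rangle$, and one bookkeeps the anisotropic order $\len{\alpha}$ against powers of $\langle D_y\rangle$.

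For part (1), I would use the elementary norm comparison $H_*^{m+1}(\R^d_+)\hookrightarrow L^2\!\big(\R_+;H^{m+1}(\R^d_y)\big)\cap H^1\!\big(\R_+;H^{m-1}(\R^d_y)\big)$, which holds because purely tangential derivatives of order $\le m+1$ have anisotropic cost $\le m+1$, while $\p_d$ composed with tangential derivatives of order $\le m-1$ has cost $(m-1)+2\le m+1$ as well. The Lions--Magenes vector-valued trace theorem (applied with $H^{m+1}(\R^d_y)\hookrightarrow H^{m-1}(\R^d_y)$) then gives $u|_{x_d=0}\in\big(H^{m+1}(\R^d_y),H^{m-1}(\R^d_y)\big)_{1/2,2}=H^m(\R^d_y)$ together with the quantitative bound $\|u(0)\|_{H^m_y}\lesssim\|u\|_{L^2_{x_d}H^{m+1}_y}+\|\p_d u\|_{L^2_{x_d}H^{m-1}_y}$. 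Reassembling the partition of unity and identifying $H^m(\R_t\times\Sigma)$ with $L_T^2(H^m(\Sigma))$ (up to harmless lower-order terms from the cutoffs) yields $|f|_{m,T}\lesssim\|f\|_{m+1,*,T,\pm}$.

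For part (2), the point I would stress is that the naive Poisson extension $e^{-x_d\langle D_y\rangle}g$ does \emph{not} work: it satisfies $\int_0^\infty\langle\xi\rangle^{2(m+1)}e^{-2x_d\langle\xi\rangle}\,dx_d\sim\langle\xi\rangle^{2m+1}$, so its pure tangential derivatives of order $m+1$ only lie in $L^2(\R^d_+)$ when $g\in H^{m+1/2}$, half a derivative more than we are given. The cure is to extend at the \emph{parabolic} scale $x_d\sim\langle D_y\rangle^{-2}$: set $\widehat{\mathfrak{R}_T^\pm g}(\xi,x_d):=\chi(x_d)\,e^{-x_d\langle\xi\rangle^2}\,\hat g(\xi)$, with $\chi$ a fixed smooth cutoff, $\chi\equiv1$ near $x_d=0$ and supported in $[0,\delta)$ (so the extension vanishes near $\Sigma^\pm$ and $(\mathfrak{R}_T^\pm g)|_{\Sigma}=g$). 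For an admissible multi-index $\alpha$ one rescales $x_d\mapsto\langle\xi\rangle^2 x_d$ and, using $\omega(x_d)^{\alpha_{d+1}}\lesssim x_d^{\alpha_{d+1}}$ so that each factor $\omega\p_d$ effectively contributes one power of $\langle\xi\rangle$ rather than two, obtains
\[
\big\|\p_t^{\alpha_0}(\omega\p_d)^{\alpha_{d+1}}\p_{x'}^{\alpha'}\p_d^{\alpha_d}\mathfrak{R}_T^\pm g\big\|_{L^2(\R^d_+)}^2\lesssim\int_{\R^d}\langle\xi\rangle^{2(\len{\alpha}-1)}|\hat g(\xi)|^2\,d\xi\le|g|_{H^m(\R^d)}^2\qquad(\len{\alpha}\le m+1),
\]
the terms in which $\p_d$ falls on $\chi'$ being supported away from $x_d=0$, hence rapidly decaying in $\xi$ and negligible. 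Summing over frequencies, restoring the partition of unity and restricting back to $t<T$ gives the desired operator with $\|\mathfrak{R}_T^\pm g\|_{m+1,*,T,\pm}\lesssim|g|_{m,T}$ and $(\mathfrak{R}_T^\pm g)|_{\Sigma}=g$. The main obstacle is precisely this construction: one has to recognize that, because a normal derivative carries anisotropic weight $2$ while $\omega\p_d$ carries weight $1$, the extension must be localized at the anisotropic scale $x_d\sim\langle D_y\rangle^{-2}$, and then verify that this single choice simultaneously controls \emph{all} mixed derivatives $(\omega\p_d)^{\alpha_{d+1}}\p_d^{\alpha_d}\p_y^{\alpha'}$ of order $\len{\alpha}\le m+1$; the vanishing $\omega(x_d)\lesssim x_d$ at $\Sigma$ is used crucially to absorb the extra powers of $\langle\xi\rangle$ generated by the $\omega\p_d$ factors. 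Part (1) and the gluing steps are routine once the vector-valued trace theorem and the interpolation identity $\big(H^{m+1},H^{m-1}\big)_{1/2,2}=H^m$ are in hand.
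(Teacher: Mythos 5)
Your proof is correct. The paper itself only cites Appendix~B of the companion \cite{Zhang2023CMHDVS1} for this lemma, so I assess your argument on its own merits against the standard anisotropic-Sobolev literature (Ohno--Shizuta--Yanagisawa, Secchi, Chen). For part~(1), invoking the Lions--Magenes vector-valued trace theorem through the embedding $H_*^{m+1}\hookrightarrow L^2_{x_d}(H^{m+1}_y)\cap H^1_{x_d}(H^{m-1}_y)$ together with $(H^{m+1},H^{m-1})_{1/2,2}=H^m$ is a clean repackaging of the direct Fourier-analytic argument (bounding $|\hat f(\xi,0)|$ via the fundamental theorem of calculus in $x_d$ and Cauchy--Schwarz) more commonly seen in that literature; the two are equivalent in substance, and the interpolation wrapper yields a shorter proof. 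For part~(2), you correctly identify the decisive scaling $x_d\sim\langle D_y\rangle^{-2}$, and the kernel $e^{-x_d\langle D_y\rangle^2}$ is the standard extension at this scale. One minor imprecision: in the $x_d$-integration, a factor $\omega\p_d$ with $\omega\lesssim x_d$ contributes \emph{zero} net powers of $\langle\xi\rangle$ rather than one (the $\langle\xi\rangle^2$ from $\p_d$ is absorbed by the extra $x_d$ upon integrating against $e^{-2x_d\langle\xi\rangle^2}$), so the sharp bound is $\lesssim\int\langle\xi\rangle^{2(\len{\alpha}-\alpha_{d+1}-1)}|\hat g|^2\,d\xi$; your displayed bound is a harmless over-estimate that still closes since $\len{\alpha}-1\le m$. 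Note also that this paper's $\omega(x_d)=(H^2-x_d^2)x_d^2$ in fact vanishes quadratically at $\Sigma$, which only strengthens the inequality $\omega\lesssim x_d$ you use and has no effect on part~(1), which invokes no $(\omega\p_d)$-derivatives.
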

\begin{proof}
See \cite[Appendix B]{Zhang2023CMHDVS1}.
\end{proof}
There is one derivative loss in the above trace lemma, which is 1/2-order more than the trace lemma for standard Sobolev spaces. Indeed, for $\Om^\pm$ defined in this paper, we have the following estimate that will be applied to control the non-characteristic variables $q, v\cdot\NN$ and $b\cdot\NN$.
\begin{lem}[An estimate for traces of non-characteristic variables]\label{nctrace}
Let $\Om^\pm:=\T^{d-1}\times\{0\lessgtr  x_d\lessgtr \pm H\}$, $\Sigma=\T^{d-1}\times\{x_d=0\}$ and $\Sigma^\pm=\T^{d-1}\times\{\pm H\}$. Let $\TT^\alpha=(\omega(x_d)\p_d)^{\alpha_{d+1}}\p_t^{\alpha_0}\TP_1^{\alpha_1}\cdots\TP_{d-1}^{\alpha_{d-1}}\p_d^{\alpha_d}$ with $\len{\alpha}:=\alpha_0+\cdots+\alpha_{d-1}+2\alpha_d+\alpha_{d+1}=m-1,~m\in\N^*$. Let $q^\pm(t,x)\in H_*^m(\Om)$ satisfy $\|q^\pm(t)\|_{m,*,\pm}+\|\p_d q^\pm(t)\|_{m-1,*,\pm}<\infty$ for any $0\leq t\leq T$ and let $f^\pm\in H_*^{2}(\Om^\pm)\cap H^{\frac32}(\Om^\pm)$ be a function vanishing on $\Sigma^\pm$. Then we have
\begin{equation}\label{q IBP}
\is (\jp^{\frac12}\TT^\gamma q^\pm) \, (\jp f^\pm)\dx'\leq (\|\p_d q^\pm\|_{m-1,*,\pm}+\|q^\pm\|_{m,*,\pm})\|\jp^{\frac12} f^\pm\|_{1,\pm}
\end{equation}
In particular, for $s\geq 1$, we have the following inequality for any $g^\pm\in H_*^s(\Om^\pm)$ with $g^\pm|_{\Sigma^\pm}=0$.
\[
|g^\pm|_{s-1/2}^2\leq \|\jp^s g^\pm\|_{0,\pm}\|\jp^{s-1}\p_d g^\pm\|_{0,\pm}\leq \|g^\pm\|_{s,*,\pm}\|\p_dg^\pm\|_{s-1,*,\pm}.
\]
\end{lem}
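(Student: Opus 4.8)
The plan is to reduce both estimates to interior integrals via the fundamental theorem of calculus in $x_d$, using that $f^\pm$ (resp.\ $g^\pm$) vanishes on the rigid boundary $\Sigma^\pm$, and then to distribute the tangential half-powers of $\jp$ so that no factor exceeds the norms appearing on the right-hand side.

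For \eqref{q IBP}, I would first note that since $\omega(x_d)$ vanishes on $\Sigma$ we have $\TT^\gamma q^\pm|_\Sigma=0$ whenever $\gamma_{d+1}\ge1$; hence it suffices to treat $\gamma_{d+1}=0$, in which case $\TT^\gamma$ is built only from $\p_t,\TP_1,\dots,\TP_{d-1},\p_d$, commutes with $\p_d$, and has anisotropic order $\len{\gamma}=m-1$. Setting $F:=(\jp^{\frac12}\TT^\gamma q^\pm)(\jp f^\pm)$ --- with $\jp$ acting tangentially, hence commuting with $\p_d$ --- the identity $\is F\dx'=\mp\iopm\p_d F\dx$ (exact, as $f^\pm|_{\Sigma^\pm}=0$) gives, after a routine density argument,
\begin{align*}
\is (\jp^{\frac12}\TT^\gamma q^\pm)(\jp f^\pm)\dx'
&=\mp\iopm (\jp^{\frac12}\p_d\TT^\gamma q^\pm)(\jp f^\pm)\dx\mp\iopm (\jp^{\frac12}\TT^\gamma q^\pm)(\jp\p_d f^\pm)\dx\\
&=:\mp(\mathrm{A}+\mathrm{B}).
\end{align*}
In $\mathrm{A}$ I would move the self-adjoint tangential multiplier $\jp^{\frac12}$ entirely onto the $f$-factor, $\mathrm{A}=\iopm(\p_d\TT^\gamma q^\pm)(\jp^{\frac32}f^\pm)\dx$, and apply Cauchy--Schwarz; here $\p_d\TT^\gamma q^\pm=\TT^\gamma(\p_d q^\pm)$ with $\TT^\gamma$ of anisotropic order $m-1$, so $\|\p_d\TT^\gamma q^\pm\|_{0,\pm}\le\|\p_d q^\pm\|_{m-1,*,\pm}$, while $\|\jp^{\frac32}f^\pm\|_{0,\pm}=\|\jp(\jp^{\frac12}f^\pm)\|_{0,\pm}\lesssim\|\jp^{\frac12}f^\pm\|_{1,\pm}$. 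In $\mathrm{B}$ I would instead move only one copy of $\jp^{\frac12}$ from the $f$-factor to the $q$-factor, $\mathrm{B}=\iopm(\jp\TT^\gamma q^\pm)(\jp^{\frac12}\p_d f^\pm)\dx$, and bound $\|\jp\TT^\gamma q^\pm\|_{0,\pm}\lesssim\|q^\pm\|_{m,*,\pm}$ (since $\jp\TT^\gamma$ has anisotropic order $\le m$) together with $\|\jp^{\frac12}\p_d f^\pm\|_{0,\pm}=\|\p_d(\jp^{\frac12}f^\pm)\|_{0,\pm}\le\|\jp^{\frac12}f^\pm\|_{1,\pm}$. Adding $\mathrm{A}$ and $\mathrm{B}$ yields \eqref{q IBP}.

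For the ``in particular'' inequality I would argue directly on the Fourier side in $x'$: with $\hat g^\pm(\xi,x_d)$ the tangential Fourier coefficients and using $g^\pm|_{\Sigma^\pm}=0$, the fundamental theorem of calculus gives the exact pointwise bound $|\hat g^\pm(\xi,0)|^2\le 2\int|\hat g^\pm(\xi,x_d)|\,|\p_d\hat g^\pm(\xi,x_d)|\,\mathrm{d}x_d$. Multiplying by $\langle\xi\rangle^{2s-1}=\langle\xi\rangle^{s}\langle\xi\rangle^{s-1}$, summing over $\xi$ and integrating in $x_d$, and applying Cauchy--Schwarz in $(\xi,x_d)$, I obtain $|g^\pm|_{s-\frac12}^2\le 2\|\jp^s g^\pm\|_{0,\pm}\|\jp^{s-1}\p_d g^\pm\|_{0,\pm}$; the two factors are then dominated by $\|g^\pm\|_{s,*,\pm}$ and $\|\p_d g^\pm\|_{s-1,*,\pm}$ respectively because each tangential derivative costs one unit in the anisotropic norm (interpolating for non-integer $s$).

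The one delicate point --- and the reason a naive trace inequality does not suffice --- is the asymmetric distribution of the half-derivatives. Keeping the extra $\jp$ on the $q$-side in $\mathrm{A}$ would force it onto $\p_d\TT^\gamma q$ and hence require a bound by $\|\p_d q\|_{m,*}$, which is not among the controlled quantities; the $\jp$ must instead be absorbed into $\|\jp^{\frac32}f\|_0\lesssim\|\jp^{\frac12}f\|_1$. Likewise, in the ``in particular'' part, the symmetric split $\langle\xi\rangle^{2s-1}=\langle\xi\rangle^{s-\frac12}\langle\xi\rangle^{s-\frac12}$ would produce $\|\jp^{s-\frac12}\p_d g\|_0$, which is not controlled by $H_*^s$; the asymmetric split $\langle\xi\rangle^{s}\langle\xi\rangle^{s-1}$ is what keeps the $\p_d g$-factor at only $s-1$ tangential derivatives. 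Once this bookkeeping is arranged, the reduction to $\gamma_{d+1}=0$ removes all commutators (as $\p_d$ commutes with $\p_t,\TP_i,\p_d$), and everything else is Cauchy--Schwarz together with the elementary bound $\|\jp u\|_{0,\pm}\lesssim\|u\|_{1,\pm}$.
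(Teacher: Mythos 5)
The paper itself does not give a proof of this lemma (it refers to Appendix B of the companion paper \cite{Zhang2023CMHDVS1}), so a line-by-line comparison is not possible; but your argument is correct and is the natural proof of this kind of non-characteristic trace estimate. The reduction to $\gamma_{d+1}=0$, the fundamental-theorem-of-calculus identity, and especially the \emph{asymmetric} distribution of the tangential half-powers of $\jp$ in the two terms A and B --- keeping $\p_d q^\pm$ at anisotropic level $m-1$ and the undifferentiated $q^\pm$ at level $m$ --- are exactly the right moves, and your Fourier-side derivation of the second inequality uses the same asymmetric split. The only cosmetic point is that the argument produces harmless absolute constants (e.g.\ a factor $2$ from $\p_d|\hat g|^2 = 2\,\textbf{Re}(\bar{\hat g}\,\p_d\hat g)$ and a constant in $\|\jp u\|_{0,\pm}\lesssim\|u\|_{0,\pm}+\|\TP u\|_{0,\pm}$), so the inequalities should be read with an implicit universal constant as elsewhere in the paper.
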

\begin{proof}
See \cite[Appendix B]{Zhang2023CMHDVS1}.
\end{proof}

The following lemma concerns the Sobolev embeddings.
\begin{lem}\label{embedding}
We have the following inequalities
\begin{align*}
H^m(\Omega^\pm)\hookrightarrow H_*^m(\Omega^\pm)\hookrightarrow& H^{\lfloor m/2\rfloor}(\Omega^\pm),~~\forall m\in\N^*\\
\|u\|_{L^{\infty}(\Omega^\pm)}\lesssim\|u\|_{H_*^3(\Omega^\pm)},~~&\|u\|_{W^{1,\infty}(\Omega^\pm)}\lesssim\|u\|_{H_*^5(\Omega^\pm)},~~|u|_{W^{1,\infty}(\Omega^\pm)}\lesssim\|u\|_{H_*^{5}(\Omega^\pm)}.
\end{align*}
\end{lem}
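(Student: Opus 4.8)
The plan is to separate the statement into (i) the chain of inclusions $H^m\hookrightarrow H_*^m\hookrightarrow H^{\lfloor m/2\rfloor}$, which is purely combinatorial, and (ii) the pointwise embeddings into $L^\infty$ and $W^{1,\infty}$, which require genuine anisotropic analysis in thin collars of the characteristic boundaries $\Sigma\cup\Sigma^\pm$.

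For (i): for the first inclusion, given $f\in H^m(\Om^\pm)$ and a multi-index $\alpha$ with $\lee\alpha\ree\le m$, expand $(\omega\p_d)^{\alpha_{d+1}}$ by the Leibniz rule; since $\omega=(H^2-x_d^2)x_d^2$ and all its derivatives are bounded on $[-H,H]$, $\p_*^\alpha f$ is a finite sum of terms $c(x_d)\p^\beta f$ with smooth bounded $c$ and $|\beta|\le \alpha_1+\cdots+\alpha_d+\alpha_{d+1}\le\lee\alpha\ree\le m$, hence in $L^2$, and summing gives $\|f\|_{m,*,\pm}\lesssim\|f\|_{m,\pm}$. For the second inclusion, any $\beta$ with $|\beta|\le\lfloor m/2\rfloor$ satisfies $\lee\beta\ree=\sum_{j<d}\beta_j+2\beta_d\le 2|\beta|\le m$, so $\p^\beta f$ (the choice $\alpha_{d+1}=0$, $\alpha=\beta$) already occurs in the definition of $\|f\|_{m,*,\pm}$; summing over such $\beta$ yields $\|f\|_{\lfloor m/2\rfloor,\pm}\lesssim\|f\|_{m,*,\pm}$.

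For (ii): fix a smooth partition of unity $1=\chi_{\rm int}+\chi_\Sigma+\chi_++\chi_-$ on $\overline{\Om^\pm}$ with $\chi_{\rm int}$ supported in $\{\omega(x_d)\ge\kappa\}$ and $\chi_\Sigma,\chi_\pm$ supported in thin collars of $\Sigma$ and $\Sigma^\pm$. On the support of $\chi_{\rm int}$ the weight $\omega$ is smooth and bounded below, so $(\omega\p_d)^j$ is an order-$j$ operator in $\p_d$ whose leading coefficient $\omega^j$ is bounded below; since $\omega$ depends only on $x_d$, multiplication by $\omega^{-j}$ commutes with $\p_{x'}$, and an induction on the number of normal derivatives recovers all plain derivatives of order $\le m$ from $\{\p_*^\alpha:\lee\alpha\ree\le m\}$, i.e.\ $\|\chi_{\rm int}f\|_{m,\pm}\simeq\|\chi_{\rm int}f\|_{m,*,\pm}$. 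Thus on this piece the classical Sobolev embeddings $H^3(\R^3)\hookrightarrow L^\infty$, $H^5(\R^3)\hookrightarrow W^{1,\infty}$ finish the bound. On each collar one passes to a model domain $\T^2\times(0,1)$ with $\omega$ comparable to a fixed power of the distance to the boundary (quadratic near $\Sigma$, linear near $\Sigma^\pm$ for the stated $\omega$), and proves the model inequalities $\|f\|_{L^\infty}\lesssim\|f\|_{H_*^3}$ and $\|f\|_{W^{1,\infty}}\lesssim\|f\|_{H_*^5}$; these are precisely the anisotropic embedding/trace theory of Shu-Xing Chen \cite{ChenSX} (see also the references in \cite{Zhang2023CMHDVS1,WZ2023CMHDlimit}). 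The mechanism is: decompose the collar dyadically in the normal variable into slabs on which $x_d$ and $\omega$ have constant size, rescale each slab to unit thickness so that $\omega\p_d$ becomes a non-degenerate vector field, apply a scale-invariant anisotropic Gagliardo--Nirenberg inequality on each rescaled unit slab, and sum the resulting bounds in $\ell^2$ over dyadic scales; the bookkeeping of the powers of $2$ produced by the rescaling closes exactly when the anisotropic order is $\ge d=3$ for $L^\infty$ and $\ge d+2=5$ for each further $W^{1,\infty}$-derivative, and is insensitive to whether $\omega$ vanishes linearly or quadratically. Finally the last assertion $|u|_{W^{1,\infty}(\Om^\pm)}\lesssim\|u\|_{H_*^5}$ is the same statement (or, if it is read as a boundary seminorm, it follows from it together with the trace inequality of Lemma \ref{trace}).

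The main obstacle is the near-boundary anisotropic embedding in (ii): because $H_*^m$ controls only a single plain normal derivative together with the weighted fields $(\omega\p_d)^j$, one cannot argue by a tensor-product / iterated one-dimensional Sobolev argument — that route loses one derivative and would give only $H_*^4\hookrightarrow L^\infty$ in three dimensions, not the sharp $H_*^3$. The dimension-sharp route is the dyadic-rescaling plus anisotropic Gagliardo--Nirenberg argument above, whose delicate point is to verify that the rescaled $H^m$ norm on each unit slab is dominated by the $H_*^m$ norm on that slab with summable constants; this is exactly where the precise weighting $2\alpha_d+\alpha_{d+1}$ in $\lee\alpha\ree$ and the finite vanishing order of $\omega$ are used.
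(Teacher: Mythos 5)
Your part (i) is correct and essentially the only way to argue it (Leibniz expansion of $(\omega\p_d)^{\alpha_{d+1}}$ for the first inclusion, and the observation $\lee\beta\ree\le 2|\beta|$ for the second), and your interior reduction in (ii) is fine. The gap is in the near-boundary half of (ii), which is the entire content of the lemma; note the paper itself states this lemma without proof, as standard anisotropic embedding theory, so your collar argument has to stand on its own — and as sketched it does not close. The central claim of your dyadic scheme, that the rescaled isotropic $H^m$ norm on each unit slab is dominated by the $H_*^m$ norm on the corresponding slab ``with summable constants,'' is false rather than merely delicate. On a slab $I_j=\{x_3\sim 2^{-j}\}$ rescaled to unit thickness, every $L^2$ norm picks up the Jacobian factor $2^{j/2}$; purely tangential terms gain nothing from the rescaling, so e.g. $\|\p_{x'}^{\beta'}v_j\|_{L^2(\mathrm{unit\ slab})}=2^{j/2}\|\p_{x'}^{\beta'}u\|_{L^2(I_j)}$, which is not bounded by the local anisotropic norm uniformly in $j$; and plain normal derivatives of order $\ge 2$ are not controlled on $I_j$ at the required strength at all, since there $(\omega\p_3)^k\approx 2^{-kj}\p_3^k$ (indeed $2^{-2kj}$ near $\Sigma$, where $\omega$ vanishes quadratically), i.e. the weighted fields become weaker, not nondegenerate, after rescaling. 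The additive slab-wise Sobolev/Agmon inequality likewise carries the non-uniform factor $|I_j|^{-1/2}$, and an $L^\infty$ bound requires a supremum over $j$, not an $\ell^2$ sum, so there is no summation device that absorbs these growing factors. In short, the one step you yourself identify as the crux is exactly the step that fails, and deferring it to ``the theory of \cite{ChenSX}'' is not a proof.

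Moreover, the premise that motivates your detour — that an iterated one-dimensional/mixed-norm argument loses a derivative and would only give $H_*^4\hookrightarrow L^\infty$ — is incorrect, provided the normal variable is treated globally on $(0,\pm H)$ rather than slab by slab. Expanding $u=\sum_{\xi'}\hat u(\xi',x_3)e^{ix'\cdot\xi'}$ and using the one-dimensional Agmon inequality $\sup_{x_3}|\hat u(\xi',x_3)|^2\lesssim \|\hat u(\xi',\cdot)\|_{L^2_{x_3}}\big(\|\p_3\hat u(\xi',\cdot)\|_{L^2_{x_3}}+\|\hat u(\xi',\cdot)\|_{L^2_{x_3}}\big)$ on the whole interval, followed by Cauchy--Schwarz in $\xi'$ with the weight $\lee\xi'\ree^{2+\eps}$ split as $(1+\eps)+1$, yields $\|u\|_{L^\infty(\Om^\pm)}\lesssim \|u\|_{L^2_{x_3}H^2_{x'}}^{1/2}\|\p_3 u\|_{L^2_{x_3}H^1_{x'}}^{1/2}+\|u\|_{L^2_{x_3}H^2_{x'}}$, and every norm on the right involves only $\p_{x'}^{\le 2}u$ and $\p_3\p_{x'}^{\le 1}u$, i.e. anisotropic order $\le 3$; applying the same inequality to $\p_{x'}u$ and to $\p_3 u$ (worst term $\p_3^2\p_{x'}u$, order $5$) gives $\|u\|_{W^{1,\infty}}\lesssim\|u\|_{H_*^5}$, and the boundary $W^{1,\infty}$ bound follows since the argument gives continuity up to $\Sigma\cup\Sigma^\pm$. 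This route needs no partition into collars, no dyadic decomposition, and never touches $\omega$, so it is genuinely insensitive to the vanishing order of the weight — which your bookkeeping claim asserted but could not deliver. To repair your proposal, replace the rescaling mechanism by this global-in-normal mixed-norm argument (which is the standard proof behind the quoted embedding), or else exhibit a slab-wise multiplicative inequality in which all powers of $2^{j}$ actually cancel; the sketch as written does neither.
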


We also need the following Kato-Ponce type multiplicative Sobolev inequality.
\begin{lem}[{\cite{KatoPonce1988}}]\label{KatoPonce}
Let $J=(1-\Delta)^{1/2}$, $s\geq 0$. Then the following estimates hold:
\begin{equation}\label{product}
\|J^s(fg)\|_{L^2}\lesssim \|f\|_{W^{s,p_1}}\|g\|_{L^{p_2}}+\|f\|_{L^{q_1}}\|g\|_{W^{s,q_2}},
\end{equation}where $1/2=1/p_1+1/p_2=1/q_1+1/q_2$ and $2\leq p_1,q_2<\infty$.
\begin{equation}\label{commutator}
\|[J^s,f]g\|_{L^p}\lesssim \|\p f\|_{L^{\infty}}\|J^{s-1}g\|_{L^p}+\|J^s f\|_{L^p}\|g\|_{L^{\infty}}
\end{equation}where  $s\geq 0$ and $1<p<\infty$.
\end{lem}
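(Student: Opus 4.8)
The two displayed inequalities are the classical fractional Leibniz rule and its commutator counterpart, and the plan is to prove them by the Littlewood--Paley / Bony paraproduct calculus, following the scheme of Kato--Ponce \cite{KatoPonce1988}; since the lemma is applied only on the torus $\T^d$ and the torus proof is identical to the one on $\R^d$, I would work with the periodic dyadic decomposition without further comment. I would first set up the standard toolbox: the dyadic projections $\Delta_j$ with partial sums $S_j=\sum_{k\le j}\Delta_k$, Bernstein's inequalities, the pointwise domination $|S_j u|\lesssim Mu$ by the Hardy--Littlewood maximal function, the Fefferman--Stein vector-valued maximal inequality on $L^p$ for $1<p<\infty$, and the square-function characterisation $\|u\|_{W^{s,p}}\sim\|u\|_{L^p}+\big\|(\sum_j 2^{2js}|\Delta_j u|^2)^{1/2}\big\|_{L^p}$, valid for $1<p<\infty$ via the Mikhlin--H\"ormander multiplier theorem. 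Then I would split the product by the Bony decomposition $fg=T_fg+T_gf+R(f,g)$, where $T_fg=\sum_j S_{j-2}f\,\Delta_j g$ is low--high, $T_gf$ is high--low, and $R(f,g)=\sum_{|j-k|\le1}\Delta_j f\,\Delta_k g$ is high--high; the key frequency facts are that $S_{j-2}f\,\Delta_j g$ is supported in an annulus $|\xi|\sim2^j$ while $\Delta_j f\,\Delta_k g$ with $|j-k|\le1$ is supported only in a ball $|\xi|\lesssim2^j$.

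For the product estimate \eqref{product}, I would apply $J^s$ to each paraproduct and place the resulting derivatives on the more singular factor: on $T_gf$ and $R(f,g)$ put them on $f$ and, using the frequency localisations, Bernstein, H\"older with $1/2=1/p_1+1/p_2$, the maximal-function bound on the $S_j$, and the square-function characterisation (with Fefferman--Stein to resum), obtain $\|J^sT_gf\|_{L^2}+\|J^sR(f,g)\|_{L^2}\lesssim\|f\|_{W^{s,p_1}}\|g\|_{L^{p_2}}$; here the high--high piece requires $s>0$ so that $\sum_{j\ge\ell}2^{(\ell-j)s}$ converges (the case $s=0$ being the elementary product rule). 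Symmetrically, putting the derivatives on $g$ in $T_fg$ gives $\|J^sT_fg\|_{L^2}\lesssim\|f\|_{L^{q_1}}\|g\|_{W^{s,q_2}}$. Adding the three contributions, together with the trivial bound $\|fg\|_{L^2}\le\|f\|_{L^{q_1}}\|g\|_{L^{q_2}}$, yields \eqref{product}.

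For the commutator estimate \eqref{commutator}, I would write $fJ^sg=T_f(J^sg)+T_{J^sg}f+R(f,J^sg)$ and subtract it from $J^s(fg)=J^sT_fg+J^sT_gf+J^sR(f,g)$, producing three pieces. In the low--high piece $J^sT_fg-T_f(J^sg)=\sum_j[J^s,S_{j-2}f]\Delta_j g$ one uses that $S_{j-2}f$ is frequency-localised at scales $\lesssim2^{j-2}$ while $\Delta_jg$ lives at scale $2^j$, so a first-order Taylor/symbol expansion of $[J^s,a]$ on that annulus gives the one-derivative gain $\|[J^s,S_{j-2}f]\Delta_jg\|_{L^p}\lesssim\|\nabla f\|_{L^\infty}2^{(s-1)j}\|\Delta_jg\|_{L^p}$, and resumming produces $\|\nabla f\|_{L^\infty}\|J^{s-1}g\|_{L^p}$. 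In the high--low piece $J^sT_gf-T_{J^sg}f$ the derivative genuinely sits on $f$ and no cancellation is needed, so both terms are $\lesssim\|g\|_{L^\infty}\|J^sf\|_{L^p}$. In the high--high piece $J^sR(f,g)-R(f,J^sg)$ the output frequency is at most the interacting frequency; moving $J^s$ onto $f$ and summing with Cauchy--Schwarz in the dyadic index (again using $s>0$ and $|j-k|\le1$) bounds it by $\|g\|_{L^\infty}\|J^sf\|_{L^p}$. Summing the three pieces gives \eqref{commutator}.

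The main obstacle is twofold and is precisely what the careful Kato--Ponce argument handles. First, in \eqref{product} the factors are measured in $L^{p_i}$ with $p_i$ possibly finite rather than in $L^\infty$, so one cannot merely bound the partial sums $S_j$ in $L^\infty$ and must route everything through $|S_ju|\lesssim Mu$ and the Fefferman--Stein inequality; this is where $1<p_i<\infty$ is genuinely used. Second, in \eqref{commutator} the delicate points are extracting the one-derivative gain from the commutator $[J^s,a]$ in the low--high interaction (which for $0<s<1$ is still read off the same Taylor expansion) and correctly resumming the low-frequency output of the high--high interactions; both rest on the commutator structure and on $s>0$, and the endpoints $p\in\{1,\infty\}$ are legitimately excluded, consistently with the hypothesis $1<p<\infty$.
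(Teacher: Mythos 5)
The paper does not prove this lemma; it is taken verbatim from the cited reference \cite{KatoPonce1988}, so there is no in-paper argument to compare against. Your paraproduct sketch is, as far as it goes, correct and is the standard modern route to both inequalities, but it differs in method from the original Kato--Ponce argument: Kato and Ponce work directly with the Bessel potential $J^s$, its integral representation, and Coifman--Meyer-type bilinear singular-integral theory, whereas you organize everything through the Bony decomposition $fg=T_fg+T_gf+R(f,g)$ and Littlewood--Paley square functions. What the paraproduct route buys is uniformity (the three interaction types are dispatched by the same small toolbox of Bernstein, Fefferman--Stein, and square-function equivalences) and transparency about where $s>0$ and $1<p<\infty$ enter; what the original route buys is a self-contained derivation that does not presuppose Triebel--Lizorkin machinery. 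Two places where your outline leans on facts worth spelling out: (i) in the high--high pieces of both \eqref{product} and \eqref{commutator}, naive $\ell^1$ summation over the output frequency lands you in $B^s_{p,1}$ rather than $W^{s,p}=F^s_{p,2}$, so one must run the Cauchy--Schwarz-in-$j$ plus Fefferman--Stein resummation you gesture at rather than simply adding the dyadic pieces; and (ii) in the low--high commutator piece, after the one-derivative gain $\|[J^s,S_{j-2}f]\Delta_jg\|_{L^p}\lesssim 2^{(s-1)j}\|\nabla f\|_{L^\infty}\|\Delta_jg\|_{L^p}$, you still need that each summand is frequency-localized near $|\xi|\sim 2^j$ (true because $S_{j-2}f\,\Delta_jg$ and $S_{j-2}f\cdot J^s\Delta_jg$ both are) in order to invoke the square-function equivalence $\|(\sum_j 4^{(s-1)j}|\Delta_jg|^2)^{1/2}\|_{L^p}\sim\|J^{s-1}g\|_{L^p}$, which for $s<1$ is a statement about a negative-order Sobolev space and relies on the Triebel--Lizorkin identification $W^{s-1,p}=F^{s-1}_{p,2}$ for $1<p<\infty$. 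Neither of these is a flaw in your plan, but they are the two points where a careless write-up would be wrong.
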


We also need the following transport-type estimate in order to close the uniform estimates for the nonlinear approximate system.
\begin{lem}\label{parabolic}
Let $f(t)\in W^{1,1}(0,T)$ and $g\in L^1(0,T)$ and $\kk>0$. Assume that $$f(t)+\kk f'(t)\leq g(t)~~\text{a.e. }t\in(0,T).$$ Then for any $t\in(0,T)$, 
\[
\sup_{\tau\in[0,t]}f(\tau)\leq f(0)+\mathop{\mathrm{ess~sup}}\limits_{\tau\in(0,t)}|g(\tau)|.
\]
\end{lem}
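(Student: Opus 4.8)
The plan is to treat the differential inequality $f+\kappa f'\le g$ by the classical integrating–factor (Duhamel/Grönwall) argument. First I would multiply the inequality by the positive weight $\kappa^{-1}e^{t/\kappa}$ and recognize the left‐hand side as an exact derivative:
\[
\frac{d}{dt}\left(e^{t/\kappa}f(t)\right)=\frac{1}{\kappa}e^{t/\kappa}\left(f(t)+\kappa f'(t)\right)\le \frac{1}{\kappa}e^{t/\kappa}g(t)\qquad\text{for a.e. }t\in(0,T).
\]
Since $f\in W^{1,1}(0,T)$ it agrees a.e. with an absolutely continuous function and $t\mapsto e^{t/\kappa}$ is smooth, so $t\mapsto e^{t/\kappa}f(t)$ is absolutely continuous on $[0,T]$ and the fundamental theorem of calculus legitimately applies to the displayed a.e. inequality.

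Next, fix $t\in(0,T)$, set $G:=\mathop{\mathrm{ess\,sup}}_{\tau\in(0,t)}|g(\tau)|$, and use $g(s)\le|g(s)|\le G$ for a.e. $s\in(0,t)$. Integrating the inequality above from $0$ to $\tau\le t$ gives
\[
e^{\tau/\kappa}f(\tau)-f(0)\le \frac{G}{\kappa}\int_0^{\tau}e^{s/\kappa}\,ds=G\left(e^{\tau/\kappa}-1\right),
\]
hence $f(\tau)\le e^{-\tau/\kappa}f(0)+G\left(1-e^{-\tau/\kappa}\right)$. Because $0\le 1-e^{-\tau/\kappa}\le 1$ and $e^{-\tau/\kappa}f(0)\le f(0)$ — the latter since $f$ is an energy functional, so $f(0)\ge 0$ — one concludes $f(\tau)\le f(0)+G$. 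Taking the supremum over $\tau\in[0,t]$ yields the claim.

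This argument presents no genuine obstacle; the only points worth care are the regularity bookkeeping (that $W^{1,1}$ regularity suffices to justify the fundamental theorem of calculus for the product $e^{t/\kappa}f(t)$, and that the a.e. differential inequality may be integrated against the nonnegative weight) and the use of $f(0)\ge 0$ in the final step, which is precisely the situation — closing the uniform energy estimates for the nonlinear approximate system — in which the lemma is invoked. If one wished to avoid the sign hypothesis entirely, the same computation gives the sharper bound $\sup_{[0,t]}f\le \max\{f(0),\,G\}$; the stated form $f(0)+\mathop{\mathrm{ess\,sup}}|g|$ is the convenient one for the downstream Grönwall closure.
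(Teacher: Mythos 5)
The paper states Lemma \ref{parabolic} without proof, so there is no written argument to compare against; I evaluate your proposal on its own. Your integrating-factor computation is correct: since $f\in W^{1,1}(0,T)$, the product $\tau\mapsto e^{\tau/\kappa}f(\tau)$ is absolutely continuous, the pointwise a.e.\ inequality may be multiplied by the positive weight $\kappa^{-1}e^{\tau/\kappa}$ and integrated, and you obtain
\[
f(\tau)\ \le\ e^{-\tau/\kappa}f(0)+\bigl(1-e^{-\tau/\kappa}\bigr)\,\mathop{\mathrm{ess~sup}}_{(0,t)}|g|\qquad\text{for all }\tau\in[0,t].
\]
Your observation that the final step uses $f(0)\ge 0$, which is not among the stated hypotheses, is a genuine and important catch: without it the conclusion can fail outright. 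Take $\kappa=1$, $g\equiv 0$, $f(\tau)=-e^{-\tau}$; then $f+f'=0\le g$ a.e., yet $\sup_{[0,t]}f=-e^{-t}>-1=f(0)+\mathop{\mathrm{ess~sup}}_{(0,t)}|g|$ for every $t>0$. In the paper's use $f$ is a nonnegative energy functional, so the restriction is harmless, but it should have been stated as a hypothesis. The sharper bound you record as a byproduct, $\sup_{[0,t]}f\le\max\{f(0),\ \mathop{\mathrm{ess~sup}}_{(0,t)}|g|\}$, follows because the right-hand side of the displayed inequality is a convex combination of $f(0)$ and $\mathop{\mathrm{ess~sup}}|g|$; it needs no sign assumption and is the cleaner statement of the lemma.
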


\section{Paraproducts and the Dirichlet-to-Neumann operator}\label{sect app para}
\subsection{Bony's paraproduct decomposition}
We already introduce the paradifferential operator in Section \ref{sect para pre}. Here we present the relations between paradifferential operators and paraproducts. The cutoff function $\tilde{\chi}(\xi,\eta)$ in the definition of $T_a u$ is 
\[
\tilde{\chi}(\xi,\eta)=\sum_{k=0}^{\infty}\Theta_{k-3}(\xi)\vartheta(\eta),
\]where $\Theta(\xi)=1$ when $|\xi|\leq 1$ and $\Theta(\xi)=0$ when $|\xi|\geq 2$ and 
\[
\Theta_k(\xi):=\Theta(\frac{\xi}{2}),~~k\in\Z,\quad \vartheta_0=\Theta,\quad \vartheta_k:=\Theta_k-\Theta_{k-1},~~k\geq 1.
\] Based on this, we can introduce the Littlewood-Paley projections $\LP_k$ and $\LP_{\leq k}$ as follows
\begin{align*}
\widehat{\LP_k u}(\xi):=\vartheta_k(\xi)\hat{u}(\xi),~~\forall k\geq 0,\quad \LP_k u:=0~~\forall k<0,\quad \LP_{\leq k} u:=\sum_{l\leq k}\LP_l u.
\end{align*} When the symbol $a(x,\xi)$ (in the paradifferential operator $T_a$) does not depend on $\xi$, we can take $\psi(\eta)\equiv 1$ and then we have $$T_a u=\sum_k \LP_{\leq k-3} a(\LP_k u)$$ which is the usual Bony's paraproduct. In general, the well-known Bony's paraproduct decomposition is
\[
au=T_a u+T_u a+R(u,a),\quad R(u,a)=\sum_{|k-l|\leq 2}(\LP_k a) (\LP_l u).
\]

We have the following estimates for the remainder $R(u,a)$
\begin{lem}[{\cite[Section 2.3]{ABZ2014IWW}}]
For $s\in\R,~r<d/2,~\delta>0$, we have \[
|T_a u|_{H^s}\lesssim \min\{|a|_{L^{\infty}}|u|_{H^s},|a|_{H^r}|u|_{H^{s+\frac{d}{2}-r}},|a|_{H^{\frac{d}{2}}}|u|_{H^{s+\delta}}\}
\]and for any $s>0,s_1,s_2\in\R$ satisfying $s_1+s_2=s+\frac{d}{2}$, we have
\[
|R(u,a)|_{H^s}\lesssim |a|_{H^{s_1}}|u|_{H^{s_2}}.
\]
\end{lem}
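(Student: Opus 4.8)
The plan is to prove both estimates by the classical Littlewood--Paley argument underlying Bony's calculus. Since the symbol $a$ here is independent of $\xi$, one has $T_a u = \sum_{j\geq 0}(\LP_{\leq j-3}a)(\LP_j u)$, and everything reduces to two facts: (i) each block $(\LP_{\leq j-3}a)(\LP_j u)$ has Fourier spectrum in an annulus $\{|\xi|\sim 2^j\}$, so by spectral almost-orthogonality $|T_a u|_{H^s}^2 \sim \sum_j 2^{2js}\,|(\LP_{\leq j-3}a)(\LP_j u)|_{L^2}^2$ for every $s\in\R$; and (ii) Bernstein's inequality $|\LP_l a|_{L^\infty}\lesssim 2^{ld/2}|\LP_l a|_{L^2}$, whence, writing $\theta:=d/2-r>0$, $|\LP_{\leq j-3}a|_{L^\infty}\lesssim \sum_{l\leq j-3}2^{l\theta}\,(2^{lr}|\LP_l a|_{L^2})\lesssim 2^{j\theta}|a|_{H^r}$ by summing the geometric series.

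For the first bound in the minimum I would simply use $|(\LP_{\leq j-3}a)(\LP_j u)|_{L^2}\leq |\LP_{\leq j-3}a|_{L^\infty}|\LP_j u|_{L^2}\lesssim |a|_{L^\infty}|\LP_j u|_{L^2}$ and sum via (i). For the second, substituting the estimate from (ii) turns the dyadic weight $2^{js}$ into $2^{j(s+d/2-r)}$, which reassembles into $|a|_{H^r}|u|_{H^{s+d/2-r}}$. For the third (the borderline case $r=d/2$), the geometric sum in (ii) degenerates and Cauchy--Schwarz gives only $|\LP_{\leq j-3}a|_{L^\infty}\lesssim j^{1/2}|a|_{H^{d/2}}$; since $j^{1/2}\lesssim_\delta 2^{j\delta}$ for $j\geq 0$, the same summation now produces $|u|_{H^{s+\delta}}$, which is exactly why the $\delta$-loss is unavoidable there.

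For the remainder, write $R(u,a)=\sum_j R_j$ with $R_j:=\sum_{|j-l|\leq 2}(\LP_j a)(\LP_l u)$. The crucial difference is that $R_j$ is spectrally supported only in a \emph{ball} $\{|\xi|\lesssim 2^j\}$, not an annulus; consequently one only has the one-sided inequality $|\sum_j R_j|_{H^s}^2\lesssim \sum_j 2^{2js}|R_j|_{L^2}^2$, valid precisely for $s>0$ — this is the one genuinely structural point. Using $|R_j|_{L^2}\lesssim 2^{jd/2}|\LP_j a|_{L^2}\sum_{|l-j|\leq 2}|\LP_l u|_{L^2}$ (Bernstein plus $2^{ld/2}\sim 2^{jd/2}$ on the diagonal band), one obtains $2^{js}|R_j|_{L^2}\lesssim (2^{js_1}|\LP_j a|_{L^2})\sum_{|l-j|\leq 2}(2^{ls_2}|\LP_l u|_{L^2})$ for any splitting $s_1+s_2=s+d/2$; squaring, summing in $j$, and bounding the finite diagonal $u$-factor by $|u|_{H^{s_2}}^2$ uniformly in $j$ yields $|R(u,a)|_{H^s}\lesssim |a|_{H^{s_1}}|u|_{H^{s_2}}$. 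The only real obstacles are bookkeeping ones: getting the sign of $\theta=d/2-r$ right in the geometric sum, the $j^{1/2}\!\rightsquigarrow\! 2^{j\delta}$ trick at the borderline, and remembering that the remainder blocks sit on balls so that $s>0$ is mandatory. Since the statement is quoted from \cite{ABZ2014IWW}, in the paper the proof is the citation itself; the sketch above records the standard argument.
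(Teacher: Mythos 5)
The paper itself does not prove this lemma; it is cited verbatim from \cite{ABZ2014IWW} (Section 2.3), so there is no in-paper argument to compare against. Your Littlewood--Paley sketch is the standard proof and is correct: the annulus localization of the paraproduct blocks $(\LP_{\leq j-3}a)(\LP_j u)$ gives the $H^s$-equivalence for all $s\in\R$; Bernstein with a geometric sum in $\theta=d/2-r>0$ (or, at the endpoint $r=d/2$, Cauchy--Schwarz followed by $j^{1/2}\lesssim_{\delta}2^{j\delta}$) yields the three bounds for $T_a$; and the remainder blocks sit only in dyadic balls, so the one-sided $H^s$-estimate is available exactly for $s>0$ --- these are the correct structural points, and your bookkeeping with the splitting $s_1+s_2=s+d/2$ is right.
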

\subsection{Basic properties of the Dirichlet-to-Neumann operator}
Let the space dimension $d=3$ for simplicity. Given a function $f:\Sigma=\T^2\to\R$, we define the Dirichlet-to-Neumann (DtN) operator (with respect to $\psi$ and region $\Om^\pm$) by 
\[
\dnpm f:= \mp N\cdot\nabp (\HE_\psi ^\pm f)|_{\Sigma},\quad -\lapp (\HE_\psi^\pm f)=0\text{ in }\Om^\pm,~~\HE_\psi^\pm f|_{\Sigma}=f,~~\p_3(\HE_\psi^\pm f)|_{\Sigma^\pm}=0.
\]Here the Laplacian operator is defined by $\lapp:=\nabp\cdot\nabp = \p_i(\bm{E}^{ij}\p_j)$ with 
\[\bm{E}=\frac{1}{\p_3 \vp}
\begin{bmatrix}
\p_3 \vp&0&-\TP_1\vp\\
0&\p_3\vp&-\TP_2\vp\\
-\TP_1\vp&-\TP_2\vp&\frac{1+|\cnab\vp|^2}{\p_3\vp}
\end{bmatrix} = \frac{1}{\p_3\vp}\bm{P}\bm{P}^\top,~~
\bm{P}:=\begin{bmatrix}
\p_3 \vp&0&0\\
0&\p_3\vp&0\\
-\TP_1\vp&-\TP_2\vp&1
\end{bmatrix},
\] and $\vp(t,x):=x_3+\chi(x_3)\psi(t,x')$ is defined in \eqref{change of variable vp} as the extension of $\psi$ into $\Om^\pm$. The choice of $\chi(x_3)$ is slightly different from \cite{ABZ2011IWWST,ABZ2014IWW,AMDtN}, but it does not introduce any substantial difference because the expression of $\lapp$ is still written to be $\lapp:=\nabp\cdot\nabp = \p_i(\bm{E}^{ij}\p_j)$ and we have $\lapp \vp=0$ in $\Om^\pm$. The DtN operators satisfy the following estimates and we refer to \cite[Appendix A.4]{SWZ2015MHDLWP} for the proof.

\begin{lem}[Sobolev estimates for DtN operators]\label{lem DtNHs}
For $s>2+\frac{d}{2},~-\frac12\leq r\leq s-1$ and $\psi\in H^{s}(\T^d)$, we have $$|\dnpm f|_r  \leq C(|\psi|_s)|f|_{r+1}.$$
\end{lem}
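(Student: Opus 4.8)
The plan is to deduce the estimate from an elliptic regularity bound for the harmonic extension $u^\pm:=\HE_\psi^\pm f$ followed by a trace argument. Since $\dnpm f=\mp N\cdot\nabp u^\pm|_{\Sigma}$, once we know
\begin{equation}\label{DtNelliptic}
\|u^\pm\|_{H^{r+\frac32}(\Om^\pm)}\leq C(|\psi|_s)\,|f|_{r+1},\qquad -\tfrac12\leq r\leq s-1,
\end{equation}
the conormal field $N\cdot\nabp u^\pm$ lies in $H^{r+\frac12}(\Om^\pm)$ — using that its coefficients are smooth functions of $\cnab\vp\in H^{s-1}\hookrightarrow W^{1,\infty}$ and that the divergence form of $\lapp$ yields the right regularity of the conormal trace at top order — and the standard trace theorem $H^{r+\frac12}(\Om^\pm)\to H^{r}(\Sigma)$ closes the argument. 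Thus everything reduces to proving \eqref{DtNelliptic}.

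First I would establish \eqref{DtNelliptic} at the base level $r=-\tfrac12$. The operator $\lapp=\p_i(\bm{E}^{ij}\p_j)$ with $\bm{E}=(\p_3\vp)^{-1}\bm{P}\bm{P}^\top$ is uniformly elliptic on $\Om^\pm$ because $\p_3\vp\ge\tfrac12$; after subtracting an $H^{1}(\Om^\pm)$-extension of $f$ (for instance via $\mathfrak{R}_T^\pm$ from Lemma \ref{trace}, or the standard bounded extension) we reduce to a zero-Dirichlet/zero-Neumann problem with explicit source, and Lax--Milgram plus the energy identity give $\|u^\pm\|_{H^1(\Om^\pm)}\leq C(|\cnab\psi|_{W^{1,\infty}})|f|_{1/2}$; the weak formulation simultaneously defines $\dnpm f$ in $H^{-1/2}(\Sigma)$ with the same bound. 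To climb, I would apply $\TP^k$ to the equation: $\TP$ commutes with the flat Neumann boundary $\Sigma^\pm$ — where moreover $\chi\equiv0$, so $\vp=x_d$ and $\bm{E}=I$, allowing even a reflection argument — and the commutator $[\TP^k,\bm{E}^{ij}\p_j]$ involves at most $k$ derivatives of $\cnab\vp\in H^{s-1}$, which is a multiplier algebra since $s-1>d/2$; hence these commutators are bounded by $C(|\psi|_s)$ times strictly lower-order norms of $u^\pm$. Combining the tangential control with the equation $\bm{E}^{33}\p_3^2 u^\pm=(\text{tangential second derivatives of }u^\pm)+(\text{lower order})$ to recover normal derivatives yields \eqref{DtNelliptic} for integer values of $r+\tfrac32$, and interpolation together with the same scheme run with the fractional tangential multipliers $\jp^{s'}$, $\jp:=\sqrt{1-\TL}$, fills in the intermediate $r$.

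The step I expect to be the main obstacle is the top regularity $r=s-1$, where one seeks $u^\pm\in H^{s+\frac12}(\Om^\pm)$ while the coefficients $\bm{E}$ are only $H^{s-1}$, so plain product estimates (Lemma \ref{KatoPonce}) are just out of reach. Here I would invoke Bony's paraproduct decomposition of Section \ref{sect app para}, writing $\bm{E}^{ij}\p_i\p_j u^\pm=T_{\bm{E}^{ij}}\p_i\p_j u^\pm+T_{\p_i\p_j u^\pm}\bm{E}^{ij}+R(\bm{E}^{ij},\p_i\p_j u^\pm)$: the first summand is the genuine elliptic principal part, a paradifferential operator of order $2$, and stays on the left; the remaining summands, together with the lower-order piece $(\p_i\bm{E}^{ij})\p_j u^\pm$, map into $H^{s-\frac32}(\Om^\pm)$ with norm $\leq C(|\psi|_s)(\|u^\pm\|_{H^{s-\frac12}(\Om^\pm)}+\cdots)$ and are therefore absorbable by the induction; the conormal trace at $\Sigma$ at this order is handled the same way, exploiting the divergence form of $\lapp$. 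This is exactly the elliptic-regularity argument carried out in \cite[Appendix A.4]{SWZ2015MHDLWP} (compare also \cite{ABZ2011IWWST,AMDtN}), from which the bound $|\dnpm f|_r\leq C(|\psi|_s)|f|_{r+1}$ follows.
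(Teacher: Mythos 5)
The paper does not supply its own proof of this lemma: in Appendix C.2 it states Lemmas \ref{lem DtNHs}--\ref{lem DtN-1} and refers the reader to \cite[Appendix A.4]{SWZ2015MHDLWP} for the arguments. Your sketch reconstructs exactly the argument one expects to find there, and you cite the same reference at the end, so there is no conflict with the paper. The overall structure you give is the standard route: a variational $H^1$ bound and weak $H^{-1/2}$ definition of the conormal trace at the base level $r=-\tfrac12$; tangential differentiation (exploiting that $\bm{E}=I$ and $\vp=x_d$ near $\Sigma^\pm$, so the Neumann wall is flat and $\TP$-compatible) together with the equation to recover normal derivatives for intermediate $r$; and a Bony paraproduct split of $\bm{E}^{ij}\p_i\p_j$ at the top order $r=s-1$, where the $H^{s-1}$ regularity of $\bm{E}$ (hence of $N$ and of the conormal coefficients) is just short of what plain product estimates would need, together with a weak/divergence-form treatment of the conormal trace.

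One small internal inconsistency worth noting in your exposition: when "climbing" you assert that the commutators $[\TP^k,\bm{E}^{ij}\p_j]$ are controlled because $\cnab\vp\in H^{s-1}$ lies in a multiplier algebra, but this reasoning breaks down precisely as $k$ approaches the top order, since $\TP^k\bm{E}^{ij}$ then has too few derivatives left to act as a multiplier. You correctly identify this as "the main obstacle" two sentences later and resolve it with the paraproduct decomposition, so the argument is not wrong, only ordered so that it first appears to overreach and then retracts. The exponent bookkeeping in the paraproduct step does check out under $s>2+\tfrac{d}{2}$ (the remainder terms land in $H^{2s-\frac72-\frac{d}{2}}\subset H^{s-\frac32}$ exactly because $s>2+\tfrac{d}{2}$), so that part is tight but correct.
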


\begin{lem}[Remainder estimates for DtN operators]\label{lem DtNR}
For $s>2+\frac{d}{2}$ and $\psi\in H^{s}(\T^d)$, we have $$\dnpm f = T_{\Lam^{(1),\pm}}f + R_1^\pm (f)$$ with $\Lam^{(1),+}=\Lam^{(1),\pm}$ defined in Proposition \ref{prop para DtN} and $$\forall r\in[\frac12,s-1],\quad |R_1^\pm  (f)|_{r}+|(\dnp - \dnm)f|_{r}\leq C(|\psi|_s)|f|_{r}.$$
\end{lem}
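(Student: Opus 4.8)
The plan is to read the statement off from the full paralinearization of the Dirichlet--to--Neumann operator recorded in Lemma~\ref{prop para DtN}, keeping only the principal symbol $\Lam^{(1),\pm}$ and absorbing everything else into $R_1^\pm$. Lemma~\ref{prop para DtN} yields a decomposition $\dnpm f=T_{\Lam^\pm}f+R_{\Lam,1}^\pm(\psi,f)+R_{\Lam,2}^\pm(\psi,f)$ with $\Lam^\pm=\Lam^{(1),\pm}+\Lam^{(0),\pm}$, so by linearity of $T$ in the symbol one sets
\begin{equation}\label{R1defsketch}
R_1^\pm(f):=\dnpm f-T_{\Lam^{(1),\pm}}f=T_{\Lam^{(0),\pm}}f+R_{\Lam,1}^\pm(\psi,f)+R_{\Lam,2}^\pm(\psi,f).
\end{equation}
Since $\Lam^{(1),+}=\Lam^{(1),-}=\sqrt{(1+|\cnab\psi|^2)|\xi|^2-(\cnab\psi\cdot\xi)^2}$, the principal parts cancel in the difference and
\[
(\dnp-\dnm)f=T_{\Lam^{(0),+}-\Lam^{(0),-}}f+\sum_{j=1,2}\bigl(R_{\Lam,j}^+(\psi,f)-R_{\Lam,j}^-(\psi,f)\bigr),
\]
where $\Lam^{(0),+}-\Lam^{(0),-}=2\,\Re\,\Lam^{(0),+}$ is a \emph{real} symbol thanks to $\Lam^{(0),-}=-\overline{\Lam^{(0),+}}$. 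It then suffices to show that each operator appearing on the right of these two displays is bounded on $H^r(\T^d)$ with norm $\le C(|\psi|_s)$ for every $r\in[\tfrac12,s-1]$; the case of a general $f\in H^r$ follows by density of $H^{s+\frac12}$ in $H^r$, noting that $\dnpm f-T_{\Lam^{(1),\pm}}f$ is a priori an element of $H^{r-1}$ for $f\in H^r$ by Lemma~\ref{lem DtNHs}.

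First I would handle the paradifferential pieces $T_{\Lam^{(0),\pm}}$ and $T_{\Lam^{(0),+}-\Lam^{(0),-}}$. Because $s>2+\tfrac d2$, Sobolev embedding gives $\cnab\psi\in H^{s-1}\hookrightarrow W^{1,\infty}(\T^d)$ and $\cnab^2\psi\in H^{s-2}\hookrightarrow L^\infty(\T^d)$, while $\Lam^{(1),-}\ge|\xi|$ by Cauchy--Schwarz so that division by $\Lam^{(1),-}$ causes no trouble. Inspecting the explicit formula for $\Lam^{(0),\pm}$ in Lemma~\ref{prop para DtN}---a rational expression in $\cnab\psi$ built from one spatial derivative of $\Lam^{(1),-}$ and of $\alpha^{(1)}$, hence involving at most $\cnab^2\psi$---one verifies $\Lam^{(0),\pm}\in\Gamma_0^0(\T^d)$ with $M_0^0(\Lam^{(0),\pm})\le C(|\psi|_s)$, and likewise for $\Re\,\Lam^{(0),+}$. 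The boundedness proposition for paradifferential operators (an order-$m$ symbol yields an order-$m$ operator with norm controlled by $M_0^m$) then gives $|T_{\Lam^{(0),\pm}}f|_r+|T_{\Lam^{(0),+}-\Lam^{(0),-}}f|_r\le C(|\psi|_s)|f|_r$ for all $r\in\R$.

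It remains to bound the genuine remainders $R_{\Lam,j}^\pm(\psi,f)$, $j=1,2$. For the endpoint $r=s-1$ these are exactly the quantities estimated in Lemma~\ref{prop para DtN}; for $r$ in the interior of $[\tfrac12,s-1]$ I would rerun the paralinearization of the harmonic extension $\HE_\psi^\pm f$ with the regularity index lowered (keeping $\psi\in H^s$, which is ample since the coefficients of $\lapp$ and their relevant derivatives live in $H^{s-1}\cap H^{s-2}\hookrightarrow C^{1,\alpha}\cap L^\infty$), and then interpolate, using Lemma~\ref{lem DtNHs} at the low endpoint. Combined with the previous paragraph this yields $|R_1^\pm(f)|_r+|(\dnp-\dnm)f|_r\le C(|\psi|_s)|f|_r$ on $[\tfrac12,s-1]$, as claimed. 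The main obstacle is essentially bookkeeping: one must check that \emph{every} error generated by paralinearizing the variable-coefficient equation for $\HE_\psi^\pm f$---the commutator of $\lapp$ with the parametrix, the paraproduct remainders $R(\cnab\psi,\cdot)$, and the sign mismatch between the factorizations used for $\dnp$ and for $\dnm$---is of order $0$, rather than losing the customary half-derivative, and that the constants are uniform in $r$ over the whole range; this is precisely where $s>2+\tfrac d2$ and $\Lam^{(0),-}=-\overline{\Lam^{(0),+}}$ enter, and it is carried out in detail in \cite{ABZ2011IWWST,AMDtN}, so that here it only needs to be transcribed and checked for $r\in[\tfrac12,s-1]$.
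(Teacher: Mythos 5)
Your framing — peel off the principal symbol, bound $T_{\Lam^{(0),\pm}}$ by symbol calculus, note that $\Lam^{(1),+}=\Lam^{(1),-}$ so the leading parts cancel in $\dnp-\dnm$ — is the right shape, and the treatment of $T_{\Lam^{(0),\pm}}$ (zeroth–order symbol with coefficients involving at most $\cnab^2\psi\in H^{s-2}\hookrightarrow L^\infty$, hence $H^r\to H^r$ with norm $\leq C(|\psi|_s)$) is sound. However, the step where you delegate the remainders $R_{\Lam,j}^\pm$ to Lemma~\ref{prop para DtN} does not work as written. That lemma's estimates are stated only at the single index $s-\tfrac12$, not at $s-1$ as you claim, and more importantly the bound $|R_{\Lam,1}^\pm(\psi,f)|_{s-\frac12}\leq C(|\psi|_{C^2},|f|_3)|f|_{s+\frac12}$ is an order-$1$ estimate on $f$ (with a constant that itself depends on $f$), not the order-$0$, $f$-linear bound $|R_1^\pm(f)|_r\leq C(|\psi|_s)|f|_r$ demanded by the lemma. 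Those remainders in Lemma~\ref{prop para DtN} are tailored to the fully nonlinear use $f=\h(\psi)$ and carry the good-unknown corrections, so they cannot simply be rebranded as an order-$0$ operator on arbitrary $f$.

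The low-endpoint fallback via Lemma~\ref{lem DtNHs} also does not close the gap: that lemma gives $|\dnpm f|_r\leq C(|\psi|_s)|f|_{r+1}$, an order-$1$ bound on $\dnpm$ alone. The content of the present lemma is precisely a cancellation between two order-$1$ operators, $\dnpm$ and $T_{\Lam^{(1),\pm}}$, and one cannot see that cancellation by bounding each term separately. What is actually needed — and what the paper imports wholesale by citing \cite[Appendix~A.4]{SWZ2015MHDLWP} — is the paradifferential parametrix for the flattened elliptic problem for $\HE_\psi^\pm f$: factor the principal symbol of $\lapp$ as $(\p_3-T_a)(\p_3-T_A)$ with first-order symbols $a,A$, solve the resulting first-order parabolic systems in the $x_3$-direction, and read off $\dnpm=T_{\Lam^{(1),\pm}}+(\text{order }0)$ with operator norm $C(|\psi|_s)$ uniformly for $r\in[\tfrac12,s-1]$. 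Your closing sentence ("rerun the paralinearization ... carried out in detail in \cite{ABZ2011IWWST,AMDtN}") points to the right machinery, but in the proposal it replaces, rather than supplements, the decisive estimate; as it stands the proof of the order-$0$ bound is deferred entirely.
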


\begin{lem}[Sobolev estimates for the inverse of DtN operators]\label{lem DtN-1}
For $s>2+\frac{d}{2},~-\frac12\leq r\leq s-1$ and $\psi\in H^{s}(\T^d)$, we have $$|(\dnpm)^{-1} f|_{r+1}  \leq C(|\psi|_s)|f|_{r}.$$
\end{lem}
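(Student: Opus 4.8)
The plan is to establish Lemma \ref{lem DtN-1} — the gain of one derivative under $(\dnpm)^{-1}$ — by combining a coercivity (energy) estimate, which handles the low-frequency block on which $\dnpm$ fails to be elliptic, with the elliptic parametrix furnished by the paralinearization already recorded in Lemma \ref{lem DtNR}. Since $\dnpm$ annihilates constants, one first fixes conventions: $(\dnpm)^{-1}$ acts on the mean-zero subspace $\{g:\is g\dx'=0\}$, which is exactly where it was used in \eqref{resolve trace q}.

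First I would prove coercivity. For mean-zero $f$ put $u:=\HE_\psi^\pm f$, so $-\lapp u=0$ in $\Om^\pm$, $u|_\Sigma=f$, $\p_3 u|_{\Sigma^\pm}=0$. Integrating by parts via Lemma \ref{int by parts lem} and using the boundary conditions gives $\is f\,(\dnpm f)\dx'=\iopm\bm E^{ij}(\p_i u)(\p_j u)\,\p_3\vp\dx\gtrsim_{|\psi|_s}\ino{\nab u}_{0,\pm}^2\gtrsim|f|_{1/2}^2$, the last step combining a Poincar\'e estimate on the slab (available because $\is u\dx'=0$ and $\p_3 u|_{\Sigma^\pm}=0$) with the trace inequality; after $H^{1/2}$--$H^{-1/2}$ duality on the left this yields $|f|_{1/2}\le C(|\psi|_s)|\dnpm f|_{-1/2}$, and in particular $|f|_0\le C(|\psi|_s)|\dnpm f|_r$ for every $r\ge-\tfrac12$. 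This estimate pins down the low-frequency part of $f$ and, just as importantly, provides the ``error removal'' that any elliptic estimate with a lower-order remainder needs.

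Next I would invoke the paralinearization $\dnpm f=T_{\Lam^{(1),\pm}}f+R_1^\pm(f)$ of Lemma \ref{lem DtNR}, whose principal symbol $\Lam^{(1),\pm}\in\Sigma^1$ is elliptic ($\Lam^{(1),\pm}\ge c(|\cnab\psi|)|\xi|$ for $|\xi|\ge1$) and whose remainder satisfies $|R_1^\pm(f)|_r\le C(|\psi|_s)|f|_r$ on $[\tfrac12,s-1]$. Choosing $b\in\Sigma^{-1}$ with principal symbol $1/\Lam^{(1),\pm}$ (cut off for $|\xi|\le\tfrac12$), the symbolic calculus of the class $\Sigma^\bullet$ gives $T_b T_{\Lam^{(1),\pm}}=\mathrm{Id}+\mathcal R$ with $\mathcal R$ of order $\le-1$ modulo a low-frequency smoothing term. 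Applying $T_b$ to $T_{\Lam^{(1),\pm}}f=\dnpm f-R_1^\pm(f)$ and using Proposition \ref{prop para Hs} produces, for $r\in[\tfrac12,s-1]$, the estimate $|f|_{r+1}\le C(|\psi|_s)(|\dnpm f|_r+|f|_r)$. Then I would bootstrap: iterating this inequality and lowering the index on its last term by one each time, after finitely many steps that term is $|f|_{r'}$ with $r'\le\tfrac12$, which is $\le|f|_{1/2}\le C(|\psi|_s)|\dnpm f|_{-1/2}\le C(|\psi|_s)|\dnpm f|_r$ by the first step; hence $|(\dnpm)^{-1}g|_{r+1}\le C(|\psi|_s)|g|_r$. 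The residual range $r\in[-\tfrac12,\tfrac12)$ is handled by running the parametrix directly at order $r$ and absorbing the low-regularity remainder and the low-frequency piece into $|f|_0\le C(|\psi|_s)|\dnpm f|_r$. Equivalently — and this is the shorter route I would actually write down — one reads the bound off from $H^{r+3/2}$-elliptic regularity for the Neumann problem solved by $u=\HE_\psi^\pm f$, with the lower-order term removed as above, exactly as in \cite[Appendix A.4]{SWZ2015MHDLWP}.

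The hard part will be the coupling between ellipticity and the kernel: $\dnpm$ is elliptic only away from $\xi=0$, its symbol vanishes there, and its null space is the constants, so the microlocal parametrix inverts it only modulo a finite-dimensional low-frequency block. The coercivity step is precisely what controls that block and converts the ``elliptic estimate up to lower order'' into the clean inequality; once that coupling is bookkept correctly, confirming that every constant depends only polynomially on $|\psi|_s$ is routine, being inherited from Lemmas \ref{lem DtNHs}--\ref{lem DtNR}.
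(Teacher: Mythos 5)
The paper does not prove this lemma itself; Lemmas~\ref{lem DtNHs}--\ref{lem DtN-1} are stated with a single pointer to \cite[Appendix A.4]{SWZ2015MHDLWP}, so there is no in-paper proof to measure yours against.

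Your plan is sound, and the ``shorter route'' at the end --- $H^{r+3/2}$ elliptic regularity for the Neumann problem $-\lapp u=0$ in $\Om^\pm$, $\mp N\cdot\nabp u=g$ on $\Sigma$, $\p_3 u=0$ on $\Sigma^\pm$, with the low-order $\|u\|_{L^2(\Om^\pm)}$ term eliminated through the coercivity $\is f\,\dnpm f\,\dx'\gtrsim_{|\psi|_s}|f|_{1/2}^2$ --- is essentially the elementary elliptic argument the cited reference uses. You are also right that $(\dnpm)^{-1}$ must be declared on the mean-zero subspace: by Green's identity $\dnpm$ already maps into mean-zero functions, its kernel is exactly the constants, and your Poincar\'e step does go through with just $\is u|_\Sigma\dx'=0$ and $\p_3 u|_{\Sigma^\pm}=0$, though those two hypotheses deserve to be named explicitly when you write it up. The one soft spot is the paradifferential route: the remainder bound $|R_1^\pm(f)|_r\lesssim|f|_r$ in Lemma~\ref{lem DtNR} is asserted only for $r\in[\tfrac12,s-1]$, so the bootstrap inequality $|f|_{r+1}\le C(|\dnpm f|_r+|f|_r)$ is available only on that range, and your closing sentence about ``running the parametrix directly'' for $r\in[-\tfrac12,\tfrac12)$ does not say how the remainder is controlled there. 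You could patch this by interpolating between the established endpoints $r=-\tfrac12$ (pure coercivity) and $r=s-1$ (parametrix plus bootstrap), but it is simpler to commit to the elliptic-regularity route, which covers $r\in[-\tfrac12,s-1]$ in one pass and needs none of Lemma~\ref{lem DtNR}.
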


\end{appendix}

\end{document}